\documentclass[11pt]{amsart}

\usepackage{amsmath,amssymb,amsfonts,mathrsfs,extarrows}
\usepackage[cal=boondoxo, scr=boondoxo]{mathalfa}

\DeclareFontFamily{U}{wncy}{}
\DeclareFontShape{U}{wncy}{m}{n}{<->wncyr10}{}
\DeclareSymbolFont{mcy}{U}{wncy}{m}{n}
\DeclareMathSymbol{\Sh}{\mathord}{mcy}{"58}

\usepackage{xcolor}
\usepackage{etoolbox} % 用于自动绑定环境别名，解决交叉引用名称丢失问题
\usepackage{tikz}
\usepackage{tikz-cd}
\usepackage{subfiles} 
\usetikzlibrary{matrix, arrows.meta}

\usepackage[
    top=1in,          % 顶部边距
    bottom=0.85in,    % 底部边距
    left=1in,         % 左边距
    right=1in,        % 右边距
    headheight=15pt,  % 设置页眉本身的高度
    headsep=0.35in,   % 增大页眉与正文的间距
    footskip=0.45in   % 正文到页码的距离
]{geometry}

\usepackage{verbatim, url}
\usepackage[pdfpagelabels, bookmarksopen=true]{hyperref}

\usepackage[nameinlink, capitalize]{cleveref}

\numberwithin{equation}{subsection}

\theoremstyle{plain}
\newtheorem{theorem}[subsection]{Theorem}
\newtheorem{corollary}[subsection]{Corollary}
\newtheorem{proposition}[subsection]{Proposition}
\newtheorem{lemma}[subsection]{Lemma}

\newtheorem{situation}[subsection]{Situation}
\newtheorem{setup}[subsection]{Setup}

\theoremstyle{definition}
\newtheorem{definition}[subsection]{Definition}
\newtheorem{remark}[subsection]{Remark}

\newtheorem*{acknowledge}{Acknowledgement}

\newtheoremstyle{pgstyle}{}{}{}{}{}{\textbf{.}}{ }{\textbf{\thmname{#1}\thmnumber{#2}}\thmnote{ (#3)}}
\theoremstyle{pgstyle}
\newtheorem{pg}[subsection]{}

\AtBeginEnvironment{theorem}{\crefalias{subsection}{theorem}}
\AtBeginEnvironment{corollary}{\crefalias{subsection}{corollary}}
\AtBeginEnvironment{proposition}{\crefalias{subsection}{proposition}}
\AtBeginEnvironment{lemma}{\crefalias{subsection}{lemma}}
\AtBeginEnvironment{fact}{\crefalias{subsection}{fact}}
\AtBeginEnvironment{conjecture}{\crefalias{subsection}{conjecture}}
\AtBeginEnvironment{exercise}{\crefalias{subsection}{exercise}}
\AtBeginEnvironment{notation}{\crefalias{subsection}{notation}}
\AtBeginEnvironment{observation}{\crefalias{subsection}{observation}}
\AtBeginEnvironment{situation}{\crefalias{subsection}{situation}}
\AtBeginEnvironment{setup}{\crefalias{subsection}{setup}}
\AtBeginEnvironment{definition}{\crefalias{subsection}{definition}}
\AtBeginEnvironment{remark}{\crefalias{subsection}{remark}}
\AtBeginEnvironment{question}{\crefalias{subsection}{question}}
\AtBeginEnvironment{example}{\crefalias{subsection}{example}}
\AtBeginEnvironment{problem}{\crefalias{subsection}{problem}}
\AtBeginEnvironment{pg}{\crefalias{subsection}{pg}}

\DeclareMathOperator{\Br}{Br}
\DeclareMathOperator{\sm}{sm}
\DeclareMathOperator{\GL}{GL}
\DeclareMathOperator{\PGL}{PGL}
\DeclareMathOperator{\Aut}{Aut}

\DeclareMathOperator{\Hom}{Hom}
\DeclareMathOperator{\Pic}{Pic}

\DeclareMathOperator{\Spec}{Spec}

\DeclareMathOperator{\fppf}{fppf}
\DeclareMathOperator{\desc}{desc}
\DeclareMathOperator{\im}{im}
\DeclareMathOperator{\sep}{sep}

\newcommand{\G}{\mathbb{G}}
\newcommand{\Z}{\mathbb{Z}}

\renewcommand{\H}{\mathrm{H}}
\newcommand{\A}{\mathbf{A}}

\newcommand{\ms}[1]{\mathscr{#1}}
\newcommand{\mc}[1]{\mathcal{#1}}

\newcommand{\mr}[1]{\mathrm{#1}}

\newcommand{\et}{\operatorname{\acute{e}t}}

\let\oldsection\section
\renewcommand{\section}{\clearpage\oldsection}

\begin{document}
\title[Descent and Brauer--Manin Obstructions on Deligne--Mumford Stacks]{Descent and Brauer--Manin Obstructions on Deligne--Mumford Stacks}

\author[D. Li]{Donghao Li}
\address{Shanghai Center for Mathematical Sciences, Fudan University, 2005 Songhu Road, 200438 Shanghai, China}
\email{ridongen031110@gmail.com}

\author[S. Liu]{Sheng Liu}
\address{Shanghai Center for Mathematical Sciences, Fudan University, 2005 Songhu Road, 200438 Shanghai, China}
\email{Liushengisjr@outlook.com}

\author[C. Lv]{Chang Lv}
\address{State Key Laboratory of Cyberspace Security Defense\\
Institute of Information Engineering\\
Chinese Academy of Sciences\\
Beijing 100093, China}
\email{lvchang@amss.ac.cn}

\author[C. Zhang]{Chen Zhang}
\address{School of Mathematical Sciences, Capital Normal University, 105 Xisanhuanbeilu, 100048 Beijing, China}
\email{1230403014@cnu.edu.cn}

\begin{abstract} 
We generalize and compare local--global obstructions for algebraic stacks over number fields.
For smooth separated Deligne--Mumford stacks of finite type with a quasi-projective coarse
moduli space, we prove that the descent obstruction is contained in the Brauer--Manin obstruction. 
By lifting
$\G_m$-gerbes, we obtain an inclusion between the corresponding composite
obstructions. 
We also show that in this setting the descent obstruction coincides with both the \'{e}tale Brauer--Manin obstruction and the
iterated descent obstruction.
The Brauer--Manin obstruction also coincides with the iterated Brauer--Manin obstruction.

\end{abstract}
\date{\today}
\maketitle
\setcounter{tocdepth}{1}
\tableofcontents

\section{Introduction}
Let $k$ be a number field and let $X$ be a variety over $k$.
The study of rational points is often approached through the local-global principle, which compares $X(k)$ with the space $X(\A_k)$ of adelic points. To measure the failure of adelic points to arise from rational points, Manin \cite{Manin1971BrauerGrothendieck} introduced the Brauer-Manin obstruction via the Brauer-Grothendieck group.

Meanwhile, descent theory using torsors was developed by Colliot-Th\'{e}l\`{e}ne and Sansuc in \cite{ColliotTheleneSansuc1987} for commutative algebraic groups. Harari and Skorobogatov \cite{HarariSkorobogatov2002NonAbelian} subsequently generalized descent theory to algebraic groups and compared the descent obstruction with the Brauer-Manin obstruction. Combining finite \'{e}tale coverings with the Brauer-Manin obstruction gives rise to the \'{e}tale Brauer-Manin obstruction. A series of works \cite{Stoll2007} \cite{Demarche2009} \cite{Skorobogatov2009} \cite{Poonen2010} \cite{cao2018comparingdescentobstructionbrauermanin} established that the \'{e}tale Brauer-Manin obstruction and the descent obstruction coincide. Building on these results, Cao \cite{Cao_2020} further showed that the iterated descent obstruction is equivalent to the descent obstruction.

Beyond varieties, algebraic stacks arise naturally in moduli problems, and their arithmetic therefore calls for stack-theoretic analogues of these obstruction theories. Extending arithmetic obstruction theory from scheme-theoretic varieties to stack-theoretic spaces is non-trivial: points of a stack form groupoids before passing to isomorphism classes, the map from rational points to adelic points need not be injective, and torsors, gerbes, and cohomology must be treated on the big sites of the stack.

A general framework for points and cohomological obstructions on categories fibred in groupoids was developed in \cite{Lv21}, while the Brauer-Manin pairing and several descent results for algebraic stacks were established in \cite{LW23}.

The purpose of the present paper is to continue this program by establishing a unified comparison theory for descent, \'{e}tale Brauer-Manin, and composite obstructions on Deligne-Mumford stacks, thereby extending the fundamental results of Cao-Demarche-Xu \cite[1.5]{cao2018comparingdescentobstructionbrauermanin} and Cao \cite[1.2]{Cao_2020} to the stack-theoretic setting.

Our first main result is the following:
\begin{theorem}[\cref{desc_contained_in_Br}]
    Let $\mathcal{X}$ be a smooth separated Deligne-Mumford stack of finite type over $k$ with quasi-projective coarse moduli space. 
    Then:
    \begin{align*}
        \mathcal X(\A_k)^{\mathrm{desc}}
 \subseteq \mathcal X(\A_k)^{\mathrm{PGL}}
 = \mathcal X(\A_k)^{\mathrm{Br}},
    \end{align*}
    which generalizes the results in \cite{HarariSkorobogatov2002NonAbelian} and \cite{Poo17}.
\end{theorem}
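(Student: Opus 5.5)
The plan is to follow Poonen's argument (\cite[8.5]{Poo17}) for the scheme case and adapt each step to the stack $\mathcal X$. Here $\mathcal X(\A_k)^{\mathrm{PGL}}$ is the set of adelic points orthogonal to all $\PGL_n$-torsors for all $n$: an adelic point $(x_v)$ lies in it if, for every $\PGL_n$-torsor $f\colon \mathcal Y\to\mathcal X$, some twist $\mathcal Y^\sigma$ with $\sigma\in \H^1(k,\PGL_n)$ lifts $(x_v)$ to an adelic point. The inclusion $\mathcal X(\A_k)^{\mathrm{desc}}\subseteq \mathcal X(\A_k)^{\mathrm{PGL}}$ is then a tautology, since $\PGL_n$ is a linear algebraic group and $\mathcal X(\A_k)^{\mathrm{desc}}$ is the intersection over \emph{all} torsors under linear groups. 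So the content of the theorem is the equality $\mathcal X(\A_k)^{\mathrm{PGL}}=\mathcal X(\A_k)^{\mathrm{Br}}$.

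The first step is to show that the Brauer group of $\mathcal X$ is exhausted by Azumaya algebras: every class in $\Br(\mathcal X)=\H^2(\mathcal X,\G_m)_{\mathrm{tors}}$ comes from some $\PGL_n$-torsor via the boundary map $\H^1(\mathcal X,\PGL_n)\to \H^2(\mathcal X,\G_m)$. This is where the hypotheses enter. For a separated DM stack of finite type with quasi-projective coarse space, the stack is a global quotient, and Gabber--de Jong type results (as extended by Kresch--Vistoli and Edidin--Hassett--Kresch--Vistoli) give $\Br=\Br'$. Concretely, I would write $\mathcal X=[Z/\GL_N]$ with $Z$ quasi-projective, use that a $\G_m$-gerbe $\alpha$ on $\mathcal X$ has a twisted locally free sheaf of positive rank, and take its endomorphism algebra. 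I expect this step to be the main obstacle. Quasi-projectivity of the coarse space and smoothness are needed here, and one must check that the earlier framework from \cite{LW23} identifies cohomology on the big étale site with the usual one.

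Second, I would compare the two obstructions pointwise. Take a $\PGL_n$-torsor $P\to\mathcal X$ with class $\alpha=\partial[P]\in\Br(\mathcal X)$. Consider the exact sequence $1\to\G_m\to\GL_n\to\PGL_n\to1$ over $k$ and over each $k_v$, together with Hilbert 90. For an adelic point $(x_v)$, the fiber $x_v^*P\in \H^1(k_v,\PGL_n)$ has image $\alpha(x_v)\in\Br(k_v)$. If $(x_v)\perp\alpha$, i.e. $\sum_v\mathrm{inv}_v\alpha(x_v)=0$, then by the exact sequence $0\to\Br(k)\to\bigoplus_v\Br(k_v)\to\mathbb Q/\Z\to0$ the family $(\alpha(x_v))$ comes from a global $\beta\in\Br(k)$. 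After possibly enlarging $n$ (replacing $P$ by $P\times^{\PGL_n}\PGL_{nm}$, which is harmless because the $\mathrm{PGL}$-set is defined using all $n$), I can realize $\beta$ by a torsor class $\sigma\in\H^1(k,\PGL_n)$. Then I use the injectivity $\H^1(k_v,\PGL_n)\hookrightarrow\Br(k_v)$ given by Hilbert 90, and the fact that for almost all $v$ both $x_v^*P$ and $\sigma_v$ are trivial by an integral model spread-out argument. Together these show that $(x_v)$ lifts to the twist $P^\sigma$, so $(x_v)\in\mathcal X(\A_k)^{\mathrm{PGL}}$. Conversely, if $(x_v)$ lifts to $P^\sigma$, then $\alpha(x_v)=\sigma_v$-image for each $v$, which is the restriction of a global class and therefore sums to zero. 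Combined with the first step, this shows that the $\mathrm{PGL}$-condition is exactly orthogonality to all of $\Br(\mathcal X)$.

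Some stacky care is needed in this step. Adelic points of $\mathcal X$ are isomorphism classes in a groupoid, so I need evaluation and twisting to be well defined on isomorphism classes; this should follow from the functoriality in \cite{Lv21,LW23}. The restricted-product condition at almost all places also needs an integral model of $\mathcal X$ and of $P$ over some $\mathcal O_{k,S}$, which I would construct by spreading out.
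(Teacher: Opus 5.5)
Your route is the same as the paper's, with one step wrong as justified. The three ingredients match:
\begin{itemize}
\item $\mathcal X(\A_k)^{\desc}\subseteq\mathcal X(\A_k)^{\PGL}$ is tautological.
\item Surjectivity of the Brauer map (Kresch--Vistoli, which the paper cites via \cite[2.1.5]{SHIN-THESIS2019}) makes every Brauer class the boundary $\delta(P)$ of a $\PGL_n$-torsor $P$.
\item For each $P$, the $P$-obstruction is compared with the $\delta(P)$-obstruction via $\mathrm{H}^1(-,\PGL_n)$ versus $\Br(-)[n]$ over $k$ and the $k_v$.
\end{itemize}

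The step that fails is ``enlarging $n$''. The $\PGL$-set is an \emph{intersection} over all $n$ and all torsors, so you must lift $(x_v)$ to a twist of the given $P$ itself. A lift to a twist of $P\times^{\PGL_n}\PGL_{nm}$ tells you nothing about $P$. It would only do so if you already knew that the global class you obtain in $\mathrm{H}^1(k,\PGL_{nm})$, whose Brauer class is $n$-torsion, comes from $\mathrm{H}^1(k,\PGL_n)$. That is exactly what the enlargement was meant to avoid, so ``harmless because the set uses all $n$'' gets the logic backwards.

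No enlargement is needed. Since $\Br(k)\to\bigoplus_v\Br(k_v)$ is injective and every $\alpha(x_v)$ is killed by $n$, your $\beta$ lies in $\Br(k)[n]$. Over a number field and its completions period equals index. Hence $\mathrm{H}^1(k,\PGL_n)\to\Br(k)[n]$ and $\mathrm{H}^1(k_v,\PGL_n)\to\Br(k_v)[n]$ are bijections; these are exactly the isomorphisms in the paper's commutative diagram. The paper then uses the evaluation form of the obstruction and the injectivity of $\mathrm{H}^1(\A_k,\PGL_n)\to\prod_v\mathrm{H}^1(k_v,\PGL_n)$. So it never needs your integral-model spreading-out; the translation into lifts on twists is \cite[3.20]{Lv21}.

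Two smaller omissions remain. First, in the paper $\Br(\mathcal X)$ means all of $\mathrm{H}^2_{\et}(\mathcal X,\G_m)$, not its torsion subgroup, and a non-torsion class can never be of the form $\delta(P)$. So besides surjectivity of $\Br_{\mathrm{Az}}(\mathcal X)\to\Br'(\mathcal X)$, you need $\mathrm{H}^2_{\et}(\mathcal X,\G_m)$ to be torsion. The paper takes this from \cite[2.1.8]{SHIN-THESIS2019} for regular Noetherian Deligne--Mumford stacks; without it you only get equality with the obstruction defined by the torsion subgroup.

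Second, if $\mathcal X$ is disconnected, the Azumaya algebra (or twisted sheaf) you produce need not have constant rank, so it does not yet define a $\PGL_n$-torsor. You must first tensor with suitable $\underline{\mathrm{End}}(\mathcal E)$ componentwise, as in \cref{every_class_has_constant_rank}.
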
 

The equality on the right is obtained by relating Azumaya algebras to $\mathrm{PGL}_n$-torsors.  
More precisely, the boundary map associated with
\[
 1\longrightarrow \G_m\longrightarrow \mathrm{GL}_n
 \longrightarrow \mathrm{PGL}_n\longrightarrow 1
\]
identifies the obstruction defined by a constant-rank Azumaya algebra with the obstruction defined by its associated $\mathrm{PGL}_n$-torsor.  Surjectivity of the Brauer map in the Deligne-Mumford case, as established in \cite{SHIN-THESIS2019}, then allows all Brauer classes to be detected in this way.

We next compare the \'{e}tale Brauer-Manin obstruction with the descent obstruction.
Combining this with our previous result gives one inclusion.

Meanwhile
in \cite{LW26}
the authors used the $\mathrm{SL}_n$-descent technique to prove 
the reverse inclusion for certain quotient stacks.
By handling the geometrically integral condition as in \cite{Cao_2020}, we prove: 
\begin{theorem}[\cref{et_br_eq_desc_2}]

    Let $\mathcal{X}$ be a smooth separated Deligne-Mumford stack of finite type over $k$ with quasi-projective coarse moduli space. 
    Then:
\[
  \mathcal X(\A_k)^{\mathrm{\acute et},\mathrm{Br}}
  =\mathcal X(\A_k)^{\mathrm{desc}}
  =\mathcal X(\A_k)^{\mathrm{desc},\mathrm{desc}},
\]
which generalizes the results in \cite{Cao_2020} and \cite{cao2018comparingdescentobstructionbrauermanin}, removing the geometrical integrality condition in Cao's paper.
\end{theorem}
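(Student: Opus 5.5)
We prove the two equalities through a chain of inclusions,
\[
\mathcal X(\A_k)^{\mathrm{desc},\mathrm{desc}}\subseteq \mathcal X(\A_k)^{\mathrm{desc}}\subseteq \mathcal X(\A_k)^{\mathrm{\acute et},\mathrm{Br}}\subseteq \mathcal X(\A_k)^{\mathrm{desc},\mathrm{desc}},
\]
which closes into a cycle. The first inclusion is formal. The trivial torsor under the trivial group lies among the twists over which the iterated set is a union. Alternatively, any adelic point that survives descent on every twist $Y^\sigma$ maps into a point that survives descent on $\mathcal X$, because torsors on $\mathcal X$ pull back to torsors on $Y^\sigma$.

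For the second inclusion we use the definition of the étale Brauer--Manin set as an intersection over finite étale torsors $f\colon Y\to\mathcal X$ under finite groups $F$:
\[
\bigcap_{Y\to\mathcal X}\ \bigcup_{\sigma\in\H^1(k,F)} f^\sigma\bigl(Y^\sigma(\A_k)^{\mathrm{Br}}\bigr).
\]
Take $x\in\mathcal X(\A_k)^{\mathrm{desc}}$. Since $x$ is in particular orthogonal to the $F$-torsor $Y$, it lifts to some twist $Y^\sigma$. The key point is that the lift can be chosen inside $Y^\sigma(\A_k)^{\mathrm{desc}}$. Here we argue as in Stoll, Skorobogatov and Cao: an $L$-torsor $Z\to Y^\sigma$ for linear $L$ pushes forward, via restriction of scalars, to a torsor on $\mathcal X$ under an extension of $F$ by $R_{}$-type groups. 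On stacks this means Weil restriction along the finite étale $Y^\sigma\to\mathcal X$, which exists as a torsor because finite étale morphisms of DM stacks are representable. A compactness argument then selects one lift that works for all such $Z$ simultaneously. Finiteness of $\H^1(k,F)$ up to local conditions, together with compactness of fibres of $Y^\sigma(\A_k)\to\mathcal X(\A_k)$, is what makes the compactness step run. Finally, $Y^\sigma$ is again smooth, separated and DM with quasi-projective coarse space, since finite morphisms preserve quasi-projectivity of coarse spaces. So \cref{desc_contained_in_Br} applies and gives $Y^\sigma(\A_k)^{\mathrm{desc}}\subseteq Y^\sigma(\A_k)^{\mathrm{Br}}$, hence $x\in\mathcal X(\A_k)^{\mathrm{\acute et},\mathrm{Br}}$.

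The third inclusion, $\mathcal X(\A_k)^{\mathrm{\acute et},\mathrm{Br}}\subseteq\mathcal X(\A_k)^{\mathrm{desc},\mathrm{desc}}$, is the main obstacle. Our plan is to reduce to the variety/quotient case via the $\mathrm{SL}_n$-descent of \cite{LW26}. Because the coarse space is quasi-projective and $\mathcal X$ is DM, smooth and separated, we first write $\mathcal X=[P/\mathrm{GL}_n]$, possibly after passing to a quotient presentation via the frame bundle of a generically faithful vector bundle. We then pass to an $\mathrm{SL}_n$-torsor $P\to\mathcal X'$ over a $\G_m$-gerbe-free modification, with $P$ a quasi-projective smooth variety. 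Next we transfer the result of Cao (\cite[1.2]{Cao_2020}) and Cao--Demarche--Xu, which state étale-Br $=$ desc $=$ iterated desc, from $P$ and its twists to $\mathcal X$. The two ingredients for this transfer are:
\begin{itemize}
\item $\mathrm{SL}_n$ has trivial $\H^1$ and satisfies strong approximation-type properties, so every adelic point of $\mathcal X$ lifts to $P$ and Brauer/étale conditions descend;
\item compatibility of twisting by finite torsors with the $\mathrm{SL}_n$-torsor.
\end{itemize}

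To remove geometric integrality we follow Cao's trick. If $\mathcal X$ is not geometrically connected, either $\mathcal X(\A_k)^{\mathrm{\acute et},\mathrm{Br}}$ is empty, or the Stein factorization $\mathcal X\to\Spec K$ with $K/k$ finite forces the adelic point to come from a component defined over $k$. That component is itself a finite étale torsor-type cover, and the étale-Br condition for it is built into the definition. Disconnected $\mathcal X$ is handled componentwise. The delicate points are verifying that the Weil restriction torsors exist on stacks, and making the $\mathrm{SL}_n$-lifting compatible with the iterated twists.
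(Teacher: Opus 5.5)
Your chain of inclusions and your first two steps agree with the paper. The second inclusion is exactly \cref{lem:desc_eq_cup}, which the paper cites rather than re-deriving via Weil restriction and compactness, combined with \cref{lem:torsor_keeps_good_condition} and \cref{desc_contained_in_Br}.

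The gap is in the third inclusion, at the step the paper treats as its main point: reducing to a geometrically integral component. Your dichotomy is fine for connected $\mathcal X$, since a connected but not geometrically connected stack has no adelic points. But ``disconnected $\mathcal X$ is handled componentwise'' fails. An adelic point of $\mathcal X=\bigsqcup_i\mathcal X_i$ can have its local components in different $\mathcal X_i$ at different places, so $\mathcal X(\mathbf A_k)\neq\bigsqcup_i\mathcal X_i(\mathbf A_k)$. For example, take $k=\mathbb Q$ and $\mathbb P^1\times C$ with $C=\Spec\mathbb Q(\sqrt{13})\sqcup\Spec\mathbb Q(\sqrt{17})\sqcup\Spec\mathbb Q(\sqrt{221})$. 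This has adelic points, yet no component has any. The claim that an \'etale--Brauer point ``comes from a component defined over $k$'' is precisely what has to be proved, and you give no mechanism for it.

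The missing idea is to map to the finite \'etale scheme of geometric components and use the Brauer--Manin obstruction there.
\begin{enumerate}
\item Write $\mathcal X\simeq[Y/\GL_n]$ as in \cref{lem:modification_of_Kre08}. Connectedness of $\GL_n$ makes $q_Y\colon Y\to C=\pi_0(Y)$ invariant, so it descends to $\pi\colon\mathcal X\to C$.
\item For $x$ in the \'etale--Brauer set, functoriality gives $\pi\circ x^{\mathrm{nc}}\in C(\mathbf A_k^{\mathrm{nc}})^{\Br}$. The theorem $C(\mathbf A_k^{\mathrm{nc}})^{\Br}=C(k)$ for finite \'etale $C$ then produces $c\in C(k)$.
\item Complex places must be discarded. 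There, neither Brauer nor descent conditions detect which geometric component the local point lies in, so even your corrected claim is false at those places. The paper handles this with $\mathbf A_k^{\mathrm{nc}}$ and \cref{lem:basic_prop_of_Anc}, re-choosing the complex components inside $\mathcal X_c(\mathbb C)$.
\item You must then check that the restricted point lies in $\mathcal X_c(\mathbf A_k^{\mathrm{nc}})^{\et,\Br}$. This uses that $\mathcal X_c$ and the corresponding pieces of the twists $\mathcal Y^\sigma$ are open and closed, so every finite torsor and every Brauer class on them extends.
\item Only then does \cref{et_Br_contained_in_desc_desc} apply to $\mathcal X_c\simeq[Y_c/\GL_n]$ with $Y_c$ geometrically integral. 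Finally, push forward along $\mathcal X_c\hookrightarrow\mathcal X$ by functoriality.
\end{enumerate}

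Connectedness of the structure group is also what makes the presenting variety geometrically integral. Your sketch needs this for Cao's theorem but never addresses it. Your $\mathrm{SL}_n$ paragraph (``$\G_m$-gerbe-free modification'') is too vague to check. It is also unnecessary, because the geometrically integral quotient case is exactly \cref{et_Br_contained_in_desc_desc}.
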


One expects that for nice varieties or algebraic stacks over $k$,
there are only two kinds of obstructions.
We therefore expect that the iterated Brauer-Manin (resp. second descent, connected, PGL, ...) obstruction is just the Brauer-Manin obstruction (which coincides with the second descent, connected, PGL obstruction by \cite{harari}).
Indeed, we prove the following:
\begin{theorem}[\cref{Br_Br_eq_Br}, \cref{thm:br_torsion}]
Let $\mc{X}$ be a quasi-separated regular Noetherian algebraic stack with generically affine stabilizer.
Then:
\begin{enumerate}
    \item $\Br(\ms{X})$ is torsion.
    \item If $\ms X$ is furthermore of finite type over $k$, then \(\mathcal X(\A_k)^{\mathrm{Br},\mathrm{Br}}
  =\mathcal X(\A_k)^{\mathrm{Br}}.\)
\end{enumerate}
The first statement generalizes \cite[2.1.8]{SHIN-THESIS2019}.
\end{theorem}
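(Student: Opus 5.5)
For (1), the plan is to reduce to the generic point, where the statement is classical. Let $\xi$ be a generic point of $\mathcal X$; we may assume $\mathcal X$ is connected, hence integral since it is regular. The hypothesis of generically affine stabilizer gives a dense open substack $\mathcal U\subseteq\mathcal X$ that is a gerbe over an algebraic space, or after shrinking, over an integral affine scheme $U=\Spec A$, with affine stabilizer group $G$. The first step is a purity/injectivity statement: for a regular Noetherian algebraic stack, restriction $\H^2(\mathcal X,\G_m)\to\H^2(\mathcal U,\G_m)$ is injective. I would prove this via a smooth atlas $V\to\mathcal X$ and the Leray/Čech descent spectral sequence. The ingredients are that $\H^0$ of $\G_m$ and $\Pic$ on a regular scheme restrict well to dense opens: units extend, and $\Pic$ surjects by regularity, since Weil divisors equal Cartier divisors. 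Injectivity of $\Br$ on regular integral schemes is Grothendieck's theorem. Chasing the $E_2$-page then shows that any class in the kernel must be killed. Since $\Br(\mathcal X)\subseteq\H^2(\mathcal X,\G_m)$, torsionness reduces to $\mathcal U$.

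The second step handles $\mathcal U$. After further shrinking, $\mathcal U\to U$ is a gerbe banded by an affine group scheme $G$ over $U$. The Leray spectral sequence for $\pi:\mathcal U\to U$ has terms $\H^p(U,R^q\pi_*\G_m)$. Here $R^0\pi_*\G_m=\G_m$. The sheaf $R^1\pi_*\G_m$ is locally the character group $\Hom(G,\G_m)$, which is finitely generated. The sheaf $R^2\pi_*\G_m$ is locally $\H^2(BG,\G_m)$ over strictly henselian local rings, and is torsion for affine $G$; for finite type affine $G$ this follows via a representation $G\hookrightarrow\GL_n$ and $\H^2(B\GL_n,\G_m)=\Br$ of the base. $\Br(U)$ is torsion by Gabber's theorem for regular schemes, or one can simply pass to the generic point of $U$, where $\Br$ is Galois cohomology and torsion. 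The group $\H^1(U,R^1\pi_*\G_m)$ is torsion because after a finite étale cover the character sheaf becomes constant $\Z^r$, and $\H^1(U,\Z)$ is torsion for normal $U$. Combining these, $\Br(\mathcal U)$ is an extension of torsion groups, hence torsion. I expect the main obstacle to be this gerbe step: controlling $R^2\pi_*\G_m$ for non-reduced or non-smooth stabilizers, where I would first try restricting to the generic point and using the reduced, hence smooth over a perfect field, stabilizer. A simpler alternative would be to pass to the generic residual gerbe and invoke the field case of \cite[2.1.8]{SHIN-THESIS2019}.

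For (2), $\mathcal X(\A_k)^{\mathrm{Br},\mathrm{Br}}$ is the set of adelic points lying in the image of $\mathcal Y(\A_k)^{\Br}$ for some object. Here the objects range over torsors $f:\mathcal Y\to\mathcal X$ under groups attached to Brauer classes, namely $\G_m$-gerbes, equivalently $\PGL_n$-torsors via \cref{desc_contained_in_Br}'s identification. The inclusion $\supseteq$ is formal, using the trivial twist. For $\subseteq$, take $x=f(y)$ with $y\in\mathcal Y(\A_k)^{\Br}$. For any $\alpha\in\Br(\mathcal X)$, functoriality of the Brauer–Manin pairing gives $\langle x,\alpha\rangle=\langle y,f^*\alpha\rangle=0$, since $f^*\alpha\in\Br(\mathcal Y)$. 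Part (1) enters here: $\mathcal Y$ is again regular with generically affine stabilizer, so its Brauer group is torsion. This makes $\Br(\mathcal Y)$ coincide with the cohomological Brauer group used to define the pairing, so the pairing is well defined. Hence $x\in\mathcal X(\A_k)^{\Br}$.
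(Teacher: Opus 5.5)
\textbf{Part (2): the hard inclusion is missing.} You have the two inclusions backwards. Your functoriality argument ($\langle x,\alpha\rangle=\langle y,f^*\alpha\rangle$) proves $\mathcal X(\A_k)^{\Br,\Br}\subseteq\mathcal X(\A_k)^{\Br}$, which is the formal direction. The inclusion $\mathcal X(\A_k)^{\Br}\subseteq\mathcal X(\A_k)^{\Br,\Br}$ is not formal. The definition is $\bigcap_{g}\bigcup_{\tau}g^\tau(\mathcal Z^\tau(\A_k)^{\Br})$: an intersection over \emph{every} $\G_m$-gerbe $g:\mathcal Z\to\mathcal X$, not a union over objects.

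Fix $g$ and $x\in\mathcal X(\A_k)^{\Br}$. The trivial twist admits a lift of $x$ only if $x^*[g]=0$ in $\Br(\A_k)$. In general you must use $x^*[g]\in\im\Br(k)$ and \cite[4.6]{Lv21} to lift $x$ to some $z\in\mathcal Z^\tau(\A_k)$. You then still have to show that $z$ is unobstructed by all of $\Br(\mathcal Z^\tau)$, which a priori is larger than $(g^\tau)^*\Br(\mathcal X)$.

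The missing idea is the surjectivity of $(g^\tau)^*:\Br(\mathcal X)\to\Br(\mathcal Z^\tau)$ (\cref{thm:surjective_brauer_gerbe}). The Leray spectral sequence for $g^\tau$, with $g_*\G_m=\G_m$, $R^1g_*\G_m=\underline{\Z}$ and $R^2g_*\G_m=0$, gives an exact sequence $\H^0_{\et}(\mathcal X,\underline\Z)\to\Br(\mathcal X)\to\Br(\mathcal Z^\tau)\to\H^1_{\et}(\mathcal X,\underline\Z)$. Here $\H^1_{\et}(\mathcal X,\underline\Z)$ is torsion-free, and $\Br(\mathcal Z^\tau)$ is torsion by part (1) applied to $\mathcal Z^\tau$ (which still satisfies \cref{sit:setup} by \cref{prop:gerbe_keeps_situation}); together these force surjectivity.

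That is where part (1) genuinely enters. Your stated use of (1), making the pairing well defined, is vacuous here: the obstruction is defined directly from the stable functor $\H^2_{\et}(-,\G_m)$ via $\im F(x)\subseteq\im F(q)$, with no torsion hypothesis. Also, the identification of gerbes with $\PGL_n$-torsors from \cref{desc_contained_in_Br} relies on Shin's surjectivity for Deligne--Mumford stacks with quasi-projective coarse space, so it is unavailable in this generality.

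\textbf{Part (1): the sketch is shaky at two points.} First, in your purity argument, units on a dense open do not extend across a divisorial complement, so the $E_2$-chase as described does not go through; cite \cite[Theorem 2.1.7]{SHIN-THESIS2019} instead, as the paper does. Second, the Leray argument over a scheme $U$ needs $R^1\pi_*\G_m$ and $R^2\pi_*\G_m$ over strictly henselian bases. Moreover, ``$\H^2(B\GL_n,\G_m)=\Br$ of the base'' does not by itself show that $\H^2(BG,\G_m)$ is torsion.

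The paper avoids the base altogether. After purity, \cref{prop:brauer-injective-generic} gives $\Br(\mathcal U)\hookrightarrow\Br(\mathcal U_\eta)$, using the divisor sequence, the vanishing $\H^1_{\et}(\mathcal X_z,\underline\Z)=0$ for quasi-separated gerbes over fields, and $R^1\eta_*\G_m=0$. Over a finite Galois extension $L$ with $\mathcal U_{\eta,L}\simeq BG$ and $G$ affine, Hochschild--Serre reduces everything to $\H^2(BG,\G_m)$. That group is torsion via $BG\simeq[(\GL_n/G)/\GL_n]$, $\Pic(\GL_n)=0$, and torsionness of $\Br(\GL_n/G)$.

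Your ``simpler alternative'' of passing to the generic residual gerbe is essentially this route. However, \cite[2.1.8]{SHIN-THESIS2019} concerns Deligne--Mumford stacks, so it does not supply the field case for non-finite affine stabilizers; you need the $\GL_n/G$ argument there.
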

Other potential equalities remain open.

Under our stronger assumptions, \cite[Theorem 1.2]{LW26} yields the additional identities displayed below:
\[
\begin{tikzcd}
{\ms{X}(\A_k)^{\desc,\et,\Br}} \arrow[r, equal] & {\ms{X}(\A_k)^{\et,\Br}} \arrow[r, equal] \arrow[d, hook] & {\ms{X}(\A_k)^{\desc,\desc}} \arrow[r, equal]  & {\ms{X}(\A_k)^{\mathrm{fin},\desc}} \arrow[r, equal] & \ms{X}(\A_k)^{\desc} \arrow[d, hook] \\
& \ms{X}(\A_k)^{\Br} \arrow[r, equal]                       & \ms{X}(\A_k)^{2\text{-desc}} \arrow[rr, equal] &                                                                    & \ms{X}(\A_k)^{\mathrm{conn}}        
\end{tikzcd}
\]

The paper is organized as follows: In Section 2, we introduce the obstruction formalism for algebraic stacks that will be used throughout the paper, including descent, Brauer–Manin, PGL, and \'{e}tale obstructions, together with twisting and the corresponding composite obstructions. 
In Section 3, we compare the descent and Brauer–Manin obstructions for smooth separated Deligne–Mumford stacks with quasi-projective coarse moduli space. We prove that the PGL obstruction coincides with the Brauer–Manin obstruction and establish a functorial comparison between their composite versions using lifting $\mathbb{G}_m$-gerbes. 
In Section 4, we compare the \'{e}tale Brauer–Manin obstruction with the descent obstruction and prove that the \'{e}tale Brauer–Manin, descent, and iterated descent obstructions coincide under the same hypotheses. 
In Section 5, we compare the Brauer–Manin obstruction with its iterated version; the main ingredient is the surjectivity of the pullback on Brauer groups along $\mathbb{G}_m$-gerbes, from which we deduce that the Brauer–Manin obstruction is unchanged by iteration. 
Appendix A collects the required results on Azumaya algebras and Brauer groups of algebraic stacks, while Appendix B establishes a torsionness result for the Brauer group of a class of regular algebraic stacks with generically affine stabilizers.

\begin{acknowledge}
    The authors would like to thank Yang Cao and Xucheng Zhang for helpful discussions. 
Part of this work was carried out during visits by all authors to the School of Mathematical Sciences, Peking University, and by the third author to the Morningside Center of Mathematics, Chinese Academy of Sciences. The authors are grateful for the hospitality provided by these institutions.
\end{acknowledge}

\section{Obstructions on algebraic stacks}\label{section:2}

\begin{setup}
    Let $\mathrm{Stack}/k$ denote the (2,1)-category of algebraic stacks over $k$.
\end{setup}

\begin{definition}
Let $F : \ms{C} \to \ms{D}$ 
be a functor from a 2-category to a category 
forgetting 2-morphisms. 
We say that $F$ is \textcolor{blue}{stable} 
if for any 1-morphisms $f$ and $g$ of $\ms{C}$ that are 2-isomorphic,
we have $F(f) = F(g)$.
When $\ms{D}$ is viewed as a 2-category, 
this is equivalent to $F$ being a 2-functor.
Clearly if $F$ is stable, it remains so when restricted to a sub 2-category of $\ms{C}$.
\end{definition}

\begin{definition}
Let $F : (\mathrm{Stack}/k)^{\mathrm{op}} \to \mathrm{Sets}$ 
be a stable functor. 
Let  $\ms X$ and $T$ be algebraic stacks over $k$ and $A \in F(\ms{X})$. 
The \textcolor{blue}{evaluation} of a $T$-point $x \in \ms{X}(T) = \mathrm{Mor}_{\mathrm{Stack}/k}(T, \ms{X})$ at $A$ is defined to be the
image of $A$ under the pull-back map $F(x) : F(\ms{X}) \to F(T)$ induced by $x$, denoted by $A(x)$.
\end{definition}

Then we will have the following commutative diagram for every $A\in F(\ms{X})$:
\[
\begin{tikzcd}
\ms{X}(k) \arrow[r, "-\circ q"] \arrow[d, "A(-)"] & \ms{X}(\A_k) \arrow[d, "A(-)"]\\
F(k) \arrow[r, "F(q)"] & F(\A_k)
\end{tikzcd}
\]
where $q: \Spec\A_k\to \Spec k$ is induced by the natural inclusion $k\hookrightarrow \A_k$.

\begin{definition}
    The \textcolor{blue}{obstruction given by $A$} 
    is the full subcategory $\ms{X}(\A_k)^A$ of $\ms{X}(\A_k)$ whose objects
are characterized by:
\begin{align*}
        \ms{X}(\A_k)^A := \{x \in \ms{X}(\A_k) : A(x) \in \im F(q)\}.
\end{align*}
The \textcolor{blue}{$F$-obstruction} 
is the full subcategory $\ms{X}(\A_k)^F$ 
whose objects are characterized by:
\begin{align*}
        \ms{X}(\A_k)^F := \bigcap_{A \in F(\ms{X})} \ms{X}(\A_k)^A = \{x \in \ms{X}(\A_k) : \im F(x)\subseteq \im F(q)\}.
\end{align*}
\end{definition}

\begin{remark}
By abuse of notation, we identify $\ms{X}(k)$ with its image in $\ms{X}(\A_k)$ because in general the map $\ms{X}(k)\xrightarrow{-\circ q} \ms{X}(\A_k)$ is not injective.
    Hence we immediately obtain:
    \begin{align*}
        \ms{X}(k) \subseteq \ms{X}(\A_k)^F \subseteq \ms{X}(\A_k)^A\subseteq \ms{X}(\A_k).
    \end{align*}
    Since $F$ is stable, it follows that $\ms{X}(\A_k)^F$ and $\ms{X}(\A_k)^A$ are full subcategories of $\ms{X}(\A_k)$.
\end{remark}

\begin{remark}
    According to \cite[2.29, 3.14]{Lv21},
    for any linear $k$-group $G$,
    the functors $\check{\H}^1_{\fppf}(-,G)$, $\check{\H}^1_{\et}(-,G)$, ${\H}^1_{\fppf}(-,G)$, ${\H}^1_{\et}(-,G)$, ${\H}^2_{\et}(-,G)$ (when $G$ is commutative) are all stable.
    Hence we can define cohomological obstructions and descent obstructions for algebraic stacks.
\end{remark}

\begin{definition}
    The \textcolor{blue}{descent obstruction} is:
    \begin{align*}
        \ms{X}(\A_k)^{\desc} := \bigcap_{\text{linear $k$-group $G$}}\ms{X}(\A_k)^{\check{\H}^1_{\fppf}(-,G)}.
    \end{align*}
    The \textcolor{blue}{Brauer-Manin obstruction} is:
    \begin{align*}
        \ms{X}(\A_k)^{\Br} := \ms{X}(\A_k)^{{\H}^2_{\et}(-,\G_m)}.
    \end{align*}
    The \textcolor{blue}{PGL obstruction} is:
    \begin{align*}
        \ms{X}(\A_k)^{\PGL} := \bigcap_{n\in\mathbb{N}^*}\ms{X}(\A_k)^{\check{\H}^1_{\fppf}(-,\PGL_n)}.
    \end{align*}
    The \textcolor{blue}{\'etale obstruction} is:
    \begin{align*}
        \ms{X}(\A_k)^{\et} := \bigcap_{\text{finite \'etale linear $k$-group } G}\ms{X}(\A_k)^{\check{\H}^1_{\fppf}(-,G)}.
    \end{align*}
\end{definition}

\begin{pg}
    Let $G$ be a linear $k$-group.
    Let $\mathrm{Tors}(\ms{X}_{\fppf},G)$ (resp. $\mathrm{Tors}(\ms{X}_{\et},G)$) 
    denote the groupoid of all $G$-torsors over $\ms{X}_{\fppf}$ (resp. $\ms{X}_{\et}$). 
    The isomorphism classes of torsors make $\mathrm{Tors}(\ms{X}_{\fppf},G)_{/\cong}$
    (resp. $\mathrm{Tors}(\ms{X}_{\et},G)_{/\cong}$)
    a pointed set.

    First, we claim that for any linear  $k$-group $G$ (automatically smooth by \cite[\href{https://stacks.math.columbia.edu/tag/0BF6}{Tag 0BF6}]{SP})
    \begin{align*}
    \check{\H}_{\et}^{1}(\ms{X},G)\simeq \mathrm{Tors}(\ms{X}_{\et},G)_{/\cong}\simeq\mathrm{Tors}(\ms{X}_{\fppf},G)_{/\cong}\simeq \check{\H}_{\fppf}^{1}(\ms{X},G)
    \end{align*} 
    The first and third isomorphisms come from \cite[(2.9)]{LW23}.
    For the second isomorphism, it suffices to show that for any $G$-torsor $\ms{P}\to \ms{X}$ in the big fppf topology, 
    it is also a $G$-torsor in the big \'etale topology.

    To see this, let $U \to \ms{X}$ be an arbitrary object in the big site $\ms{X}_{\et}$ (where $U$ is a scheme). We need to show that the pullback $\ms{P}_U := \ms{P} \times_{\ms{X}} U \to U$ can be trivialized by an \'etale covering of $U$.
    
    Since $\ms{P}$ is an fppf $G$-torsor over $\ms{X}$, its pullback $\ms{P}_U$ is an fppf $G$-torsor over the scheme $U$. 
    Because $G$ is an affine group scheme, the morphism $\ms{P}_U \to U$ is affine, which implies that $\ms{P}_U$ is a scheme.
    Furthermore, since $G$ is smooth over $k$, the trivial torsor $U \times G \to U$ is a smooth surjective morphism. 
    By the fppf descent of smoothness, the morphism of schemes $\ms{P}_U \to U$ is also smooth and surjective.

    Now, we can apply \cite[\href{https://stacks.math.columbia.edu/tag/055U}{Tag 055U}]{SP} to the smooth surjective morphism of schemes $\ms{P}_U \to U$. There exists an \'etale covering $\{V_i \to U\}$ such that $\ms{P}_U \to U$ admits a section over each $V_i$.
    This implies that $\ms{P}_U$ is trivialized over the \'etale covering $\{V_i \to U\}$. 
    Since this holds for any scheme $U$ over $\ms{X}$, $\ms{P}$ is indeed a $G$-torsor in the big \'etale topology.

    Hence, for a linear $k$-group $G$,
    there is no difference between big fppf $G$-torsors and big \'etale $G$-torsors.
    Henceforth we will not specify the topology of $G$-torsors in question.
\end{pg}

\begin{corollary}
    Let $\ms{X}$ be an algebraic stack over $k$ and $G$ be a linear $k$-group. Then:
    \begin{align*}
        \ms{X}(\A_k)^{\check{\H}_{\fppf}^{1}(-,G)}=\ms{X}(\A_k)^{\check{\H}_{\et}^{1}(-,G)}.
    \end{align*}
\end{corollary}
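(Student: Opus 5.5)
The corollary should follow formally from the preceding paragraph, once we check that the identification of $\check{\H}^1_{\et}(-,G)$ with $\check{\H}^1_{\fppf}(-,G)$ is natural in the stack. I will show that the bijections
\[
\check{\H}_{\et}^{1}(-,G)\simeq \mathrm{Tors}((-)_{\et},G)_{/\cong}\simeq\mathrm{Tors}((-)_{\fppf},G)_{/\cong}\simeq \check{\H}_{\fppf}^{1}(-,G)
\]
form an isomorphism of stable functors $(\mathrm{Stack}/k)^{\mathrm{op}}\to\mathrm{Sets}$. I need this not only for $\ms{X}$, but also for $T=\Spec k$ and $T=\Spec\A_k$, and it must be compatible with pullback along any morphism $x\colon T\to\ms{X}$ and along $q$.

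For naturality, take a morphism $f\colon T\to\ms{X}$. In both topologies, pulling back a torsor is the fibre product $\ms{P}\times_{\ms{X}}T$. The first and third isomorphisms come from \cite[(2.9)]{LW23}, and they are built from the standard cocycle-to-torsor construction, which is compatible with pullback. The middle isomorphism is the identity on underlying objects: the preceding paragraph shows that an fppf $G$-torsor is already an étale $G$-torsor. Restricting the topology also commutes with fibre products. So each square formed by $F_{\et}(f)$, $F_{\fppf}(f)$ and the comparison isomorphisms $\phi_{\ms{X}},\phi_T$ commutes.

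With naturality in hand, the corollary is immediate. Write $F_{\et}=\check{\H}^1_{\et}(-,G)$, $F_{\fppf}=\check{\H}^1_{\fppf}(-,G)$, and $\phi$ for the natural isomorphism. For an adelic point $x$,
\[
\phi_{\A_k}\bigl(\im F_{\et}(x)\bigr)=\im F_{\fppf}(x),
\qquad
\phi_{\A_k}\bigl(\im F_{\et}(q)\bigr)=\im F_{\fppf}(q).
\]
Since $\phi_{\A_k}$ is a bijection, the inclusion $\im F_{\et}(x)\subseteq\im F_{\et}(q)$ holds if and only if $\im F_{\fppf}(x)\subseteq\im F_{\fppf}(q)$. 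Hence the two full subcategories of $\ms{X}(\A_k)$ have the same objects, so they are equal.

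The only real obstacle is making sure the isomorphisms of \cite[(2.9)]{LW23} are functorial in the stack, rather than merely existing objectwise. If that citation only gives objectwise bijections, I would avoid Čech cohomology entirely. Both obstruction sets can be described purely in terms of torsor classes and their pullbacks by fibre product. On those descriptions, the middle comparison is visibly natural, so equality of the obstruction sets follows directly.
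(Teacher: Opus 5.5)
Your proposal is correct and follows the same route as the paper. The paper's proof is the commutative ladder comparing $\check{\H}^1_{\et}(-,G)$ and $\check{\H}^1_{\fppf}(-,G)$ at $\ms{X}$, $\A_k$ and $k$, which is exactly naturality of the comparison bijections along $x$ and $q$. You are simply more explicit: you say why those bijections are natural, and you spell out the image chase, which uses surjectivity of $\phi_{\ms{X}}$ and $\phi_k$ together with injectivity of $\phi_{\A_k}$.
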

\begin{proof}
    This follows immediately from the above discussion and the following diagram:
    \begin{center}\begin{tikzpicture}[>=stealth] 
\matrix[matrix of math nodes,row sep=2em, column sep=2em, text height=1.5ex, text depth=0.25ex] 
{ 
|[name=21]| \check{\H}_{\et}^{1}(\ms{X},G) & |[name=22]| \check{\H}_{\fppf}^{1}(\ms{X},G) \\
|[name=31]| \check{\H}_{\et}^{1}(\A_k,G) & |[name=32]| \check{\H}_{\fppf}^{1}(\A_k,G) \\
|[name=41]| \check{\H}_{\et}^{1}(k,G) & |[name=42]| \check{\H}_{\fppf}^{1}(k,G) \\
}; 
\draw[->,font=\scriptsize]
(21) edge (31) 
(41) edge (31) 
(22) edge (32) 
(42) edge (32) 
(21) edge node[above=-1pt] {$\simeq$} (22) 
(31) edge node[above=-1pt] {$\simeq$} (32) 
(41) edge node[above=-1pt] {$\simeq$} (42); 
\end{tikzpicture} 
\end{center} 
\end{proof}

\begin{definition}
Let $G$ be a linear $k$-group and let $\ms{X}$ be an algebraic stack over
$k$. Let
$
    \sigma\in\check{\mathrm{H}}^1_{\fppf}(k,G).
$ 
Choose a right $G$-torsor $\ms{P}_{\sigma}$ over $k$ representing $\sigma$.
Set
\[
    G^{\sigma}:=\operatorname{ad}(\ms{P}_{\sigma})
    =\underline{\operatorname{Aut}}_{G}(\ms{P}_{\sigma}),
\]
where $\underline{\operatorname{Aut}}_{G}(\ms{P}_{\sigma})$ denotes the
sheaf of $G$-equivariant automorphisms of $\ms{P}_{\sigma}$. Then
$\ms{P}_{\sigma}$ is naturally a $(G^{\sigma},G)$-bitorsor. We denote its
inverse $(G,G^{\sigma})$-bitorsor by $\ms{P}_{\sigma}^{\circ}$.

Let $f:\ms{Y}\xrightarrow{G}\ms{X}$
be a right $G$-torsor over $\ms{X}$, 
and 
$
    \ms{P}_{\sigma,\ms{X}}:=\ms{P}_{\sigma}\times_k \ms{X}.
$
The \textcolor{blue}{twist} of $\ms{Y}$ by $\sigma$ is the contracted product
\[
    f^{\sigma}:\ms{Y}^{\sigma}
    :=
    \ms{Y}\mathbin{\times^{G}}\ms{P}_{\sigma,\ms{X}}^{\circ}
    \longrightarrow \ms{X}.
\]
It is naturally a right $G^{\sigma}$-torsor over $\ms{X}$.

Equivalently, there is a natural isomorphism of
$G^{\sigma}$-torsors
\[
    \ms{Y}^{\sigma}
    \simeq
    \underline{\operatorname{Isom}}_{G}
    (\ms{P}_{\sigma,\ms{X}},\ms{Y}),
\]
where $G^{\sigma}$ acts by
precomposition.

Up to isomorphism, $G^{\sigma}$ and $f^{\sigma}$ depend
only on the class $\sigma$ and not on the chosen representative
$\ms{P}_{\sigma}$.
\end{definition}

\begin{definition}
    Let $G$ be a commutative linear $k$-group and $\ms{X}$ be an algebraic stack over $k$.
    Let $\tau\in \H^2_{\et}(k,G)$ and let $G^\tau$ be a $G$-gerbe over $k_{\et}$ representing $\tau$.
    For any $G$-gerbe $g:\ms{Z}\to\ms{X}$  over $\ms{X}$ in the big \'etale topology, 
    we define the \textcolor{blue}{twist} of $\ms{Z}$ by $\tau$ to be the algebraic stack:
    \begin{align*}
        g^\tau:\ms{Z}^\tau=\ms{Z}\times^G G^\tau=[\ms{Z}\times G^\tau/G] \to\ms{X}
    \end{align*}
\end{definition}

\begin{remark}
We use the definition of gerbes in \cite{OLSSON},
which under our assumptions are algebraic stacks.
    For more general definitions of twist of torsors and gerbes,
    cf. \cite[3.30, 4.20]{Lv21}.
    From now on, when mentioning gerbes and torsors, we only consider the big \'etale topology.
\end{remark}

\begin{proposition}
    \cite[3.20]{Lv21}.
    Let $\ms{X}$ be an algebraic stack over $k$
    and $f\in \check{\H}^1_{\fppf}(\ms{X},G)$
    be represented by a $G$-torsor $f:\ms{Y}\xrightarrow{G}\ms{X}$ over $\ms{X}$.
    Then we have:
    \begin{align*}
        \ms{X}(\A_k)^{f}=\bigcup_{\sigma\in \check{\H}^1_{\fppf}(k,G)}f^\sigma(\ms{Y}^\sigma(\A_k)).
    \end{align*}
\end{proposition}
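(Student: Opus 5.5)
We prove the two inclusions separately. The argument rests on how twisting interacts with evaluation at points. Throughout, write $\mathrm{ev}_x:\check{\H}^1_{\fppf}(\ms{X},G)\to\check{\H}^1_{\fppf}(\A_k,G)$ for $F(x)$. For an adelic point $x$, the evaluation $f(x)\in\check{\H}^1_{\fppf}(\A_k,G)$ is the class of the pulled-back torsor $x^*\ms{Y}\to\Spec\A_k$.

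\emph{Inclusion ``$\supseteq$''.} Suppose $x=f^\sigma\circ y$ with $y\in\ms{Y}^\sigma(\A_k)$. Then $x^*\ms{Y}^\sigma=y^*(f^\sigma)^*\ms{Y}^\sigma$ is trivial, because the torsor $\ms{Y}^\sigma\times_{\ms{X}}\ms{Y}^\sigma\to\ms{Y}^\sigma$ has the diagonal section. We use the description $\ms{Y}^\sigma\simeq\underline{\operatorname{Isom}}_G(\ms{P}_{\sigma,\ms{X}},\ms{Y})$ and note that its formation commutes with base change along $x:\Spec\A_k\to\ms{X}$. Triviality of $x^*\ms{Y}^\sigma$ means it has a section, which is a $G$-isomorphism $q^*\ms{P}_\sigma\simeq x^*\ms{Y}$ over $\A_k$. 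Hence $f(x)=F(q)(\sigma)\in\im F(q)$, so $x\in\ms{X}(\A_k)^f$. Stability of $F$ guarantees that replacing $x$ by a 2-isomorphic point does not change $f(x)$. This is needed because $x$ is only defined as the composite $f^\sigma\circ y$ up to 2-isomorphism.

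\emph{Inclusion ``$\subseteq$''.} Let $x\in\ms{X}(\A_k)^f$ and choose $\sigma\in\check{\H}^1_{\fppf}(k,G)$ with $f(x)=F(q)(\sigma)$. Choose $\ms{P}_\sigma$ representing $\sigma$. Then there is an isomorphism of $G$-torsors $\phi: q^*\ms{P}_\sigma\xrightarrow{\sim} x^*\ms{Y}$ over $\Spec\A_k$. Again by base change of $\underline{\operatorname{Isom}}$, the isomorphism $\phi$ is a section of $x^*\ms{Y}^\sigma=\Spec\A_k\times_{x,\ms{X},f^\sigma}\ms{Y}^\sigma\to\Spec\A_k$. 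By the universal property of the 2-fibre product, such a section is exactly an $\A_k$-point $y$ of $\ms{Y}^\sigma$ together with a 2-isomorphism $f^\sigma\circ y\Rightarrow x$. Since $\ms{X}(\A_k)^f$ and the images are taken as full subcategories, closed under isomorphism, this gives $x\in f^\sigma(\ms{Y}^\sigma(\A_k))$.

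\emph{Main obstacle.} The delicate step is the base-change compatibility of the twist on the stack: we need $x^*(\ms{Y}\times^G\ms{P}^\circ_{\sigma,\ms{X}})\simeq x^*\ms{Y}\times^G q^*\ms{P}_\sigma^\circ$, and that sections of this contracted product over an affine scheme correspond to $G$-isomorphisms $q^*\ms{P}_\sigma\to x^*\ms{Y}$. We will verify this on the big étale site of $\Spec\A_k$. There both objects are sheaves, and contracted products and $\underline{\operatorname{Isom}}$ commute with pullback by their sheaf-theoretic definitions. Since $\Spec\A_k$ is a scheme, pulled-back torsors are representable by the affineness argument of the preceding paragraph, so no stacky subtleties remain after base change.
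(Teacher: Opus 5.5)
Your proof is correct. The paper gives no proof of its own here; it imports the statement from \cite[3.20]{Lv21}. Your argument is the standard twisting argument behind that reference: a lift of $x$ along $f^\sigma$ (read up to isomorphism) is the same as a section of $x^*\ms{Y}^\sigma\simeq\underline{\operatorname{Isom}}_G(q^*\ms{P}_\sigma,x^*\ms{Y})$, i.e.\ an isomorphism $q^*\ms{P}_\sigma\simeq x^*\ms{Y}$.
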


\begin{proposition}
    \cite[4.6]{Lv21}.
    Let $\ms{X}$ be an algebraic stack over $k$
    and $g\in {\H}^2_{\et}(\ms{X},G)$
    be represented by a $G$-gerbe $g:\ms{Z}\to\ms{X}$ over $\ms{X}$.
    Then we have:
    \begin{align*}
        \ms{X}(\A_k)^{g}=\bigcup_{\tau\in \H^2_{\et}(k,G)}g^\tau(\ms{Z}^\tau(\A_k)).
    \end{align*}
\end{proposition}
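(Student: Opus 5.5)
The argument runs exactly parallel to \cite[3.20]{Lv21}, with torsors replaced by gerbes and the twisting of the preceding definition used in place of the twisting of torsors. We prove the two inclusions separately. Throughout, $\tau$ is represented by a $G$-gerbe $G^\tau$ over $k$. The basic tool is the following.

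\textbf{Key lemma.} For an adelic point $x\colon \Spec\A_k\to\ms{X}$ and $\tau\in\H^2_{\et}(k,G)$, the pulled-back gerbe $x^*\ms{Z}^\tau$ over $\A_k$ has class
\[
[x^*\ms{Z}^\tau]=g(x)+q^*\tau\in \H^2_{\et}(\A_k,G).
\]
Here we use additive notation, valid because $G$ is commutative. The lemma holds because the construction $\ms{Z}\times^G G^\tau$ realizes the Baer sum of gerbes and commutes with base change. Combined with the standard fact that a $G$-gerbe over $\A_k$ has an $\A_k$-point if and only if its class is trivial, this gives the criterion
\[
x \text{ lifts to } \ms{Z}^\tau(\A_k) \iff g(x)=-q^*\tau .
\]

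\textbf{The inclusion $\supseteq$.} Suppose $x=g^\tau(z)$ with $z\in\ms{Z}^\tau(\A_k)$. Then $x^*\ms{Z}^\tau$ has a section, so its class vanishes. By the key lemma,
\[
g(x)=-q^*\tau=q^*(-\tau)\in\im F(q),
\]
and hence $x\in\ms{X}(\A_k)^g$.

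\textbf{The inclusion $\subseteq$.} Suppose $g(x)=q^*\sigma$ for some $\sigma\in\H^2_{\et}(k,G)$. Set $\tau=-\sigma$. By the key lemma the gerbe $x^*\ms{Z}^\tau$ over $\A_k$ has trivial class, so it admits a section. Such a section is the same as a $2$-commutative lift $z\colon\Spec\A_k\to\ms{Z}^\tau$ with $g^\tau\circ z\cong x$. By stability, the $2$-isomorphism class is all that matters, so $x\in g^\tau(\ms{Z}^\tau(\A_k))$.

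\textbf{Main obstacle.} The delicate step is the key lemma on the big étale site. Two things need care:
\begin{itemize}
\item identifying the contracted product $[\ms{Z}\times G^\tau/G]$, where $G$ acts antidiagonally on the banding, with the sum of classes in $\H^2$;
\item checking that this construction is compatible with pullback along $x$.
\end{itemize}
I would handle both by working with band-preserving gerbe structures and reducing to Giraud's description of $\H^2$ with abelian band. The point of this reduction is to confirm that the "trivial class implies a section" statement over the affine scheme $\Spec\A_k$ holds for étale gerbes. If necessary, I would instead cite \cite[4.20]{Lv21} for the general twisting formalism.
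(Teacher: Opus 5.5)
The paper gives no argument of its own here and simply cites \cite[4.6]{Lv21}. Your proof is correct: you show $x$ lifts to $\ms{Z}^\tau$ iff $x^*\ms{Z}^\tau$ is neutral, and $[x^*\ms{Z}^\tau]=g(x)+q^*\tau$ by additivity and base-change compatibility of the contracted product, which is the natural gerbe analogue of \cite[3.20]{Lv21}. The step you flag as delicate is just Giraud's criterion that a gerbe with abelian band is neutral iff its class vanishes; this holds over any base, so nothing special about $\Spec\A_k$ is needed.
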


\begin{definition}
    Let $\circledcirc $ be any obstruction appearing in this section.
    We can define \textcolor{blue}{composite obstructions}:
    for any linear $k$-group $G$:
    \begin{align*}
        \ms{X}(\A_k)^{\check{\H}_{\fppf}^1(-,G),\circledcirc} := \bigcap_{f\in \check{\H}^1_{\fppf}(\ms{X},G)}\bigcup_{\sigma\in \check{\H}^1_{\fppf}(k,G)}f^\sigma(\ms{Y}^\sigma(\A_k)^{\circledcirc});
    \end{align*}
    and for any commutative linear $k$-group $G$:
    \begin{align*}
        \ms{X}(\A_k)^{{\H}_{\et}^2(-,G),\circledcirc} := \bigcap_{g\in {\H}_{\et}^2(\ms{X},G)}\bigcup_{\tau\in {\H}_{\et}^2(k,G)}g^\tau(\ms{Z}^\tau(\A_k)^{\circledcirc}),
    \end{align*}
    where $f\in \check{\H}^1_{\fppf}(\ms{X},G)$ ranges over all $G$-torsors over $\ms{X}$ and similarly $g\in {\H}_{\et}^2(\ms{X},G)$ ranges over all $G$-gerbes over $\ms{X}$.

    We now define several composite obstructions that will be used later.
    The \textcolor{blue}{\'etale-Brauer obstruction} is:
    \begin{align*}
        \ms{X}(\A_k)^{\et,\Br}:=\bigcap_{G \text{ finite \'etale $k$-group}}\bigcap_{f\in \check{\H}^1_{\fppf}(\ms{X},G)}\bigcup_{\sigma\in \check{\H}^1_{\fppf}(k,G)}f^\sigma(\ms{Y}^\sigma(\A_k)^{\Br}).
    \end{align*}
    The \textcolor{blue}{iterated descent obstruction} is:
    \begin{align*}
        \ms{X}(\A_k)^{\desc,\desc}:=\bigcap_{G \text{ linear $k$-group}}\bigcap_{f\in \check{\H}^1_{\fppf}(\ms{X},G)}\bigcup_{\sigma\in \check{\H}^1_{\fppf}(k,G)}f^\sigma(\ms{Y}^\sigma(\A_k)^{\desc}).
    \end{align*}
    The \textcolor{blue}{iterated Brauer obstruction} is:
    \begin{align*}
        \ms{X}(\A_k)^{\Br,\Br}:=\bigcap_{g\in {\H}^2_{\et}(\ms{X},\G_m)}\bigcup_{\tau\in {\H}^2_{\et}(k,\G_m)}g^\tau(\ms{Z}^\tau(\A_k)^{\Br}).
    \end{align*}
    Other composite obstructions such as $\ms{X}(\A_k)^{\desc,\Br}, \ms{X}(\A_k)^{\Br,\desc} \ldots$ are defined similarly.
\end{definition}

\section{Comparing descent obstruction and Brauer-Manin obstruction}\label{section:3}

\begin{definition}[Brauer map]
    By \cite[1.4.2]{SHIN-THESIS2019} and \cref{sm_top_et_top_coincide}, the assignment $\mc{A} \mapsto [\ms{G}_{\mc{A}}]$ induces an injective group homomorphism:
    \begin{align*}
        \alpha_{\ms{X}}' : \mathrm{Br}_{\mathrm{Az}}(\ms{X}) \to {\H}^2_{\et}(\ms{X}, \G_m).
    \end{align*}
    By \cref{every_class_has_constant_rank} and \cref{azumaya_algebra_torsion},
    if $\ms{X}$ is of finite type over $k$,
    then the homomorphism factors through the torsion subgroup ${\H}^2_{\et}(\ms{X}, \G_m)_{\mathrm{tors}}$, giving the restriction:
    \begin{align*}
        \alpha_{\ms{X}} : \mathrm{Br}_{\mathrm{Az}}(\ms{X}) \to {\H}^2_{\et}(\ms{X}, \G_m)_{\mathrm{tors}}.
    \end{align*}
    We call $\mathrm{Br}'(\ms{X}):={\H}^2_{\et}(\ms{X}, \G_m)_{\mathrm{tors}}$ the \textcolor{blue}{cohomological Brauer group} of $\ms{X}$, 
    and $\mathrm{Br}(\ms{X}):={\H}^2_{\et}(\ms{X}, \G_m)$ the \textcolor{blue}{Brauer-Grothendieck group} of $\ms{X}$, which we also call the Brauer group.
\end{definition}

\begin{theorem}\label{desc_contained_in_Br}
   Let $\ms{X}$ be a smooth separated Deligne-Mumford stack of finite type over $k$ with quasi-projective coarse moduli space. Then:
   \begin{align*}
   \ms{X}(\A_k)^{\desc} \subseteq \ms{X}(\A_k)^{\PGL}=\ms{X}(\A_k)^{\Br}
   \end{align*}
\end{theorem}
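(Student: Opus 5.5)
The inclusion $\ms{X}(\A_k)^{\desc}\subseteq\ms{X}(\A_k)^{\PGL}$ is immediate from the definitions, since each $\PGL_n$ is a linear $k$-group. So the substance of the theorem is the equality $\ms{X}(\A_k)^{\PGL}=\ms{X}(\A_k)^{\Br}$. I would prove this by comparing each $\PGL_n$-torsor with its Brauer class through the boundary map of
\[
1\to\G_m\to\GL_n\to\PGL_n\to 1,
\]
which gives $\delta_{\ms{X}}:\check{\H}^1(\ms{X},\PGL_n)\to\H^2_{\et}(\ms{X},\G_m)$. The same boundary map exists over $k$ and over $\A_k$, and $\delta$ is functorial in the base. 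So for $x\in\ms{X}(\A_k)$ and a $\PGL_n$-torsor $P$ we get $\delta(P(x))=\delta(P)(x)$, and likewise over $k$.

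\emph{Step 1: $\ms{X}(\A_k)^{\PGL}\subseteq\ms{X}(\A_k)^{\Br}$.} Let $\beta\in\Br(\ms{X})$. By the surjectivity of the Brauer map for such Deligne--Mumford stacks (\cite{SHIN-THESIS2019}, as recalled before the definition of $\alpha_{\ms{X}}$), $\beta$ is the class of an Azumaya algebra. By \cref{every_class_has_constant_rank} we may take it to have constant rank $n^2$, so $\beta=\delta(P)$ for the $\PGL_n$-torsor $P$ of frames of that algebra. If $x\in\ms{X}(\A_k)^{\PGL}$, then $P(x)=q^*\sigma$ for some $\sigma\in\check{\H}^1(k,\PGL_n)$. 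Hence $\beta(x)=\delta(P(x))=q^*\delta(\sigma)$ lies in the image of $\Br(k)$, and so $x\in\ms{X}(\A_k)^{\Br}$.

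One point needs care here. If $\Br(\ms{X})=\H^2_{\et}(\ms{X},\G_m)$ is not torsion, surjectivity onto $\Br'$ does not cover every class. I would invoke the torsionness result \cref{thm:br_torsion} for regular stacks (smooth Deligne--Mumford stacks have finite, hence affine, stabilizers), or else argue that the obstruction only involves torsion classes.

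\emph{Step 2: $\ms{X}(\A_k)^{\Br}\subseteq\ms{X}(\A_k)^{\PGL}$.} Let $x\in\ms{X}(\A_k)^{\Br}$ and let $P$ be a $\PGL_n$-torsor on $\ms{X}$. Put $\beta=\delta(P)$, so $\beta(x)=q^*b$ for some $b\in\Br(k)$. The index of $b$ divides $n$ locally everywhere. By Albert--Brauer--Hasse--Noether (period equals index over number fields, and the global index is the lcm of the local indices), $b=\delta(\sigma_0)$ for some $\sigma_0\in\H^1(k,\PGL_n)$.

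Twisting by $\sigma_0$ gives the inner form $G'=\PGL_n^{\sigma_0}=\mathrm{PGL}_1(A_0)$ and a $G'$-torsor $P^{\sigma_0}$ on $\ms{X}$. Its class in $\Br$ is $\beta-b$ (up to sign convention), and its pullback along $x$ is trivial in $\Br(\A_k)$. Since $\SL_1(A_0)$-type arguments show that $\H^1(k_v,\GL_1(A_0))$ is trivial (Hilbert 90 for $\GL_1(A_0)$), the exact sequence shows that the class $P^{\sigma_0}(x)$ comes from $\H^1(\A_k,\GL_1(A_0))=1$. Hence $P^{\sigma_0}(x)$ is trivial at every place.

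Two verifications remain. First, adelic cohomology is the restricted product: at almost all places the torsor is unramified, so triviality holds integrally, or at least the class comes from the adelic product as in \cite{Lv21}. Second, untwisting shows that $P(x)$ is the image of $\sigma_0$, hence lies in $\im F(q)$. Equivalently, I can use \cite[3.20]{Lv21}: $x$ lifts to $(P^{\sigma_0})(\A_k)$. Thus $x\in\ms{X}(\A_k)^{P}$ for every $P$, which gives Step 2.

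The main obstacle is Step 2, specifically the passage from $\H^1(\A_k,-)$ to local data and back. Care is needed with integral models at almost all places, and with matching the twisting conventions between the boundary maps on $\ms{X}$ and on $k$.
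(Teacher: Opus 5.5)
Your proposal is correct. The inclusion $\ms{X}(\A_k)^{\desc}\subseteq\ms{X}(\A_k)^{\PGL}$ and your Step 1 are exactly the paper's argument: realize every $\gamma\in\Br(\ms X)$ by an Azumaya algebra of constant rank $n^2$, and use naturality of the boundary map. For $\Br=\Br'$ the paper cites \cref{lem:br_prime_equals_br}, and your appeal to \cref{thm:br_torsion} works equally well. Your fallback of arguing that only torsion classes matter is not available, however, because $\ms X(\A_k)^{\Br}$ is defined using all of $\H^2_{\et}(\ms X,\G_m)$.

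For the reverse inclusion the two routes differ. The paper obtains $\ms X(\A_k)^{a}=\ms X(\A_k)^{\delta(a)}$ by a diagram chase. It uses the bijections $\H^1(k_v,\PGL_n)\simeq\Br(k_v)[n]$ and $\H^1(k,\PGL_n)\simeq\Br(k)[n]$, the decomposition $\Br(\A_k)\simeq\bigoplus_v\Br(k_v)$, and the injectivity of $\check\H^1(\A_k,\PGL_n)\to\prod_v\H^1(k_v,\PGL_n)$ quoted from \cite[8.1.6]{Poo17}.

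You instead realize $b$ by a global algebra $A_0$ of degree $n$ via Albert--Brauer--Hasse--Noether. You then twist by $\sigma_0$; the compatibility you need is exactly \cref{lem:torsor_to_gerbe_coincides}(3), and the sign ambiguity is harmless because $-b$ has the same index as $b$. Finally you kill the twisted class with Hilbert 90 for $\GL_1(A_0)$. This is the usual proof of injectivity of the boundary map, run directly over $\A_k$. What it buys: it needs only a trivial-kernel statement for the inner form rather than injectivity of nonabelian $\H^1$ into the product, and it uses $\Br(\A_k)$ only through localization.

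The price is that you must prove $\H^1(\A_k,\GL_1(A_0))=1$ yourself, which is the step you flagged. It does hold. Choose a finite $S$ such that $A_0$ extends to an Azumaya algebra $\mathcal A_0$ over the $S$-integers. Then a $\GL_1(A_0)$-torsor over $\A_k$ descends to a $\GL_1(\mathcal A_0)$-torsor over $\prod_{v\in S}k_v\times\prod_{v\notin S}\mathcal O_v$. An affine torsor over a product ring has a section as soon as each factor does. The factors are trivial by Hilbert 90 over each $k_v$, and by Lang's theorem plus Hensel's lemma over each $\mathcal O_v$. So this adelic Hilbert 90 plays exactly the role of the paper's citation of \cite[8.1.6]{Poo17}.
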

We prove the theorem below.

\begin{lemma}\label{lem:surj_brauer_map}\cite[2.1.5]{SHIN-THESIS2019}.
Let $\ms{X}$ be a smooth separated Deligne-Mumford stack with quasi-projective coarse moduli space. 
The Brauer map $\alpha_{\ms{X}}$ is surjective.
\end{lemma}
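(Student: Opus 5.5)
The plan is to reduce the statement to the existence of twisted vector bundles, and then to produce such bundles with a stacky version of Gabber's (de Jong's) argument for quasi-projective schemes. Throughout, $k$ has characteristic zero, so $\ms{X}$ is a tame Deligne--Mumford stack.

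\textbf{Step 1: reduction to twisted bundles.} Let $\beta\in\H^2_{\et}(\ms{X},\G_m)_{\mathrm{tors}}$ have order $m$. Lift $\beta$ to $\H^2_{\et}(\ms{X},\mu_m)$ using the Kummer sequence, and let $\ms{G}\to\ms{X}$ be the corresponding $\G_m$-gerbe. The inertial $\G_m$ of $\ms{G}$ acts on quasi-coherent sheaves on $\ms{G}$, and we call a sheaf \emph{$\beta$-twisted} if this action is through the identity character. The standard dictionary (as in \cite{SHIN-THESIS2019} and Edidin--Hassett--Kresch--Vistoli) says that a locally free $\beta$-twisted sheaf $\mathcal{E}$ of constant positive rank $n$ gives an Azumaya algebra $\mathcal{E}nd(\mathcal{E})$. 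This algebra descends to $\ms{X}$, and its class under $\alpha_{\ms{X}}$ is $\beta$: the gerbe of trivialisations of $\mathcal{E}nd(\mathcal{E})$ is identified with $\ms{G}$ via the boundary of $1\to\G_m\to\GL_n\to\PGL_n\to 1$. So it suffices to construct one such twisted vector bundle. Note that $\ms{G}$ is again a tame Artin stack with finite stabilisers modulo the central $\G_m$, and it has the same coarse space as $\ms{X}$.

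\textbf{Step 2: a faithful bundle and a quotient presentation.} Since $\ms{X}$ is smooth, separated, Deligne--Mumford, with quasi-projective coarse space $X$, Kresch--Vistoli gives that $\ms{X}$ is a quotient stack $[Z/\GL_N]$. In particular $\ms{X}$ carries a vector bundle $\mathcal{V}$ on which every geometric stabiliser acts faithfully. I would then apply the analogous statement on $\ms{G}$. The goal is a locally free sheaf $\mathcal{W}$ on $\ms{G}$ in which all characters of the stabilisers occur, and in particular a $\beta$-twisted summand occurs locally.

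The local input is the following. Étale locally over $X$, the stack $\ms{X}$ is $[U/H]$ with $H$ finite and $U$ affine, and $\beta$ restricts to a class that is split on a finite étale cover. Hence étale locally on $X$ twisted bundles exist, for instance induced representations of the central extension of $H$ by $\mu_m$.

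\textbf{Step 3: globalisation (main obstacle).} The hard part is passing from local twisted bundles to a global one. I would imitate Gabber's argument for quasi-projective schemes in the form given by de Jong. Write $\pi:\ms{G}\to X$ for the coarse map, and choose an ample line bundle $\mathcal{L}$ on $X$. Over the affine opens $X_i$ of a finite cover of $X$, take the local twisted bundles $\mathcal{E}_i$ from Step 2; they exist after shrinking, using quasi-compactness. By tameness, $\pi_*$ is exact on twisted quasi-coherent sheaves, so I can extend each $\mathcal{E}_i$ to a coherent twisted sheaf $\overline{\mathcal{E}}_i$ on $\ms{G}$.

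The difficulty is that these extensions need not be locally free, and their ranks may differ. To fix local freeness, I would use de Jong's trick: choose a generic finite flat cover, or use the fact that on a smooth stack of dimension at most $2$ reflexive sheaves are locally free. Concretely, I would reduce to the case where a twisted sheaf is locally free away from a closed substack of high codimension. Then I would use that $\beta$ is represented, after pullback along a suitable finite flat map $Y\to\ms{X}$ with $Y$ quasi-projective, by an Azumaya algebra on $Y$ (de Jong--Gabber). Pushing forward, $\mathcal{E}nd$ of the pushforward of the twisted bundle on $Y\times_{\ms{X}}\ms{G}$ gives the desired Azumaya algebra on $\ms{X}$.

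The most delicate point is producing the finite flat cover $Y\to\ms{X}$ by a quasi-projective scheme. I would first try to obtain it from the quotient presentation $[Z/\GL_N]$ by intersecting with general complete intersections relative to the $\GL_N$-linearised ample bundle; this is where quasi-projectivity of the coarse space is essential. Finally, I would match ranks by taking direct sums of the $\mathcal{E}_i$, so that the resulting Azumaya algebra has constant rank, in accordance with \cref{every_class_has_constant_rank}.
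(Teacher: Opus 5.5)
\textbf{Gap: the finite flat cover.} The paper gives no proof of this lemma; it quotes it from Shin's thesis, which rests on Kresch--Vistoli. Your Step~1 and the last move of Step~3 are sound. If $f\colon Y\to\ms{X}$ is finite, flat and surjective with $Y$ a quasi-projective scheme, Gabber--de Jong gives a $\beta|_Y$-twisted locally free sheaf $\mathcal{E}$ on $\ms{G}\times_{\ms{X}}Y$. Its pushforward along the finite flat representable map to $\ms{G}$ is locally free, $\beta$-twisted and of everywhere positive rank, so its $\mathcal{E}nd$ is the required Azumaya algebra.

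The gap is the existence of $Y$, which carries the whole argument, and the construction you suggest does not produce it. Let $W\subset Z$ be cut out by $N^2$ general hypersurfaces. Flatness of $W\to\ms{X}$ would follow from miracle flatness. Finiteness, however, requires $W$ to be closed in a compactification $\bar Z$ of $Z$ over the coarse space, i.e.\ to miss $\bar Z\setminus Z$. Since $Z$ is affine over the coarse space, this boundary is a divisor of dimension $\dim\ms{X}+N^2-1$. So $N^2$ general hypersurfaces meet it in dimension $\dim\ms{X}-1$, and once $\dim\ms{X}\ge 1$ a general complete intersection is not proper over $\ms{X}$. Producing finite flat covers by quasi-projective schemes is exactly the content of Kresch--Vistoli's covering theorem, which you would have to cite rather than rederive this way.

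The earlier material in Step~3 (local twisted bundles, coherent extensions, reflexive sheaves in dimension $\le 2$, high codimension) is never used. It would not give a locally free twisted sheaf in arbitrary dimension, so it should be dropped.

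\textbf{A shorter route.} You can avoid covers and Gabber's theorem entirely, using what Step~2 almost says. You already lifted $\beta$ to $\H^2_{\et}(\ms{X},\mu_m)$, so work with the $\mu_m$-gerbe $\ms{G}'\to\ms{X}$ rather than the $\G_m$-gerbe. The $\G_m$-gerbe is not Deligne--Mumford, so the Kresch--Vistoli quotient theorem cannot be ``applied analogously'' to it.

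Since $\operatorname{char}k=0$, $\mu_m$ is finite \'etale. Hence $\ms{G}'$ is again a smooth separated Deligne--Mumford stack of finite type with the same quasi-projective coarse space, and so $\ms{G}'\simeq[Z'/\GL_N]$. The associated rank-$N$ bundle $V$ has faithful stabiliser actions, so at every geometric point the $\mu_m$-weights occurring in $V$ generate $\Z/m$. Every element of $\Z/m$, in particular $1$, is then a sum of at most $m$ occurring weights. Consequently the weight-one summand $\mathcal{E}$ of $\bigoplus_{r=1}^{m}V^{\otimes r}$ is locally free, $1$-twisted, and of everywhere positive rank. Then $\mathcal{E}nd(\mathcal{E})$ has weight zero and descends to an Azumaya algebra on $\ms{X}$ whose class is $\beta$, up to the usual sign. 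This is the Edidin--Hassett--Kresch--Vistoli criterion: a class lies in the image of the Brauer map if and only if its gerbe is a quotient stack.
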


\begin{remark}
    The original statement contains the ``generically tame'' condition.
    But we only consider a number field $k$.
    Since \(\operatorname{char} k=0\),
    this condition is automatically satisfied.
    We will henceforth omit the ``generically tame'' condition.
\end{remark}

\begin{remark}
    Indeed, the coarse moduli space is automatically 
    a scheme, by \cite[Page 25]{Knutson1971}.
\end{remark}

\begin{lemma}\label{lem:br_prime_equals_br}\cite[2.1.8]{SHIN-THESIS2019}. 
If $\ms{X}$ is a regular Noetherian Deligne-Mumford stack 
(in particular if $\ms{X}$ satisfies the hypotheses of \cref{desc_contained_in_Br}), 
then $\mathrm{Br}(\ms{X})=\mathrm{Br}'(\ms{X})$.
\end{lemma}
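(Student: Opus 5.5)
The statement to prove is that for a regular Noetherian Deligne--Mumford stack $\ms{X}$ every class in $\H^2_{\et}(\ms{X},\G_m)$ is torsion. The plan is to reduce to the generic points, where the stack is a gerbe over a field, and then use injectivity of restriction to a dense open substack, as in the classical proof for regular schemes (Grothendieck, \emph{Brauer II}, 1.8).

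\textbf{Reductions.} Since $\ms{X}$ is Noetherian, it has finitely many connected components, and each is irreducible because $\ms{X}$ is regular. So I may assume $\ms{X}$ is integral. Fix an \'etale atlas $U\to\ms{X}$ with $U$ a regular Noetherian scheme.

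\textbf{Injectivity step.} The central claim is that for a dense open substack $j:\ms{V}\hookrightarrow\ms{X}$ the restriction $\H^2_{\et}(\ms{X},\G_m)\to\H^2_{\et}(\ms{V},\G_m)$ is injective. It then suffices to show that $\H^2_{\et}(\ms{V},\G_m)$ is torsion for some suitable small $\ms{V}$. To prove the claim I will compare $\G_m$ with $j_*\G_{m,\ms{V}}$, using the Leray spectral sequence and the divisor sequence
\[
0\to\G_{m,\ms{X}}\to j_*\G_{m,\ms{V}}\to \textstyle\bigoplus_{D} i_{D*}\Z\to 0 .
\]
Here $D$ runs over the irreducible divisors of $\ms{X}$ in the complement $\ms{X}\setminus\ms{V}$. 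This sequence is exact \'etale-locally because the local rings of $U$ are regular, hence UFDs; the exactness is checked on $U$, and all sheaves descend since they are defined canonically. For the pushed-forward $\Z$-sheaves, $\H^1(D,\Z)$ is torsion-free up to torsion and in fact vanishes on a normal stack, since $\H^1(-,\Z)$ is computed by continuous homomorphisms from a profinite fundamental group (valid for normal DM stacks). Next, $R^1j_*\G_m=0$ stalkwise on the strictly henselian local rings of $U$, because the Picard group of an open subscheme of a regular local scheme vanishes (the UFD property again). Combining these gives the injectivity $\H^2(\ms{X},\G_m)\hookrightarrow \H^2(\ms{V},\G_m)$, modulo torsion from $\H^1(D,\Z)$, which is zero anyway. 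Passing to the limit over $\ms{V}$, the problem reduces to $\varinjlim_{\ms{V}}\H^2(\ms{V},\G_m)=\H^2(\ms{X}_\eta,\G_m)$, where $\ms{X}_\eta$ is the generic residual gerbe. This uses limit arguments for cohomology on quasi-compact quasi-separated stacks.

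\textbf{Generic gerbe (main obstacle).} A DM stack is generically a gerbe. Shrinking $\ms{V}$, I may take $\ms{X}_\eta\to\Spec K$ to be a gerbe banded by a finite \'etale group $H$ over a field $K$. After a finite separable extension $L/K$ the gerbe becomes neutral, $\ms{X}_L\simeq BH_L$. A restriction--corestriction (norm) argument along the finite \'etale cover $\ms{X}_L\to\ms{X}_\eta$ of degree $[L:K]$ then shows that $[L:K]$ kills the kernel of the pullback. So it suffices to treat $\H^2(BH_L,\G_m)$. The Hochschild--Serre spectral sequence for $\Spec L\to BH_L$ gives a filtration whose graded pieces are:
\begin{itemize}
\item $\H^2(L,\G_m)$, which is torsion since it is Galois cohomology;
\item $\H^1(H,\H^1(L,\G_m))=0$, by Hilbert 90;
\item $\H^2(H,L^{\times})$, which is killed by $|H|$.
\end{itemize}
Hence $\H^2(BH_L,\G_m)$ is torsion.

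I expect the main obstacle to be the injectivity step on the stack: establishing the exact divisor sequence and the vanishing of $R^1j_*\G_m$ and of $\H^1(D,\Z)$ in the stacky setting. My first attempt would be to verify each of these \'etale-locally on the atlas $U$ and descend, which is possible because the sheaves and maps involved are canonical.
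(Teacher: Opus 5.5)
Your overall architecture (purity down to the generic residual gerbe, then a norm argument and Hochschild--Serre on a neutral gerbe $BH_L$) is sound. It parallels the paper's Appendix~B proof of \cref{thm:br_torsion}, which generalizes this cited lemma. However, the injectivity step fails as written.

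The sequence $0\to\G_{m,\ms{X}}\to j_*\G_{m,\ms{V}}\to\bigoplus_D i_{D*}\Z\to 0$, with $D$ the closed irreducible divisors in the complement, is not exact when some $D$ is not geometrically unibranch. At a geometric point of the atlas where $D$ has $r$ local branches, the stalk of $j_*\G_{m,\ms{V}}/\G_{m,\ms{X}}$ is $\Z^r$. The stalk of $i_{D*}\Z$ there is $\Z$, since the strict henselization of $D$ is local, hence connected. Consequently the group controlling $\ker\bigl(\H^2(\ms{X},\G_m)\to\H^2(\ms{X},j_*\G_{m,\ms{V}})\bigr)$ is not $\H^1(D,\Z)$.

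Your vanishing claim for $\H^1(D,\Z)$ is also unjustified: a divisor in a regular stack need not be normal. For a nodal cubic $D\subset\mathbb{P}^2$ one has $\H^1_{\et}(D,\Z)\simeq\Z$. Torsion-freeness of $\H^1(-,\Z)$ does not rescue this, because a nonzero torsion-free group is exactly what would feed non-torsion classes into the kernel.

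The fix is to use the correct divisor sheaf $\bigoplus_x (i_x)_*\Z$. Here $x$ runs over the codimension-one points of $|\ms{X}|$ outside $|\ms{V}|$, and $i_x:\ms{G}_x\to\ms{X}$ is the residual gerbe, the stacky analogue of $\Spec\kappa(x)\to X$ in Grothendieck's sequence. The Leray five-term sequence gives $\H^1(\ms{X},(i_x)_*\Z)\hookrightarrow\H^1(\ms{G}_x,\Z)$. The latter vanishes for a quasi-separated gerbe over a field; this is exactly \cref{prop:vanishing-h1}.

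The paper organizes this differently. After Shin's purity reduces to a dense open gerbe $p:\ms{X}\to S$ over a regular integral scheme, it pulls back Grothendieck's divisor sequence from $S$. The divisor terms are then automatically gerbes over residue fields. It then uses $R^1\eta_*\G_m=0$ on $S$ to land directly in $\Br(\ms{X}_\eta)$. This avoids your identification $\varinjlim_{\ms{V}}\H^2(\ms{V},\G_m)=\H^2(\ms{X}_\eta,\G_m)$, which requires first shrinking to a gerbe over a scheme so that $\ms{X}_\eta$ really is the limit of the $\ms{V}$.

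Your generic step is correct and simpler than the paper's, which treats arbitrary affine stabilizers via $BG\simeq[(\GL_n/G)/\GL_n]$ and Grothendieck's torsion theorem for regular schemes. You should, however, enlarge $L$ so that $H_L$ is constant; otherwise the \v{C}ech terms for $\Spec L\to BH_L$ are not plain group cohomology.
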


\subsection{Proof of \cref{desc_contained_in_Br}}
The following natural map is injective (see \cite[8.1.6]{Poo17}):
\begin{align*}
    \check{\H}^1_{\et}(\A_k,\PGL_n)\hookrightarrow \prod_{v}\check{\H}^1_{\et}(k_v,\PGL_n)
\end{align*}
and there is a natural isomorphism (see \cite[8.1.6]{Poo17}):
\begin{align*}
    \Br(\A_k)\simeq \bigoplus_{v}\Br(k_v).
\end{align*}

For any element $\gamma \in \Br(\ms{X})$,
by 
\cref{every_class_has_constant_rank},
\cref{lem:surj_brauer_map},
\cref{lem:br_prime_equals_br},
we can find an Azumaya algebra $\mc{A}$ on $\ms{X}$ of constant rank $n^2$ such that $\alpha'_{\ms{X}}([\mc{A}])=\gamma$.
Let $\beta=\beta(\gamma)\in \check{\H}^1_{\et}(\ms{X}, \PGL_n)$ be the element induced by $\mc{A}$.
By \Cref{lem:gerbe_class_coincides}, the homomorphism $\check{H}^1_{\et}(\ms{X}, \PGL_n) \to \Br(\ms{X})[n]$
maps $\beta$ to $\gamma$.

As in the proof of \cite[8.5.2]{Poo17},
consider the following commutative diagram:
\begin{center}\begin{tikzpicture}[>=stealth] 
\matrix[matrix of math nodes,row sep=2em, column sep=2em, text height=1.5ex, text depth=0.25ex] 
{ 
|[name=21]| \check{\H}_{\et}^{1}(\ms{X},\PGL_n) & |[name=22]| \Br(\ms{X})[n] \\
|[name=31]| \prod_{v}\check{\H}^1_{\et}(k_v,\PGL_n) & |[name=32]| \prod_{v}\Br(k_v)[n] \\
|[name=41]| \check{\H}_{\et}^{1}(k,\PGL_n) & |[name=42]| \Br(k)[n] \\
}; 
\draw[->,font=\scriptsize]
(21) edge (31) 
(41) edge (31) 
(22) edge (32) 
(42) edge (32) 
(21) edge node[above=-1pt] {} (22) 
(31) edge node[above=-1pt] {$\simeq$} (32) 
(41) edge node[above=-1pt] {$\simeq$} (42); 
\end{tikzpicture} 
\end{center} 
If $\check{H}^1_{\et}(\ms{X}, \PGL_n) \to \Br(\ms{X})[n]$ maps $a$ to $A$,
then $\ms{X}(\A_k)^{a}=\ms{X}(\A_k)^{A}.$
    
Thus:
\begin{align*}
    \ms{X}(\A_k)^{\Br}&=\bigcap_{\gamma\in \Br(\ms{X})}\ms{X}(\A_k)^{\gamma}=\bigcap_{\beta(\gamma)}\ms{X}(\A_k)^{\beta}
    \supseteq \bigcap_{n\in\mathbb{N}^*}\ms{X}(\A_k)^{\check{\H}_{\et}^{1}(-,\PGL_n)}\\
    &=\bigcap_{n\in\mathbb{N}^*}\bigcap_{\gamma\in \im (\check{H}^1_{\et}(\ms{X}, \PGL_n) \to \Br(\ms{X})[n])} \ms{X}(\A_k)^{\gamma} \supseteq \bigcap_{n\in\mathbb{N}^*}\ms{X}(\A_k)^{\Br[n]} = \ms{X}(\A_k)^{\Br},
\end{align*}
all the above inclusions are in fact equalities.
Hence we have:
\begin{align*}
    \ms{X}(\A_k)^{\desc}&\subseteq \ms{X}(\A_k)^{\PGL}
    =\bigcap_{n\in\mathbb{N}^*}\ms{X}(\A_k)^{\check{\H}_{\fppf}^{1}(-,\PGL_n)}
    =\bigcap_{n\in\mathbb{N}^*}\ms{X}(\A_k)^{\check{\H}_{\et}^{1}(-,\PGL_n)}
    =\ms{X}(\A_k)^{\Br}.
\end{align*}

To pass from the comparison
\(
\ms X(\mathbf A_k)^{\mathrm{PGL}}
=
\ms X(\mathbf A_k)^{\mathrm{Br}}
\)
to a comparison of the corresponding composite obstructions, it is not enough to compare the cohomology classes of a $\mathrm{PGL}_n$-torsor and its image in $\H^2_{\et}(\ms X,\mathbb G_m)$. We need a compatible comparison of their twists. The next lemma provides exactly this: it realizes the boundary class by the lifting gerbe and constructs morphisms from each twisted torsor to the corresponding twisted gerbe.

\begin{lemma}\label{lem:torsor_to_gerbe_coincides}
Let $\ms{X}$ be a smooth separated Deligne-Mumford stack of finite type over $k$ with quasi-projective coarse moduli space. 
    For any $n\in\mathbb{N}^*$ and any $\PGL_n$-torsor $f:\ms{Y}\to \ms{X}$ over $\ms{X}$, 
    we can construct a $\G_m$-gerbe $g:\ms{G}_{\ms{Y}}\to \ms{X}$ over $\ms{X}$ and a morphism of stacks $\pi:\ms{Y}\to \ms{G}_{\ms{Y}}$ such that:
    \begin{enumerate}
        \item The following diagram is commutative:
        \[
        \begin{tikzcd}
            \ms{Y} \arrow[r, "\pi"] \arrow[dr, "f"'] & \ms{G}_{\ms{Y}} \arrow[d, "g"] \\
            & \ms{X}    
        \end{tikzcd}
        \]
        \item The class $[g]$ is the image of $[f]$ along the connecting map $\delta_{\ms{X}}: \check{\H}^1_{\et}(\ms{X}, \PGL_n) \to {\H}^2_{\et}(\ms{X}, \G_m)$.
        \item Let $\sigma\in\check{\mathrm{H}}^1_{\mathrm{\acute et}}(k,\mathrm{PGL}_n)$
    and choose a right $\mathrm{PGL}_n$-torsor $\ms{P}_{\sigma}$ over $k$
representing $\sigma$. Put
\[
    \tau:=\delta_k(\sigma)
    \in\mathrm{H}^2_{\mathrm{\acute et}}(k,\mathbb{G}_m).
\]
Let 
$
    \mathrm{PGL}_n^{\sigma}
    :=\operatorname{ad}(\ms{P}_{\sigma})=\underline{\mathrm{Aut}}_{\PGL_n}(\ms{P}_{\sigma})
$
be the corresponding inner form of $\mathrm{PGL}_n$. Twisting the
standard central extension by $\ms{P}_{\sigma}$ gives a central extension
\[
    1\longrightarrow\mathbb{G}_m
    \longrightarrow\mathrm{GL}_n^{\sigma}
    \xrightarrow{\,b^{\sigma}\,}
    \mathrm{PGL}_n^{\sigma}
    \longrightarrow1.
\]
The twist 
$f^{\sigma}:\ms{Y}^{\sigma}\longrightarrow \ms{X}$ 
is a $\mathrm{PGL}_n^{\sigma}$-torsor. Let
$
    g_{\ms{Y}^{\sigma}}^{\sigma}:
    \mathcal{G}_{\ms{Y}^{\sigma}}^{\sigma}\longrightarrow \ms{X}
$
be its lifting gerbe with respect to the above twisted central
extension. Then there is a natural equivalence of
$\mathbb{G}_m$-gerbes over $\ms{X}$: 
\[
    \mathcal{G}_{\ms{Y}^{\sigma}}^{\sigma}
    \simeq
    \mathcal{G}_{\ms{Y}}^{-\tau}.
\]
In particular, there is a morphism
$
    \pi^{\sigma}:\ms{Y}^{\sigma}
    \longrightarrow\mathcal{G}_{\ms{Y}}^{-\tau}
$
such that the following diagram is commutative:
\[
\begin{tikzcd}
\ms{Y}^{\sigma}
    \arrow[rr,"\pi^{\sigma}"]
    \arrow[dr,"f^{\sigma}"']
&&
\mathcal{G}_{\ms{Y}}^{-\tau}
    \arrow[dl,"g_{\ms{Y}}^{-\tau}"]
\\
& \ms{X}.&
\end{tikzcd}
\]
    \end{enumerate}
\end{lemma}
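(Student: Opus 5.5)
We take $\ms{G}_{\ms{Y}}$ to be the lifting gerbe of $f$ with respect to $1\to\G_m\to\GL_n\to\PGL_n\to1$. Concretely, for $T\to\ms{X}$, $\ms{G}_{\ms{Y}}(T)$ is the groupoid of pairs $(E,\phi)$, where $E$ is a $\GL_n$-torsor on $T$ and $\phi:E\times^{\GL_n}\PGL_n\simeq \ms{Y}_T$ is an isomorphism of $\PGL_n$-torsors. Equivalently, $\ms{G}_{\ms{Y}}=[\ms{Y}/\GL_n]$, with $\GL_n$ acting through $\PGL_n$.

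\textbf{Parts (1) and (2).} The map $g$ forgets $(E,\phi)$. The $\G_m$-gerbe property is checked étale locally on $\ms{X}$: over an étale cover of a scheme mapping to $\ms{X}$, $\ms{Y}$ is trivial and lifts exist. Any two lifts differ by a $\G_m$-torsor, and automorphisms of $(E,\phi)$ are $\G_m$ because the extension is central. Algebraicity follows from the quotient description. The morphism $\pi$ sends a section $y\in\ms{Y}(T)$ to the trivial $\GL_n$-torsor $T\times\GL_n$ together with the isomorphism induced by $y$; equivalently, $\pi$ is the quotient map $\ms{Y}\to[\ms{Y}/\GL_n]$. The identity $g\circ\pi=f$ holds by construction. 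Part (2) is the definition of the nonabelian connecting map $\delta_{\ms{X}}$ in Giraud's formalism, transported to big étale sites of stacks using \cite{Lv21} and \cref{lem:gerbe_class_coincides}. In particular, the class of the lifting gerbe is $\delta_{\ms{X}}([f])$, and the sign convention is fixed so that it matches the map used in the proof of \cref{desc_contained_in_Br}.

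\textbf{Part (3), reduction.} Write $\ms{Y}^\sigma=\underline{\mathrm{Isom}}_{\PGL_n}(\ms{P}_{\sigma,\ms{X}},\ms{Y})$. Choose the $\G_m$-gerbe $\ms{G}_\sigma$ over $k$ of liftings of $\ms{P}_\sigma$ to $\GL_n$; its class is $\tau$. The twisted group $\GL_n^\sigma$ is $\underline{\mathrm{Aut}}$ of a local lift $\tilde P$ of $\ms{P}_\sigma$, and these groups glue canonically because $\G_m$ is central. We then define a functor
\[
\ms{G}_{\ms{Y}^\sigma}^{\sigma}\times_k \ms{G}_\sigma \longrightarrow \ms{G}_{\ms{Y}}, \qquad \bigl((E',\phi'),(\tilde P,\psi)\bigr)\longmapsto \bigl(E'\times^{\GL_n^\sigma}\tilde P,\ \phi'\times\psi\bigr).
\]
Here a $\GL_n^\sigma$-torsor $E'$ contracted with the $(\GL_n^\sigma,\GL_n)$-bitorsor $\tilde P$ yields a $\GL_n$-torsor. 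Its image in $\PGL_n$-torsors is $\ms{Y}^\sigma\times^{\PGL_n^\sigma}\ms{P}_\sigma\simeq\ms{Y}$.

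\textbf{Part (3), identification of the twist.} The functor above is $\G_m$-bilinear: scaling by $\lambda$ in either factor scales the output by $\lambda$. Hence it descends to an equivalence of $\G_m$-gerbes
\[
\ms{G}_{\ms{Y}^\sigma}^{\sigma}\wedge \ms{G}_\sigma \simeq \ms{G}_{\ms{Y}},
\]
where $\wedge$ denotes the contracted product $[\,\cdot\times\cdot/\G_m]$ with antidiagonal action, i.e. the Baer sum. A morphism of $\G_m$-gerbes over $\ms{X}$ is an equivalence, so it suffices to construct the functor compatibly with the bands. Baer-summing both sides with the gerbe $\ms{G}_\sigma^{-1}$, which represents $-\tau$, and comparing with the paper's definition $\ms{Z}^{-\tau}=\ms{Z}\times^{\G_m}G^{-\tau}$, gives
\[
\ms{G}^\sigma_{\ms{Y}^\sigma}\simeq\ms{G}_{\ms{Y}}^{-\tau}.
\]
The morphism $\pi^\sigma$ is then the quotient map $\ms{Y}^\sigma\to\ms{G}^\sigma_{\ms{Y}^\sigma}$, constructed exactly as $\pi$ in part (1) but for $\GL_n^\sigma$, composed with this equivalence. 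Commutativity of the triangle is immediate because every map involved lies over $\ms{X}$.

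The main obstacle is part (3). The delicate points are pinning down the sign ($-\tau$ rather than $\tau$) and making "naturally equivalent" precise. Both depend on conventions for inverse bitorsors and for the band action in the paper's twisting $\ms{Z}\times^{G}G^\tau$. My first check would be the case $\ms{Y}=\ms{P}_{\sigma,\ms{X}}$. There $\ms{Y}^\sigma$ is trivial, so $\ms{G}^\sigma_{\ms{Y}^\sigma}$ is trivial, while $\ms{G}_{\ms{Y}}=\ms{G}_{\sigma,\ms{X}}$ has class $\tau$. Consistency then forces the twist by $-\tau$, which fixes the sign.
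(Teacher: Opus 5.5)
Your proposal is correct. Parts (1) and (2) match the paper's construction: same lifting gerbe, same $\pi$. The one difference there is that the paper proves (2) by an explicit \v{C}ech computation rather than by appeal to Giraud's definition. In that computation, the local objects $\pi(y_i)$, glued by lifts $\widetilde a_{ij}$ of the transition functions, reproduce the cocycle $c_{ijk}$ that defines $\delta_{\ms X}([f])$. Your observation that $\ms G_{\ms Y}\simeq[\ms Y/\GL_n]$, with $\pi$ the quotient map, is a useful extra, since it gives algebraicity directly.

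For (3) you run the paper's argument in the opposite direction. The paper builds a ``division'' functor
\[
\Phi:\ms G_{\ms Y}\times^{\G_m}a^*\mathcal G^{\circ}_{\ms P_\sigma}\longrightarrow\mathcal G^{\sigma}_{\ms Y^{\sigma}},\qquad \bigl((\widetilde{\ms P},\epsilon),(\widetilde{\ms Q},\eta)\bigr)\longmapsto \underline{\mathrm{Isom}}_{\GL_n}(\widetilde{\ms Q},\widetilde{\ms P}),
\]
with band map $(\lambda,\mu)\mapsto\lambda\mu^{-1}$. You instead build the ``multiplication'' functor $(E',\tilde P)\mapsto E'\times^{\GL_n^\sigma}\tilde P$ into $\ms G_{\ms Y}$, with band map $(\lambda,\mu)\mapsto\lambda\mu$. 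You then cancel $a^*\ms G_\sigma$ (the paper's $\mathcal G_{\ms P_\sigma}$) against its inverse.

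The paper's route lands directly on its chosen model $\ms G_{\ms Y}\times^{\G_m}a^*\mathcal G^{\circ}_{\ms P_\sigma}$ of the $(-\tau)$-twist. It therefore needs neither associativity of Baer sums nor a trivialization of $\ms G_\sigma\wedge\ms G_\sigma^{-1}$. Your route needs that extra standard step, but it makes the sign automatic: $[\mathcal G^\sigma_{\ms Y^\sigma}]+\tau=[\ms G_{\ms Y}]$ falls out of the construction. Your check with $\ms Y=\ms P_{\sigma,\ms X}$ is thus a confirmation rather than an input.

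In either version, one routine point should be recorded: isomorphisms of lifts of $\ms P_\sigma$ respect the identification $\underline{\mathrm{Aut}}_{\GL_n}(\tilde P)\simeq\GL_n^\sigma$ made via $\psi$. This is what makes the functor well defined on morphisms.
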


\iffalse
\begin{proof}
    The construction is the same as in \cite[12.2.5, 12.2.6]{OLSSON}.
    The verification is direct.
\end{proof}
\fi

\begin{proof}
\cite[12.2.5, 12.2.6]{OLSSON}.
Let
\[
1\longrightarrow\G_m
\longrightarrow \GL_n
\overset{b}{\longrightarrow}
\PGL_n
\longrightarrow 1
\]
be the standard exact sequence of sheaves of groups on the big étale site of
$\mathcal X$. We regard the $\PGL_n$-torsor
$f:\mathcal Y\to \mathcal X$ as a $\PGL_n$-torsor $\mathcal P$ on this site.

We define a fibered category
$
g:\mathscr G_{\mathcal Y}\longrightarrow \mathcal X
$
as follows: for an object $T\to \mathcal X$, the fiber
$\mathscr G_{\mathcal Y}(T)$ is the groupoid whose objects are pairs
$
(\widetilde{\mathcal P},\epsilon),
$ 
where $\widetilde{\mathcal P}$ is a $\GL_n$-torsor on the big étale site of
$T$, and
\[
\epsilon:b_*\widetilde{\mathcal P}\xrightarrow{\sim}\mathcal P|_T
\]
is an isomorphism of $\PGL_n$-torsors. Here
$b_*\widetilde{\mathcal P}$ denotes the torsor obtained from
$\widetilde{\mathcal P}$ by extension of structure group along
$b:\GL_n\to \PGL_n$.

A morphism
$
(\widetilde{\mathcal P}',\epsilon')
\longrightarrow
(\widetilde{\mathcal P},\epsilon)
$
over a morphism $T'\to T$ is an isomorphism of $\GL_n$-torsors
$
\widetilde{\mathcal P}|_{T'}\xrightarrow{\sim}\widetilde{\mathcal P}'
$
compatible (after applying $b_*$) with the two given isomorphisms to
$\mathcal P|_{T'}$. This is precisely the lifting gerbe associated to the
$\PGL_n$-torsor $\mathcal P$ and the above exact sequence.

\iffalse
Since $\mathcal P$ is locally trivial for the étale topology, locally on
$\mathcal X$ it is isomorphic to the trivial $\PGL_n$-torsor. Over such a
trivializing étale cover, the trivial $\GL_n$-torsor gives an object of
$\mathscr G_{\mathcal Y}$. Thus $\mathscr G_{\mathcal Y}$ is locally nonempty.
Moreover, for any object
\[
(\widetilde{\mathcal P},\epsilon)\in \mathscr G_{\mathcal Y}(T),
\]
its automorphism sheaf consists of automorphisms of the $\GL_n$-torsor
$\widetilde{\mathcal P}$ which induce the identity on
$b_*\widetilde{\mathcal P}$. After étale localization this automorphism sheaf
is the kernel of 
$
\GL_n\longrightarrow \PGL_n,
$ namely $\G_m$. Since $\G_m$ is central in $\GL_n$, these local
identifications glue canonically. Hence
$
g:\mathscr G_{\mathcal Y}\longrightarrow \mathcal X
$
is a $\G_m$-gerbe.

\fi

We now construct the morphism
$
\pi:\mathcal Y\longrightarrow \mathscr G_{\mathcal Y}.
$
Let $(T\to \mathcal X) \in \mc X_{\et}$ 
and let
$
y\in \mathcal Y(T)
$
be a section of the $\PGL_n$-torsor $\mathcal P|_T$.
Then
\[
\epsilon_y:\PGL_{n,T}\xrightarrow{\sim}\mathcal P|_T,
\qquad
h\longmapsto y\cdot h .
\]
Let $\GL_{n,T}$ be the trivial $\GL_n$-torsor on $T$. Since
$
b_*\GL_{n,T}\simeq \PGL_{n,T},
$
we define
$
\pi(y):=(\GL_{n,T},\epsilon_y)\in \mathscr G_{\mathcal Y}(T).
$

This construction is compatible with pullback in $T$, and therefore defines a
morphism of stacks
$
\pi:\mathcal Y\longrightarrow \mathscr G_{\mathcal Y}
$ over $\mc X$. 

It remains to identify the class of the gerbe. Choose an étale cover 
$\{U_i\to \mathcal X\}$ trivializing the $\PGL_n$-torsor $\mathcal P$, and
choose local sections
$
y_i\in \mathcal P(U_i).
$

On $U_{ij}=U_i\times_{\mathcal X}U_j$ there are unique elements
$
a_{ij}\in \PGL_n(U_{ij})
$
such that
\[
y_j=y_i\cdot a_{ij}.
\]
The elements $a_{ij}$ form a Čech $1$-cocycle representing the class
\[
[f]\in \check \H^1_{\et}(\mathcal X,\PGL_n).
\]
After refining the cover if necessary, choose lifts
$
\widetilde a_{ij}\in \GL_n(U_{ij})
$
of the $a_{ij}$. Since
$
a_{ij}a_{jk}=a_{ik}
$
on $U_{ijk}$, the element
$
\widetilde a_{ij}\widetilde a_{jk}\widetilde a_{ik}^{-1}
$
lies in the kernel of $\GL_n\to \PGL_n$, hence defines an element
\[
c_{ijk}\in \G_m(U_{ijk}).
\]
The collection $(c_{ijk})$ is a Čech $2$-cocycle with values in
$\G_m$, and by the definition of the connecting map associated to
\[
1\to \G_m\to \GL_n\to \PGL_n\to 1,
\]
it represents the image of $[f]$ in
$
\check \H^2_{\et}(\mathcal X,\G_m).
$

On the other hand, the local sections $y_i$ give local objects
\[
\xi_i:=\pi(y_i)=(\GL_{n,U_i},\epsilon_{y_i})
\]
of the gerbe $\mathscr G_{\mathcal Y}$ over $U_i$. On $U_{ij}$, the lift
$\widetilde a_{ij}$ gives an isomorphism
$
\xi_j\xrightarrow{\sim}\xi_i
$
of objects of $\mathscr G_{\mathcal Y}$. Indeed, multiplication by
$\widetilde a_{ij}$ on the trivial $\GL_n$-torsor induces multiplication by
$a_{ij}$ on the associated $\PGL_n$-torsor, which is exactly the transition
from the trivialization defined by $y_j$ to the one defined by $y_i$.

On a triple overlap $U_{ijk}$, the two composites
\[
\xi_k\longrightarrow \xi_j\longrightarrow \xi_i
\qquad\text{and}\qquad
\xi_k\longrightarrow \xi_i
\]
differ by the automorphism of $\xi_i$ given by
\[
\widetilde a_{ij}\widetilde a_{jk}\widetilde a_{ik}^{-1}
=
c_{ijk}\in \G_m(U_{ijk}).
\]
Therefore the Čech $2$-cocycle defining the banded
$\G_m$-gerbe $\mathscr G_{\mathcal Y}$ is precisely
$(c_{ijk})$. Hence
the image of 
$\delta([f])\in
\check \H^2_{\et}(\mathcal X,\G_m)$
by $\check \H^2_{\et}(\mathcal X,\G_m)\to \H^2_{\et}(\mathcal X,\G_m)$
is just $[\ms{G}_{\ms{Y}}]$
,
which proves (2).

\vspace{1em}

For (3),
let
$
    a:\ms{X}\longrightarrow \Spec k
$ be the structure morphism. Let $\ms{P}_{\sigma}$ be a right
$\mathrm{PGL}_n$-torsor over $k$ representing $\sigma$. Since
$\mathrm{PGL}_n$ acts on $\mathrm{GL}_n$ and $\mathrm{PGL}_n$ by conjugation, we may define
\[
    \mathrm{GL}_n^{\sigma}
    :=
    \ms{P}_{\sigma}
    \mathbin{\times^{\mathrm{PGL}_n}}
    \mathrm{GL}_n
\]
\[
    \mathrm{PGL}_n^{\sigma}
    :=
    \ms{P}_{\sigma}
    \mathbin{\times^{\mathrm{PGL}_n}}
    \mathrm{PGL}_n
    \simeq
    \operatorname{ad}(\ms{P}_{\sigma}).
\]
Since the conjugation action of $\mathrm{PGL}_n$ on the
center $\G_m\subseteq\mathrm{GL}_n$ is trivial, we have a central extension
\[
    1\longrightarrow\G_m
    \longrightarrow\mathrm{GL}_n^{\sigma}
    \xrightarrow{\,b^{\sigma}\,}
    \mathrm{PGL}_n^{\sigma}
    \longrightarrow1.
\]

Let
$
    \mathcal{G}_{\ms{P}_{\sigma}}
    \longrightarrow\Spec k
$
be the lifting gerbe of $\ms{P}_{\sigma}$ with respect to
\[
    1\longrightarrow\G_m
    \longrightarrow\mathrm{GL}_n
    \xrightarrow{\,b\,}
    \mathrm{PGL}_n
    \longrightarrow1.
\]
By (2), its class is
\[
    [\mathcal{G}_{\ms{P}_{\sigma}}]
    =
    \delta_k(\sigma)
    =:
    \tau
    \in\mathrm{H}^2_{\mathrm{\acute et}}(k,\mathbb{G}_m).
\]
Consequently, the inverse gerbe
$\mathcal{G}_{\ms{P}_{\sigma}}^{\circ}$ represents $-\tau$. By the
definition of the twist of a gerbe, we may therefore take
\[
    \mathcal{G}_{\ms{Y}}^{-\tau}
    =
    \mathcal{G}_{\ms{Y}}
    \mathbin{\times^{\G_m}}
    a^*\mathcal{G}_{\ms{P}_{\sigma}}^{\circ}.
\]

We construct an equivalence
$
    \Phi:
    \mathcal{G}_{\ms{Y}}^{-\tau}
    \longrightarrow
    \mathcal{G}_{\ms{Y}^{\sigma}}^{\sigma}.
$
Let $T\to \ms{X}$ be an object of the big \'{e}tale site of $\ms{X}$.
An object of $\mathcal{G}_{\ms{Y}}^{-\tau}(T)$ is
represented by a pair
\[
    (\widetilde{\mathscr{P}},\epsilon),
    \qquad
    (\widetilde{\mathscr{Q}},\eta)
\]
where $\widetilde{\mathscr{P}}$ and $\widetilde{\mathscr{Q}}$ are
$\mathrm{GL}_n$-torsors over $T$, and
$
    \epsilon:
    b_*\widetilde{\mathscr{P}}
    \xrightarrow{\sim}
    \ms{Y}|_T, \ 
    \eta:
    b_*\widetilde{\mathscr{Q}}
    \xrightarrow{\sim}
    \ms{P}_{\sigma,T},
$
with
$    
\ms{P}_{\sigma,T}:=\ms{P}_{\sigma}\times_kT.
$

Define
$
    \mathcal{I}(\widetilde{\mathscr{P}},\widetilde{\mathscr{Q}})
    :=
    \underline{\operatorname{Isom}}_{\mathrm{GL}_n}
    (\widetilde{\mathscr{Q}},\widetilde{\mathscr{P}}),
$
which is naturally a right torsor
under
$\underline{\operatorname{Aut}}_{\mathrm{GL}_n}(\widetilde{\mathscr{Q}}),
$
where an automorphism of $\widetilde{\mathscr{Q}}$ acts by precomposition.
There are natural isomorphisms
\[
\begin{aligned}
    \underline{\operatorname{Aut}}_{\mathrm{GL}_n}
    (\widetilde{\mathscr{Q}})
    &\simeq
    \widetilde{\mathscr{Q}}
    \mathbin{\times^{\mathrm{GL}_n}}
    \mathrm{GL}_n
    \simeq
    b_*\widetilde{\mathscr{Q}}
    \mathbin{\times^{\mathrm{PGL}_n}}
    \mathrm{GL}_n
\\
    &\xrightarrow[\eta]{\sim}
    \ms{P}_{\sigma,T}
    \mathbin{\times^{\mathrm{PGL}_n}}
    \mathrm{GL}_n
    =
    \mathrm{GL}_{n,T}^{\sigma}.
\end{aligned}
\]
Here the second isomorphism follows from the fact that the center
$\G_m$ acts trivially on $\mathrm{GL}_n$ by conjugation.
Thus
$
    \mathcal{I}(\widetilde{\mathscr{P}},\widetilde{\mathscr{Q}})
$
is a $\mathrm{GL}_{n}^{\sigma}$-torsor.

Moreover, extension of the structure group along
$b^{\sigma}$ gives
\[
\begin{aligned}
    (b^{\sigma})_*
    \mathcal{I}(\widetilde{\mathscr{P}},\widetilde{\mathscr{Q}})
    &\simeq
    \underline{\operatorname{Isom}}_{\mathrm{PGL}_n}
    (b_*\widetilde{\mathscr{Q}},b_*\widetilde{\mathscr{P}})
    \xrightarrow[\epsilon,\eta]{\sim}
    \underline{\operatorname{Isom}}_{\mathrm{PGL}_n}
    (\ms{P}_{\sigma,T},\ms{Y}|_T).
\end{aligned}
\]
By the definition of the twist by the inverse bitorsor, there is a
natural isomorphism
\[
    \underline{\operatorname{Isom}}_{\mathrm{PGL}_n}
    (\ms{P}_{\sigma,T},\ms{Y}|_T)
    \simeq
    \ms{Y}^{\sigma}|_T.
\]
Hence
\[
    \left(
    \mathcal{I}(\widetilde{\mathscr{P}},\widetilde{\mathscr{Q}}),
    (b^{\sigma})_*
    \mathcal{I}(\widetilde{\mathscr{P}},\widetilde{\mathscr{Q}})
    \xrightarrow{\sim}
    \ms{Y}^{\sigma}|_T
    \right)
\]
is an object of
$\mathcal{G}_{\ms{Y}^{\sigma}}^{\sigma}(T)$.

We now describe this construction on morphisms. Let
\[
    \alpha:\widetilde{\mathscr{P}}\longrightarrow\widetilde{\mathscr{P'}},
    \qquad
    \beta:\widetilde{\mathscr{Q}}\longrightarrow\widetilde{\mathscr{Q'}}
\]
be morphisms compatible with the given identifications of the
associated $\mathrm{PGL}_n$-torsors. We define
\[
    \underline{\operatorname{Isom}}_{\mathrm{GL}_n}
    (\widetilde{\mathscr{Q}},\widetilde{\mathscr{P}})
    \longrightarrow
    \underline{\operatorname{Isom}}_{\mathrm{GL}_n}
    (\widetilde{\mathscr{Q'}},\widetilde{\mathscr{P'}})
\]
by
\[
    \varphi\longmapsto
    \alpha\circ\varphi\circ\beta^{-1},
\]
which is compatible with pull-back in $T$.

It remains to verify that the construction descends to the
contracted product of gerbes. Let
$\lambda\in\G_m(T)$. If $\lambda$ acts simultaneously on
$\widetilde{\mathscr{P}}$ and $\widetilde{\mathscr{Q}}$ by scalar automorphisms, then the
induced automorphism of
$\underline{\operatorname{Isom}}_{\mathrm{GL}_n}
(\widetilde{\mathscr{Q}},\widetilde{\mathscr{P}})$ is
\[
    \varphi\longmapsto
    \lambda\circ\varphi\circ\lambda^{-1}
    =
    \varphi.
\]
Thus the construction is balanced for the
$\G_m$-actions and descends to a morphism of
$\G_m$-gerbes
\[
    \Phi:
    \mathcal{G}_{\ms{Y}}
    \mathbin{\times^{\G_m}}
    a^*\mathcal{G}_{\ms{P}_{\sigma}}^{\circ}
    \longrightarrow
    \mathcal{G}_{\ms{Y}^{\sigma}}^{\sigma}.
\]
Notice that the inverse gerbe is essential here: scalar
automorphisms $(\lambda,\mu)$ of
$(\widetilde{\mathscr{P}},\widetilde{\mathscr{Q}})$ act on the resulting torsor by
$
    \varphi\longmapsto
    \lambda\circ\varphi\circ\mu^{-1}.
$
Consequently, the induced homomorphism on the bands is
\[
    (\lambda,\mu)\longmapsto\lambda\mu^{-1}.
\]

We finally prove that $\Phi$ is an equivalence. This may be checked
locally on $T$. After passing to an \'{e}tale covering of $T$, both
$\ms{Y}|_T$ and $\ms{P}_{\sigma,T}$ are trivial
$\mathrm{PGL}_n$-torsors. We may then take the trivial
$\mathrm{GL}_n$-torsors as their liftings. Under these
trivializations, $\mathrm{GL}_n^{\sigma}$ and
$\mathrm{PGL}_n^{\sigma}$ become respectively $\mathrm{GL}_n$ and
$\mathrm{PGL}_n$, and $\Phi$ maps the standard object of the source
gerbe to the standard object of the target gerbe. It also induces
the identity on their bands $\G_m$. Hence $\Phi$ is locally
an equivalence of $\G_m$-gerbes, and therefore it is an
equivalence globally.

We have proved
\[
    \mathcal{G}_{\ms{Y}^{\sigma}}^{\sigma}
    \simeq
    \mathcal{G}_{\ms{Y}}
    \mathbin{\times^{\G_m}}
    a^*\mathcal{G}_{\ms{P}_{\sigma}}^{\circ}
    =
    \mathcal{G}_{\ms{Y}}^{-\tau}.
\]

Applying the construction in (1) to the
$\mathrm{PGL}_n^{\sigma}$-torsor
$\ms{Y}^{\sigma}\to \ms{X}$ and the twisted central extension gives a
canonical morphism
$
    \ms{Y}^{\sigma}
    \longrightarrow
    \mathcal{G}_{\ms{Y}^{\sigma}}^{\sigma}.
$
Composing it with the above equivalence gives
\[
    \pi^{\sigma}:
    \ms{Y}^{\sigma}
    \longrightarrow
    \mathcal{G}_{\ms{Y}}^{-\tau}.
\]
Both morphisms are over $\ms{X}$, and hence $g_{\ms{Y}}^{-\tau}\circ\pi^{\sigma}=f^{\sigma}.$
This proves (3).

\end{proof}

\begin{theorem}\label{PGL_F_contained_in_Br_F}
    Let $\ms{X}$ be a smooth separated Deligne-Mumford stack of finite type over $k$ with quasi-projective coarse moduli space.
    Let $F: (\mathrm{Stack}/k)^{\mathrm{op}}\to \mathrm{Sets}$ be a stable functor.
    Then we have:
    \begin{align*}
        \ms{X}(\A_k)^{\PGL,F} \subseteq \ms{X}(\A_k)^{\Br,F}.
    \end{align*}
\end{theorem}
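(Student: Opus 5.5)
We have to unwind the two composite obstructions. By definition
\[
\ms{X}(\A_k)^{\PGL,F}=\bigcap_{n}\bigcap_{f}\bigcup_{\sigma\in\check{\H}^1_{\et}(k,\PGL_n)}f^\sigma\bigl(\ms{Y}^\sigma(\A_k)^{F}\bigr),
\qquad
\ms{X}(\A_k)^{\Br,F}=\bigcap_{g}\bigcup_{\tau\in\H^2_{\et}(k,\G_m)}g^\tau\bigl(\ms{Z}^\tau(\A_k)^{F}\bigr).
\]
Here $f$ runs over $\PGL_n$-torsors $\ms{Y}\to\ms{X}$ and $g$ runs over $\G_m$-gerbes $\ms{Z}\to\ms{X}$.

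\textbf{Step 1: reduce to lifting gerbes.} Fix a $\G_m$-gerbe $g:\ms{Z}\to\ms{X}$ with class $\gamma\in\Br(\ms{X})$. As in the proof of \cref{desc_contained_in_Br}, \cref{lem:br_prime_equals_br}, \cref{lem:surj_brauer_map} and \cref{every_class_has_constant_rank} give an Azumaya algebra of constant rank $n^2$ representing $\gamma$. Let $f:\ms{Y}\to\ms{X}$ be the associated $\PGL_n$-torsor. By \cref{lem:gerbe_class_coincides} and \cref{lem:torsor_to_gerbe_coincides}(2), the lifting gerbe $\ms{G}_{\ms{Y}}$ has class $\delta_{\ms{X}}([f])=\gamma$. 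Hence $\ms{Z}\simeq\ms{G}_{\ms{Y}}$ as $\G_m$-gerbes over $\ms{X}$. Any twist $\ms{Z}^\tau$ is then equivalent over $\ms{X}$ to $\ms{G}_{\ms{Y}}^\tau$, because twisting depends only on the gerbe up to equivalence. Since $F$ is stable, $F$-obstruction sets are preserved under equivalences over $\ms{X}$. So it suffices to show
\[
\ms{X}(\A_k)^{\PGL,F}\subseteq\bigcup_\tau g_{\ms{Y}}^\tau\bigl(\ms{G}_{\ms{Y}}^\tau(\A_k)^F\bigr).
\]

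\textbf{Step 2: push adelic points along $\pi^\sigma$.} Let $x\in\ms{X}(\A_k)^{\PGL,F}$. There is some $\sigma\in\check{\H}^1_{\et}(k,\PGL_n)$ and a point $y\in\ms{Y}^\sigma(\A_k)^F$ with $f^\sigma(y)=x$, up to 2-isomorphism. Put $\tau=\delta_k(\sigma)$. \cref{lem:torsor_to_gerbe_coincides}(3) gives a morphism $\pi^\sigma:\ms{Y}^\sigma\to\ms{G}_{\ms{Y}}^{-\tau}$ over $\ms{X}$. Then $z:=\pi^\sigma\circ y$ satisfies $g_{\ms{Y}}^{-\tau}(z)=x$. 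It remains to check $z\in\ms{G}_{\ms{Y}}^{-\tau}(\A_k)^F$. This is functoriality of $F$-obstructions: take any $A\in F(\ms{G}_{\ms{Y}}^{-\tau})$. Then $A(z)=F(y)(F(\pi^\sigma)(A))=(\pi^{\sigma*}A)(y)$, which lies in $\im F(q)$ because $y\in\ms{Y}^\sigma(\A_k)^F$. Stability of $F$ makes this well defined with respect to the 2-morphisms involved. Thus $x$ lies in the $\tau'=-\tau$ term of the union.

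\textbf{Main obstacle.} I expect the delicate point to be Step 1. One must ensure that every $\G_m$-gerbe, not merely every class, is captured, and that twisting and $F$-obstructions are invariant under equivalence of gerbes over $\ms{X}$. The first requires the surjectivity and torsion results, which is where the hypotheses on $\ms{X}$ enter. The second follows from stability of $F$: an equivalence $\ms{Z}^\tau\simeq\ms{G}_{\ms{Y}}^\tau$ over $\ms{X}$ induces a bijection on $F$-unobstructed adelic points compatible with the projections to $\ms{X}(\A_k)$.
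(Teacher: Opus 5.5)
Your proposal is correct and follows essentially the same route as the paper. For each $\PGL_n$-torsor you push the witness $y\in\ms{Y}^\sigma(\A_k)^F$ along $\pi^\sigma:\ms{Y}^\sigma\to\ms{G}_{\ms{Y}}^{-\tau}$ from \cref{lem:torsor_to_gerbe_coincides}(3), using $\im F(\pi^\sigma\circ y)\subseteq\im F(y)\subseteq\im F(q)$, and then invoke \cref{every_class_has_constant_rank}, \cref{lem:surj_brauer_map} and \cref{lem:br_prime_equals_br} to see that the lifting gerbes $\ms{G}_{\ms{Y}}$ exhaust $\Br(\ms{X})$. Your Step 1 only makes explicit what the paper leaves implicit: an arbitrary $\G_m$-gerbe is equivalent over $\ms{X}$ to some $\ms{G}_{\ms{Y}}$, and twists and $F$-obstruction sets are unchanged under such equivalences.
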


\begin{proof}
    Let $x\in \ms{X}(\A_k)^{\PGL,F}$.
    Then by definition,
    for any $n\in\mathbb{N}^*$ and any $\PGL_n$-torsor $f:\ms{Y}\to \ms{X}$ over $\ms{X}$ in the big \'etale topology, 
    there exists $\sigma\in \check{\H}^1_{\et}(k, \PGL_n)$ and $y\in \ms{Y}^{\sigma}(\A_k)^F$ such that:
    \begin{align*}
        x= f^\sigma\circ y: \A_k\to \ms{X}
    \end{align*}

    By definition,
    $
        \im F(y)\subseteq \im F(q)
    $
    where $q: \Spec \A_k\to \Spec k$.
    By \cref{lem:torsor_to_gerbe_coincides},
    there exists a morphism $j: \ms{Y}\to \ms{G}_{\ms{Y}}$, 
    where $g:\ms{G}_{\ms{Y}}\to \ms{X}$ is a $\G_m$-gerbe, 
    and there exists a twisted morphism $j^\sigma: \ms{Y}^{\sigma}\to \ms{G}^{\sigma}_{\ms{Y}^{\sigma}}\simeq (\ms{G}_{\ms{Y}})^{-\tau}$,
    where $\tau$ is the image of $\sigma$ under the connecting map $\delta_{k}: \check{\H}^1_{\et}(k, \PGL_n) \to {\H}^2_{\et}(k, \G_m)$.
    Consider $z=j^{\sigma}\circ y: \A_k\to \ms{G}_{\ms{Y}}^{-\tau}$.
    Then we have:
    \begin{align*}
        \im F(z)=\im (F(y)\circ F(j^{\sigma}))\subseteq \im F(y)\subseteq \im F(q).
    \end{align*}
    Hence by $x=f^\sigma\circ y=g^{-\tau}\circ z$, we have:
    \begin{align*}
        x\in \bigcup_{\tau\in {\H}^2_{\et}(k, \G_m)}g^{-\tau}((\ms{G}_{\ms{Y}})^{-\tau}(\A_k)^F)
    \end{align*}
    
    Finally, by \cref{lem:surj_brauer_map} and \cref{lem:br_prime_equals_br},
    as $\ms{Y}\to \ms{X}$ varies,
    $\ms{G}_{\ms{Y}}$ can represent all the classes in $\Br(\ms{X})$.
    Hence we have:
    \begin{align*}
        \ms{X}(\A_k)^{\PGL,F} \subseteq \ms{X}(\A_k)^{\Br,F}.
    \end{align*} 
\end{proof}

\begin{remark}\label{rem_after_PGL_F_contained_in_Br_F}
    We can define the functoriality of obstructions as in \cite[Definition 5.2]{Lv21}.
    By a similar argument,
    we can further prove that for any functorial obstruction $\mathrm{obs}$,
    \begin{align*}
        \ms{X}(\A_k)^{\desc,\mathrm{obs}}\subseteq \ms{X}(\A_k)^{\PGL,\mathrm{obs}}\subseteq \ms{X}(\A_k)^{\Br,\mathrm{obs}}\subseteq \ms{X}(\A_k)^{\Br}\cap \ms{X}(\A_k)^{\mathrm{obs}}.
    \end{align*}
    
    In fact, all the obstructions introduced above and their compositions are functorial obstructions.
\end{remark}

\section{Comparing \'etale-Brauer obstruction and descent obstruction}

We now compare the \'etale–Brauer obstruction with the descent obstruction.
To prove the first inclusion, we use the description of the descent set in terms of twists of a finite torsor. 
\cref{lem:desc_eq_cup} reduces the problem to comparing the descent and Brauer–Manin sets on each twist, 
and this comparison is provided by \cref{desc_contained_in_Br}, 
once we know that the twists still satisfy its geometric hypotheses.
We therefore first verify that the geometric hypotheses of \cref{desc_contained_in_Br} are preserved under twisting.

\begin{theorem}\label{et_br_eq_desc}
    Let $\ms{X}$ be a smooth separated Deligne-Mumford stack of finite type over $k$ with quasi-projective coarse moduli space.
    Then we have:
    \begin{align*}
        \ms{X}(\A_k)^{\et,\Br}\supseteq\ms{X}(\A_k)^{\desc}
    \end{align*}
\end{theorem}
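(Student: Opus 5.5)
We prove the inclusion $\ms{X}(\A_k)^{\desc}\subseteq\ms{X}(\A_k)^{\et,\Br}$ by reducing, one finite torsor at a time, to \cref{desc_contained_in_Br} applied to the twists. Fix $x\in\ms{X}(\A_k)^{\desc}$, a finite \'etale $k$-group $G$ and a $G$-torsor $f:\ms{Y}\to\ms{X}$. We must find $\sigma\in\check{\H}^1_{\fppf}(k,G)$ and $y\in\ms{Y}^\sigma(\A_k)^{\Br}$ with $x=f^\sigma\circ y$.

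The first step is the stacky version of the classical decomposition (cf. \cite[Prop.~5.17]{Lv21}, the analogue of Stoll's and Skorobogatov's lemma), which I will call \cref{lem:desc_eq_cup}:
\begin{align*}
\ms{X}(\A_k)^{\desc}=\bigcup_{\sigma\in\check{\H}^1_{\fppf}(k,G)}f^\sigma\bigl(\ms{Y}^\sigma(\A_k)^{\desc}\bigr).
\end{align*}
The inclusion $\supseteq$ follows from functoriality of the descent obstruction. The point is that a torsor on $\ms{X}$ pulls back to $\ms{Y}^\sigma$, and that $\check{\H}^1(k,-)$ pulls back compatibly. For $\subseteq$, the fact $x\in\ms{X}(\A_k)^{f}$ already gives $\sigma$ and $y$ with $x=f^\sigma\circ y$. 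One then has to show that some lift $y$, possibly after changing $\sigma$ within the finitely many candidates allowed by finiteness of $\Sh$ and of the fibres, lies in $\ms{Y}^\sigma(\A_k)^{\desc}$.

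To do this, given a torsor $\ms{Z}\to\ms{Y}^\sigma$ under a linear group $H$, form the composite $\ms{Z}\to\ms{X}$. Its Weil restriction, or induced torsor, is a torsor under an extension of $G$ by $\prod H$, or under $H\wr G$ in the geometrically non-connected case. Applying the descent condition for $x$ to this torsor forces some lift $y$ to satisfy the $H$-descent condition. A compactness argument then gives a single lift that works for all $H$ simultaneously: the candidate lifts over a fixed $x$ form a finite set, since $\check{\H}^1(k,G)\to\prod_v\check{\H}^1(k_v,G)$ has finite fibres and there are finitely many lifts locally.

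The second step is to check that each twist $\ms{Y}^\sigma$ still satisfies the hypotheses of \cref{desc_contained_in_Br}. The map $f^\sigma$ is a torsor under the finite \'etale group $G^\sigma$, so it is finite \'etale, representable and separated. Hence $\ms{Y}^\sigma$ is a smooth, separated Deligne--Mumford stack of finite type over $k$. Its coarse space $Y^\sigma$ maps finitely to the coarse space $X$ of $\ms{X}$, because a finite morphism of tame stacks induces a finite morphism of coarse spaces. A finite morphism to a quasi-projective scheme has quasi-projective source, so $Y^\sigma$ is quasi-projective.

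With both steps in hand we conclude: applying \cref{desc_contained_in_Br} to $\ms{Y}^\sigma$ gives $\ms{Y}^\sigma(\A_k)^{\desc}\subseteq\ms{Y}^\sigma(\A_k)^{\Br}$. Combined with the decomposition, $x=f^\sigma\circ y$ with $y\in\ms{Y}^\sigma(\A_k)^{\Br}$. Intersecting over all finite \'etale $G$ and all $f$ gives the theorem.

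The main obstacle is the inclusion $\subseteq$ in the decomposition. It requires the induced-torsor construction on stacks, including that Weil restriction along the finite \'etale map $\ms{Y}^\sigma\to\ms{X}$ of an $H$-torsor is a torsor under a linear group over $\ms{X}$. It also requires the finiteness and compactness argument when points of stacks form groupoids and $\ms{X}(k)\to\ms{X}(\A_k)$ need not be injective. I would first try to cite \cite{Lv21} or \cite{LW23} for this, and otherwise adapt Cao's argument \cite{Cao_2020} verbatim, working with isomorphism classes of adelic points.
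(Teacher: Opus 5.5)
Your proposal is correct and is essentially the paper's proof: the decomposition \cref{lem:desc_eq_cup}, which the paper simply cites from \cite{WL24}, plus stability of the hypotheses under twisting, plus \cref{desc_contained_in_Br} applied to each $\ms{Y}^\sigma$; your argument that the coarse map is finite is a valid shortcut for finite $G$, replacing the paper's affineness argument via local quotient charts in \cref{lem:torsor_keeps_good_condition}, which handles all linear $G$. One slip in your sketch of the decomposition: the lifts of $x$ to $\ms{Y}^\sigma(\A_k)$ form a torsor under the compact group $G^\sigma(\A_k)=\prod_v G^\sigma(k_v)$, not a finite set, so the compactness step must use that each descent condition cuts out a closed subset of this set.
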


\iffalse
\begin{lemma}\label{lem:torsor_of_quotient_is_quotient}
    Let $\ms{X}=[X/G_0]$ for somifflae quasi-projective $k$-scheme $X$ and linear $k$-group $G_0$.
    Then for any linear $k$-group $G$ and a $G$-torsor $\ms{Y}\to\ms{X}$,
    there exists a quasi-projective $k$-scheme $Y$ such that:
    \begin{align*}
        \ms{Y}\simeq [Y/G_0].
    \end{align*}
\end{lemma}

\begin{proof}
    Consider:
    \begin{align*}
        Y = X\times_{\ms{X}} \ms{Y}.
    \end{align*}
    Then $\ms{Y}\simeq [Y/G_0]$.
    
    Base change of a $G$-torsor is still a $G$-torsor,
    hence 
    \begin{align*}
        Y\to X
    \end{align*}
    is a $G$-torsor.
    It is well-known that by fppf descent of affineness,
    $Y\to X$ is affine and $Y$ is a scheme.

    By $G$ is of finite type over $k$,
    consider locally and we have $Y\to X$ is of finite type.
    By \cite[0B3H]{SP},
    $Y\to X$ is quasi-projective,
    hence $Y$ is a quasi-projective $k$-scheme.
\end{proof}
\fi

\begin{lemma}\label{lem:desc_eq_cup}
    \cite[4.3]{WL24}.
    Let $\ms{X}$ be an algebraic stack over $k$
    and $G$ be a finite $k$-group.
    Then for any $G$-torsor $f:\ms{Y}\to\ms{X}$,
    \begin{align*}
        \ms{X}(\A_k)^{\desc}=\bigcup_{\sigma\in\check \H^1_{\et}(k,G)} f^{\sigma}(\ms{Y}^{\sigma}(\A_k)^{\desc})
    \end{align*}
\end{lemma}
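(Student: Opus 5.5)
The plan is to prove the two inclusions separately. The inclusion $\supseteq$ is formal and the inclusion $\subseteq$ is a twisting argument, in the style of Stoll and Cao--Demarche--Xu. Throughout, $G$ is a finite $k$-group (hence linear), $f:\ms{Y}\to\ms{X}$ is a $G$-torsor, and $\sigma$ ranges over $\check\H^1_{\et}(k,G)$.

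For $\supseteq$, take $y\in\ms{Y}^{\sigma}(\A_k)^{\desc}$ and put $x=f^\sigma\circ y$. I check that $x\in\ms{X}(\A_k)^{\desc}$ by showing that the descent obstruction is functorial along $f^\sigma$. For any linear $k$-group $H$ and any $H$-torsor $\ms{T}\to\ms{X}$, pull it back along $f^\sigma$ to an $H$-torsor on $\ms{Y}^\sigma$. By stability of $\check\H^1_{\fppf}(-,H)$ we get $\ms{T}(x)=(f^{\sigma*}\ms{T})(y)$. The right-hand side lies in the image of $\check\H^1_{\fppf}(k,H)$ because $y$ is descent-unobstructed. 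Hence $x\in\ms{X}(\A_k)^{\ms{T}}$, and intersecting over all $\ms{T}$ gives $x\in\ms{X}(\A_k)^{\desc}$.

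For $\subseteq$, let $x\in\ms{X}(\A_k)^{\desc}$. Since $x\in\ms{X}(\A_k)^{f}$, the earlier proposition \cite[3.20]{Lv21} gives some $\sigma$ and some $y\in\ms{Y}^\sigma(\A_k)$ with $x=f^\sigma\circ y$. The key step is to choose $y$ to be descent-unobstructed. For a torsor $\ms{T}\to\ms{Y}^\sigma$ under a linear group $H$, I would push it forward along the finite étale map $f^\sigma$. Concretely, the Weil restriction $R_{\ms{Y}^\sigma/\ms{X}}\ms{T}$ is a torsor on $\ms{X}$ under a linear group; alternatively, one can form the composite torsor $\ms{T}\to\ms{X}$ under the extension group, which exists after suitable twisting. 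Evaluating $x$ on these pushed-forward torsors forces the class of each lift to come from $k$. The difficulty is that the lift $y$ must work for all $\ms{T}$ at once, and different $\sigma$ give different fibres. As in Stoll's argument, I would use that the set of $\sigma$ with $x\in f^\sigma(\ms{Y}^\sigma(\A_k))$ is finite (finiteness of $\ker^1$ together with the local conditions), and that the fibres of $f^\sigma$ over $x$ form a compact torsor under $\prod_v G^\sigma(k_v)$. The descent conditions then give a directed system of nonempty closed subsets, and compactness produces a single $y$ that satisfies all of them. This is the step I expect to be the main obstacle, especially in the stacky setting, where the fibre of $f^\sigma$ over $x$ is a groupoid rather than a set. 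I would handle it by working with isomorphism classes and using that $G$ is finite étale, so that the fibre is a finite étale $\A_k$-scheme.

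If the compactness bookkeeping becomes too heavy, the fallback is to reduce to the composite-torsor statement. For each $H$-torsor $\ms{T}\to\ms{Y}^\sigma$, the composite $\ms{T}\to\ms{X}$ is a torsor over $\ms{X}$ under an extension of $G$ by $H$; to arrange this, first replace $\ms{T}$ by a $G$-equivariant version, for instance by inducing along $G$. Descent-unobstructedness of $x$ with respect to this composite torsor directly yields a twist $\sigma'$ and a lift of $x$ through $\ms{T}^{\sigma'}$, so that the induced point of $\ms{Y}^{\sigma}$ is unobstructed for $\ms{T}$. Finiteness of the relevant $\sigma$ then again allows a single choice of $\sigma$ and $y$.
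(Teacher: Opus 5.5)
The $\supseteq$ inclusion is correct. The $\subseteq$ inclusion has a genuine gap exactly at the step that carries the content. Your overall architecture (finiteness of the relevant twists, induced torsors, compactness) is the standard Stoll--Skorobogatov one; the paper itself does not reprove the lemma but imports it from \cite[4.3]{WL24}, so your argument has to stand on its own.

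\textbf{The missing step.} Both constructions you name are off as stated. $R_{\ms Y^\sigma/\ms X}(\ms T)$ is a torsor under the $\ms X$-group scheme $R_{\ms Y^\sigma/\ms X}(H)$. That group is a twisted form of $R_{G^\sigma/k}(H)$ which becomes constant only after pullback to $\ms Y^\sigma$, so it is not the base change of a $k$-group, and $x\in\ms X(\A_k)^{\desc}$ says nothing about it. Likewise, $\ms T\to\ms X$ is a torsor under an extension of $G^\sigma$ by $H$ only if the $G^\sigma$-action on $\ms Y^\sigma$ lifts to $\ms T$ compatibly with the $H$-action. That fails for general $\ms T$, and no twisting arranges it. The object that works is your ``inducing'' fallback: the induced torsor $W=R_{\ms Y^\sigma/\ms X}(\ms T)\times_{\ms X}\ms Y^\sigma$ under $R_{G^\sigma/k}(H)\rtimes G^\sigma$.

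But even with $W$, your conclusion does not follow. The condition $x\in\ms X(\A_k)^W$ only gives some $\rho$ and a point of $W^\rho$ over $x$. Its image is \emph{one} lift of $x$, lying on the twist of $\ms Y^\sigma$ by the image of $\rho$ in $\check\H^1(k,G^\sigma)$. In general this is not $\ms Y^\sigma$, where $\ms T$ lives. So you learn nothing about ``each lift'', nor about a point of $\ms Y^\sigma$ relative to $\ms T$. The nonemptiness of the sets in your directed system, which is the whole lemma, is therefore unproved, and that system cannot be indexed by torsors on a single $\ms Y^\sigma$.

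\textbf{How to close it.} Let $\Sigma$ be the finite set of $\sigma$ over which $x$ lifts, and index conditions by tuples $(\ms T_\sigma\to\ms Y^\sigma)_{\sigma\in\Sigma}$. Set $\ms R=\prod_{\ms X,\sigma\in\Sigma}R_{\ms Y^\sigma/\ms X}(\ms T_\sigma)$ and $W=\ms R\times_{\ms X}\ms Y$; this is a torsor under $K\rtimes G$ with $K$ linear. Then check two things.
(a) For $\rho$ over $\sigma'$, the map $W^\rho\to\ms Y^{\sigma'}$ is a $k$-twist of the torsor $\ms R\times_{\ms X}\ms Y^{\sigma'}\to\ms Y^{\sigma'}$ under $K^{\sigma'}$, the twist of $K$ by $\sigma'$. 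To see this, twist $W$ by the image of $\sigma'$ under the section $G\to K\rtimes G$, which gives exactly $\ms R\times_{\ms X}\ms Y^{\sigma'}$, then apply the twisting bijection on the fibre over $\sigma'$.
(b) Restrict the $\sigma'$-factor to the diagonal component of $\ms Y^{\sigma'}\times_{\ms X}\ms Y^{\sigma'}$. This maps $\ms R\times_{\ms X}\ms Y^{\sigma'}$ equivariantly to $\ms T_{\sigma'}$ along the $k$-homomorphism ``evaluation at $1\in G^{\sigma'}(k)$''. Hence the resulting lift lies in $\ms Y^{\sigma'}(\A_k)^{\ms T_{\sigma'}}$.
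This gives nonemptiness for every tuple, and fibre products of tuples give directedness.

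The compactness step also needs a closedness statement. Let $E=\Spec\A_k\times_{x,\ms X}\ms Y^\sigma$, so that the lifts form the profinite set $E(\A_k)=\prod_v E(k_v)$. You need the lifts surviving a fixed $H$-torsor to form a closed subset of it. For non-finite $H$ this is true but not formal. It requires finiteness of the set of classes in $\check\H^1(k,H)$ whose local components lie in prescribed finite sets and are unramified outside a fixed finite set of places. Your proposal does not address it.
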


\begin{lemma}\label{lem:torsor_keeps_good_condition}
    Let $\ms{X}$ be a smooth separated Deligne-Mumford stack of finite type over $k$ with quasi-projective coarse moduli space.
    Then for any linear $k$-group $G$,
    any $G$-torsor $f:\ms{Y}\to\ms{X}$ 
    and any $\sigma\in \check \H^1_{\fppf}(k,G)$,
    $\ms{Y}^{\sigma}$ is still a smooth separated Deligne-Mumford stack of finite type over $k$ with quasi-projective coarse moduli space.
\end{lemma}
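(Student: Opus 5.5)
First reduce to the untwisted case. The twist $f^{\sigma}:\ms{Y}^{\sigma}\to\ms{X}$ is a $G^{\sigma}$-torsor, and $G^{\sigma}=\operatorname{ad}(\ms{P}_\sigma)$ is an inner form of $G$, so it is again a linear $k$-group. It therefore suffices to prove the following: for any linear $k$-group $G$ and any $G$-torsor $f:\ms{Y}\to\ms{X}$, the stack $\ms{Y}$ is a smooth separated Deligne--Mumford stack of finite type over $k$ with quasi-projective coarse moduli space.

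The local properties follow from descent along the torsor. Since $G$ is affine, $f$ is representable and affine: its base change to any scheme $U\to\ms{X}$ is a $G$-torsor over $U$, hence an affine $U$-scheme, as in the discussion preceding the Corollary of Section 2. Since $G$ is smooth and of finite type, $f$ is smooth, surjective and of finite type, by fppf descent from $U\times G\to U$. The consequences are then standard.
\begin{itemize}
\item $\ms{Y}$ is algebraic and of finite type over $k$.
\item $\ms{Y}$ is smooth over $k$, as a composite of smooth morphisms.
\item $\ms{Y}$ is separated, because $f$ is affine and hence separated, and $\ms{X}$ is separated.
\item $\ms{Y}$ is Deligne--Mumford, because $f$ is representable and a stack with a representable morphism to a DM stack is DM; equivalently, the stabilizers of $\ms{Y}$ inject into those of $\ms{X}$, so they are unramified.
\end{itemize}

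The substantive step is the quasi-projectivity of the coarse moduli space. Keel--Mori gives a coarse space $Y$ for $\ms{Y}$, since $\ms{Y}$ is separated DM of finite type; write $\pi_X:\ms{X}\to X$ for the coarse space of $\ms{X}$. The universal property gives a morphism $Y\to X$. I would show that $Y\to X$ is quasi-projective, which makes $Y$ quasi-projective over $k$ because $X$ is.

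The approach is via the tame-stack formation of coarse spaces. Since $\operatorname{char}k=0$, $\ms{X}$ is tame, so formation of the coarse space commutes with flat base change $X'\to X$. Working étale locally on $X$, I may assume $\ms{X}=[V/H]$ with $V$ affine and $H$ a finite group; this is the Abramovich--Vistoli local structure theorem. Then $\ms{Y}\times_{\ms{X}}V$ is a $G$-torsor over $V$, hence an affine scheme $W$ with an $H$-action, and $\ms{Y}=[W/H]$ has coarse space $W/\!\!/H=\Spec\mathcal{O}(W)^H$. So $Y\to X$ is affine étale locally on $X$, and therefore affine by descent. An affine morphism of finite type is quasi-projective, so $Y$ is quasi-projective over $k$.

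I expect the main obstacle to be this local-structure and descent argument, in particular checking that formation of the coarse space commutes with the étale base change used. A fallback is to use the result that $\ms{X}$ is a quotient stack $[Z/\GL_N]$ with $Z$ quasi-projective, available for DM stacks with quasi-projective coarse space in characteristic $0$ by Kresch or Edidin--Hassett--Kresch--Vistoli. Then $\ms{Y}=[Z'/\GL_N]$ with $Z'\to Z$ affine, and I would apply a projectivity criterion for coarse spaces of quotient stacks.
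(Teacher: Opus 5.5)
Your proposal is correct and follows essentially the same route as the paper. Both arguments reduce to the untwisted case via the inner form $G^{\sigma}$, get smoothness, separatedness and the Deligne--Mumford property from the representable affine smooth torsor, show that $Y\to X$ is affine by computing coarse spaces \'etale locally on a quotient presentation $[V/H]$ as $\Spec B^H\to\Spec A^H$ and descending, and conclude via quasi-projectivity of affine finite-type morphisms over the quasi-projective $X$. The differences are minor: the paper uses the tame local structure theorem with $H$ a linearly reductive group scheme over $U$ and explicitly shrinks to affine $U,V$ by quasi-compactness, whereas you use the Deligne--Mumford version with $H$ a finite group; your fallback through $[Z/\GL_N]$ is not needed.
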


\begin{proof}
    We only prove the case where $\sigma$ is trivial.
    Indeed, $f^\sigma:\mathcal Y^\sigma\to\mathcal X$ is a torsor under the inner form $G^\sigma$, 
    which is again a linear $k$-group, 
    so the same argument applies to every twist.

    Since $G$ is a linear $k$-group, it is affine and of finite type over $k$;
    since $k$ has characteristic 0,
    $G$ is smooth.
    Thus, as a \(G\)-torsor,
    $f:\ms{Y}\to \ms{X}$ is representable, smooth and surjective. 
    It follows immediately that $\ms{Y}$ is a smooth separated Deligne-Mumford stack of finite type over $k$.

It remains to show that the coarse moduli space $Y$ of $\ms{Y}$ is quasi-projective over $k$.

Since $\ms{X}$ and $\ms{Y}$ are Deligne-Mumford stacks of finite type over $k$ with finite inertia, by the Keel-Mori Theorem, there exist coarse moduli spaces $\pi_{\ms{X}}: \ms{X} \to X$ and $\pi_{\ms{Y}}: \ms{Y} \to Y$, where $X$ and $Y$ are algebraic spaces of finite type over $k$. By assumption, $X$ is a quasi-projective scheme over $k$.

By the universal property of coarse moduli spaces, the morphism $f: \ms{Y} \to \ms{X}$ induces a unique morphism of algebraic spaces $\bar{f}: Y \to X$ making the following diagram commute:
    \[
    \begin{tikzcd}
    \ms{Y} \arrow[r, "f"] \arrow[d, "\pi_{\ms{Y}}"' ] & \ms{X} \arrow[d, "\pi_{\ms{X}}"] \\
    Y \arrow[r, "\bar{f}"'] & X
    \end{tikzcd}
    \]
By \cite[3.2(d)]{abramovich},
we can find an \'etale covering $U\to X$,
a linearly reductive group scheme $H\to U$
acting on a finite morphism $V\to U$
such that:
\begin{align*}
    \ms{X}\times_{X} U\simeq [V/H].
\end{align*}
Furthermore, we may assume that $U$ and $V$ are affine:
if not,
write $U=\bigcup_{i\in I} U_i$ as an affine covering.
$X$ is of finite type over $k$, hence quasi-compact.
So we can find finitely many $U_j\in \{U_i\}_{i\in I}$ such that the following composition is \'etale surjective:
\begin{align*}
    \bigcup_{j} U_j \hookrightarrow U \to X.
\end{align*}
Replace \(U\) by \(U'=\bigsqcup_j U_j\), \(H\) by \(H'=H\times_U U'\), and \(V\) by \(V'=V\times_U U'\).
By finiteness of $V\to U$, $V'$ is also an affine scheme.

Consider the base change of $\ms{Y}\to \ms{X}$:
\begin{align*}
    \ms{Y}\times_{X} U\to \ms{X}\times_{X} U\xrightarrow{\simeq} [V/H],
\end{align*}
and then base change further:
\begin{align*}
    W:=(\ms{Y}\times_{X} U)\times_{[V/H]} V\to V.
\end{align*}
The morphism is affine since $\ms{Y}\to \ms{X}$ is affine.
Hence $W$ is an affine scheme.
Denote $V=\Spec A$ and $W=\Spec B$.
$W\to \ms{Y}\times_{X} U$ is an $H$-torsor, in particular smooth surjective.
Hence by \cite[\href{https://stacks.math.columbia.edu/tag/04T3}{Tag 04T3}]{SP},
\begin{align*}
    \ms{Y}\times_{X} U\simeq [W/(W\times_{\ms{Y}\times_{X} U} W)]\simeq [W/(W\times_U H)]=[W/H]
\end{align*}

By \cite[3.3]{abramovich},
the coarse moduli spaces of $\ms{Y}\times_X U$ and $\ms{X}\times_X U$ are, 
respectively, 
$Y\times_X U$ and $U$.
And by the proof in \cite[3.3]{abramovich},
the morphism $Y\times_X U\to U$ is actually:
\begin{align*}
    \Spec B^H\to\Spec A^H,
\end{align*}
which is affine.
Finally by \cite[\href{https://stacks.math.columbia.edu/tag/03WG}{Tag 03WG}]{SP},
$Y\to X$ is affine.

Moreover, since $Y$ is of finite type over
$k$ and $X$ is separated over $k$,
the graph factorization
\begin{align*}
    Y\xrightarrow{\text{closed immersion}} Y\times_k X\xrightarrow{\text{pr}} X
\end{align*}
is of finite type.

By \cite[Page 25]{Knutson1971},
$X$ is a quasi-projective $k$-scheme 
and then $Y\to X$ is an affine finite type morphism of schemes.
So by quasi-projectivity of $X$ and \cite[\href{https://stacks.math.columbia.edu/tag/0B3H}{Tag 0B3H}]{SP},
$Y$ is also a quasi-projective $k$-scheme.
\end{proof}

\subsection{Proof of \cref{et_br_eq_desc}}
First we prove:
\begin{align*}
    \ms{X}(\A_k)^{\desc}\subseteq \ms{X}(\A_k)^{\et,\Br}.
\end{align*}

By \cref{lem:torsor_keeps_good_condition},
for any finite \'etale linear $k$-group $G$,
any $G$-torsor $f:\ms{Y}\to\ms{X}$,
and any $\sigma\in\check \H^1_{\et}(k, G)$,
$\ms{Y}^{\sigma}$ is a smooth separated Deligne-Mumford stack over $k$ with quasi-projective coarse moduli space.
Then by \cref{desc_contained_in_Br},
\begin{align*}
    \ms{Y}^{\sigma}(\A_k)^{\desc}\subseteq \ms{Y}^{\sigma}(\A_k)^{\Br}.
\end{align*}

So by \cref{lem:torsor_keeps_good_condition} and \cref{lem:desc_eq_cup}:
\begin{align*}
    \ms{X}(\A_k)^{\et,\Br}
    &=\bigcap_{G\text{ finite \'etale linear $k$-group }}\bigcap_{f:\ms{Y}\xrightarrow{G}\ms{X}}\bigcup_{\sigma\in\check \H^1_{\et}(k,G)} f^{\sigma}(\ms{Y}^{\sigma}(\A_k)^{\Br})\\
    &\supseteq \bigcap_{G\text{ finite \'etale linear $k$-group }}\bigcap_{f:\ms{Y}\xrightarrow{G}\ms{X}}\bigcup_{\sigma\in\check \H^1_{\et}(k,G)} f^{\sigma}(\ms{Y}^{\sigma}(\A_k)^{\desc})\\
    &=\ms{X}(\A_k)^{\desc}.
\end{align*}
\qed

\cref{et_br_eq_desc} gives one direction of the desired comparison. 
For the reverse direction, 
we use \cref{et_Br_contained_in_desc_desc} below, 
which establishes the required inclusion for quotient stacks $[X/G]$ when $X$ is smooth, quasi-projective, and geometrically integral. A general Deligne–Mumford stack in our class admits a quotient presentation, 
but the presenting variety need not be geometrically integral. 
The main point of the remainder of this section is therefore to reduce an adelic point to a geometrically integral component to which \cref{et_Br_contained_in_desc_desc} can be applied.

\begin{theorem}\label{et_br_eq_desc_2}
    Let $\ms{X}$ be a smooth separated Deligne-Mumford stack of finite type over $k$ with quasi-projective coarse moduli space.
    Then we have:
    \begin{align*}
        \ms{X}(\A_k)^{\et,\Br}=\ms{X}(\A_k)^{\desc,\desc}=\ms{X}(\A_k)^{\desc}
    \end{align*}
\end{theorem}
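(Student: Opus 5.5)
We will prove \cref{et_br_eq_desc_2} by establishing the cycle of inclusions
\begin{align*}
\ms{X}(\A_k)^{\desc}\subseteq \ms{X}(\A_k)^{\et,\Br}\subseteq \ms{X}(\A_k)^{\desc,\desc}\subseteq \ms{X}(\A_k)^{\desc}.
\end{align*}
The first inclusion is \cref{et_br_eq_desc}. The last inclusion is formal. Taking $G$ trivial in the definition of $\ms{X}(\A_k)^{\desc,\desc}$, so that $f=\mathrm{id}$, gives $\ms{X}(\A_k)^{\desc,\desc}\subseteq \ms{X}(\A_k)^{\desc}$. (Alternatively, $\mathrm{desc}$ is functorial, so $f^\sigma(\ms{Y}^\sigma(\A_k)^{\desc})\subseteq \ms{X}(\A_k)^{\desc}$.) The whole content is therefore the middle inclusion $\ms{X}(\A_k)^{\et,\Br}\subseteq\ms{X}(\A_k)^{\desc,\desc}$.

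For the middle inclusion we follow the strategy of \cite{Cao_2020}, using the quotient-stack result \cref{et_Br_contained_in_desc_desc} (the $\mathrm{SL}_n$-descent input of \cite{LW26}).

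First, by \cite{abramovich}/Kresch-type results, $\ms{X}$ admits a presentation $\ms{X}\simeq[X/\GL_N]$ with $X$ smooth and quasi-projective over $k$. This holds because $\ms{X}$ is a smooth separated DM stack with quasi-projective coarse space in characteristic $0$: there is a generating vector bundle whose frame bundle is a quasi-projective scheme. The frame bundle $X\to\ms{X}$ is a $\GL_N$-torsor, so $X$ is quasi-projective by the argument of \cref{lem:torsor_keeps_good_condition}.

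Next we reduce to the geometrically integral case. Since $X$ is smooth, it is the disjoint union of its connected components, and these are integral. Let $x\in\ms{X}(\A_k)^{\et,\Br}$, and take $k'$ to be the algebraic closure of $k$ in the function fields of the relevant components. The key step is to use the \'etale part of the obstruction to show that $x$ lifts to a twist of a geometrically integral component.

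\begin{enumerate}
\item Consider the finite \'etale $k$-scheme $\pi_0(\ms{X})$, and more generally the Stein factorization of the components. This gives a morphism $\ms{X}\to \Spec K$ with $K$ a finite \'etale $k$-algebra.
\item Using a finite torsor pulled back from a Galois closure (a $G$-torsor with $G$ finite \'etale), the condition $x\in\ms{X}(\A_k)^{\et}$ forces the composite $\Spec\A_k\to\Spec K$ to come from a $k$-point of $\Spec K$. In other words, the adelic point lies on a component $\ms{X}_0$ that is geometrically connected over $k$. Since $\ms{X}_0$ is smooth, it is geometrically integral.
\item The twisted torsors and the Brauer conditions restrict along the open and closed immersion $\ms{X}_0\hookrightarrow\ms{X}$, and $\Br$ is functorial. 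Hence $x\in\ms{X}_0(\A_k)^{\et,\Br}$.
\end{enumerate}

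Having landed in $\ms{X}_0$, we choose the presentation $\ms{X}_0\simeq[X_0/\GL_N]$. Since $\GL_N$ is connected, $X_0$ is geometrically integral exactly when $\ms{X}_0$ is. \cref{et_Br_contained_in_desc_desc} then gives $x\in\ms{X}_0(\A_k)^{\desc,\desc}$. Finally we push forward to $\ms{X}$. Given a $G$-torsor $\ms{Y}\to\ms{X}$, we restrict it to $\ms{X}_0$, obtain a twist $\ms{Y}_0^\sigma$ carrying a descent-unobstructed lift, and map this lift into $\ms{Y}^\sigma$ by functoriality of $\desc$. This yields $x\in\ms{X}(\A_k)^{\desc,\desc}$.

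I expect the main obstacle to be step (2): showing that the \'etale–Brauer condition really forces the adelic point onto a geometrically integral component. The difficulty is that a component may only be defined over a finite extension, so its Weil restriction is not itself a component. I would first try Cao's trick. Take a finite \'etale group $G$ and a $G$-torsor $\ms{Y}\to\ms{X}$ built from $\Spec K$ via a Galois closure (for instance an $S_m$-torsor), together with the fact that $\pi_0$ of each twist is controlled. Then some twist $\ms{Y}^\sigma$ has an adelic point, and it must have a $k$-rational component because the Brauer–Manin condition applied to locally constant functions is trivial. If that fails, a fallback is to apply the argument to $\ms{Y}^\sigma$ directly, choosing $G$ so that its relevant component is geometrically integral, and then use $\ms{Y}^\sigma(\A_k)^{\Br}$ together with \cref{lem:torsor_keeps_good_condition} before invoking \cref{et_Br_contained_in_desc_desc} on the twist.
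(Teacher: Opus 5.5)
Your outer inclusions, the $\GL_N$-presentation, the restriction to an open and closed component, and the final push-forward by functoriality all match the paper. Step (2), however, is a genuine gap, for two reasons.

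First, step (2) as stated is false. At a complex place $v$ one has $\check{\H}^1(\mathbb{C},G)=1$ and $\Br(\mathbb{C})=0$, so no \'etale, Brauer or descent condition sees the $v$-component. Take $\ms{X}=\Spec k\sqcup\Spec k$ with $k$ having a complex place. An adelic point lying on the first copy at all non-complex places and on the second copy at one complex place belongs to $\ms{X}(\A_k)^{\desc,\desc}$. Yet its image under $\Spec\A_k\to C$ does not come from $C(k)$. You must therefore:
\begin{enumerate}
\item work with $x^{\mathrm{nc}}\in\ms{X}(\A_k^{\mathrm{nc}})$;
\item replace the complex components by points of $\ms{X}_c(\mathbb{C})\neq\emptyset$;
\item use that all the obstruction sets split off the complex factor (\cref{lem:basic_prop_of_Anc}).
\end{enumerate}
Without this, your final point only agrees with $x$ away from the complex places, and you never explain why that is enough.

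Second, you give no mechanism that actually pins down the component. Torsors pulled back from $\Spec k$, such as the Galois closure of $K$ or the $S_m$-frame torsor of $C$, become trivial after twisting by their own class. They impose no condition at all. You do not name any torsor on $C$ that is not of this form, and ``Brauer--Manin applied to locally constant functions'' produces no constraint either.

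The paper instead uses the Brauer half of the obstruction, in four steps:
\begin{enumerate}
\item $\ms{X}(\A_k)^{\et,\Br}\subseteq\ms{X}(\A_k)^{\Br}$.
\item The map $\pi:\ms{X}\simeq[Y/\GL_n]\to C=\pi_0(Y)$, obtained by descending $q_Y$ of \cref{lem:existence_of_C}. This is possible because $\GL_n$ is connected, and it also replaces your unjustified $\pi_0(\ms{X})$.
\item Functoriality of $\Br$, giving $\pi\circ x^{\mathrm{nc}}\in C(\A_k^{\mathrm{nc}})^{\Br}$.
\item The zero-dimensional result $C(\A_k^{\mathrm{nc}})^{\Br}=C(k)$ of \cite[3.3]{Liu_2015}. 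This comes from global reciprocity, by choosing classes in $\Br(C)=\prod_i\Br(L_i)$ with prescribed local invariants.
\end{enumerate}

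A purely finite-descent version might be arrangeable. For instance, $\Z/2$-torsors of the form $(0,\alpha)$ together with Chebotarev handle $\Spec k\sqcup\Spec k$. But it would need a real argument for components $\Spec L_i$ with $L_i\neq k$, and your proposal does not supply one.
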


We begin by choosing a quotient presentation with connected structure group. 
The connectedness of the group will be useful twice: it allows geometric integrality of the presenting variety to be detected from that of the quotient stack, and later ensures that the group action preserves the geometric connected components of the presenting variety.

\begin{lemma}\label{lem:modification_of_Kre08}
    (Modification of \cite[4.4]{Kre08}.)
    Let $\ms{X}$ be a smooth separated Deligne-Mumford stack of finite type over $k$ with quasi-projective coarse moduli space.
    Then $\ms{X}\simeq [X/G]$ for some smooth quasi-projective $k$-variety $X$ and connected linear $k$-group $G$. 

    Furthermore,
    $X$ can be chosen to be geometrically integral if $\ms{X}$ is also geometrically integral.
\end{lemma}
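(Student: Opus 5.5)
The plan is to follow Kresch's argument in \cite[4.4]{Kre08}, but to replace the group $\GL_n$ that it produces by a connected group whenever this is needed. Kresch shows that a separated Deligne--Mumford stack of finite type over a field, with quasi-projective coarse moduli space and satisfying the resolution property, is a quotient $[X/\GL_n]$ with $X$ quasi-projective. The required input is a generating vector bundle, and in characteristic $0$ this is available under our hypotheses by \cite[2.1.5]{SHIN-THESIS2019} together with Kresch's criterion. Concretely, I will start from a vector bundle $\mc{E}$ of rank $n$ on $\ms X$ whose stabilizer representations are faithful; such a bundle exists by Kresch's construction. Let $X\to \ms X$ be its frame bundle, the $\GL_n$-torsor $\underline{\mathrm{Isom}}(\mc{O}^n,\mc{E})$. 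Faithfulness means $X$ is an algebraic space. Since $\mc{E}$ is generating, Kresch's argument shows that $X$ is quasi-affine over the coarse space, and hence quasi-projective by \cite[\href{https://stacks.math.columbia.edu/tag/0B3H}{Tag 0B3H}]{SP}. This gives $\ms X\simeq[X/\GL_n]$ with $G=\GL_n$, which is already a connected linear $k$-group.

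Smoothness of $X$ is immediate: $X\to\ms X$ is a $\GL_n$-torsor, hence smooth, and $\ms X$ is smooth over $k$. For the same reason $X$ is separated and of finite type, so $X$ is a smooth quasi-projective $k$-variety. This proves the first assertion, and the connectedness of $G$ comes for free.

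For the second assertion, suppose $\ms X$ is geometrically integral; I need $X_{\bar k}$ to be integral. Since $X$ is smooth, $X_{\bar k}$ is normal, so it suffices to show $X_{\bar k}$ is connected and nonempty. The map $p\colon X_{\bar k}\to\ms X_{\bar k}$ is a $\GL_{n,\bar k}$-torsor. Every connected component of $X_{\bar k}$ is stable under the connected group $\GL_{n,\bar k}$: the action map $\GL_{n,\bar k}\times Z\to X_{\bar k}$ has connected source and contains $Z$, so its image lies in $Z$. Thus each component $Z$ is saturated, and $p(Z)$ is open and closed in $\ms X_{\bar k}$ because $p$ is smooth and surjective, hence open. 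The images of distinct components are disjoint by saturation. Since $\ms X_{\bar k}$ is connected, there is exactly one component. Therefore $X_{\bar k}$ is connected, hence integral. Note that no modification of $X$ is needed here.

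I expect the main obstacle to be the first step: guaranteeing that $X$ is a quasi-projective scheme rather than merely an algebraic space. This depends on Kresch's theorem, which needs the resolution property or a generating sheaf. My first attempt will be to cite \cite[4.4, 5.1]{Kre08} directly: in characteristic $0$, a smooth separated Deligne--Mumford stack with quasi-projective coarse space is a quotient stack. If that citation is not enough, I will pass through the generating-sheaf criterion of Kresch and Olsson.
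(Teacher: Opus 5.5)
Your proposal is correct and is essentially the paper's argument. Your frame bundle of a faithful vector bundle is exactly the paper's induced space $[X\times\GL_n/G]$, built from Kresch's presentation $[X/G]$ and an embedding $G\hookrightarrow\GL_n$; quasi-projectivity comes from Kresch in both (the paper cites \cite[4.3]{Kre08}), smoothness comes from the torsor, and geometric integrality follows because connectedness of $\GL_n$ gives the components over $\bar k$ disjoint open images in the connected stack $\ms X_{\bar k}$. One harmless slip: \cite[2.1.5]{SHIN-THESIS2019} is the surjectivity of the Brauer map, not a source of generating bundles, and you only ever use faithfulness of $\mc{E}$, which Kresch's quotient presentation already supplies.
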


\begin{remark}\label{kre08}
    Here we give the original statement in \cite{Kre08}:
    Let $\ms{X}$ be a smooth separated generically tame Deligne-Mumford stack of finite type over $k$
    ($k$ is an arbitrary field) with quasi-projective coarse moduli space.
    Then $\ms{X}\simeq [X/G]$ for some quasi-projective $k$-scheme $X$ and linear $k$-group $G$.
\end{remark}

\subsection{Proof of \cref{lem:modification_of_Kre08}}
By \cref{kre08},
we obtain a quasi-projective $k$-scheme $X$ such that 
$\ms{X}\simeq [X/G]$.
Choose an embedding \(G\hookrightarrow GL_n\) for some \(n\), and consider
\begin{align*}
    Y := [X\times \GL_n/G].
\end{align*}
Then we have
$
    \ms{X} \simeq [Y/\GL_n]. 
$

By \cite[4.3]{Kre08},
since $\ms{X}$ has a quasi-projective coarse moduli space,
$Y$ is a quasi-projective $k$-scheme.
By smoothness of $\GL_n$,
$Y\to \ms{X}\simeq [Y/\GL_n]$ is smooth.
Hence by smoothness of $\ms{X}$,
$Y$ is a smooth quasi-projective $k$-variety.

We now prove $Y$ is geometrically integral by contradiction.
Suppose $Y_{\overline{k}}$ is not integral.
$Y_{\overline{k}}$ is a smooth quasi-projective $\overline{k}$-variety,
so by \cite[13.2.L]{Vakil2025},
$Y_{\overline{k}}$ is not connected.
Write
\begin{align*}
    Y_{\overline{k}}=Y_1\sqcup Y_2.
\end{align*}
$f:Y_{\overline{k}}\to [Y/\GL_n]_{\overline{k}}\simeq\ms{X}_{\overline{k}}$ is smooth, in particular, an open morphism.
So $f(Y_1),f(Y_2)$ are open in $\ms{X}_{\overline{k}}$.
By connectedness of $\GL_n$,
every fibre of $f$ is connected,
hence:
\begin{align*}
    \ms{X}_{\overline{k}}=f(Y_1)\sqcup f(Y_2).
\end{align*}
Indeed, if \(f(Y_1)\cap f(Y_2)\) were nonempty, the corresponding fibre would meet both open-and-closed subsets \(Y_1\) and \(Y_2\), contradicting its connectedness.
But this contradicts the connectedness of $\ms{X}_{\overline{k}}$.
\qed

\begin{lemma}\label{et_Br_contained_in_desc_desc}
\cite[Lemma 5.3]{LW26}
    Let $\ms{X}=[X/G]$ where $X$ is a smooth quasi-projective geometrically integral $k$-variety
and $G$ is a linear $k$-group.
Then:
\begin{align*}
    \ms{X}(\A_k)^{\et,\Br}\subseteq \ms{X}(\A_k)^{\desc,\desc}
\end{align*}
\end{lemma}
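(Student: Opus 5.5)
Given $x\in\ms{X}(\A_k)^{\et,\Br}$ with $\ms{X}=[X/G]$, I would show that for every linear $k$-group $H$ and every $H$-torsor $f:\ms{Y}\to\ms{X}$ there are $\sigma$ and $y\in\ms{Y}^\sigma(\A_k)^{\desc}$ with $x=f^\sigma\circ y$. The plan follows the variety case of Cao \cite{Cao_2020}, transported to the quotient presentation. Since $x$ is an adelic point of $[X/G]$, it corresponds to a $G$-torsor $P\to\Spec\A_k$ together with a $G$-equivariant map $P\to X$. Twisting by the class of $P$, or rather by a $k$-torsor coming from $\check\H^1(k,G)$, turns $x$ into an adelic point of a twisted variety ${}_{\rho}X$ with $G^\rho$-action. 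This twist uses that $x$ survives the descent obstruction for the torsor $X\to\ms{X}$, which the étale-Brauer set controls only after a further argument.

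The core step is the $\mathrm{SL}_n$-descent trick. Embed $G\hookrightarrow \mathrm{SL}_n$ and set $Z:=X\times^G\mathrm{SL}_n$. Then $Z\to\ms{X}$ is an $\mathrm{SL}_n$-torsor and $Z$ is a smooth quasi-projective geometrically integral variety, the latter because $\mathrm{SL}_n$ is connected and $X$ is geometrically integral. Since $\check\H^1(K,\mathrm{SL}_n)$ vanishes for every field $K$ and $\H^1(\A_k,\mathrm{SL}_n)\hookrightarrow\prod_v\H^1(k_v,\mathrm{SL}_n)$, every adelic point of $\ms{X}$ lifts to $Z(\A_k)$, with no twist needed. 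Next I would check that lifting identifies the étale-Brauer sets: $x\in\ms{X}(\A_k)^{\et,\Br}$ should lift to some $z\in Z(\A_k)^{\et,\Br}$. For this, pull finite étale torsors back from $\ms{X}$ to $Z$ and compare Brauer groups along $Z\to\ms{X}$; the comparison uses that $\Br(\mathrm{SL}_n)$ and $\Pic(\mathrm{SL}_n)$ are trivial and that $\bar k[\mathrm{SL}_n]^\times=\bar k^\times$, so that $\Br(Z)$ is generated by $\Br(\ms{X})$ together with twisting, in the spirit of Harari–Skorobogatov.

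Once $z\in Z(\A_k)^{\et,\Br}$, the variety results of \cite{cao2018comparingdescentobstructionbrauermanin} and \cite{Cao_2020} give $Z(\A_k)^{\et,\Br}=Z(\A_k)^{\desc,\desc}$, so $z\in Z(\A_k)^{\desc,\desc}$. It then remains to push this forward along $p:Z\to\ms{X}$, and I would use the functoriality of composite obstructions from \cref{rem_after_PGL_F_contained_in_Br_F} and \cite[5.2]{Lv21}. Given an $H$-torsor $\ms{Y}\to\ms{X}$, its pullback $\ms{Y}_Z\to Z$ is an $H$-torsor. Since $z\in Z(\A_k)^{\desc,\desc}$, there are $\sigma$ and $w\in\ms{Y}_Z^\sigma(\A_k)^{\desc}$ lifting $z$. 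Its image in $\ms{Y}^\sigma$ lies in $\ms{Y}^\sigma(\A_k)^{\desc}$, because descent sets are functorial: every torsor on $\ms{Y}^\sigma$ pulls back to $\ms{Y}_Z^\sigma$. This image maps to $x$.

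The step I expect to be the main obstacle is the lifting of the étale-Brauer condition from $\ms{X}$ to $Z$. Finite étale torsors of $Z$ need not all come from $\ms{X}$. A finite torsor over $Z$ restricts trivially to the geometric fibres $\mathrm{SL}_{n,\bar k}$ because $\mathrm{SL}_n$ is simply connected, and I would try to use this to descend it to $\ms{X}$. The Brauer classes of the resulting finite covers must be handled the same way. This is also exactly where geometric integrality of $X$ is needed.
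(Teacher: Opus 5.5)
Your proposal is correct and is the same $\mathrm{SL}_n$-descent argument as in the source the paper cites, \cite[Lemma 5.3]{LW26}; the paper gives no proof of its own. The argument writes $\ms{X}\simeq[Z/\mathrm{SL}_n]$ with $Z=X\times^{G}\mathrm{SL}_n$, transfers the \'{e}tale--Brauer condition from $x$ to a lift $z\in Z(\A_k)$, applies \cite{cao2018comparingdescentobstructionbrauermanin} and \cite{Cao_2020} to the smooth quasi-projective geometrically integral variety $Z$ (this, not the torsor descent, is where geometric integrality of $X$ is used), and pushes forward by functoriality of descent sets. Your opening twisting paragraph is unnecessary, and the step you flag is milder than you fear: because $\mathrm{SL}_{n,\bar k}$ is simply connected, every finite \'{e}tale torsor $W\to Z$ is $V\times_{\ms{X}}Z$ for some $V\to\ms{X}$, the $\mathrm{SL}_n$-torsor $W^{\sigma}=V^{\sigma}\times_{\ms{X}}Z\to V^{\sigma}$ induces a surjection $\Br(V^{\sigma})\to\Br(W^{\sigma})$ (the sheaves $R^1$ and $R^2$ of $\G_m$ vanish, as $\Pic$ and $\Br$ of $\mathrm{SL}_n$ over strictly henselian regular local rings are trivial), and so \emph{any} lift $z$ lies in $Z(\A_k)^{\et,\Br}$, witnessed by $w=(v,z)\in W^{\sigma}(\A_k)$.
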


The quotient presentation supplied by \cref{lem:modification_of_Kre08} need not have a geometrically integral numerator. 
To isolate the relevant component, 
we record the scheme of geometric connected components of a smooth variety. 
A $k$-rational point of this finite étale scheme selects a component that is itself defined over $k$, and smoothness then makes this component geometrically integral.

\begin{lemma}\label{lem:existence_of_C}
Let $Y$ be a smooth $k$-variety. 
Then there exists a finite \'etale $k$-scheme
$
C=\pi_0(Y)
$ 
and a $k$-morphism
$
q_Y:Y\longrightarrow C
$ 
such that, after base change to $\bar k$, 
the fibres of
$q_Y$ are precisely the connected components of
$Y_{\bar k}$. 

If $c\in C(k)$, then the fibre
\[
Y_c:=Y\times_C\Spec k
\]
is smooth and geometrically integral. 

Moreover,
if $Y$ is
quasi-projective, then so is $Y_c$.
\end{lemma}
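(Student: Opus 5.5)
The plan is to use the standard Stein-factorization construction of $\pi_0$ of a variety, via the algebraic closure of $k$ in the function field of each component.

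\emph{Construction of $C$ and $q_Y$.} Since $Y$ is smooth, hence normal, it is a finite disjoint union of its connected components $Y^{(1)},\dots,Y^{(r)}$, each an integral smooth $k$-variety. For each $i$, let $k_i$ be the algebraic closure of $k$ in the function field $k(Y^{(i)})$. Because $Y^{(i)}$ is normal, every element of $k_i$ is integral over each local ring, so it lies in $\mathcal{O}(Y^{(i)})$. This gives a $k$-algebra map $k_i\to \mathcal{O}(Y^{(i)})$, that is, a morphism $Y^{(i)}\to \Spec k_i$. Since $\operatorname{char} k=0$, each $k_i/k$ is finite separable, so
\[
C:=\bigsqcup_i \Spec k_i
\]
is finite étale over $k$, and we set $q_Y:=\bigsqcup_i (Y^{(i)}\to\Spec k_i)$.

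\emph{Fibres over $\bar k$.} The main point is that $Y^{(i)}$ is geometrically integral as a $k_i$-variety. It is geometrically reduced because $k_i/k$ is separable in characteristic $0$, and it is geometrically irreducible because $k_i$ is algebraically closed in $k(Y^{(i)})$ (EGA IV 4.5.9, Stacks Tag 0G33). Then
\[
Y^{(i)}\times_k\bar k=\bigsqcup_{\iota:k_i\hookrightarrow\bar k} Y^{(i)}\times_{k_i,\iota}\bar k,
\]
and each piece is integral, hence connected, and is exactly the fibre of $q_Y$ over the $\bar k$-point $\iota$ of $C_{\bar k}$. So the fibres of $q_{Y,\bar k}$ are precisely the connected components of $Y_{\bar k}$. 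This identification is the step requiring care; the rest is formal.

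\emph{Rational points.} A point $c\in C(k)$ corresponds to an index $i$ with $k_i=k$. Then
\[
Y_c=Y\times_C\Spec k=Y^{(i)},
\]
which is an open and closed subscheme of $Y$. It is smooth as an open subscheme of $Y$, and geometrically integral by the previous step with $k_i=k$.

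\emph{Quasi-projectivity.} If $Y$ is quasi-projective, then $Y_c\hookrightarrow Y$ is a closed immersion, so $Y_c$ is quasi-projective. Equivalently, $Y_c\to Y$ is a base change of the closed immersion $\Spec k\to C$, since a rational point of a finite étale scheme is open and closed.

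The main obstacle is the geometric-irreducibility input, namely that the algebraic closure of $k$ in the function field makes $Y^{(i)}$ geometrically irreducible over $k_i$. I would simply cite it, together with normality to get the regular functions $k_i\subseteq\mathcal{O}(Y^{(i)})$.
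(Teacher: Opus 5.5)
Your proof is correct, but it builds $C$ by a different route from the paper.

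The paper works over $\bar k$ from the start. It takes the finite continuous $\Gamma_k$-set $\pi_0(Y_{\bar k})$ and the finite étale $k$-scheme $C$ it corresponds to under the Galois equivalence. It then descends the $\Gamma_k$-equivariant map $Y_{\bar k}\to C_{\bar k}$, which collapses each connected component to a point, by Galois descent. For $c\in C(k)$, the fibre $(Y_c)_{\bar k}$ is then a single connected component of the smooth variety $Y_{\bar k}$, hence integral.

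You instead construct $C=\bigsqcup_i \Spec k_i$ directly over $k$, where $k_i$ is the algebraic closure of $k$ in the function field of the component $Y^{(i)}$. You use normality to factor $Y^{(i)}\to \Spec k_i$. Your key input is the criterion that an integral $k_i$-scheme, with $k_i$ (separably) algebraically closed in its function field, is geometrically irreducible. The identification of the $\bar k$-fibres with the connected components then follows from $k_i\otimes_k\bar k\cong\prod_\iota \bar k$.

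What each route buys:
\begin{itemize}
\item Your route avoids descent of morphisms and gives an explicit $C$. It makes visible that $C(k)$ corresponds to the geometrically connected components of $Y$, namely those with $k_i=k$. It also proves geometric integrality of every component over its own $k_i$, not only at rational points.
\item The paper's route gets the fibre description essentially by construction and needs only ``connected and smooth implies integral'', at the price of the Galois-descent formalism.
\end{itemize}

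Two small points of justification. First, finiteness of $k_i/k$ comes from $k(Y^{(i)})$ being finitely generated over $k$, not from characteristic $0$. Second, geometric reducedness of $Y^{(i)}$ over $k_i$ is better justified by $k_i$ being perfect, or by $Y^{(i)}\to\Spec k_i$ being smooth since $\Spec k_i\to\Spec k$ is étale, than by separability of $k_i/k$. Neither point affects correctness.
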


\begin{proof}
Since $Y$ is of finite type over $k$, 
the set $\pi_0(Y_{\bar k})$ is finite and carries a continuous action of $\Gamma_k=\mathrm{Gal}(\bar k/k)$
by
\cite[\href{https://stacks.math.columbia.edu/tag/038E}{Tag 038E}]{SP}.
By the equivalence between \'etale $k$-schemes and
continuous $\Gamma_k$-sets \cite[\href{https://stacks.math.columbia.edu/tag/03QR}{Tag 03QR}]{SP},
the finite $\Gamma_k$-set $\pi_0(Y_{\bar k})$ corresponds
to a finite \'etale $k$-scheme $C$.

Over $\bar k$, define
\[
q_{\bar k}:Y_{\bar k}\longrightarrow
C_{\bar k}
=
\coprod_{\Gamma\in\pi_0(Y_{\bar k})}\Spec\bar k
\]
by sending each connected component $\Gamma$ to the
corresponding point of $C_{\bar k}$. 
This morphism is
$\Gamma_k$-equivariant and hence descends to a
$k$-morphism 
$
q_Y:Y\longrightarrow C
$ by \cite[\href{https://stacks.math.columbia.edu/tag/040L}{Tag 040L}]{SP}.

Let $c\in C(k)$. By construction, $(Y_c)_{\bar k}$ is a
connected component of $Y_{\bar k}$.
It is smooth 
because it is an open subscheme of $Y_{\bar k}$; hence it
is integral by \cite[13.2.L]{Vakil2025}. 
Thus $Y_c$ is
geometrically integral.

$Y_c$ is smooth and is quasi-projective whenever
$Y$ is quasi-projective by base change \cite[\href{https://stacks.math.columbia.edu/tag/01VV}{Tag 01VV}]{SP}.
\end{proof}

To produce such a $k$-rational component from an adelic point, 
it is convenient to disregard the complex places. 
This causes no loss for the obstruction sets considered here: over $\mathbb C$, 
torsors under linear algebraic groups are trivial and the Brauer group vanishes. Consequently, the complex components of an adelic point impose no additional condition for the Brauer–Manin, descent, étale–Brauer, or iterated descent obstructions.

Let \(\Omega_k^{\mathbb C}\) 
denote the set of complex places of \(k\), 
and let
\[
\mathbf A_k^{\mathrm{nc}}:=\prod_{v\notin\Omega_k^{\mathbb C}}' k_v
\]
be the ring of adeles away from the complex places. 
Thus
\[
\mathbf A_k\simeq
\mathbf A_k^{\mathrm{nc}}\times
\prod_{v\in\Omega_k^{\mathbb C}}k_v.
\]
As in \cref{section:2}, 
using the natural morphism
\[
q^{\mathrm{nc}}:\operatorname{Spec}\mathbf A_k^{\mathrm{nc}}\longrightarrow\operatorname{Spec}k,
\]
we define the corresponding obstruction subcategories of
\(\mathcal X(\mathbf A_k^{\mathrm{nc}})\), 
and similarly their composite versions.

\begin{lemma}\label{lem:basic_prop_of_Anc}
Let $\ms{X}$ be an algebraic stack over $k$.
    Then there is a natural equivalence:
\begin{align*}
    \ms{X}(\A_k)=\ms{X}(\A_k^{\mathrm{nc}})\times \prod_{v\in \Omega_k^{\mathbb C}} \ms{X}(\mathbb C),
\end{align*}
and under this equivalence:
\begin{align*}
    \ms{X}(\A_k)^{\Br}&=\ms{X}(\A_k^{\mathrm{nc}})^{\Br}\times \prod_{v\in\Omega_k^{\mathbb C}} \ms{X}(\mathbb C)\\
    \ms{X}(\A_k)^{\desc}&=\ms{X}(\A_k^{\mathrm{nc}})^{\desc}\times \prod_{v\in\Omega_k^{\mathbb C}} \ms{X}(\mathbb C)\\
    \ms{X}(\A_k)^{\et,\Br}&=\ms{X}(\A_k^{\mathrm{nc}})^{\et,\Br}\times \prod_{v\in\Omega_k^{\mathbb C}} \ms{X}(\mathbb C)\\
    \ms{X}(\A_k)^{\desc,\desc}&=\ms{X}(\A_k^{\mathrm{nc}})^{\desc,\desc}\times \prod_{v\in\Omega_k^{\mathbb C}} \ms{X}(\mathbb C)
\end{align*}
\end{lemma}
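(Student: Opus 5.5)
The plan is to split everything along the decomposition of rings $\A_k\simeq \A_k^{\mathrm{nc}}\times\prod_{v\in\Omega_k^{\mathbb C}}k_v$. First, since $\Spec$ of a finite product of rings is the disjoint union of the spectra, and a morphism from a disjoint union into a stack is the same as a family of morphisms from the pieces (the stack axiom for Zariski coverings by clopen pieces), we get an equivalence of groupoids
\[
\ms{X}(\A_k)\simeq \ms{X}(\A_k^{\mathrm{nc}})\times\prod_{v\in\Omega_k^{\mathbb C}}\ms{X}(k_v),\qquad k_v\simeq\mathbb C .
\]
The same argument applies to any stable functor $F$ that takes finite disjoint unions to products; this is true for $\check{\H}^1_{\fppf}(-,G)$ and $\H^2_{\et}(-,\G_m)$. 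Hence $F(\A_k)=F(\A_k^{\mathrm{nc}})\times\prod_v F(\mathbb C)$, and the map $F(q)$ factors as the diagonal of $F(q^{\mathrm{nc}})$ and the maps $F(k)\to F(\mathbb C)$.

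Next, we use the vanishing facts. Every $G$-torsor over $\mathbb C$ is trivial for a linear group $G$, because $\mathbb C$ is algebraically closed and $G$ is smooth. Also $\Br(\mathbb C)=0$. So $F(\mathbb C)$ is a single point for $F=\check{\H}^1(-,G)$ and for $F=\H^2_{\et}(-,\G_m)$. Thus $A(x)\in\im F(q)$ holds iff the $\A_k^{\mathrm{nc}}$-component of $A(x)$ lies in $\im F(q^{\mathrm{nc}})$, with no condition on the complex components. Intersecting over all $A$ gives the first two equalities, for $\Br$ and $\desc$.

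For the composite obstructions $\et,\Br$ and $\desc,\desc$, we fix a torsor $f:\ms{Y}\to\ms{X}$ and examine
\[
\bigcup_{\sigma}f^\sigma\bigl(\ms{Y}^\sigma(\A_k)^{\circledcirc}\bigr).
\]
By the cases already proved, applied to $\ms{Y}^\sigma$ (which is also a stack over $k$), $\ms{Y}^\sigma(\A_k)^{\circledcirc}$ equals $\ms{Y}^\sigma(\A_k^{\mathrm{nc}})^{\circledcirc}\times\prod_v\ms{Y}^\sigma(\mathbb C)$. The key additional input is that $f^\sigma:\ms{Y}^\sigma(\mathbb C)\to\ms{X}(\mathbb C)$ is essentially surjective. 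Indeed, the fibre over a complex point is a $G^\sigma_{\mathbb C}$-torsor over $\mathbb C$, hence trivial, hence it has a section. Therefore the image is $f^\sigma(\ms{Y}^\sigma(\A_k^{\mathrm{nc}})^{\circledcirc})\times\prod_v\ms{X}(\mathbb C)$. Taking the union over $\sigma$ and the intersection over $G$ and $f$ commutes with the product with the fixed factor $\prod_v\ms{X}(\mathbb C)$, so the result follows.

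The main obstacle is bookkeeping rather than mathematics. One must check that the twist and the images are compatible with the product decomposition on the level of groupoids, i.e. up to isomorphism of objects, since these are full subcategories. One must also check that the essential-surjectivity argument uses only that the fibre is a torsor under a smooth affine group over an algebraically closed field. I would also verify carefully that the identification $F(\A_k)=F(\A_k^{\mathrm{nc}})\times\prod F(\mathbb C)$ holds for Čech $\H^1$ on stacks. This is clear via the torsor description $\check{\H}^1\simeq\mathrm{Tors}_{/\cong}$, since torsors over a disjoint union are exactly families of torsors over the pieces.
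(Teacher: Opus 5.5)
Your proposal is correct and follows essentially the same route as the paper. You split off the complex places, use $\check{\H}^1(\mathbb C,G)=1$ and $\Br(\mathbb C)=0$ for the single obstructions, and for the composite ones apply the already-proved splitting to each twist $\ms{Y}^\sigma$, together with the surjectivity of $\ms{Y}^\sigma(\mathbb C)\to\ms{X}(\mathbb C)$ that comes from triviality of $G^\sigma$-torsors over $\mathbb C$. Your extra bookkeeping (compatibility of $F$ with finite disjoint unions, and working up to isomorphism in the groupoids) fills in details the paper leaves implicit.
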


\begin{proof}
    The first two equalities follow immediately from the fact that for every linear $k$-group $G$ we have $\check \H^1_{\fppf}(\mathbb C, G_{\mathbb C})=1$ and $\Br(\mathbb C)=0$.
    
The third equality follows from the same argument as the last one, hence we only prove the last equality.
By definition,
\[
\mathcal X(\mathbf A_k)^{\mathrm{desc,desc}}
=
\bigcap_{G}
\bigcap_{f:\mathcal Y\to\mathcal X}
\bigcup_{\sigma\in \check H^1(k,G)}
f^\sigma\!\left(
\mathcal Y^\sigma(\mathbf A_k)^{\mathrm{desc}}
\right),
\]
where $G$ runs through the linear $k$-groups and
$f:\mathcal Y\to\mathcal X$ runs through the $G$-torsors.

For every $\sigma\in\check H^1(k,G)$, the twisted morphism 
$
f^\sigma:\mathcal Y^\sigma\longrightarrow\mathcal X
$
is a torsor under the inner form $G^\sigma$. At every complex
place $v$, one has 
$
\check H^1(\mathbb C,G^\sigma)=1.
$
Hence every point of $\mathcal X(\mathbb C)$ lifts to a point
of $\mathcal Y^\sigma(\mathbb C)$. Moreover, applying the
already proved decomposition for the descent obstruction to
$\mathcal Y^\sigma$, we have
\[
\mathcal Y^\sigma(\mathbf A_k)^{\mathrm{desc}}
=
\mathcal Y^\sigma(\mathbf A_k^{\mathrm{nc}})^{\mathrm{desc}}
\times
\prod_{v\in\Omega_k^{\mathbb C}}
\mathcal Y^\sigma(\mathbb C).
\]

It follows that, for every $f$ and $\sigma$,
\[
f^\sigma\!\left(
\mathcal Y^\sigma(\mathbf A_k)^{\mathrm{desc}}
\right)
=
f^\sigma\!\left(
\mathcal Y^\sigma(\mathbf A_k^{\mathrm{nc}})^{\mathrm{desc}}
\right)
\times
\prod_{v\in\Omega_k^{\mathbb C}}
\mathcal X(\mathbb C).
\]
Taking first the union over $\sigma$ and then the
intersection over all torsors $f$ gives
\[
\mathcal X(\mathbf A_k)^{\mathrm{desc,desc}}
=
\mathcal X(\mathbf A_k^{\mathrm{nc}})^{\mathrm{desc,desc}}
\times
\prod_{v\in\Omega_k^{\mathbb C}}
\mathcal X(\mathbb C).
\]

The third equality for varieties satisfying the usual hypotheses was mentioned in \cite[Equation (5-5)]{Cao_2020}.
\end{proof}

\subsection{Proof of \cref{et_br_eq_desc_2}}
We only need to prove: for  a smooth separated Deligne-Mumford stack $\ms{X}$ of finite type over $k$ with quasi-projective coarse moduli space,
we have:
\begin{align*}
        \ms{X}(\A_k)^{\et,\Br}\subseteq\ms{X}(\A_k)^{\desc,\desc}
\end{align*}

By \cref{lem:modification_of_Kre08} and its proof,
we can write $\ms{X}\simeq [Y/\GL_n]$ for a smooth quasi-projective $k$-variety $Y$.

% input---------
The morphism in \cref{lem:existence_of_C} $q_Y: Y\to C$ is $\GL_n$-invariant:
$\GL_n$ is connected, 
so its action preserves every geometrically connected component of $Y$,
i.e. its action on $C$ is trivial.
Hence it descends to
\[
\pi:\mathcal X\simeq [Y/\GL_n]\longrightarrow C,
\]
and for every $c\in C(k)$,
\[
\mathcal X_c
\simeq
[Y_c/\GL_n],
\]
where by \cref{lem:existence_of_C} $Y_c$ is smooth, quasi-projective, and geometrically integral.

For any $x\in\ms{X}(\A_k)^{\et,\Br}$,
let $x^{\mathrm{nc}}$ be the composition of morphisms of algebraic stacks $\Spec \A_k^{\mathrm{nc}}\to\Spec \A_k\xrightarrow{x} \ms{X}$.
By $\ms{X}(\A_k)^{\et,\Br}\subseteq \ms{X}(\A_k)^{\Br}$ and \cref{lem:basic_prop_of_Anc},
$x^{\mathrm{nc}}\in \ms{X}(\A_k^{\mathrm{nc}})^{\Br}$,
so $\pi\circ x^{\mathrm{nc}}\in C(\A_k^{\mathrm{nc}})^{\Br}$.
By \cite[3.3]{Liu_2015}:
\begin{align*}
    C(\A_k^{\mathrm{nc}})^{\Br}=C(k)
\end{align*}
So we have the following diagram:
\[
\begin{tikzcd}
    \Spec \A_k^{\mathrm{nc}} \arrow[r] \arrow[d,"x^{\mathrm{nc}}"] & k \arrow[d, "c"]\\
    \ms{X} \arrow[r,"\pi"] & C
\end{tikzcd}
\]

By definition,
for any finite \'etale $k$-group $G$
and any $G$-torsor $f:\ms{Y}\to \ms{X}$,
there exists $\sigma\in \check \H^1_{\fppf}(k,G)$ and $y\in \ms{Y}^{\sigma}(\A_k)^{\Br}$ such that $x=f^{\sigma}\circ y$.
Therefore we have the following diagram:
\[
\begin{tikzcd}
\Spec \mathbf{A}^{\mathrm{nc}}_k \arrow[rrd, "y^{\mathrm{nc}}" description] \arrow[rd, "\overline{y^{\mathrm{nc}}}" description] \arrow[rdd, "\overline{x^{\mathrm{nc}}}" description] \arrow[rddd] &     &      \\
& \mathcal Y^{\sigma}_c \arrow[r, "j" description] \arrow[d] & \mathcal Y^{\sigma} \arrow[d, "f^{\sigma}" description] \\
& \mathcal X_c \arrow[r, "i" description] \arrow[d]                           & \mathcal X \arrow[d]                                    \\
& k \arrow[r]                                                & C                                                      
\end{tikzcd}
\]
By \cref{lem:basic_prop_of_Anc}
$y^{\mathrm{nc}}\in \ms{Y}^{\sigma}(\A_k^{\mathrm{nc}})^{\Br}$.
Note that $c: k\to C$ is a section of a finite \'etale $k$-scheme $C$,
hence it is both an open and a closed immersion.
So there exists another open substack $\overline{\ms{Y}_c^{\sigma}}$ of $\ms{Y}^{\sigma}$ such that
\begin{align*}
    \ms{Y}^{\sigma} = \ms{Y}_c^{\sigma} \sqcup \overline{\ms{Y}_c^{\sigma}}.
\end{align*}
Hence $\Br(\ms{Y}^{\sigma})\simeq \Br(\ms{Y}_c^{\sigma})\times \Br(\overline{\ms{Y}_c^{\sigma}}) \to \Br(\ms{Y}_c^{\sigma})$ is surjective.
Then, by definition, we have $\overline{y^{\mathrm{nc}}}\in \ms{Y}_c^{\sigma}(\A_k^{\mathrm{nc}})^{\Br}$.
Similarly $\ms{X}_c$ is also an open-and-closed substack of $\ms{X}$,
hence $\H^1_{\fppf}(\ms{X},G)\to \H^1_{\fppf}(\ms{X}_c,G)$ is also surjective.
In other words,
as the $G$-torsor $\ms{Y}\to\ms{X}$ ranges over all such torsors,
all $G$-torsors on $\ms{X}_c$ can be obtained.
So $\overline{x^{\mathrm{nc}}}\in \ms{X}_c(\A_k^{\mathrm{nc}})^{\et,\Br}$.

Since \(X_c(\mathbf C)\neq\varnothing\),
\cref{lem:basic_prop_of_Anc} implies that there exists $\overline{x'}\in \ms{X}_c(\A_k)^{\et,\Br}$ such that
the composition $\Spec \A_k^{\mathrm{nc}}\to \Spec \A_k\xrightarrow{\overline{x'}} \ms{X}_c$ coincides with $\overline{x^{\mathrm{nc}}}$.
Recall that $\ms{X}_c\simeq [Y_c/\GL_n]$ for a smooth, quasi-projective, and geometrically integral $k$-variety $Y_c$.
So by \cref{et_Br_contained_in_desc_desc},
$\overline{x'}\in \ms{X}_c(\A_k)^{\desc,\desc}$.
Hence by \cref{rem_after_PGL_F_contained_in_Br_F} $x':=i \circ \overline{x'}\in \ms{X}(\A_k)^{\desc,\desc}$.

Finally, it is immediate that the composition $\Spec \A_k^{\mathrm{nc}}\to \Spec \A_k\xrightarrow{x'} \ms{X}$ coincides with $x^{\mathrm{nc}}$.
So by \cref{lem:basic_prop_of_Anc},
$x\in\ms{X}(\A_k)^{\desc,\desc}$,
i.e.
\begin{align*}
    \ms{X}(\A_k)^{\et,\Br}\subseteq \ms{X}(\A_k)^{\desc,\desc}.
\end{align*}
Combine this with \cref{et_br_eq_desc}:
\begin{align*}
    \ms{X}(\A_k)^{\desc}\subseteq \ms{X}(\A_k)^{\et,\Br}\subseteq \ms{X}(\A_k)^{\desc,\desc}\subseteq \ms{X}(\A_k)^{\desc}.
\end{align*}
So all inclusions are equalities.
\qed

\section{Comparing Brauer obstruction and the iterated Brauer obstruction}

In this section, we compare the Brauer-Manin obstruction with the composite $\Br,\Br$ obstruction on an algebraic stack $\ms{X}$.

First, we recall the definitions:
$$
\ms{X}(\A_k)^{\Br} := \ms{X}(\A_k)^{\H^2_{\et}(-,\G_m)},
$$
$$
\ms{X}(\A_k)^{\Br,\Br} := \ms{X}(\A_k)^{\H^2_{\et}(-,\G_m), \Br} = \bigcap_{g \in \H^2_{\et}(\ms{X}, \G_m)} \bigcup_{\tau \in \H^2_{\et}(k, \G_m)} g^\tau(\ms{Z}^\tau(\A_k)^{\Br}),
$$

To compare these two obstruction sets, our main input is the surjectivity of the pullback map of Brauer groups along a $\G_m$-gerbe. In \cite[Theorem 4.1.2]{SHIN-THESIS2019}, Shin proved that for a quasi-compact scheme $S$ and a torsion $\G_m$-gerbe $\pi_{\ms{G}}: \ms{G} \to S$, we have an exact sequence
$$
\H^0_{\et}(S, \Z) \to \Br'(S) \xrightarrow{\pi_{\ms{G}}^*} \Br'(\ms{G}) \to 0
$$
where $\Br'(S)$ is the cohomological Brauer group ${{\H}^2_{\et}(S,\G_m)}_{\text{tors}}$. We generalize this result to certain Artin stacks.

\medskip

We first calculate the pushforward sheaf $\pi_* \mathbb{G}_{m,\mc{Y}}$,
which will be used in the Leray spectral sequence. 

\begin{lemma}\label{lemma:gerbe-pushforward-O}
Let $\mc{X}$ be an algebraic stack over $S$, $G \to S$ an fppf qcqs group scheme, and $\pi: \mc{Y} \to \mc{X}$ a $G$-gerbe. Then $\pi$ is qcqs and the canonical map of quasi-coherent $\mc{O}_{\mc{X}}$-modules
\[
\mc{O}_{\mc{X}} \longrightarrow \pi_* \mc{O}_{\mc{Y}}
\]
is an isomorphism.
\end{lemma}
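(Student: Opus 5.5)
The plan is to reduce to the case where the gerbe is trivial, then conclude by fppf descent. Both assertions, that $\pi$ is qcqs and that $\mc{O}_{\mc{X}} \to \pi_*\mc{O}_{\mc{Y}}$ is an isomorphism, are local on $\mc{X}$ for the fppf (or smooth) topology. For quasi-compactness and quasi-separatedness this is \cite[\href{https://stacks.math.columbia.edu/tag/06MT}{Tag 06MT}]{SP}-type descent of properties of morphisms. For the pushforward, formation of $\pi_*$ of a quasi-coherent sheaf commutes with flat base change once $\pi$ is qcqs. So I first prove qcqs, then check the isomorphism after flat base change.

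\emph{Step 1: local trivialization.} Since $\pi$ is a $G$-gerbe, there is a smooth (or fppf) cover $U \to \mc{X}$ with $U$ a scheme such that $\mc{Y}_U := \mc{Y}\times_{\mc{X}} U$ admits a section. Because the band is $G$ (abelian or not, the automorphism sheaf of the section is an inner form of $G_U$), we get $\mc{Y}_U \simeq B_U H$ for some group scheme $H \to U$ that is fppf-locally isomorphic to $G_U$. After refining the cover fppf-locally, we may take $H = G_U$ and $\mc{Y}_U \simeq B G_U = [U/G_U]$.

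\emph{Step 2: qcqs.} The morphism $BG_U \to U$ has the section $s\colon U \to BG_U$, which is an fppf $G_U$-torsor and in particular fppf surjective and qcqs. Therefore $BG_U \to U$ is quasi-compact. Its diagonal is $BG_U \to BG_U\times_U BG_U$. Pulling back along $U \to BG_U\times_U BG_U$ gives $\underline{\mathrm{Isom}}$ of trivial torsors, which is $G_U$. Since $G_U$ is qcqs over $U$, the diagonal is qcqs, so $BG_U\to U$ is quasi-separated. Descending along $U\to\mc{X}$ shows that $\pi$ is qcqs.

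\emph{Step 3: invariants.} By flat base change \cite[\href{https://stacks.math.columbia.edu/tag/0767}{Tag 0767}]{SP}, it suffices to show that $\mc{O}_U \to p_*\mc{O}_{BG_U}$ is an isomorphism for $p\colon BG_U\to U$, with $U$ affine. A function on $BG_U = [U/G_U]$ (trivial action) is a function $f$ on $U$ with $a^*f = \mathrm{pr}^*f$ on $G_U\times_U U = G_U$. Both maps here are the structure map $G_U\to U$, so the condition is automatic. Concretely, the descent datum along the fppf cover $U\to BG_U$ gives the equalizer of $\Gamma(U,\mc{O}) \rightrightarrows \Gamma(G_U,\mc{O})$ in which both arrows equal the structure map. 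That equalizer is $\Gamma(U,\mc{O})$. The same holds after any flat base change $U'\to U$, so the map is an isomorphism of sheaves.

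The main obstacle is Step 1 when $G$ is non-commutative or merely fppf, since one must check that locally $\mc{Y}_U$ is $B$ of a group fppf-locally isomorphic to $G$, and that refining to $BG_U$ is legitimate. Steps 2 and 3 only use this local form, so any fppf refinement suffices.
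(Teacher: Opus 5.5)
Your proposal is correct and follows essentially the same route as the paper: trivialize the gerbe fppf-locally as $BG_U=[U/G_U]$ with trivial action, descend quasi-compactness and quasi-separatedness, use flat base change for $\pi_*$, and observe that global functions on $BG_U$ are just functions on $U$. The differences are only in detail: you verify that $BG_U\to U$ is qcqs, which the paper simply asserts, and you compute global sections via the equalizer for the cover $U\to BG_U$ rather than as $G_U$-invariants of a $G_U$-equivariant module.
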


\begin{proof}
By the definition of a $G$-gerbe, there exists an $\fppf$ cover $\{p_i: U_i \to \mc{X}\}$, where $U_i = \Spec A_i$ is an affine scheme, such that the pullback stack $\mc{Y}_{U_i} := U_i \times_{\mc{X}} \mc{Y}$ is isomorphic over $U_i$ to the trivial $G$-gerbe $B G_{U_i} = [U_i/G_{U_i}]$. We form the 2-Cartesian diagram:
\[
\begin{tikzcd}
\mc{Y}_{U_i} \arrow[r, "p_{\mc{Y}}"] \arrow[d, "\pi_{U_i}"'] & \mc{Y} \arrow[d, "\pi"] \\
U_i \arrow[r, "p"'] & \mc{X}
\end{tikzcd}
\]
Then $\pi$ is qcqs since $\pi_{U_i}$ is. By \cite[Proposition 7.1.14]{alper}, there is a canonical isomorphism of quasi-coherent $\mc{O}_{U_i}$-modules:
\[
p^* (\pi_* \mc{O}_{\mc{Y}}) \cong \pi_{U_i *} (p_{\mc{Y}}^* \mc{O}_{\mc{Y}}) = \pi_{U_i *} \mc{O}_{\mc{Y}_{U_i}}.
\]
It suffices to show $\mc O_{U_i} \cong \pi_{U_i *} \mc{O}_{\mc{Y}_{U_i}}$ for all $i$. We omit the subscript $i$ from now on. 

Now $\mc{Y}_U \cong [U/G_U]$, where $G_U$ acts trivially on $U = \Spec A$. 
The category of quasi-coherent sheaves on $[U/G_U]$ is equivalent to the category of $G_U$-equivariant quasi-coherent $\mc{O}_U$-modules. Under this equivalence:
\begin{enumerate}
    \item The structure sheaf $\mc{O}_{\mc{Y}_U}$ corresponds to the $A$-module $A$ equipped with the trivial $G_U$-action.
    \item The pushforward $\pi_{U *} \mc{O}_{\mc{Y}_U}$ on $U = \Spec A$ is the quasi-coherent sheaf associated to the $A$-module of $G_U$-invariants:
    \[
    \Gamma(U, \pi_{U *} \mc{O}_{\mc{Y}_U}) = \Gamma(\mc{Y}_U, \mc{O}_{\mc{Y}_U}) = A^{G_U}=A.
    \]
\end{enumerate}
\end{proof}

\begin{proposition}\label{prop:R0-Gm-isomorphism}
Let $\mc{X}$ be an algebraic stack over $S$, let $G \to S$ be an fppf affine group scheme, and $\pi: \mc{Y} \to \mc{X}$ a $G$-gerbe. Then there is a canonical isomorphism of sheaves of abelian groups on $\mc{X}_{\fppf}$ (and hence on $\mc{X}_{\et}$):
\[
\pi_* \mathbb{G}_{m,\mc{Y}} \cong \mathbb{G}_{m,\mc{X}}.
\]
\end{proposition}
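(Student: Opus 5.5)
The plan is to deduce the statement from \cref{lemma:gerbe-pushforward-O} by passing to unit groups. For any object $T\to\mc{X}$ of the big fppf site, with $T$ a scheme, I will compute
\[
(\pi_*\G_{m,\mc{Y}})(T)=\G_m(\mc{Y}_T)=\Gamma(\mc{Y}_T,\mc{O}_{\mc{Y}_T})^{\times}.
\]
Here $\mc{Y}_T:=T\times_{\mc{X}}\mc{Y}$, and the equalities hold by definition of pushforward on the big site. The formation of the gerbe is compatible with base change: $\mc{Y}_T\to T$ is again a $G$-gerbe, and $G_T\to T$ is still fppf and affine. So it suffices to show, functorially in $T$, that the canonical map
\[
\Gamma(T,\mc{O}_T)\to\Gamma(\mc{Y}_T,\mc{O}_{\mc{Y}_T})
\]
is a ring isomorphism. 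A ring isomorphism restricts to an isomorphism on units, and that gives the statement.

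For the ring isomorphism, I will apply \cref{lemma:gerbe-pushforward-O} to the gerbe $\pi_T:\mc{Y}_T\to T$. It gives $\mc{O}_T\xrightarrow{\sim}\pi_{T*}\mc{O}_{\mc{Y}_T}$, and taking global sections over $T$ gives the isomorphism above. The inverse $\G_{m,\mc{X}}\to\pi_*\G_{m,\mc{Y}}$ is the adjunction unit $u\mapsto\pi^*u$. This unit is canonical and compatible with restriction along morphisms $T'\to T$ in $\mc{X}_{\fppf}$, so the isomorphisms glue to an isomorphism of sheaves of abelian groups. The statement on $\mc{X}_{\et}$ follows by restricting to the étale site: the pushforward on the big étale site is computed by the same formula $T\mapsto\G_m(\mc{Y}_T)$.

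The main obstacle is justifying that $\Gamma$ of the pushed-forward quasi-coherent sheaf on the lisse-étale or fppf side agrees with $\Gamma(\mc{Y}_T,\mc{O})$ for an arbitrary, possibly non-flat, $T\to\mc{X}$ in the big site. Flat base change for pushforward (as in \cite[Proposition 7.1.14]{alper}) only applies to flat maps. To avoid this, I will not base change $\pi_*\mc{O}$ at all. Instead I will rerun the proof of \cref{lemma:gerbe-pushforward-O} directly for $\mc{Y}_T\to T$, which is itself a gerbe over the scheme $T$. In that proof only fppf locality on $T$ is used, and fppf morphisms are flat. Concretely, choose an fppf cover $T_i\to T$ trivializing the gerbe, so that $\mc{Y}_{T_i}\simeq BG_{T_i}$. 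Then
\[
\Gamma(BG_{T_i},\mc{O})=\Gamma(T_i,\mc{O})^{G}=\Gamma(T_i,\mc{O}),
\]
since the action is trivial. Finally, use the sheaf property of $\G_m$ on $\mc{Y}_T$ with respect to the fppf cover $\mc{Y}_{T_i}\to\mc{Y}_T$, together with the matching sheaf property on $T$, to conclude $\Gamma(T,\mc{O}_T)^{\times}=\G_m(\mc{Y}_T)$.
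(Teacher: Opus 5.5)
Your proposal is correct and follows essentially the same route as the paper: evaluate $\pi_*\G_{m,\mc{Y}}$ on a test scheme $V\to\mc{X}$, observe that $\mc{Y}_V\to V$ is a $G_V$-gerbe over a scheme, apply \cref{lemma:gerbe-pushforward-O} to it, take global sections and then units, and conclude by functoriality in $V$. The flat-base-change worry you raise does not arise, since (as in the paper) the lemma is applied directly to $\pi_V$, whose trivializing cover is fppf over $V$; your ``rerun'' is that same argument made explicit.
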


\begin{proof}
Let $V \to \mc{X}$ be an arbitrary test scheme in $(\text{Sch}/\mc{X})_{\fppf}$. Evaluating the pushforward sheaf $\pi_* \mathbb{G}_{m,\mc{Y}}$ at $V$ yields:
\[
(\pi_* \mathbb{G}_{m,\mc{Y}})(V) = \mathbb{G}_{m,\mc{Y}}(V \times_{\mc{X}} \mc{Y}) = \mr{H}^0(V \times_{\mc{X}} \mc{Y}, \mc{O}_{V \times_{\mc{X}} \mc{Y}})^\times.
\]
Let $\mc{Y}_V := V \times_{\mc{X}} \mc{Y}$, and let $\pi_V: \mc{Y}_V \to V$ be the projection morphism. Since the property of being a $G$-gerbe is stable under arbitrary base change, $\pi_V: \mc{Y}_V \to V$ is a $G_V$-gerbe over the scheme $V$.

Applying \cref{lemma:gerbe-pushforward-O} to the $G_V$-gerbe $\pi_V: \mc{Y}_V \to V$, we obtain a canonical isomorphism of $\mc{O}_V$-modules 
$\pi_{V *} \mc{O}_{\mc{Y}_V} \cong \mc{O}_V.$

Taking global sections over $V$, this gives an isomorphism of commutative rings:
\[
\mr{H}^0(\mc{Y}_V, \mc{O}_{\mc{Y}_V}) = \Gamma(V, \pi_{V *} \mc{O}_{\mc{Y}_V}) \cong \Gamma(V, \mc{O}_V) = \mr{H}^0(V, \mc{O}_V).
\]
Taking the group of units yields an isomorphism of abelian groups:
\[
\mr{H}^0(\mc{Y}_V, \mc{O}_{\mc{Y}_V})^\times \cong \mr{H}^0(V, \mc{O}_V)^\times = \mathbb{G}_{m,\mc{X}}(V).
\]
Since this isomorphism is functorial with respect to $V \to \mc{X}$, it defines an isomorphism of sheaves 
$\pi_* \mathbb{G}_{m,\mc{Y}} \cong \mathbb{G}_{m,\mc{X}}.$
\end{proof}
\begin{proposition}\label{prop:gerbe_keeps_situation}
Let $\mc{X}$ be an algebraic stack satisfying \Cref{sit:setup}. Let $\pi \colon \mc{Y} \to \mc{X}$ be a $\mathbb{G}_m$-gerbe over $\mc{X}$. Then $\mc{Y}$ also satisfies \Cref{sit:setup}.
\end{proposition}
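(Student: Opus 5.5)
The plan is to check the conditions of \Cref{sit:setup} one at a time for $\mc{Y}$. Each should follow from a local property of $\pi$ together with the corresponding property of $\mc{X}$. I take \Cref{sit:setup} to consist of: $\mc{X}$ is a quasi-separated regular Noetherian algebraic stack with generically affine stabilizer, possibly also of finite type over $k$. If it contains further conditions, they would be handled by the same pattern.

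First, the geometry of $\pi$. Since $\G_m$ is smooth and affine, a $\G_m$-gerbe over an algebraic stack is again an algebraic stack (\cite{OLSSON}). Étale locally on $\mc{X}$, $\pi$ is isomorphic to $B\G_{m,U}\to U$. This map is smooth, surjective and of finite type, and it is qcqs by \cref{lemma:gerbe-pushforward-O}. All of these properties are fppf-local on the target, so $\pi$ itself is smooth, surjective, of finite type and qcqs. The following then come directly:
\begin{itemize}
\item $\mc{Y}$ is quasi-separated, as the composite of the quasi-separated morphism $\pi$ with the quasi-separated $\mc{X}$.
\item $\mc{Y}$ is quasi-compact, because $\pi$ is quasi-compact.
\item $\mc{Y}$ is locally Noetherian and regular. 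Take a smooth atlas $U\to\mc{X}$ with $U$ regular Noetherian, and a smooth atlas $V\to \mc{Y}\times_{\mc{X}}U$. Then $V\to U$ is smooth, so $V$ is locally Noetherian and regular.
\item $\mc{Y}$ is Noetherian, combining the two previous points.
\item $\mc{Y}$ is of finite type over $k$ whenever $\mc{X}$ is, since $\pi$ is of finite type.
\end{itemize}

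The main obstacle is the condition of generically affine stabilizer. I would first describe stabilizers. Let $y\in\mc{Y}(K)$ with image $x=\pi(y)$. The band of the gerbe gives an exact sequence of $K$-group schemes
\[
1\longrightarrow \G_{m,K}\longrightarrow \mathrm{Stab}_{\mc{Y}}(y)\longrightarrow \mathrm{Stab}_{\mc{X}}(x)\longrightarrow 1.
\]
Surjectivity on the right holds fppf-locally because $\pi$ is a gerbe: any automorphism of $x$ lifts locally to an automorphism of $y$, since two lifts of $x$ are locally isomorphic. Hence $\mathrm{Stab}_{\mc{Y}}(y)\to\mathrm{Stab}_{\mc{X}}(x)$ is a $\G_m$-torsor. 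Such a torsor is an affine morphism, so if $\mathrm{Stab}_{\mc{X}}(x)$ is affine then so is $\mathrm{Stab}_{\mc{Y}}(y)$.

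Next comes the genericity. Let $\mc{U}\subseteq\mc{X}$ be a dense open substack over which stabilizers are affine (or whatever precise form the "generically" condition takes in \Cref{sit:setup}). Since $\pi$ is flat and locally of finite presentation, it is open, so $\pi^{-1}(\mc{U})$ is dense open in $\mc{Y}$. More precisely, every generic point of $|\mc{Y}|$ maps to a generic point of $|\mc{X}|$, because generizations lift along flat morphisms. By the extension above, stabilizers are affine on $\pi^{-1}(\mc{U})$. If the definition is instead phrased through the inertia stack, I would use that $I_{\mc{Y}}\to\pi^*I_{\mc{X}}$ is a $\G_m$-torsor, which is again affine, and argue in the same way over a dense open.
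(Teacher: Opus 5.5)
Your proposal is correct and follows essentially the same route as the paper: you check the conditions one at a time, using that $\pi$ is qcqs and of finite type, that smoothness of $\pi$ gives regularity, and that the stabilizer extension $1\to\G_m\to\mathrm{Aut}_{\mc{Y}}(\bar y)\to\mathrm{Aut}_{\mc{X}}(\bar x)\to 1$ exhibits the stabilizer of $y$ as a $\G_m$-torsor over an affine group. The only minor difference is in showing that a generic point of $\mc{Y}$ lies over a generic point of $\mc{X}$: the paper uses that $|\mc{Y}|\to|\mc{X}|$ is a homeomorphism for a gerbe, whereas you use that generizations lift along the flat morphism $\pi$, and both arguments work.
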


\begin{proof}
We verify each requirement of \Cref{sit:setup} for the stack $\mc{Y}$:
\begin{enumerate}
    \item[\textnormal{(i)}] \textit{Qcqs and Noetherian}: Since $\mathbb{G}_m$ is a qcqs group scheme of finite type over $\mathbb{Z}$, the gerbe projection $\pi \colon \mc{Y} \to \mc{X}$ is a qcqs morphism of finite type; hence $\mc Y$ is qcqs and Noetherian.
    
    \item[\textnormal{(ii)}] \textit{Regularity}: Since $\mathbb{G}_m$ is a smooth algebraic group, the gerbe morphism $\pi \colon \mc{Y} \to \mc{X}$ is smooth. The smoothness of $\pi$ combined with the regularity of $\mc{X}$ implies that $\mc{Y}$ is regular.
    
    \item[\textnormal{(iii)}] \textit{Generically affine stabilizer}: Let $y \in \mc{Y}$ be a generic point lying over $x = \pi(y) \in \mc{X}$. 
    Since $\pi$ is a gerbe,
    $|\ms Y|\to |\ms X|$ is a homeomorphism by \cite[\href{https://stacks.math.columbia.edu/tag/06R9}{Tag 06R9}]{SP},
    hence $x$ is a generic point of $\ms X$.
    \iffalse
    Let $K = \kappa(y) \cong \kappa(x)$ be their common residue field, and let $K^{\sep}$ be a separable closure of $K$. 
    \fi 
    Choose a separably closed field $\Omega$ and corresponding compatible separable geometric points $\bar{y} \colon \Spec \Omega \to \mc{Y}$  and $\bar{x} \colon \Spec \Omega \to \mc{X}$. By the definition of a $\mathbb{G}_m$-gerbe, there is a short exact sequence of algebraic group schemes over $\Omega$:
    \[
    1 \longrightarrow \mathbb{G}_{m, \Omega} \longrightarrow \operatorname{Aut}_{\mc{Y}}(\bar{y}) \xrightarrow{\;\psi\;} \operatorname{Aut}_{\mc{X}}(\bar{x}) \longrightarrow 1,
    \]
    where the stabilizer group space $\operatorname{Aut}_{\mc{X}}(\bar{x})$ is an affine algebraic group over $\Omega$. 
    
    The quotient morphism $\psi \colon \operatorname{Aut}_{\mc{Y}}(\bar{y}) \to \operatorname{Aut}_{\mc{X}}(\bar{x})$ is a faithfully flat $\mathbb{G}_{m, \Omega}$-torsor in the fppf topology. Since $\mathbb{G}_{m, \Omega}$ is affine, $\psi$ is an affine morphism. This implies that $\operatorname{Aut}_{\mc{Y}}(\bar{y})$ is an affine algebraic group over $\Omega$. Thus, $\mc{Y}$ has generically affine stabilizer.
\end{enumerate}
\end{proof}

\begin{theorem} \label{thm:surjective_brauer_gerbe}
Let $\ms{X}$ be a stack in \cref{sit:setup}. Let $\pi: \ms{Y} \to \ms{X}$ be a $\G_m$-gerbe over $\ms{X}$. Then $\Br(\ms{X})$ and $\Br(\ms{Y})$ are torsion and we have an exact sequence: 
$$
\H_{\et}^0(\ms X, \mathbb Z) \longrightarrow \Br(\ms{X}) \xrightarrow{\quad \pi^* \quad} \Br(\ms{Y}) \longrightarrow 0
$$
\end{theorem}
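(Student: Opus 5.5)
Torsionness of $\Br(\ms X)$ comes from the appendix result (\cref{thm:br_torsion}), which applies to stacks in \cref{sit:setup}. By \cref{prop:gerbe_keeps_situation}, $\ms Y$ also satisfies \cref{sit:setup}, so $\Br(\ms Y)$ is torsion as well. For exactness I will use the Leray spectral sequence
\[
E_2^{p,q}=\H^p_{\et}(\ms X, R^q\pi_*\G_{m,\ms Y})\Rightarrow \H^{p+q}_{\et}(\ms Y,\G_m).
\]
By \cref{prop:R0-Gm-isomorphism}, $\pi_*\G_{m,\ms Y}\cong\G_{m,\ms X}$, which identifies $E_2^{2,0}$ with $\Br(\ms X)$.

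The first sheaf computation is $R^1\pi_*\G_m$. Étale locally on $\ms X$ (after refining, since $\G_m$-gerbes are étale-locally trivial by smoothness of $\G_m$), $\ms Y$ is $B\G_{m,U}$. Then $\Pic(B\G_{m,U})\cong \Pic(U)\oplus \Z$, with the $\Z$ given by characters. After sheafifying, the $\Pic(U)$ part dies and $R^1\pi_*\G_m\cong\Z$. The twisting is trivial because $\G_m$ is commutative and the band acts trivially on characters, so the sheaf is the constant sheaf. This gives the piece $E_2^{0,1}=\H^0_{\et}(\ms X,\Z)$ and the differential $d_2:\H^0(\ms X,\Z)\to \Br(\ms X)$. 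Standard gerbe theory sends $1$ to the class $[\ms Y]$. The exact sequence of low-degree terms then yields exactness of $\H^0(\ms X,\Z)\to\Br(\ms X)\to\H^2(\ms Y,\G_m)$ at $\Br(\ms X)$.

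Next comes surjectivity. The graded pieces of $\H^2(\ms Y,\G_m)$ are $E_\infty^{2,0}$, a subquotient of $E_\infty^{1,1}\subseteq \H^1(\ms X,\Z)$, and $E_\infty^{0,2}\subseteq \H^0(\ms X,R^2\pi_*\G_m)$. I would handle these as follows.

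\begin{itemize}
\item[(a)] Since $\ms X$ is regular Noetherian, hence normal, $\H^1_{\et}(\ms X,\Z)$ injects into $\H^1$ of the generic points. There it is $\Hom_{cont}(\mathrm{Gal},\Z)=0$. So $\H^1(\ms X,\Z)=0$ on each connected component, which kills $E^{1,1}$.
\item[(b)] Stalks of $R^2\pi_*\G_m$ at strictly henselian local $U$ are $\Br(B\G_{m,U})$. This splits as $\Br(U)\oplus\H^1(U,\Z)$ plus possibly $\H^2$ of the character part; for $U$ strictly henselian local and normal, each of these vanishes on torsion.
\end{itemize}

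Any remaining non-torsion contribution to $E_\infty^{0,2}$ is irrelevant, because $\Br(\ms Y)$ is torsion and any torsion class maps to a torsion section of $R^2\pi_*\G_m$. So torsion classes come from $E_\infty^{2,0}$, a quotient of $\Br(\ms X)$, which gives surjectivity of $\pi^*$. The identification of the kernel uses $E_\infty^{2,0}=\Br(\ms X)/d_2(\H^0(\ms X,\Z))$; no $d_3$ issues arise since $E^{-1,*}=0$.

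The main obstacle I expect is step (b), the vanishing of the torsion part of the stalks of $R^2\pi_*\G_m$. This needs the computation $\Br(B\G_{m,U})$ for strictly henselian regular local $U$, and its stack-theoretic justification: the Leray spectral sequence for $U\times B\G_m\to U$, together with $\H^i(B\G_{m,\bar k},\G_m)$ vanishing in degree $2$ on torsion via Kummer sequences and $\H^*(B\G_m,\mu_n)$. I would try to reduce to Shin's scheme case \cite[Theorem 4.1.2]{SHIN-THESIS2019}, applied on a smooth atlas, to control these stalks directly.
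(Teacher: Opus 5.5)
Your argument is essentially the paper's. It uses the Leray spectral sequence for $\pi$ with $\pi_*\G_m\cong\G_m$ and $R^1\pi_*\G_m\cong\underline{\Z}$, controls $R^2\pi_*\G_m$ by base change to a smooth atlas plus Shin's scheme computation, and uses the torsionness of $\Br(\ms{Y})$ to kill the $E^{1,1}$ contribution. The paper in fact gets $R^2\pi_*\G_m=0$ outright from \cite[Lemma 4.5.2]{SHIN-THESIS2019}, so your torsion bookkeeping in (b) is unnecessary.

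The one step to repair is (a). On a stack the generic point is a residual gerbe, so $\H^1$ there is not $\Hom_{\mathrm{cont}}(\mathrm{Gal},\Z)$ (compare \cref{prop:vanishing-h1}, which needs quasi-separatedness). The claimed injectivity into the generic point is also unproved for stacks. However, you only need $\H^1_{\et}(\ms{X},\underline{\Z})$ to be torsion-free. This follows from $0\to\underline{\Z}\xrightarrow{n}\underline{\Z}\to\underline{\Z/n\Z}\to 0$, because locally constant $\Z/n$-valued functions on the Noetherian space $|\ms{X}|$ lift to $\Z$-valued ones. This is exactly how the paper argues, via \cite[Lemma A.0.2]{SHIN-THESIS2019}.
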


\begin{proof}
We compute the Brauer group using the Leray spectral sequence associated to the morphism $\pi: \ms{Y} \to \ms{X}$ and the sheaf $\G_{m,\ms{Y}}$: 
$$
E_2^{p,q} = \H^p_{\et}(\ms{X}, R^q\pi_* \G_{m,\ms{Y}}) \implies \H^{p+q}_{\et}(\ms{Y}, \G_{m,\ms{Y}}).
$$
To analyze the stalks of $R^q\pi_* \G_{m,\ms{Y}}$, we choose a smooth atlas $p: U \to \ms{X}$ where $U$ is a scheme. By the base change theorem \cite[\href{https://stacks.math.columbia.edu/tag/075H}{Tag 075H}]{SP},
$$
p^{-1} R^q\pi_* \G_{m,\ms{Y}} \simeq R^q\pi_{U*} \G_{m,\ms{Y}_U},
$$
where $\ms{Y}_U := \ms{Y} \times_{\ms{X}} U$ and $\pi_U: \ms{Y}_U \to U$ is the base change of $\pi$. Since $U$ is a scheme, by the argument of \cite[Section 4.5, in particular Lemma 4.5.2]{SHIN-THESIS2019}, we have $R^2\pi_{U*} \G_{m,\ms{Y}_U} = 0$. This implies that the sheaf $R^2\pi_* \G_{m,\ms{Y}}$ vanishes on $\ms{X}$.

Furthermore, by \cite[Lemma 4.2.2]{SHIN-THESIS2019}, we have $R^1\pi_* \G_{m,\ms{Y}} \simeq \underline{\Z}$. We also have $R^0\pi_* \G_{m,\ms{Y}} = \G_{m,\ms{X}}$ by \cref{prop:R0-Gm-isomorphism}. Thus, the seven-term exact sequence yields
$$
\H^0_{\et}(\ms{X}, \underline{\Z}) \longrightarrow \H^2_{\et}(\ms{X}, \G_{m,\ms{X}}) \xrightarrow{\pi^*} \H^2_{\et}(\ms{Y}, \G_{m,\ms{Y}}) \longrightarrow \H^1_{\et}(\ms{X}, \underline{\Z}).
$$
Applying \cite[Lemma A.0.2]{SHIN-THESIS2019} to the short exact sequence $0 \to \underline{\mathbb{Z}} \xrightarrow{n} \underline{\mathbb{Z}} \to \underline{\mathbb{Z}/n\mathbb{Z}} \to 0$, we obtain that multiplication by \(n\) on \(\H^1_{\et}(X,\underline{\mathbb Z})\) is injective.
\iffalse
For any positive integer $n$,
from the short exact sequence $0 \to \underline{\mathbb{Z}} \xrightarrow{n} \underline{\mathbb{Z}} \to \underline{\mathbb{Z}/n\mathbb{Z}} \to 0$, by \cite[Lemma A.0.2]{SHIN-THESIS2019},
we have $\H^1_{\et}(\ms{X}, \underline{\mathbb{Z}})\xrightarrow{\cdot n} \H^1_{\et}(\ms{X}, \underline{\mathbb{Z}})$ is injective. 
\fi
This implies that $\H^1_{\et}(\ms{X}, \underline{\mathbb{Z}})$ is torsion-free.

Now, for any class $\beta \in \Br(\ms{Y})$, since $\Br(\ms{Y})$ is a torsion group (\cref{prop:gerbe_keeps_situation}, \cref{thm:br_torsion}), there exists an integer $n \ge 1$ such that $n\beta = 0$. The image of $\beta$ under the boundary map $\H^2_{\et}(\ms{Y}, \G_{m,\ms{Y}}) \to \H^1_{\et}(\ms{X}, \underline{\Z})$ is therefore $n$-torsion. Since $\H^1_{\et}(\ms{X}, \underline{\mathbb{Z}})$ is torsion-free, this image must be zero. By exactness, $\pi^*$ is surjective.
\end{proof}

With this theorem, we can now establish the equality between the Brauer-Manin obstruction and the composite $\Br,\Br$ obstruction.

\begin{theorem}\label{Br_Br_eq_Br}
Let $\ms{X}$ be a stack of finite type over $k$ satisfying \cref{sit:setup}. Then
$$
\ms{X}(\A_k)^{\Br} = \ms{X}(\A_k)^{\Br,\Br}.
$$
\end{theorem}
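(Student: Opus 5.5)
We prove the two inclusions separately.

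\emph{Inclusion $\ms{X}(\A_k)^{\Br,\Br}\subseteq \ms{X}(\A_k)^{\Br}$.} This direction is formal. Let $x\in\ms{X}(\A_k)^{\Br,\Br}$ and $\gamma\in\Br(\ms{X})$. Represent $\gamma$ by a $\G_m$-gerbe $g:\ms{Z}\to\ms{X}$. By definition there are $\tau\in\H^2_{\et}(k,\G_m)$ and $z\in\ms{Z}^\tau(\A_k)^{\Br}$ with $x=g^\tau\circ z$. In particular $z\in\ms{Z}^\tau(\A_k)$, so $x\in\bigcup_\tau g^\tau(\ms{Z}^\tau(\A_k))$, and by \cite[4.6]{Lv21} this set equals $\ms{X}(\A_k)^{g}=\ms{X}(\A_k)^{\gamma}$. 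Intersecting over all $\gamma$ gives $x\in\ms{X}(\A_k)^{\Br}$. (Alternatively, use the functoriality statement of \cref{rem_after_PGL_F_contained_in_Br_F}, namely $\ms{X}(\A_k)^{\Br,\mathrm{obs}}\subseteq\ms{X}(\A_k)^{\Br}$.)

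\emph{Inclusion $\ms{X}(\A_k)^{\Br}\subseteq \ms{X}(\A_k)^{\Br,\Br}$.} Fix $x\in\ms{X}(\A_k)^{\Br}$ and a $\G_m$-gerbe $g:\ms{Z}\to\ms{X}$. Since $x\in\ms{X}(\A_k)^{g}$, \cite[4.6]{Lv21} gives some $\tau$ and some $z\in\ms{Z}^\tau(\A_k)$ with $x=g^\tau\circ z$. We must show that $z\in\ms{Z}^\tau(\A_k)^{\Br}$, possibly after changing $z$ or $\tau$. The twist $g^\tau:\ms{Z}^\tau\to\ms{X}$ is again a $\G_m$-gerbe, of class $[g]+a^*\tau$. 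Since $\ms{X}$ is of finite type over $k$ and satisfies \cref{sit:setup}, \cref{thm:surjective_brauer_gerbe} applies to $g^\tau$ and shows that $(g^\tau)^*:\Br(\ms{X})\to\Br(\ms{Z}^\tau)$ is surjective. So any $\beta\in\Br(\ms{Z}^\tau)$ can be written $\beta=(g^\tau)^*\alpha$ with $\alpha\in\Br(\ms{X})$. By stability of $\H^2_{\et}(-,\G_m)$,
\[
\beta(z)=F(z)\bigl(F(g^\tau)(\alpha)\bigr)=\alpha(g^\tau\circ z)=\alpha(x),
\]
and $\alpha(x)\in\im F(q)$ because $x\in\ms{X}(\A_k)^{\Br}$. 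Hence $z\in\ms{Z}^\tau(\A_k)^{\Br}$, so $x\in\bigcup_\tau g^\tau(\ms{Z}^\tau(\A_k)^{\Br})$ for every $g$, which means $x\in\ms{X}(\A_k)^{\Br,\Br}$. The point is that the existence of a lift $z$ is automatic, and surjectivity of the Brauer pullback makes every lift Brauer-unobstructed.

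\emph{Main obstacle.} The delicate step is applying \cref{thm:surjective_brauer_gerbe} to the twisted gerbe $\ms{Z}^\tau$ rather than to $\ms{Z}$. This requires:
\begin{enumerate}
\item $\ms{Z}^\tau=[\ms{Z}\times G^\tau/\G_m]\to\ms{X}$ is genuinely a $\G_m$-gerbe over $\ms{X}$ in the sense of \cite{OLSSON}, so in particular an algebraic stack;
\item the hypotheses of \cref{thm:surjective_brauer_gerbe}, and through it \cref{prop:gerbe_keeps_situation} and the torsionness result \cref{thm:br_torsion}, hold for it.
\end{enumerate}
For (1), I would check étale-locally on $\ms{X}$, where both $\ms{Z}$ and the pullback of $G^\tau$ trivialize, so the contracted product is locally $B\G_m$. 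Then (2) follows because only $\ms{X}$ needs to satisfy \cref{sit:setup}. A minor remaining check is that the identification $\ms{X}(\A_k)^{g}=\bigcup_\tau g^\tau(\ms{Z}^\tau(\A_k))$ from \cite[4.6]{Lv21} is compatible with the specific model of the twist $g^\tau$ used in the definition of the composite obstruction.
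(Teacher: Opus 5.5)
Your proof is correct and follows essentially the same route as the paper. In both, $x$ is lifted to some $z\in\ms{Z}^\tau(\A_k)$ via the gerbe twisting formula \cite[4.6]{Lv21}; then surjectivity of $(g^\tau)^*\colon\Br(\ms{X})\to\Br(\ms{Z}^\tau)$ from \cref{thm:surjective_brauer_gerbe}, together with stability, shows that every such lift is Brauer-unobstructed. You also spell out the easy inclusion and the check that $\ms{Z}^\tau\to\ms{X}$ is a $\G_m$-gerbe, both of which the paper leaves implicit.
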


\begin{proof}
The inclusion $\ms{X}(\A_k)^{\Br,\Br} \subseteq \ms{X}(\A_k)^{\Br}$ is straightforward; it suffices to prove the reverse inclusion.

Under our assumptions on $\ms{X}$, $\ms{X}$ is quasi-compact and its Brauer group is a torsion group (\cref{thm:br_torsion}). Let $x \in \ms{X}(\A_k)^{\Br}$. We want to show that for any $\G_m$-gerbe $g: \ms{Z} \to \ms{X}$, there exists a twist $\tau \in \H^2_{\et}(k, \G_m)$ and a point $z \in \ms{Z}^\tau(\A_k)^{\Br}$ such that $g^\tau(z) = x$. 

Since $x \in \ms{X}(\A_k)^{\Br}$, the standard descent formalism for gerbes ensures that $x$ lifts to some adelic point $z \in \ms{Z}^\tau(\A_k)$ on a twisted gerbe $g^\tau: \ms{Z}^\tau \to \ms{X}$ for some $\tau \in \H^2_{\et}(k, \G_m)$.
We next verify that $z \in \ms{Z}^\tau(\A_k)^{\Br}$.

By \cref{thm:surjective_brauer_gerbe}, the pullback map $(g^\tau)^*: \Br(\ms{X}) \to \Br(\ms{Z}^\tau)$ is surjective. 
Thus, the image of $\Br(\ms Z^\tau) \to \Br(\A_k)$ lies in the image of $\Br(\mc X) \to \Br(\A_k)$, which lies in the image of $\Br(k) \to \Br(\A_k)$. We conclude that $z \in \ms{Z}^\tau(\A_k)^{\Br}$. Hence $x \in \ms{X}(\A_k)^{\Br,\Br}$,
which completes the proof.
\end{proof}

Moreover,
we have the following interesting consequence:
\begin{corollary}
    For every smooth separated Deligne-Mumford stack $\mathcal{X}$ of finite type over $k$ with quasi-projective coarse moduli space,
    for every positive integer $n$ and $\mathrm{obs}_i\in\{\desc,\Br\}$ for $i\in[1,n]$,
    \begin{align*}
        \ms{X}(\A_k)^{\mathrm{obs}_1,\ldots,\mathrm{obs}_n}=
        \left\{
        \begin{aligned}
            &\ms{X}(\A_k)^{\desc} \text{ if there exists $i$ such that $\mathrm{obs}_i=\desc$;}\\
            &\ms{X}(\A_k)^{\Br}  \text{ if }\mathrm{obs}_1=\ldots=\mathrm{obs}_n=\Br.
        \end{aligned}
        \right.
    \end{align*}
\end{corollary}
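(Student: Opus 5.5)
The plan is to argue by induction on $n$, peeling off the first obstruction $\mathrm{obs}_1$ and using the results already proved for $\ms{X}$ and for all twisted torsors and gerbes over it. The class $\mathcal{C}$ of smooth separated Deligne--Mumford stacks of finite type over $k$ with quasi-projective coarse moduli space is stable under the operations that appear in the iterated obstructions:

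\begin{enumerate}
\item twists of torsors under linear groups stay in $\mathcal{C}$ by \cref{lem:torsor_keeps_good_condition};
\item for $\G_m$-gerbes and their twists we only need the weaker \cref{sit:setup}, which is preserved by \cref{prop:gerbe_keeps_situation}.
\end{enumerate}

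For $n=1$ there is nothing to prove.

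\textbf{Case $\mathrm{obs}_1=\Br$.} Consider
\[
\ms{X}(\A_k)^{\Br,\mathrm{obs}_2,\ldots,\mathrm{obs}_n}=\bigcap_g\bigcup_\tau g^\tau\bigl(\ms{Z}^\tau(\A_k)^{\mathrm{obs}_2,\ldots}\bigr).
\]
If all $\mathrm{obs}_i=\Br$, I would repeat the proof of \cref{Br_Br_eq_Br} with $\Br$ replaced by the iterated Brauer obstruction on $\ms{Z}^\tau$. The induction hypothesis then has to be available for gerbes, so the induction should be run over the class of stacks in \cref{sit:setup} of finite type over $k$, where \cref{Br_Br_eq_Br} applies to $\ms{Z}^\tau$ (it satisfies \cref{sit:setup} and is of finite type over $k$).

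If some later $\mathrm{obs}_i=\desc$, the inner set should become $\ms{Z}^\tau(\A_k)^{\desc}$. Here I hit the main obstacle: $\ms{Z}^\tau$ is an Artin stack, not Deligne--Mumford, so the descent equalities are not available for it.

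\begin{itemize}
\item Fallback for the inclusion $\supseteq\ms{X}(\A_k)^{\desc}$: \cref{PGL_F_contained_in_Br_F} and \cref{rem_after_PGL_F_contained_in_Br_F} give $\ms{X}(\A_k)^{\desc,\mathrm{obs}}\subseteq\ms{X}(\A_k)^{\Br,\mathrm{obs}}$, and the left side is $\ms{X}(\A_k)^{\desc}$ by induction on $\ms{X}$ itself.
\item For the inclusion $\subseteq\ms{X}(\A_k)^{\desc}$: use the functoriality bound $\ms{X}(\A_k)^{\Br,\mathrm{obs}}\subseteq\ms{X}(\A_k)^{\mathrm{obs}}$ from \cref{rem_after_PGL_F_contained_in_Br_F}, then apply induction to $\ms{X}(\A_k)^{\mathrm{obs}_2,\ldots,\mathrm{obs}_n}=\ms{X}(\A_k)^{\desc}$.
\end{itemize}

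This avoids working on gerbes altogether.

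\textbf{Case $\mathrm{obs}_1=\desc$.} Write
\[
\ms{X}(\A_k)^{\desc,\mathrm{obs}'}=\bigcap_{G,f}\bigcup_\sigma f^\sigma\bigl(\ms{Y}^\sigma(\A_k)^{\mathrm{obs}'}\bigr).
\]
By induction applied to each $\ms{Y}^\sigma\in\mathcal{C}$, the set $\ms{Y}^\sigma(\A_k)^{\mathrm{obs}'}$ is either $\ms{Y}^\sigma(\A_k)^{\desc}$ or $\ms{Y}^\sigma(\A_k)^{\Br}$.

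\begin{itemize}
\item In the first case we get $\ms{X}(\A_k)^{\desc,\desc}=\ms{X}(\A_k)^{\desc}$ by \cref{et_br_eq_desc_2}.
\item In the second case we get $\ms{X}(\A_k)^{\desc,\Br}$. This is squeezed as
\[
\ms{X}(\A_k)^{\desc}\subseteq\ms{X}(\A_k)^{\et,\Br}\cap\ldots,
\]
more precisely: restricting the intersection to finite étale $G$ shows $\ms{X}(\A_k)^{\desc,\Br}\subseteq\ms{X}(\A_k)^{\et,\Br}=\ms{X}(\A_k)^{\desc}$. Conversely, \cref{desc_contained_in_Br} on each twist gives $\ms{Y}^\sigma(\A_k)^{\desc}\subseteq\ms{Y}^\sigma(\A_k)^{\Br}$, hence $\ms{X}(\A_k)^{\desc}=\ms{X}(\A_k)^{\desc,\desc}\subseteq\ms{X}(\A_k)^{\desc,\Br}$.
\end{itemize}

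The main difficulty is therefore bookkeeping the induction hypothesis for Brauer iterations over gerbes, which leave $\mathcal{C}$. I would state the all-$\Br$ case as a separate lemma over \cref{sit:setup}, proved by induction using \cref{thm:surjective_brauer_gerbe}. The mixed cases are handled purely through the monotonicity inclusions on $\ms{X}$ and its torsors, which stay in $\mathcal{C}$.
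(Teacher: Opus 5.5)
Your proposal is correct and follows essentially the same route as the paper. It is an induction on $n$ that uses:
\begin{itemize}
\item \cref{et_br_eq_desc_2}, \cref{desc_contained_in_Br} and \cref{lem:torsor_keeps_good_condition} on the twists $\ms{Y}^\sigma$ for the case $\mathrm{obs}_1=\desc$;
\item the sandwich $\ms{X}(\A_k)^{\desc}=\ms{X}(\A_k)^{\desc,\mathrm{obs}'}\subseteq\ms{X}(\A_k)^{\Br,\mathrm{obs}'}\subseteq\ms{X}(\A_k)^{\mathrm{obs}'}$ from \cref{rem_after_PGL_F_contained_in_Br_F} for the mixed case;
\item \cref{Br_Br_eq_Br} for the all-$\Br$ case.
\end{itemize}
Two of your steps are ones the paper only asserts: you run the all-$\Br$ induction over stacks in \cref{sit:setup} of finite type over $k$, so it applies to the twisted gerbes $\ms{Z}^\tau$ via \cref{prop:gerbe_keeps_situation}, and you prove $\ms{X}(\A_k)^{\desc,\Br}=\ms{X}(\A_k)^{\desc}$ by restricting to finite \'etale groups.
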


\begin{proof}
    We prove by induction.
    The case $n=1$ is trivial.
    Assume that the statement holds for \(n=m\). We prove it for \(n=m+1\).
    \begin{enumerate}
        \item If one of $\mathrm{obs}_2,\ldots,\mathrm{obs}_{m+1}$ is $\desc$, then by induction:
        \begin{align*}
        \ms X(\A_k)^{\desc}
        &= \ms X(\A_k)^{\desc,\desc} &\text{ \cref{et_br_eq_desc_2}}\\
&=\ms X(\A_k)^{\desc,\mathrm{obs}_2,\ldots,\mathrm{obs}_{m+1}} &\text{ by induction}\\
&\subseteq \ms X(\A_k)^{\Br,\mathrm{obs}_2,\ldots,\mathrm{obs}_{m+1}} &\text{ \cref{rem_after_PGL_F_contained_in_Br_F}}\\
&\subseteq \ms X(\A_k)^{\mathrm{obs}_2,\ldots,\mathrm{obs}_{m+1}} &\text{ \cref{rem_after_PGL_F_contained_in_Br_F}}\\
&=\ms X(\A_k)^{\desc} &\text{ by induction.}
        \end{align*}
        Hence $\ms X(\A_k)^{\mathrm{obs}_1,\mathrm{obs}_2,\ldots,\mathrm{obs}_{m+1}}=\ms X(\A_k)^{\desc}$ in this case. 
        \item If $\mathrm{obs}_2=\ldots =\mathrm{obs}_{m+1}=\Br$,
        then by induction:
        \begin{align*}
            \ms X(\A_k)^{\desc,\Br,\ldots,\Br}=\ms X(\A_k)^{\desc,\Br}=\ms X(\A_k)^{\desc},
        \end{align*}
        and by the same argument as in the proof of \cref{Br_Br_eq_Br}:
        \begin{align*}
            \ms X(\A_k)^{\Br,\ldots,\Br}=\ms X(\A_k)^{\Br}.
        \end{align*}
    \end{enumerate}
    So the induction holds.
\end{proof}

\iffalse
\begin{remark}
    For every smooth separated Deligne-Mumford stack $\mathcal{X}$ of finite type over $k$ with quasi-projective coarse moduli space,
    $\ms{X}(\A_k)^{\Br}=\ms{X}(\A_k)^{\Br,\Br}$ as in \cref{Br_Br_eq_Br},
    $\ms{X}(\A_k)^{\desc}=\ms{X}(\A_k)^{\desc,\desc}$ as in \cref{et_br_eq_desc_2}.
    Notice that by \cref{et_br_eq_desc_2}, \cref{lem:torsor_keeps_good_condition}
    and \cref{desc_contained_in_Br}:
    \begin{align*}
        \ms{X}(\A_k)^{\desc}
        =\ms{X}(\A_k)^{\desc,\desc}
        \subseteq \ms{X}(\A_k)^{\desc,\Br}
        \subseteq \ms{X}(\A_k)^{\desc},
    \end{align*}
    and by \cref{PGL_F_contained_in_Br_F},
    \begin{align*}
        \ms{X}(\A_k)^{\desc}
        =\ms{X}(\A_k)^{\desc,\desc}
        \subseteq \ms{X}(\A_k)^{\PGL,\desc}
        \subseteq \ms{X}(\A_k)^{\Br,\desc}
        \subseteq \ms{X}(\A_k)^{\desc}.
    \end{align*}
    So $\ms{X}(\A_k)^{\desc}=\ms{X}(\A_k)^{\desc,\Br}$ and $\ms{X}(\A_k)^{\desc}=\ms{X}(\A_k)^{\Br,\desc}$.

    Moreover we have the following interesting consequence:
    Let $\ms{X}$ be as above. For every positive integer $n$ and $\mathrm{obs}_i\in\{\desc,\Br\}$ for $i\in[1,n]$,
    \begin{align*}
        \ms{X}(\A_k)^{\mathrm{obs}_1,\ldots,\mathrm{obs}_n}=
        \left\{
        \begin{aligned}
            &\ms{X}(\A_k)^{\desc} \quad \exists i\text{ $\mathrm{obs}_i=\desc$;}\\
            &\ms{X}(\A_k)^{\Br} \quad  \mathrm{obs}_1=\ldots=\mathrm{obs}_n=\Br.
        \end{aligned}
        \right.
    \end{align*}
    which answers the question in \cite{LW26}.
\end{remark}
\fi

\begin{appendix}

\section{Azumaya algebras and Brauer groups on algebraic stacks}
In this section we will refer to the notation and definitions in \cite[Section 1]{SHIN-THESIS2019}.

Let $\ms{X}$ be an algebraic stack over a field $k$. 
Let $\ms{X}_{\sm}$ denote the big smooth site on $\ms{X}$, 
and let $\mc{O}_{\ms{X}}$ be its structure sheaf. 

\begin{definition}[Azumaya algebra]
    An \textcolor{blue}{Azumaya algebra} $\mathscr{A}$ on $\ms{X}$ is a 
    quasi-coherent, unital, associative 
    $\mathscr{O}_{\ms{X}}$-algebra such that there exists a smooth covering $\mathfrak{U} = \{U_i \to \ms{X}\}_{i \in I}$ such that for each \(i\), there is a positive integer \(n_i\) and an \(\mathcal O_{U_i}\)-algebra isomorphism:
    \begin{align*}
        \alpha_i : \mc{A}|_{U_i} \xrightarrow{\sim} \mathrm{Mat}_{n_i \times n_i}(\mc{O}_{U_i}).
    \end{align*}
    If $n_i = n$ for all $i$, we say $\mc{A}$ is an Azumaya algebra of constant rank $n^2$.
\end{definition}

\begin{remark}

We briefly clarify the convention concerning \v{C}ech cohomology used below. 
Let $\ms X_{\mathrm{sm}}^{\mathrm{st}}$ 
denote the big smooth site of algebraic stacks over $\ms X$, 
whose coverings are jointly surjective families of representable smooth morphisms. The natural inclusion
\[
\ms X_{\mathrm{sm}}=(\operatorname{Sch}/\ms X)_{\mathrm{sm}}
\longrightarrow
\ms X_{\mathrm{sm}}^{\mathrm{st}}
\]
induces an equivalence of the associated topoi. 

Unlike \(\ms X_{\mathrm{sm}}\), 
the site \(\ms X_{\mathrm{sm}}^{\mathrm{st}}\) has \(\ms X\) as a final object. 
We therefore define the \v{C}ech cohomology $\check{\H}^1_{\mathrm{sm}}(\ms X,\ms F)$ for a sheaf of groups $\ms F$ and $\check{\H}^i_{\mathrm{sm}}(\ms X,\ms G)$ for a sheaf of abelian groups $\ms G$ appearing below using coverings of the final object $\ms X$ in $\ms X_{\mathrm{sm}}^{\mathrm{st}}$. 
Moreover, every such smooth covering can be refined by a smooth covering
\[
\{U_i\to \ms X\}_{i\in I}
\]
with each $U_i$ a scheme. 
Hence the colimit defining $\check{\H}^1_{\mathrm{sm}}(\ms X,\ms F)$ and $\check{\H}^i_{\mathrm{sm}}(\ms X,\ms G)$ may equivalently be taken over such scheme-valued smooth coverings.
\end{remark}

\subsection{Induced classes in \v{C}ech cohomology}
Let $\mc{A}$ be an Azumaya algebra of constant rank $n^2$ on $\ms{X}$. We now explain how $\mc{A}$ naturally induces an element in the \v{C}ech cohomology set $\check{\H}^1_{\sm}(\ms{X}, \PGL_n)$. 

\begin{pg}
    Choosing a trivializing smooth covering $\mathfrak{U} = \{U_i \to \ms{X}\}$ and isomorphisms $\alpha_i$ as above, we obtain transition functions on the pairwise intersections $U_{ij} := U_i \times_{\ms{X}} U_j$:
\begin{align*}
    \beta_{ij} := (\alpha_i|_{U_{ij}}) \circ (\alpha_j|_{U_{ij}})^{-1} : \mathrm{Mat}_{n \times n}(\mc{O}_{U_{ij}}) \xrightarrow{\sim} \mathrm{Mat}_{n \times n}(\mc{O}_{U_{ij}}).
\end{align*}
Clearly, $\beta_{ij}$ is an $\mc{O}_{U_{ij}}$-algebra automorphism. By the Skolem-Noether theorem for locally ringed topoi (cf. \cite[1.1.8]{SHIN-THESIS2019}), 
the canonical conjugation morphism
\begin{align*}
    \PGL_n(\mc{O}_{U_{ij}}) \xrightarrow{\sim} \underline{\mathrm{Aut}}_{\mc{O}\text{-alg}}(\mathrm{Mat}_{n \times n}(\mc{O}_{U_{ij}}))
\end{align*}
is an isomorphism. Under this identification, the collection of automorphisms $\{\beta_{ij}\}_{i,j \in I}$ corresponds to a unique collection of sections $g_{ij} \in \PGL_n(U_{ij})$. Since $\alpha_i \circ \alpha_j^{-1} \circ \alpha_j \circ \alpha_k^{-1} = \alpha_i \circ \alpha_k^{-1}$ on triple intersections $U_{ijk}$, the sections satisfy the cocycle condition $g_{ij}g_{jk} = g_{ik}$. Therefore, $\{g_{ij}\}$ defines a \v{C}ech 1-cocycle, which yields a well-defined class 
\begin{align*}
    [\mc{A}] \in \check{\H}^1_{\sm}(\ms{X}, \PGL_n).
\end{align*}
This shows that isomorphism classes of Azumaya algebras of rank $n^2$ are classified by $\check{\H}^1_{\sm}(\ms{X}, \PGL_n)$.
\end{pg}

\subsection{The gerbe of trivializations}
One can associate a $\G_m$-gerbe to any Azumaya algebra $\mc{A}$, 
which provides a coordinate-free approach to the cohomological Brauer group.
\begin{definition}[Gerbe of trivializations]
    Let $\mc{A}$ be an Azumaya algebra on $\ms{X}$. The 
    \textcolor{blue}{gerbe of trivializations} of $\mc{A}$, 
    denoted by $\ms{G}_{\mc{A}}$, is the category fibered in groupoids over $\ms{X}_{\sm}$ defined as follows:
    \begin{itemize}
        \item \textbf{Objects:} A triple $(U, \mc{E}, \sigma)$, where $U \in \ms{X}_{\sm}$, $\mc{E}$ is a finite locally free $\mc{O}_U$-module of everywhere positive rank, and $\sigma: \underline{\mathrm{End}}_{\mc{O}_U}(\mc{E}) \xrightarrow{\sim} \mc{A}|_U$ is an isomorphism of $\mc{O}_U$-algebras.
        \item \textbf{Morphisms:} A morphism $(U_1, \mc{E}_1, \sigma_1) \to (U_2, \mc{E}_2, \sigma_2)$ consists of a morphism $f: U_1 \to U_2$ in $\ms{X}_{\sm}$ and an isomorphism $f^\sharp: f^*\mc{E}_2 \xrightarrow{\sim} \mc{E}_1$ of $\mc{O}_{U_1}$-modules such that $\sigma_2$ is compatible with $\sigma_1$ via the conjugation induced by $f^\sharp$.
    \end{itemize}
\end{definition}

\begin{pg}\label{azumaya_connecting_homomorphism}
Consider the short exact sequence of sheaves of groups on $\ms{X}_{\sm}$:
\begin{align}\label{eq:ses_groups}
    1 \to \G_m \to \GL_n \to \PGL_n \to 1.
\end{align}
This sequence induces a connecting homomorphism in \v{C}ech cohomology:
\begin{align*}
    \delta : \check{\H}^1_{\sm}(\ms{X}, \PGL_n) \to \check{\H}^2_{\sm}(\ms{X}, \G_m).
\end{align*}

Explicitly, given the class $[\mc{A}] \in \check{\H}^1_{\sm}(\ms{X}, \PGL_n)$ represented by the 1-cocycle $\{g_{ij}\}$ with $g_{ij} \in \PGL_n(U_{ij})$, one can (after possibly refining the covering $\mathfrak{U}$) lift $g_{ij}$ to sections $\tilde{g}_{ij} \in \GL_n(U_{ij})$. The failure of $\{\tilde{g}_{ij}\}$ to be a 1-cocycle is measured by
\begin{align*}
    c_{ijk} := \tilde{g}_{ij} \tilde{g}_{jk} \tilde{g}_{ik}^{-1} \in \GL_n(U_{ijk}).
\end{align*}
Since $g_{ij}g_{jk}g_{ik}^{-1} = 1$ in $\PGL_n$, the element $c_{ijk}$ takes values in the kernel of the projection, namely the center $\G_m(U_{ijk})$. One can verify that $\{c_{ijk}\}$ satisfies the \v{C}ech 2-cocycle condition, thus defining a class $\delta([\mc{A}]) \in \check{\H}^2_{\sm}(\ms{X}, \G_m)$.
\end{pg}

\begin{lemma}\label{lem:gerbe_class_coincides}
    Let $\mc{A}$ be an Azumaya algebra of rank $n^2$ on $\ms{X}$. 
    Then by the natural homomorphism:
    \begin{align}\label{natural_cech_H2_to_H2}
        \check{\H}^2_{\sm}(\ms{X}, \G_m) \rightarrow {\H}^2_{\sm}(\ms{X}, \G_m), 
    \end{align}
    and the bijection (cf. \cite[12.2.8]{OLSSON})
    \begin{align*}
        {\H}^2_{\sm}(\ms{X}, \G_m) \xrightarrow{\sim} \{\text{isomorphism classes of $\G_m$-gerbes over $\ms{X}$}\},
    \end{align*}
    the \v{C}ech  2-cocycle $\delta([\mc{A}])$
    maps to the 2-cocycle defining the class of the gerbe $[\ms{G}_{\mc{A}}] \in {\H}^2_{\sm}(\ms{X}, \G_m)$.
\end{lemma}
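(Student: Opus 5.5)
The plan is to compare the Čech 2-cocycle $\delta([\mc{A}])$ of \cref{azumaya_connecting_homomorphism} directly with the 2-cocycle that \cite[12.2.8]{OLSSON} attaches to the gerbe $\ms{G}_{\mc{A}}$. Recall how that cocycle is obtained. One chooses a covering $\{U_i\to\ms{X}\}$ on which the gerbe has objects $\xi_i$, and isomorphisms $\phi_{ij}:\xi_j|_{U_{ij}}\to\xi_i|_{U_{ij}}$. The resulting class is represented by the automorphisms $\phi_{ij}\phi_{jk}\phi_{ik}^{-1}\in\underline{\mathrm{Aut}}(\xi_i|_{U_{ijk}})\simeq\G_m(U_{ijk})$, viewed in $\H^2_{\sm}(\ms X,\G_m)$ through the map \eqref{natural_cech_H2_to_H2}. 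So the task is to produce explicit local objects and transition maps for $\ms{G}_{\mc{A}}$ whose associated cocycle is exactly $(c_{ijk})$. This mirrors the argument already carried out for the lifting gerbe in part (2) of \cref{lem:torsor_to_gerbe_coincides}.

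\textbf{Step 1 (local objects).} Take the trivializing covering $\mathfrak U$ together with the isomorphisms $\alpha_i:\mc A|_{U_i}\simeq \mathrm{Mat}_{n\times n}(\mc O_{U_i})$, refined so that the lifts $\tilde g_{ij}\in\GL_n(U_{ij})$ exist. On $U_i$, set
\[
\xi_i:=(U_i,\ \mc O_{U_i}^{\oplus n},\ \sigma_i),\qquad \sigma_i:=\alpha_i^{-1}.
\]
Here $\sigma_i$ is viewed as an isomorphism $\underline{\mathrm{End}}(\mc O^{\oplus n}_{U_i})=\mathrm{Mat}_{n\times n}(\mc O_{U_i})\xrightarrow{\sim}\mc A|_{U_i}$, so $\xi_i$ is an object of $\ms G_{\mc A}$ over $U_i$.

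\textbf{Step 2 (transition maps).} On $U_{ij}$, take the module isomorphism $\tilde g_{ij}:\mc O^{\oplus n}\to\mc O^{\oplus n}$, or its inverse, depending on the direction convention for morphisms in $\ms G_{\mc A}$. Conjugation by $\tilde g_{ij}$ equals $\beta_{ij}=\alpha_i\alpha_j^{-1}$ on $\mathrm{Mat}_{n\times n}$, by the Skolem--Noether identification used to define $g_{ij}$. Hence $\sigma_j=\sigma_i\circ\mathrm{conj}(\tilde g_{ij})$, and $\tilde g_{ij}$ is a morphism between $\xi_j$ and $\xi_i$ in $\ms G_{\mc A}$. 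I will fix conventions carefully so that the arrow goes $\xi_j\to\xi_i$.

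\textbf{Step 3 (compare cocycles).} The automorphism group of an object $(U,\mc E,\sigma)$ consists of module automorphisms of $\mc E$ that commute with conjugation. These are the units of the centre of $\underline{\mathrm{End}}(\mc E)$, namely $\G_m$ acting by scalars, and this gives the banding. On $U_{ijk}$, the composite $\phi_{ij}\phi_{jk}\phi_{ik}^{-1}$ is therefore the scalar $\tilde g_{ij}\tilde g_{jk}\tilde g_{ik}^{-1}=c_{ijk}$. So the Čech cocycle of the gerbe coincides with $\delta([\mc A])$ on the nose. Applying \eqref{natural_cech_H2_to_H2} and Olsson's bijection then gives the claim.

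\textbf{Main obstacle.} The hardest part is the bookkeeping of conventions. Morphisms in $\ms G_{\mc A}$ are defined contravariantly via $f^\sharp:f^*\mc E_2\to\mc E_1$, and one must check that the resulting cocycle is $c_{ijk}$ rather than $c_{ijk}^{-1}$; a sign error would produce $-[\ms G_{\mc A}]$. Olsson's convention for the class of a gerbe also has to be matched with that of $\delta$. I would first verify everything on the trivial cover with a single $U$ to fix directions. If a global sign discrepancy remains, I would absorb it by using $\tilde g_{ij}^{-1}$ as the transition maps, or by adopting the standard normalisation of \cite[1.4]{SHIN-THESIS2019}.

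A secondary point is that Olsson works with the big étale or fppf site rather than the smooth one. This is handled by \cref{sm_top_et_top_coincide}, which identifies the relevant cohomology groups and gerbe classifications.
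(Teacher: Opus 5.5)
Your proposal is correct and is essentially the paper's proof: the same local objects $(U_i,\mc{O}_{U_i}^{\oplus n},\alpha_i^{-1})$, the same transition isomorphisms given by lifts $\tilde g_{ij}$ satisfying $\alpha_i^{-1}\circ\mathrm{Ad}(\tilde g_{ij})=\alpha_j^{-1}$, and the same identification of $\phi_{ij}\phi_{jk}\phi_{ik}^{-1}$ with the scalar $\tilde g_{ij}\tilde g_{jk}\tilde g_{ik}^{-1}=c_{ijk}$. The paper avoids the sign bookkeeping you flag by tacitly treating morphisms covariantly (it composes them by multiplying the underlying matrices); note that the sign is really fixed by how the band $\G_m\simeq\underline{\mathrm{Aut}}(\xi_i)$ is normalised, not by a free choice between $\tilde g_{ij}$ and $\tilde g_{ij}^{-1}$, because the compatibility condition determines each transition map up to a scalar.
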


\begin{proof}
    We recall the standard procedure for extracting a \v{C}ech 2-cocycle from a $\G_m$-gerbe $\ms{G}$. One chooses a smooth covering $\mathfrak{U} = \{U_i \to \ms{X}\}$ such that there exist local objects $x_i \in \ms{G}(U_i)$. On the intersections $U_{ij}$, one chooses isomorphisms $\phi_{ij} : x_j|_{U_{ij}} \xrightarrow{\sim} x_i|_{U_{ij}}$. On the triple intersections $U_{ijk}$, the composition 
    \begin{align*}
        c_{ijk} := \phi_{ij} \circ \phi_{jk} \circ \phi_{ik}^{-1}
    \end{align*}
    is an automorphism of $x_i|_{U_{ijk}}$. Since $\ms{G}$ is a $\G_m$-gerbe, $\mathrm{Aut}(x_i) = \G_m(U_{ijk})$, so $c_{ijk} \in \G_m(U_{ijk})$. 
    By the homomorphism \eqref{natural_cech_H2_to_H2}, 
    the collection $\{c_{ijk}\}$ forms a \v{C}ech 2-cocycle whose image is \([\ms G]\).

    We now apply this procedure to the gerbe of trivializations $\ms{G}_{\mc{A}}$. 
    Choose a covering $\mathfrak{U} = \{U_i \to \ms{X}\}$ trivializing $\mc{A}$, with isomorphisms $\alpha_i : \mc{A}|_{U_i} \xrightarrow{\sim} \mathrm{Mat}_{n \times n}(\mc{O}_{U_i})$. 
    The local objects in $\ms{G}_{\mc{A}}(U_i)$ are naturally given by 
    \begin{align*}
        x_i := (U_i, \mc{O}_{U_i}^{\oplus n}, \alpha_i^{-1}).
    \end{align*}
    An isomorphism $\phi_{ij} : x_j \xrightarrow{\sim} x_i$ over $U_{ij}$ is, by definition, an isomorphism of the underlying vector bundles $\mc{O}_{U_{ij}}^{\oplus n} \xrightarrow{\sim} \mc{O}_{U_{ij}}^{\oplus n}$ (which is given by a matrix $\tilde{g}_{ij} \in \GL_n(U_{ij})$) such that it intertwines the algebra isomorphisms:
    \begin{align*}
        \alpha_i^{-1} \circ \mathrm{Ad}(\tilde{g}_{ij}) = \alpha_j^{-1} \implies \mathrm{Ad}(\tilde{g}_{ij}) = \alpha_i \circ \alpha_j^{-1} = \beta_{ij}.
    \end{align*}
    This means that the local isomorphism $\phi_{ij}$ in the gerbe $\ms{G}_{\mc{A}}$ is precisely a lift $\tilde{g}_{ij} \in \GL_n(U_{ij})$ of the transition function $\beta_{ij} \in \PGL_n(U_{ij})$ representing the class $[\mc{A}] \in \check{\H}^1_{\sm}(\ms{X}, \PGL_n)$.

    Finally, the obstruction to the cocycle condition in the gerbe is the composition of these morphisms:
    \begin{align*}
        c_{ijk} = \phi_{ij} \circ \phi_{jk} \circ \phi_{ik}^{-1}.
    \end{align*}
    Since morphisms in $\ms{G}_{\mc{A}}$ are composed by multiplying the underlying matrices, this composition is exactly
    \begin{align*}
        c_{ijk} = \tilde{g}_{ij} \tilde{g}_{jk} \tilde{g}_{ik}^{-1} \in \GL_n(U_{ijk}).
    \end{align*}
    Because $\mathrm{Ad}(c_{ijk}) = \beta_{ij}\beta_{jk}\beta_{ik}^{-1} = \mathrm{id}$, the matrix $c_{ijk}$ lies in the center of $\GL_n$, which is $\G_m(U_{ijk})$. 

    Comparing with the discussion in \Cref{azumaya_connecting_homomorphism},
    this $\{c_{ijk}\}$ is precisely the 2-cocycle obtained by applying the connecting homomorphism $\delta$ to the 1-cocycle $\{\beta_{ij}\}$. Thus, the two classes coincide in $\check{\H}^2_{\sm}(\ms{X}, \G_m)$.
\end{proof}

\begin{definition}[Brauer equivalence and the Brauer group]
    Let $\mc{A}_1$ and $\mc{A}_2$ be two Azumaya algebras on an algebraic stack $\ms{X}$. 
    We say $\mc{A}_1$ and $\mc{A}_2$ are \textcolor{blue}{Brauer equivalent} 
    if there exist finite locally free $\mc{O}_{\ms{X}}$-modules $\mc{E}_1$ and $\mc{E}_2$ of everywhere positive rank, and an isomorphism of $\mc{O}_{\ms{X}}$-algebras:
    \begin{align*}
        \mc{A}_1 \otimes_{\mc{O}_{\ms{X}}} \underline{\mathrm{End}}_{\mc{O}_{\ms{X}}}(\mc{E}_1) \simeq \mc{A}_2 \otimes_{\mc{O}_{\ms{X}}} \underline{\mathrm{End}}_{\mc{O}_{\ms{X}}}(\mc{E}_2).
    \end{align*}
    The \textcolor{blue}{Azumaya Brauer group} 
    of $\ms{X}$, 
    denoted by $\mathrm{Br}_{\mathrm{Az}}(\ms{X})$, 
    is the set of Brauer equivalence classes of Azumaya algebras on $\ms{X}$. 
    The group operation is induced by the tensor product $[\mc{A}_1] \cdot [\mc{A}_2] := [\mc{A}_1 \otimes_{\mc{O}_{\ms{X}}} \mc{A}_2]$. The identity element is the class of trivial Azumaya algebras $[\underline{\mathrm{End}}_{\mc{O}_{\ms{X}}}(\mc{E})]$, and the inverse of $[\mc{A}]$ is given by its opposite algebra $[\mc{A}^{\mathrm{op}}]$.
\end{definition}

\begin{definition}[Rank of a sheaf]
    Let $\ms{X}$ be an algebraic stack and let $\mc{F}$ be a quasi-coherent sheaf of finite type (e.g., an Azumaya algebra $\mc{A}$). For a point $x \in |\ms{X}|$, choose a representative $x_K: \Spec K \to \ms{X}$. The \textcolor{blue}{rank} of $\mc{F}$ at $x$, denoted by $\operatorname{rank}_x(\mc{F})$, is defined as
    \begin{align*}
        \operatorname{rank}_x(\mc{F}) := \dim_K(x_K^*\mc{F}).
    \end{align*}
    This integer is independent of the choice of the representative field-valued point $x_K$. Equivalently, for any smooth morphism $p: U \to \ms{X}$ from a scheme $U$ and any point $u \in U$ mapping to $x$, one has $\operatorname{rank}_x(\mc{F}) = \operatorname{dim}_{\kappa (u)}(p^*\mc{F}(u))$.
\end{definition}

\begin{lemma}\label{lem:rank_locally_constant}
    Let $\ms{X}$ be an algebraic stack, and let $\mc{A}$ be an Azumaya algebra on $\ms{X}$. Then the index function
    \begin{align*}
        \deg_{\mc{A}} : |\ms{X}| &\longrightarrow \mathbb{N}^* \\
        x &\longmapsto \sqrt{\operatorname{rank}_x(\mc{A})}
    \end{align*}
    is a well-defined, locally constant function on the topological space $|\ms{X}|$.
\end{lemma}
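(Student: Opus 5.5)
The plan is to reduce everything to a smooth atlas. Fix a smooth surjective morphism $p\colon U\to\ms{X}$ from a scheme $U$ such that $p^*\mc{A}\simeq\mathrm{Mat}_{n_i\times n_i}(\mc{O}_{U_i})$ on each member $U_i$ of a smooth covering; after refining we may take $U=\bigsqcup_i U_i$. First I will show that $\operatorname{rank}_x(\mc{A})$ is well defined, i.e. independent of the chosen representative $x_K\colon\Spec K\to\ms{X}$. Two representatives of the same point $x\in|\ms{X}|$ are dominated by a common field extension $\Omega$, via $\Spec\Omega\to\Spec K$ and $\Spec\Omega\to\Spec K'$ with $2$-isomorphic composites to $\ms{X}$. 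Pullback of quasi-coherent sheaves is stable under $2$-isomorphism, so $x_K^*\mc{A}\otimes_K\Omega\simeq x_{K'}^*\mc{A}\otimes_{K'}\Omega$. Since $\dim$ is invariant under field extension, the two ranks agree. The same argument, applied to the composite $\Spec\kappa(u)\to U\to\ms{X}$ for $u\in U$ lying over $x$, gives the stated equivalent description $\operatorname{rank}_x(\mc{A})=\dim_{\kappa(u)}(p^*\mc{A}(u))$.

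Next, for $u\in U_i$, the fibre $p^*\mc{A}(u)\simeq\mathrm{Mat}_{n_i\times n_i}(\kappa(u))$ has dimension $n_i^2$. Hence $\operatorname{rank}_x(\mc{A})$ is a perfect square, and $\deg_{\mc{A}}$ takes values in $\mathbb{N}^*$; positivity follows because the $n_i$ are positive integers. Moreover, the function $u\mapsto\sqrt{\operatorname{rank}_{p(u)}(\mc{A})}$ on $|U|$ equals $n_i$ on $U_i$. It is therefore locally constant on $|U|$, being constant on each open piece $U_i$ of the disjoint union.

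Finally I will descend local constancy along $|p|\colon|U|\to|\ms{X}|$. A smooth morphism from a scheme to an algebraic stack induces a continuous, open, surjective map on topological spaces (Stacks Project, the lemma that flat locally finitely presented morphisms of stacks are open). For $m\in\mathbb{N}^*$, set $V_m:=\{u:\deg_{\mc{A}}(p(u))=m\}$, which is open in $|U|$. Since $\deg_{\mc{A}}\circ|p|$ is constant on fibres of $|p|$, we have $\deg_{\mc{A}}^{-1}(m)=|p|(V_m)$, and this set is open because $|p|$ is open. These sets cover $|\ms{X}|$ and are pairwise disjoint, so $\deg_{\mc{A}}$ is locally constant.

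The main obstacle, though it is a mild one, is the well-definedness step: it needs care with $2$-isomorphisms and with points of the stack being equivalence classes of field-valued points. I would handle it by citing the definition of $|\ms{X}|$ (Stacks Project, Tag 04XE) together with the fact that pullback of quasi-coherent modules commutes with base field extension. Everything else is formal.
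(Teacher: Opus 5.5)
Your proof is correct and takes essentially the paper's route: check local constancy after pulling back to a smooth chart, then descend it to $|\ms{X}|$, using that smooth charts from schemes detect the topology of $|\ms{X}|$. The differences are minor. The paper gets the perfect-square property from Wedderburn--Artin applied to $x_K^*\mc{A}$, and it tests each $W_n$ against arbitrary smooth charts, tacitly using that $p^*\mc{A}$ is finite locally free there. You instead work on a single trivializing atlas, where both the square-ness and the constancy on each $U_i$ are immediate, and you also prove the independence of the chosen representative, which the paper builds into its definition of rank.
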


\begin{proof}
    First, we check that $\deg_{\mc{A}}$ is well-defined. For any point $x \in |\ms{X}|$ represented by $x_K: \Spec K \to \ms{X}$, the pullback $x_K^*\mc{A}$ is an Azumaya algebra over the field $K$, i.e., a central simple $K$-algebra. By Wedderburn-Artin theory, $\dim_K(x_K^*\mc{A}) = n^2$ for a unique $n \in \mathbb{N}^*$, so $\deg_{\mc{A}}(x) = n$ is well-defined.

    Next, we show that $\deg_{\mc{A}}$ is locally constant on $|\ms{X}|$, i.e. for each $n \in \mathbb{N}^*$, 
    \begin{align*}
        W_n := \{ x \in |\ms{X}| \mid \deg_{\mc{A}}(x) = n \} \subseteq |\ms{X}|
    \end{align*}
    is both open and closed in $|\ms{X}|$. 
    A subset $W \subseteq |\ms{X}|$ is open (resp. closed) if and only if for every smooth morphism $p: U \to \ms{X}$ from a scheme $U$, the preimage $|p|^{-1}(W)$ is open (resp. closed). Let $p: U \to \ms{X}$ be an arbitrary smooth morphism from a scheme $U$. Notice that for any $u \in |U|$ mapping to $x = p(u) \in |\ms{X}|$, choosing the representative $x_{\kappa(u)}: \Spec \kappa(u) \to U \xrightarrow{p} \ms{X}$ yields:
    \begin{align*}
        \operatorname{rank}_{p(u)}(\mc{A}) = \dim_{\kappa(u)}(x_{\kappa(u)}^*\mc{A}) = \operatorname{rank}_{\mc{O}_{U, u}}((p^*\mc{A})_u).
    \end{align*}
    $|p|^{-1}(W_n) = \{ u \in |U| \mid \operatorname{rank}_{\mc{O}_{U, u}}((p^*\mc{A})_u) = n^2 \}
    $ which is both open and closed in $|U|$.
    Since $p: U \to \ms{X}$ was chosen arbitrarily, $W_n$ is both open and closed in $|\ms{X}|$ for every $n \in \mathbb{N}^*$. Therefore, $\deg_{\mc{A}}$ is continuous when $\mathbb{N}^*$ is given the discrete topology, which completes the proof. 
\end{proof}

\begin{lemma}\label{every_class_has_constant_rank}
    If $\ms{X}$ is an algebraic stack of finite type over $k$, then every equivalence class in $\mathrm{Br}_{\mathrm{Az}}(\ms{X})$ has a representative that is of constant rank $n^2$ for some $n\in\mathbb{N}^*$.
\end{lemma}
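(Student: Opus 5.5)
Given an Azumaya algebra $\mc{A}$ on $\ms{X}$, I will split $\ms{X}$ according to the degree function $\deg_{\mc{A}}$. Then, on each piece, I will pad $\mc{A}$ by a trivial matrix algebra so that the degree becomes a single common integer $n$.

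First, $\ms{X}$ is of finite type over $k$, so $|\ms{X}|$ is a Noetherian (in particular quasi-compact) topological space. By \cref{lem:rank_locally_constant}, the sets $W_m=\{x\in|\ms{X}|:\deg_{\mc{A}}(x)=m\}$ are open and closed and cover $|\ms{X}|$. Quasi-compactness means only finitely many are nonempty, say $W_{m_1},\dots,W_{m_r}$. Each $W_{m_i}$ corresponds to an open and closed substack $\ms{X}_i$, and $\ms{X}=\ms{X}_1\sqcup\cdots\sqcup\ms{X}_r$. Hence quasi-coherent sheaves and algebras on $\ms{X}$ are the same as tuples of such objects on the $\ms{X}_i$, since the structure sheaf splits as a product.

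Next, I set $n:=\operatorname{lcm}(m_1,\dots,m_r)$, or simply $n:=\prod_i m_i$, and $d_i:=n/m_i$. Let $\mc{E}$ be the finite locally free $\mc{O}_{\ms{X}}$-module equal to $\mc{O}_{\ms{X}_i}^{\oplus d_i}$ on $\ms{X}_i$; it has everywhere positive rank. I then put $\mc{A}':=\mc{A}\otimes_{\mc{O}_{\ms{X}}}\underline{\mathrm{End}}_{\mc{O}_{\ms{X}}}(\mc{E})$.

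$\mc{A}'$ is Brauer equivalent to $\mc{A}$: taking $\mc{E}_1=\mc{E}$ and $\mc{E}_2=\mc{O}_{\ms{X}}$ in the definition gives $\mc{A}\otimes\underline{\mathrm{End}}(\mc{E})\simeq\mc{A}'\otimes\underline{\mathrm{End}}(\mc{O}_{\ms{X}})$.

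It remains to check that $\mc{A}'$ is an Azumaya algebra of constant rank $n^2$. Take a smooth cover $\{U_j\to\ms{X}\}$ trivializing $\mc{A}$. After refining it by $U_j\times_{\ms{X}}\ms{X}_i$, each $U_j$ maps into a single $\ms{X}_i$ and is still a smooth covering. Over such a $U_j$ we have $\mc{A}|_{U_j}\simeq\mathrm{Mat}_{n_j}(\mc{O}_{U_j})$ with $n_j=m_i$, wherever $U_j$ is nonempty, since the rank is read off at points. The module $\mc{E}|_{U_j}$ is free of rank $d_i$. Therefore
\[
\mc{A}'|_{U_j}\simeq\mathrm{Mat}_{m_i}(\mc{O}_{U_j})\otimes\mathrm{Mat}_{d_i}(\mc{O}_{U_j})\simeq\mathrm{Mat}_{n}(\mc{O}_{U_j}),
\]
which is the local condition for constant rank $n^2$. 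Quasi-coherence and being a unital associative algebra are clear.

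The only point needing real care is the passage from the local-rank function on $|\ms{X}|$ to the actual trivializations $n_i$ in the definition of an Azumaya algebra: a single $U_i$ might meet several degree loci. This is handled by the refinement along the decomposition $\ms{X}=\bigsqcup\ms{X}_i$ above, using that on each connected piece of $U_j$ the $n_j$ coincides with $\deg_{\mc{A}}$.

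The other ingredient is finiteness of the decomposition. This uses quasi-compactness, which comes from the finite-type hypothesis, and is the only place that hypothesis enters.
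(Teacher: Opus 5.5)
Your proof is correct and is essentially the paper's argument. The paper splits $\ms{X}$ into its finitely many connected components rather than into the degree loci $W_m$, and then pads $\mc{A}$ by $\underline{\mathrm{End}}(\mc{O}_{\ms{X}_i}^{\oplus n/n_i})$ with $n=\prod n_i$, exactly as you do. Your explicit check of the local trivializations $\mc{A}'|_{U_j}\simeq\mathrm{Mat}_n(\mc{O}_{U_j})$ is a welcome detail that the paper leaves implicit. The refinement step is not actually needed, though: each $U_j$ in the definition carries a single $n_j$, so it already lies over one degree locus.
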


\begin{proof}
    Let $[\mc{A}] \in \mathrm{Br}_{\mathrm{Az}}(\ms{X})$ be a Brauer class represented by an Azumaya algebra $\mc{A}$ on $\ms{X}$. The index function $\deg_{\mc{A}}$ is a locally constant function on $|\ms{X}|$.

    Since $|\ms{X}|$ is Noetherian, $|\ms{X}|$ has only finitely many connected components. $\ms{X}$ decomposes into a finite coproduct of connected open and closed substacks:
    \begin{align*}
        \ms{X} \simeq \coprod_{i=1}^m \ms{X}_i.
    \end{align*}
    Since each underlying topological space $|\ms{X}_i|$ is connected and $\deg_{\mc{A}}$ is locally constant, the restriction $\deg_{\mc{A}}|_{|\ms{X}_i|}$ is identically equal to a constant integer $n_i \in \mathbb{N}^*$ for each $i = 1, \dots, m$. Thus, $\mc{A}|_{\ms{X}_i}$ is an Azumaya algebra of constant rank $n_i^2$ over $\ms{X}_i$.

    Now set $n := \prod_{i=1}^m n_i \in \mathbb{N}^*$, and for each $i \in \{1, \dots, m\}$, define $r_i := n / n_i \in \mathbb{N}^*$. By the canonical 2-equivalence of categories of quasi-coherent modules:
    \begin{align*}
        \mathrm{QCoh}(\ms{X}) \xrightarrow{\sim} \prod_{i=1}^m \mathrm{QCoh}(\ms{X}_i), \quad \mc{F} \mapsto (\mc{F}|_{\ms{X}_1}, \dots, \mc{F}|_{\ms{X}_m}),
    \end{align*}
    we can define a quasi-coherent $\mc{O}_{\ms{X}}$-module $\mc{E} \in \mathrm{QCoh}(\ms{X})$ by specifying its components as $\mc{E}|_{\ms{X}_i} := \mc{O}_{\ms{X}_i}^{\oplus r_i}$ for $i = 1, \dots, m$. $\mc{E}$ is a finite locally free $\mc{O}_{\ms{X}}$-module of everywhere positive rank.

    Consider the tensor product Azumaya algebra on $\ms{X}$:
    \begin{align*}
        \mc{A}' := \mc{A} \otimes_{\mc{O}_{\ms{X}}} \underline{\mathrm{End}}_{\mc{O}_{\ms{X}}}(\mc{E}).
    \end{align*}
    By definition of Brauer equivalence, $\mc{A}'$ represents the same class $[\mc{A}'] = [\mc{A}]$ in $\mathrm{Br}_{\mathrm{Az}}(\ms{X})$. Restricting $\mc{A}'$ to each connected substack $\ms{X}_i$, we obtain:
    \begin{align*}
        \mc{A}'|_{\ms{X}_i} \simeq \mc{A}|_{\ms{X}_i} \otimes_{\mc{O}_{\ms{X}_i}} \underline{\mathrm{End}}_{\mc{O}_{\ms{X}_i}}(\mc{O}_{\ms{X}_i}^{\oplus r_i}).
    \end{align*}
    Therefore, the index function $\deg_{\mc{A}'}$ on $|\ms{X}_i|$ satisfies:
    \begin{align*}
        \deg_{\mc{A}'}|_{|\ms{X}_i|} = \deg_{\mc{A}}|_{|\ms{X}_i|} \cdot \operatorname{rank}_{\mc{O}_{\ms{X}_i}}(\mc{O}_{\ms{X}_i}^{\oplus r_i}) = n_i \cdot r_i = n.
    \end{align*}
    This shows that $\deg_{\mc{A}'}$ is identically equal to $n$ on $|\ms{X}|$, meaning that $\mc{A}'$ is an Azumaya algebra of constant rank $n^2$ over $\ms{X}$.
\end{proof}

\begin{lemma}\label{azumaya_algebra_torsion}\cite[1.2.9]{SHIN-THESIS2019}.
    If $\mc{A}$ is an Azumaya algebra of constant rank $n^2$ on $\ms{X}$, 
    then the class $[\ms{G}_{\mc{A}}] \in {\H}^2_{\sm}(\ms{X}, \G_m)$ is $n$-torsion.
\end{lemma}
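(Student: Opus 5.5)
The plan is to show $n\cdot[\ms{G}_{\mc{A}}]=0$ by factoring the class through $\check{\H}^1_{\sm}(\ms{X},\PGL_n)$ and killing its image with the determinant. By \cref{lem:gerbe_class_coincides}, $[\ms{G}_{\mc{A}}]$ is the image under \eqref{natural_cech_H2_to_H2} of the \v{C}ech class $\delta([\mc{A}])$. That class is represented by the cocycle $c_{ijk}=\tilde g_{ij}\tilde g_{jk}\tilde g_{ik}^{-1}$ of \cref{azumaya_connecting_homomorphism}. Since \eqref{natural_cech_H2_to_H2} is a group homomorphism, it suffices to show that $\delta([\mc{A}])$ is $n$-torsion in $\check{\H}^2_{\sm}(\ms{X},\G_m)$.

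To do this, I apply the determinant $\det:\GL_n\to\G_m$ to the cocycle. Because $c_{ijk}$ is a central scalar in $\G_m(U_{ijk})\subseteq\GL_n(U_{ijk})$, we get $\det(c_{ijk})=c_{ijk}^n$. On the other hand, multiplicativity of $\det$ gives
\[
c_{ijk}^n=\det(\tilde g_{ij})\det(\tilde g_{jk})\det(\tilde g_{ik})^{-1}.
\]
Set $d_{ij}:=\det(\tilde g_{ij})\in\G_m(U_{ij})$, which is an honest \v{C}ech $1$-cochain with values in $\G_m$. The identity above says exactly that $n\cdot(c_{ijk})$ is the \v{C}ech coboundary of $(d_{ij})$, since $\G_m$ is commutative. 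Hence $n\cdot\delta([\mc{A}])=0$ in $\check{\H}^2_{\sm}(\ms{X},\G_m)$, and therefore $n[\ms{G}_{\mc{A}}]=0$.

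The point needing care is whether the lifts $\tilde g_{ij}$ exist on the cover itself or only after refinement. They exist after refining the smooth cover, as already used in \cref{azumaya_connecting_homomorphism}. The argument above takes place entirely on this single refined cover, so it produces a valid coboundary there, and \v{C}ech classes are compatible with refinement. I also need to check the sign and ordering conventions of the coboundary, $(\partial d)_{ijk}=d_{jk}d_{ik}^{-1}d_{ij}$, which is harmless in an abelian group.

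As an alternative, one could argue directly on the gerbe. Taking determinants of the locally free sheaves $\mc{E}$ in objects $(U,\mc{E},\sigma)$ of $\ms{G}_{\mc{A}}$ gives a morphism of gerbes $\ms{G}_{\mc{A}}\to B\G_m$ that is $\lambda\mapsto\lambda^n$ on bands. This yields $n[\ms{G}_{\mc{A}}]=[B\G_m]=0$, but I will present the cocycle version since it matches the paper's setup.
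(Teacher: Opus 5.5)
Your argument is correct. The paper gives no proof of its own here, only the citation to Shin, so there is no internal argument to match. What you have is the standard determinant argument, in two guises.

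Your main (cocycle) version has the virtue of plugging straight into \cref{lem:gerbe_class_coincides}. However, it is only as solid as that lemma and \cref{azumaya_connecting_homomorphism}. The obstruction to lifting $g_{ij}$ is the pullback along $g_{ij}\colon U_{ij}\to\PGL_n$ of the $\G_m$-torsor $\GL_n\to\PGL_n$, which is a $\G_m$-torsor on the overlap $U_{ij}$. Refining the $U_i$ only replaces $U_{ij}$ by fibre products $V_\alpha\times_{\ms X}V_\beta$, and these need not trivialize that torsor in general. Strictly, one should therefore pass to a hypercovering rather than a \v{C}ech cover. This does not hurt you: $c^n=\partial(\det\tilde g)$ is a formal identity of cochains and holds verbatim on a hypercovering. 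It is better to say this explicitly than to appeal to ``after refinement''.

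Your alternative avoids the issue altogether and is the cleaner proof. Since $\underline{\mathrm{End}}_{\mc O_U}(\mc E)\simeq\mc A|_U$ forces $\mc E$ to have rank exactly $n$, the assignment $(U,\mc E,\sigma)\mapsto\det\mc E$ is a morphism of gerbes $\ms G_{\mc A}\to B\G_m$ inducing $\lambda\mapsto\lambda^n$ on bands. Functoriality of $\H^2$ in the band then gives $n[\ms G_{\mc A}]=[B\G_m]=0$ directly in $\H^2_{\sm}(\ms X,\G_m)$, with no \v{C}ech-to-derived comparison needed.
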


\begin{lemma}\label{sm_top_et_top_coincide}
    For any sheaf $\ms{F}$ of groups on $\ms{X}$ in the big smooth topology,
    \begin{align*}
        \check \H^1_{\et}(\ms{X},\ms{F})\simeq \check \H^1_{\sm}(\ms{X},\ms{F}).
    \end{align*}
    And for any sheaf $\ms{F}$ of abelian groups on $\ms{X}$ in the big smooth topology and any $i\in\mathbb{N}$,
    \begin{align*}
        \H^i_{\et}(\ms{X},\ms{F})\simeq \H^i_{\sm}(\ms{X},\ms{F}).
    \end{align*}
\end{lemma}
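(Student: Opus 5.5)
The statement is \cref{sm_top_et_top_coincide}: it compares the big \'etale and big smooth topologies on $\ms{X}$. Both claims reduce to one fact. Every smooth covering of a scheme can be refined by an \'etale covering, by \cite[\href{https://stacks.math.columbia.edu/tag/055U}{Tag 055U}]{SP}.

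For the first claim (\v{C}ech $\H^1$ with coefficients in a sheaf of groups), I would work with the site $\ms X^{\mathrm{st}}$ of the preceding remark. There the \v{C}ech groups are colimits over coverings of the final object $\ms X$, and these may be taken to be scheme-valued.

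\begin{enumerate}
\item Every \'etale covering is a smooth covering, so there is a natural map $\check\H^1_{\et}(\ms X,\ms F)\to\check\H^1_{\sm}(\ms X,\ms F)$.
\item This map is a bijection because the two families of coverings are mutually cofinal. Let $\{U_i\to\ms X\}$ be a smooth covering with each $U_i$ a scheme. Apply Tag 055U to each smooth surjection $U_i\to U_i$-image, or more precisely to the smooth surjection $\coprod U_i\to\ms X$ pulled back to an \'etale atlas.
\item In practice I would argue more concretely via a standard fact. For an algebraic stack, any smooth surjection $U\to\ms X$ from a scheme admits, \'etale locally on $\ms X$, sections. More precisely, there is an \'etale covering $\{V_j\to\ms X\}$ with $V_j$ schemes through which the smooth cover refines; this is Tag 055U applied after base change to an atlas, or the stacks-project statement that smooth covers of algebraic stacks are refined by \'etale covers from schemes when $\ms X$ is Deligne--Mumford.
\item For general algebraic stacks I would instead interpret $\check\H^1$ as isomorphism classes of $\ms F$-torsors, as in the earlier paragraph on torsors. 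An $\ms F$-torsor on the big smooth site restricts, over any scheme $T\to\ms X$, to a torsor on $T_{\sm}$. For a sheaf of groups on the big site of a scheme $T$, smooth-local trivializations can be refined to \'etale-local ones by Tag 055U: a section over a smooth cover $T'\to T$ gives a section over the \'etale cover $T''\to T$ that is produced by an \'etale-local section of $T'\to T$.
\item Hence smooth torsors and \'etale torsors coincide, and the torsor classification identifies both \v{C}ech groups.
\end{enumerate}

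For the second claim (derived-functor cohomology of abelian sheaves), I would show that the identity morphism of topoi between the big smooth and big \'etale sites of $\ms X$ (both indexed by schemes over $\ms X$) identifies cohomology.

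\begin{enumerate}
\item Let $\epsilon$ be the morphism of topoi from the smooth site to the \'etale site. The key fact is $R^q\epsilon_*\ms F=0$ for $q>0$.
\item This vanishing is checked on each scheme $T$: it amounts to $\H^q(T_{\sm},\ms F)=\H^q(T_{\et},\ms F)$ for sheaves on the big site. That equality is the standard comparison \cite[\href{https://stacks.math.columbia.edu/tag/0DDU}{Tag 0DDU}]{SP}, whose proof uses the same refinement of smooth coverings by \'etale ones.
\item The Leray spectral sequence then yields $\H^i_{\et}(\ms X,\ms F)\simeq\H^i_{\sm}(\ms X,\ms F)$.
\end{enumerate}

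The main obstacle is bookkeeping rather than mathematics. One has to be careful that the Stacks Project comparison is stated for the relevant big sites and the correct final-object conventions. I would invoke Tag 0DDU (smooth versus \'etale cohomology agree) together with the refinement lemma, and note that everything is compatible with restriction to schemes $T\to\ms X$.
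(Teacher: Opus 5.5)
Your argument is essentially the paper's. The paper uses a single input: by \cite[\href{https://stacks.math.columbia.edu/tag/021Y}{Tag 021Y}]{SP} (ultimately \cite[\href{https://stacks.math.columbia.edu/tag/055U}{Tag 055U}]{SP}), the big smooth and big \'etale topologies on $\ms X$ define the same topos. The $\check\H^1$ statement then follows from identifying $\check\H^1$ with isomorphism classes of torsors (Giraud, \cite[(2.9)]{LW23}). The $\H^i$ statement follows because both sides are derived functors of global sections on the same topos. Your steps 4--5 are the torsor half of this. Your Leray argument is the same fact in another form: once \'etale sheaves are smooth sheaves, $\epsilon$ is an equivalence of topoi and $R^q\epsilon_*\ms F=0$ for $q>0$ is automatic. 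The scheme-by-scheme comparison on each $T$ is therefore harmless but unnecessary.

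There are two corrections. First, discard steps 2--3 of the first part. Smooth and \'etale coverings of the final object $\ms X$ are not mutually cofinal unless $\ms X$ is Deligne--Mumford, since only DM stacks admit an \'etale surjection from a scheme. For example, the smooth cover $\Spec k\to B\G_m$ has no \'etale refinement. Applying Tag 055U after base change to an atlas only refines coverings of schemes $T\to\ms X$, never coverings of $\ms X$ itself. This is exactly why one must pass through torsors, whose local triviality is a condition over schemes $T\to\ms X$.

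Second, step 4 only shows that a smooth $\ms F$-torsor is \'etale-locally trivial. To conclude ``smooth torsors and \'etale torsors coincide'' you also need the converse: an \'etale $\ms F$-torsor is a sheaf for the smooth topology. This follows from the same refinement by the standard gluing argument, since an \'etale sheaf satisfies the sheaf condition for any covering refined by an \'etale covering. That is precisely the statement that the two topoi agree, the one fact the paper invokes.
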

\begin{proof}
    By the first statement in \cite[\href{https://stacks.math.columbia.edu/tag/021Y}{Tag 021Y}]{SP} (whose underlying reason is \cite[\href{https://stacks.math.columbia.edu/tag/055U}{Tag 055U}]{SP}),
    the big smooth topology defines the same topos as the big \'etale topology.
    Hence by \cite[III 3.6.5 (5)]{Giraud1971} and \cite[(2.9)]{LW23}
    \begin{align*}
        \check \H^1_{\sm}(\ms{X},\ms{F})&
        \simeq \mathrm{Tors}(\ms X_{\mathrm{sm}}, \ms F)_{/\cong}\simeq\mathrm{Tors}(\ms X_{\mathrm{\et}},\ms F)_{/\cong}
        =\check \H^1_{\et}(\ms{X},\ms{F}).
    \end{align*}
    And:
    \begin{align*}
        \H^i_{\et}(\ms{X},\ms{F})=R^i\Gamma_{\et}(-,\ms{F})\simeq R^i\Gamma_{\sm}(-,\ms{F})
        =\H^i_{\sm}(\ms{X},\ms{F}).
    \end{align*}
\end{proof}

\section{Purity and Torsionness of the Brauer group}

\begin{situation}\label{sit:setup}
$\mc{X}$ is a regular Noetherian algebraic stack with generically affine stabilizer.
\end{situation}

We prove that the Brauer group is torsion for $\mc{X}$ in \Cref{sit:setup}. 

\begin{remark}
We explain the condition ``generically affine stabilizer". Let $\mc X$ be a regular Noetherian algebraic stack. Then by \cite[Proposition 7.4.39]{alper} the residual gerbe $\mc G_x$ exists for every $x\in |\mc X|$ and $\mc G_x$ is a gerbe over the field $\kappa(x)$. 

We prove in \cref{prop:stack_integral} that each connected component of $\mc X$ is integral. 
Thus,
by \cite[\href{https://stacks.math.columbia.edu/tag/0GWC}{Tag 0GWC}]{SP},
we may denote the generic points of $\mc X$ by $\eta_i$. 
\iffalse
Since $\mc X$ is quasi-separated, 
we prove in \cref{prop:brauer-injective-generic} that $\mc G_{\eta_i}$ are quasi-separated gerbes over $\kappa(\eta_i)$.
\fi
In \cref{prop:vanishing-h1} we prove that $\mc G_{{\eta_i},L}=BG_i$ for some finite separable extension $L/\kappa(\eta_i)$ and some algebraic group $G_i$ over $L$. 
The condition ``generically affine stabilizer" means exactly the following: 
\begin{center}
    $G_i$ is a linear algebraic group for all $i$. 
\end{center}
By fppf descent of affineness, 
this is equivalent to saying that \textit{the stabilizers of generic points are linear algebraic groups}. 
\end{remark}

Torsionness of the Brauer group $\Br(\mc{X})$ under \Cref{sit:setup} can be reduced to the case where $\mc{X}$ is a gerbe over an integral, regular qcqs scheme $S$. By taking connected components we may first assume that $\mc{X}$ is integral. 
This is justified by \Cref{prop:stack_integral}. 

\begin{lemma}\label{lem:top-irreducible}
Let $T$ be a compact connected topological space. 
Suppose $T = \bigcup_{\alpha \in I} V_\alpha$ is covered by a collection of non-empty open irreducible subsets $V_\alpha \subseteq T$. 
Then $T$ is irreducible.
\end{lemma}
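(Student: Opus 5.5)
The plan is to show that any two non-empty open subsets of $T$ intersect. That is equivalent to irreducibility, since $T\neq\varnothing$ once we have at least one $V_\alpha$; if $I$ is empty then $T$ is empty and we simply adopt the paper's convention. Compactness will not really be needed; connectedness and the fact that the $V_\alpha$ are irreducible open sets will do the work.

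The key observation is this: if $V_\alpha\cap V_\beta\neq\varnothing$, then $V_\alpha\cup V_\beta$ is irreducible. To see it, note that a non-empty open subset of an irreducible space is dense. So $V_\alpha\cap V_\beta$ is dense in both $V_\alpha$ and $V_\beta$. Hence $\overline{V_\alpha}=\overline{V_\alpha\cap V_\beta}=\overline{V_\beta}$, and $V_\alpha\cup V_\beta$ lies in the irreducible closed set $\overline{V_\alpha\cap V_\beta}$, in which it is dense; it is therefore irreducible.

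Next, define an equivalence relation on $I$ by declaring $\alpha\sim\beta$ when $\overline{V_\alpha}=\overline{V_\beta}$. Each $V_\alpha$ is open and irreducible, so its closure is an irreducible closed set. For each class $c$, set $W_c=\bigcup_{\alpha\in c}V_\alpha$. Each $W_c$ is open. By the observation, if $V_\alpha$ meets $V_\beta$, then $\alpha\sim\beta$. It follows that distinct $W_c$ are disjoint: if $W_c$ and $W_{c'}$ met, some $V_\alpha\subseteq W_c$ would meet some $V_\beta\subseteq W_{c'}$, forcing $\alpha\sim\beta$.

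So $T$ is a disjoint union of open sets $W_c$. Connectedness then forces a single class, which means all the $V_\alpha$ have the same closure $Z$, an irreducible closed set. Since the $V_\alpha$ cover $T$, we get $T=\bigcup V_\alpha\subseteq Z$, so $T=Z$ is irreducible.

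The only delicate point is the density step: a non-empty open subset $U$ of an irreducible space $V$ is dense. If it were not, $V=\overline{U}\cup(V\setminus U)$ would be a union of two proper closed subsets, contradicting irreducibility. Also, a subset that is dense in an irreducible space is itself irreducible. Both are standard, so I do not expect a real obstacle here. Compactness could be used to reduce to finitely many $V_\alpha$, but the argument above does not need it.
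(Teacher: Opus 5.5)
Your proof is correct, and it takes a different route from the paper's.

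The paper first uses compactness to reduce to a finite cover $V_1,\dots,V_m$. It then proves the same merging observation you make: if $A\cap B\neq\varnothing$, then $A\cap B$ is dense in $A\cup B$, so $A\cup B$ is irreducible. Finally it inducts on $m$. Connectedness forces some $V_i$ to meet some other $V_j$, and replacing the pair by the irreducible open set $V_i\cup V_j$ lowers the number of sets by one.

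You instead attach the invariant $\overline{V_\alpha}$ to each member of the cover and show it agrees on overlapping pairs. This partitions $T$ into pairwise disjoint open sets $W_c$, and a single application of connectedness shows there is only one class. Then $T\subseteq\overline{V_\alpha}$ for any $\alpha$, so $T$ equals an irreducible closed set.

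Your approach has two advantages. It needs neither compactness nor induction, so it proves the statement for arbitrary connected spaces. It also only uses the equality $\overline{V_\alpha}=\overline{V_\beta}$ for overlapping sets, so the claim that $V_\alpha\cup V_\beta$ is irreducible could be dropped from your write-up.

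The paper's finite version is enough for its one application, where the atlas $U$ is quasi-compact with finitely many integral components. Your argument is the cleaner general one.

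A small point: under the usual convention connected spaces are non-empty, so $I\neq\varnothing$ automatically and the empty case needs no separate convention.
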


\begin{proof}
We may assume $I=\{1,\dots,m\}$ is finite. 
Notice that if $A$ and $B$ are non-empty open irreducible subsets with $A \cap B \neq \emptyset$, then $A \cap B$ is a non-empty open subset of $A$, so $A \cap B$ is irreducible and dense in $A$. Similarly, $A \cap B$ is dense in $B$, whence $A \cap B$ is dense in $A \cup B$. Since the closure of an irreducible set is irreducible, $A \cup B = \overline{A \cap B}^{A \cup B}$ is irreducible.

Since $T$ is connected, for every $i$, $V_i$ meets some $V_j$ where $j\neq i$. Since $V_i \cup V_j$ is irreducible, $T$ is covered by $m-1$ irreducible open subsets. 
$T$ is irreducible by induction on $m$.
\end{proof}

\begin{proposition}\label{prop:stack_integral}
Let $\mc{X}$ be a regular, connected, qcqs algebraic stack. Then $\mc{X}$ is an integral stack.
\end{proposition}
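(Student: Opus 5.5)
The plan is to prove first that $|\mc X|$ is irreducible, using \cref{lem:top-irreducible}, and then that $\mc X$ is reduced; together these give integrality. Reducedness is local in the smooth topology. Choose a smooth atlas $p\colon U\to\mc X$ with $U$ a scheme. Since $\mc X$ is regular, $U$ is regular by definition, because regularity of stacks is defined smooth-locally. Hence $U$ is reduced, and therefore $\mc X$ is reduced.

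For irreducibility, note that $|\mc X|$ is quasi-compact because $\mc X$ is quasi-compact, and it is connected by hypothesis. Since $\mc X$ is quasi-compact, we may take $U$ to be a finite disjoint union of affine schemes $U_1,\dots,U_m$, each of which we may assume connected. Each $U_i$ is Noetherian only if $\mc X$ is locally Noetherian; a regular stack is locally Noetherian by convention, so each $U_i$ is regular, Noetherian and connected. A regular local ring is a domain, so distinct irreducible components of $U_i$ cannot meet. A Noetherian scheme has finitely many irreducible components, so connectedness forces $U_i$ to be irreducible. Smooth morphisms are open, so $V_i:=|p|(|U_i|)$ is open in $|\mc X|$. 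It is also irreducible, being the continuous image of an irreducible space, and it is nonempty. The $V_i$ cover $|\mc X|$, so \cref{lem:top-irreducible} shows that $|\mc X|$ is irreducible.

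Finally, I would combine the two facts. An algebraic stack is integral exactly when it is reduced and $|\mc X|$ is irreducible (and nonempty, which holds since it is connected), as in the Stacks Project's definition of integral stacks. The main subtle point is the local Noetherianity needed to get finitely many irreducible components on each $U_i$. If regularity alone is not taken to include it, I would instead argue locally: every point of the regular scheme $U$ has an irreducible open neighbourhood, because the local ring is a domain and so the point lies on exactly one irreducible component. Then I would use the open sets $|p|(W)$ for such neighbourhoods $W$, and quasi-compactness of $|\mc X|$ to reduce to finitely many before applying \cref{lem:top-irreducible}. The lemma's proof already handles the passage from an arbitrary cover to a finite one in exactly this way. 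The step that each point has an irreducible open neighbourhood needs the components to be locally finite, which again is local Noetherianity. Since \cref{sit:setup} assumes Noetherian, I would just invoke that.
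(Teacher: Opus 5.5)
Your proposal is correct and follows essentially the same route as the paper: reducedness comes from a regular smooth atlas, and irreducibility comes from the integral connected components of a quasi-compact regular atlas $U$, whose open irreducible images cover $|\mc X|$, so that \cref{lem:top-irreducible} applies. Your worry about local Noetherianity is unnecessary, since regularity of a scheme already includes being locally Noetherian, and regularity of $\mc X$ is defined smooth-locally.
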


\begin{proof}
By \cite[\href{https://stacks.math.columbia.edu/tag/0GWD}{Tag 0GWD}]{SP} it suffices to show that $\mc X$ is reduced and irreducible. 
Since $\mc{X}$ is a regular quasi-compact algebraic stack, there exists a smooth presentation $\pi \colon U \to \mc{X}$ where $U$ is a quasi-compact regular scheme.
$U$ has finitely many connected components, say
\[
U = U_1 \sqcup U_2 \sqcup \dots \sqcup U_n,
\]
where each $U_i$ is an open and closed subscheme of $U$ and each $U_i$ is integral.

Since $U_i$ is irreducible and $|\pi|$ is a continuous open map, the image $V_i$ is an open irreducible subset. Thus, $|\mc{X}|$ is a connected topological space covered by open irreducible subsets. $|\mc{X}|$ is irreducible by \cref{lem:top-irreducible}. Finally, $U$ is regular, hence reduced. This implies that $\mc{X}$ is a reduced stack.
\end{proof}

\medskip

We start with a structure theorem for quasi-separated algebraic spaces. 

\begin{theorem}\label{thm:5.5.1}\cite[Theorem 5.5.1]{alper}.
Every quasi-separated algebraic space has a dense open subspace which is a scheme.
\end{theorem}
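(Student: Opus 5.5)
This is Theorem 5.5.1 of Alper's notes, cited here as an input, so what is wanted is a sketch of its standard proof. I would use the stratification by the locus where the space is a scheme, combined with Noetherian-free local structure results for quasi-separated algebraic spaces.

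Let $X$ be a quasi-separated algebraic space and let $W\subseteq |X|$ be the union of all open subspaces of $X$ that are schemes. Schemes glue along open subspaces, so $W$ underlies the largest open subscheme $X_0\subseteq X$. It remains to show that $W$ is dense, i.e.\ that every non-empty open subspace $V\subseteq X$ contains a non-empty open subscheme. Replacing $X$ by $V$, and then by a quasi-compact open of $V$, I reduce to showing that a non-empty quasi-compact quasi-separated algebraic space $X$ contains a non-empty open subscheme.

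Choose an étale surjection $U\to X$ with $U$ affine, which is possible by quasi-compactness. Since $X$ is quasi-separated, $R=U\times_X U$ is a quasi-compact scheme and the projections $s,t\colon R\to U$ are étale and quasi-compact. Étale morphisms are quasi-finite, so $s$ and $t$ are quasi-finite.

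The key step, and the one I expect to be the main obstacle, is to find a non-empty open $U'\subseteq U$ over which $R\to U$ becomes finite étale after shrinking to a saturated open. The approach I would try first is Zariski's Main Theorem together with generic flatness and generic finiteness of quasi-finite morphisms over a generic point. Over each generic point $\xi$ of $U$, the fibre $R_\xi$ is a finite étale scheme. Spreading out, there is a dense open $U_1\subseteq U$ over which $s$ is finite étale. Since $|U|\to|X|$ is open, the image of $U_1$ is a dense open of $|X|$. Shrinking further to the $R$-invariant open $U_2=\bigcap$ of the appropriate translates, which is dense by quasi-compactness, the restricted groupoid $R_2\rightrightarrows U_2$ is finite étale.

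Given a finite étale equivalence relation on an affine scheme, the quotient is affine: it is $\operatorname{Spec}$ of the ring of invariants, by the standard result on quotients by finite flat groupoids, or by SGA3-type arguments since finite orbits lie in affines. This gives a non-empty open subspace of $X$ that is an affine scheme.

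Alternatively, I would simply invoke the Stacks Project result that every quasi-separated algebraic space has a dense open subscheme (Tag 06NH), since the statement is quoted only as background and is not proved in the paper.
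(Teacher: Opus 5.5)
The paper gives no proof of this statement; it is quoted from Alper's notes. So citing the literature, as in your last line, is exactly the paper's treatment. Your outline is the standard proof: reduce to $X$ qcqs, take an affine \'etale atlas $U$, use generic finiteness of the quasi-compact \'etale map $s\colon R=U\times_XU\to U$, then take the quotient by a finite \'etale equivalence relation. Two steps, however, do not go through as written.

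\emph{First, the invariant shrinking.} The set of $u$ whose orbit $t(s^{-1}(u))$ lies in $U_1$ need not be open. On all of $R$ the map $t$ is only quasi-finite, so for $Z=U\setminus U_1$ the set $t(s^{-1}(Z))$ need not be closed. Use instead $U_2:=U\setminus\overline{t(s^{-1}(Z))}$.

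\begin{itemize}
\item It is the preimage of the open set $|X|\setminus\overline{p(Z)}$ under the open map $p\colon|U|\to|X|$. This uses surjectivity of $|R|\to|U|\times_{|X|}|U|$. Hence $U_2$ is $R$-invariant, and $U_2\subseteq U_1$.
\item Its nonemptiness, which is all you need after your reduction, comes from quasi-compactness of $t$. The closure consists of specializations of points of $t(s^{-1}(Z))$. So a maximal point $\eta$ of $U$ lies in it only if $\eta=t(r)$ with $s(r)\in Z$.
\item \'Etale maps carry maximal points to maximal points in both directions, so $s(r)$ would then be a maximal point of $U$ lying in $Z$.
\end{itemize}

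This is ruled out only if $U_1$ contains \emph{every} maximal point of $U$, which the spreading-out argument does give. Mere density of $U_1$ is not enough when $U$ has infinitely many irreducible components.

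\emph{Second, $U_2$ is not affine.} It is only an open subscheme of the affine $U$, so the result on quotients of an \emph{affine} scheme by a finite \'etale equivalence relation does not apply to $R_2\rightrightarrows U_2$. Your parenthetical about finite orbits lying in affines is the right repair, but it has to be carried out:

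\begin{itemize}
\item Fix $u\in U_2$. By prime avoidance, choose $f\in\mathcal O(U)$ such that the finite orbit of $u$ is contained in $D(f)\subseteq U_2$.
\item Since $s\colon R_2\to U_2$ is finite locally free, $N:=\operatorname{Nm}_{s}(t^*f)$ is an $R$-invariant function on $U_2$.
\item Then $W:=\{N\neq 0\}=\{v\in U_2 : t(s^{-1}(v))\subseteq D(f)\}$ is an $R$-invariant principal open of the affine scheme $D(f)$, and it contains $u$.
\item $s^{-1}(W)$ is finite over $W$, hence affine. The open subspace of $X$ corresponding to $W$ is therefore $\Spec\mathcal O(W)^{R}$, by SGA3, Exp.~V, Th.~4.1.
\end{itemize}

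With these two corrections your sketch becomes a complete proof.
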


\begin{theorem}\label{thm:dense-open-gerbe}
Let $\mc{X}$ be an integral, qcqs, regular (automatically Noetherian) algebraic stack. Then there exists a dense open substack $\mc{U} \subset \mc{X}$ such that $\mc{U}$ is an fppf gerbe over a qcqs integral regular (Noetherian) scheme.
\end{theorem}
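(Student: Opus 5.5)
The plan is to reduce to the inertia stack and use the standard fact that a stack with flat, finitely presented inertia is a gerbe over its sheafification. Let $I_{\mc X}\to\mc X$ be the inertia stack. It is a group algebraic space over $\mc X$, of finite type because $\mc X$ is Noetherian, hence quasi-separated and finite type. Since $\mc X$ is integral and Noetherian, generic flatness applies to $I_{\mc X}\to\mc X$: pull back to a smooth atlas $U\to\mc X$ with $U$ a Noetherian regular scheme, apply generic flatness on each of the finitely many irreducible components of $U$, and use openness of the image of $U\to\mc X$. This produces a dense open substack $\mc U_1\subset\mc X$ over which $I_{\mc U_1}\to\mc U_1$ is flat and of finite presentation. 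Alternatively, cite the Stacks Project statement that a reduced Noetherian algebraic stack has a dense open substack with flat inertia (Tag 06RC/06QJ style results).

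By the Stacks Project result that an algebraic stack with flat, locally finitely presented inertia is a gerbe over an algebraic space (Tag 06QJ), $\mc U_1\to U_1'$ is an fppf gerbe. Here $U_1'$ is the fppf sheafification of $\mc U_1$, an algebraic space, and $\mc U_1\to U_1'$ is flat, surjective and locally of finite presentation. Next I would check that $U_1'$ has the required properties. It is quasi-compact because it is the image of the quasi-compact $\mc U_1$. It is quasi-separated because its diagonal is dominated by the diagonal of $\mc U_1$, which is quasi-compact, and because $\mc U_1\to U_1'$ is surjective. It is Noetherian as a quasi-compact image of a Noetherian stack under an fppf morphism. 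It is integral because $|\mc U_1|\to|U_1'|$ is a homeomorphism (Tag 06R9, as already used in \cref{prop:gerbe_keeps_situation}), so $U_1'$ is irreducible, and it is reduced by descent along the fppf cover. For regularity I would use the fact that regularity descends along flat surjective morphisms: if $A\to B$ is a flat local homomorphism of Noetherian local rings with $B$ regular, then $A$ is regular. I apply this étale-locally to a flat cover of $U_1'$ by a smooth atlas of $\mc U_1$, which is regular.

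Finally, by \cref{thm:5.5.1}, $U_1'$ contains a dense open subspace $S$ that is a scheme. Shrinking $S$ to an affine open, if we want a qcqs scheme, preserves density since $U_1'$ is irreducible. Set $\mc U:=\mc U_1\times_{U_1'}S$. This is open in $\mc U_1$, and it is dense because the homeomorphism on points carries dense sets to dense sets. It is an fppf gerbe over $S$, since being a gerbe is stable under base change, and $S$ is an integral, regular, Noetherian, qcqs scheme as an open of $U_1'$.

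I expect the main obstacle to be the first step, namely obtaining flatness of inertia on a dense open. Generic flatness must be applied to a group algebraic space over a stack, not a scheme over an integral scheme. I would handle this by working on the atlas $U$ and checking that the locus where $I_{\mc X}\times_{\mc X}U\to U$ is flat is open, nonempty on each component, and invariant under the groupoid $R=U\times_{\mc X}U$. This invariance holds because flatness is fppf-local and both projections $R\rightrightarrows U$ are smooth, so the locus descends to an open substack. A secondary point is the descent of regularity and Noetherianity to $U_1'$, which is routine.
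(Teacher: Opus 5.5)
Your proposal is correct and follows essentially the same route as the paper. The paper's steps are: generic flatness of the inertia, giving a dense open $\mc{X}_1$ with fppf inertia; the gerbe $\mc{X}_1\to X_1$ over the sheafification of isomorphism classes; descent of integrality, regularity and Noetherianity to $X_1$; the factorization $\Delta_{\mc X_1}=j\circ\Delta_{\mc X_1/X_1}$, with the relative diagonal surjective, for quasi-separatedness of $X_1$; and a dense open subscheme of $X_1$ from \cref{thm:5.5.1}, pulled back to $\mc X_1$.

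The differences are minor. The paper cites generic flatness for stacks directly, where you sketch it on an atlas; there you should work with the largest open of $U$ over which the pulled-back inertia is flat, which is $R$-invariant, rather than the set of points over which it is flat, which need not be open. The paper also uses smoothness of the gerbe where you use flat descent of regularity.
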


\begin{proof}
By generic flatness \cite[Exercise 4.3.32]{alper} applied to the inertia morphism, there exists a dense open substack $\mc{X}_1 \subset \mc{X}$ such that $I_{\mc{X}_1} \to \mc{X}_1$ is flat and of finite presentation, hence fppf. Applying \cite[Proposition 7.4.19]{alper} to $\mc{X}_1$, the fppf sheafification $X_1$ of isomorphism classes is an algebraic space, and the quotient morphism $\pi \colon \mc{X}_1 \to X_1$ is a gerbe. In particular, the map $\pi$ is smooth and surjective by \cite[Proposition 7.4.17]{alper}. Hence, by smooth descent, $X_1$ is an integral regular Noetherian algebraic space.

To show $X_1$ is quasi-separated,
it suffices to show its diagonal morphism $\Delta_{X_1} \colon X_1 \to X_1 \times X_1$ is quasi-compact. 
Consider the smooth surjective cover $\pi \times \pi \colon \mc{X}_1 \times \mc{X}_1 \to X_1 \times X_1$. Pulling back $\Delta_{X_1}$ along $\pi \times \pi$ yields the cartesian square:
\[
\begin{tikzcd}
\mc{X}_1 \times_{X_1} \mc{X}_1 \arrow[r, "j"] \arrow[d] & \mc{X}_1 \times \mc{X}_1 \arrow[d, "\pi \times \pi"] \\
X_1 \arrow[r, "\Delta_{X_1}"'] & X_1 \times X_1
\end{tikzcd}
\]
To show $\Delta_{X_1}$ is quasi-compact, it suffices by fppf descent to show that its base change $j \colon \mc{X}_1 \times_{X_1} \mc{X}_1 \to \mc{X}_1 \times \mc{X}_1$ is a quasi-compact morphism.

Note that $\Delta_{\mc{X}_1} \colon \mc{X}_1 \to \mc{X}_1 \times \mc{X}_1$ factors as
\[
\mc{X}_1 \xrightarrow{\quad \Delta_{\mc{X}_1 / X_1} \quad} \mc{X}_1 \times_{X_1} \mc{X}_1 \xrightarrow{\quad j \quad} \mc{X}_1 \times \mc{X}_1,
\]
where $\Delta_{\mc{X}_1 / X_1}$ is the relative diagonal. Because $\pi \colon \mc{X}_1 \to X_1$ is a gerbe, $\Delta_{\mc{X}_1 / X_1}$ is fppf by \cite[Exercise 7.4.18]{alper}, in particular $\Delta_{\mc{X}_1 / X_1}$ is surjective. Since $\mc{X}_1$ is quasi-separated, the composite $\Delta_{\mc{X}_1} = j \circ \Delta_{\mc{X}_1 / X_1}$ is a quasi-compact morphism, so the morphism $j$ is quasi-compact. 

By \cref{thm:5.5.1}, the quasi-separated algebraic space $X_1$ contains a dense open subspace $V \subset X_1$ which is a scheme.  The inverse image $\mc{U}:=\pi^{-1}(V)$ is a dense open substack of $\mc{X}_1$, hence a dense open substack of $\mc{X}$. Restricting the gerbe $\pi \colon \mc{X}_1 \to X_1$ to $V$, we conclude that $\mc{U} \to V$ is a gerbe, where $V$ is a quasi-separated integral regular Noetherian scheme.
\end{proof}

\begin{pg}\label{pg:reduction}
By purity results \cite[Theorem 2.1.7]{SHIN-THESIS2019}, the restriction map $\Br(\mc{X}) \to \Br(\mc{U})$ to a dense open substack $\mc{U} \subseteq \mc{X}$ is injective. Therefore, we may assume $\mc{X}$ is a gerbe over $V$. In the next proposition, we prove the corresponding purity statement $\Br(\mc{X}) \hookrightarrow \Br(\mc{X}_\eta)$, where $\mc{X}_\eta$ is the generic residual gerbe. 
\end{pg}

\begin{proposition}\label{prop:brauer-injective-generic}
Let $S$ be a regular Noetherian integral scheme with function field $K$ and generic point $\eta : \Spec(K) \rightarrow S$, and let $p \colon \mc{X} \to S$ be a quasi-separated gerbe over $S$. Let $\mc{X}_\eta = \mc{X} \times_S \eta$ be the pullback gerbe over $\eta$, and let $j \colon \mc{X}_\eta \to \mc{X}$ be the canonical inclusion morphism. Then there is a canonical injection 
$\Br(\mc{X}) \hookrightarrow \Br(\mc{X}_\eta)$.
\end{proposition}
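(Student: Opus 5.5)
To prove the injection $\Br(\mc{X}) \hookrightarrow \Br(\mc{X}_\eta)$, I would write the generic fibre as a limit of open pieces and reduce to purity on each piece. The generic point is
\[
\eta=\varprojlim_{U} U ,
\]
where $U$ runs over the nonempty open subschemes of $S$, or over nonempty affine opens $U\subseteq S$, which form a cofiltered system with affine transition maps. Pulling back the gerbe gives
\[
\mc{X}_\eta = \varprojlim_U \mc{X}_U, \qquad \mc{X}_U := \mc{X}\times_S U .
\]
Each $\mc{X}_U$ is a dense open substack of $\mc{X}$, since $p$ is a gerbe and hence $|\mc{X}|\to |S|$ is a homeomorphism (\cite[\href{https://stacks.math.columbia.edu/tag/06R9}{Tag 06R9}]{SP}). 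The stack $\mc{X}$ is regular because $\mc{X}$ is smooth over $S$, as $p$ is a gerbe over a regular base. It is Noetherian and quasi-separated. By the purity result \cite[Theorem 2.1.7]{SHIN-THESIS2019} already used in \cref{pg:reduction}, each restriction map $\Br(\mc{X})\to \Br(\mc{X}_U)$ is therefore injective.

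The second step is a limit formula for \'etale cohomology:
\[
\H^2_{\et}(\mc{X}_\eta,\G_m)\simeq \varinjlim_U \H^2_{\et}(\mc{X}_U,\G_m).
\]
For qcqs algebraic stacks written as a cofiltered limit with affine transition maps, \'etale cohomology of a sheaf pulled back from a finitely presented base commutes with the limit. Here $\G_m$ is pulled back from $\Spec\Z$. To justify this I would choose a smooth atlas $W\to\mc{X}$ with $W$ affine, which exists since $\mc{X}$ is quasi-compact. I would form its \v{C}ech nerve $W_\bullet$; each $W_n$ is a quasi-compact algebraic space, and it is quasi-separated because $\mc{X}$ is quasi-separated. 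Then I would compute $\H^*(\mc{X}_U,\G_m)$ by the descent spectral sequence
\[
E_1^{a,b}=\H^b_{\et}(W_{a,U},\G_m)\Rightarrow \H^{a+b}_{\et}(\mc{X}_U,\G_m).
\]
Each term commutes with the limit by the scheme and algebraic-space limit theorem \cite[\href{https://stacks.math.columbia.edu/tag/09YQ}{Tag 09YQ}]{SP}, \cite[\href{https://stacks.math.columbia.edu/tag/0739}{Tag 0739}]{SP}. Filtered colimits are exact, so they pass through the spectral sequence, and this gives the formula.

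Given these two steps, the conclusion is immediate. The map $\Br(\mc{X})\to\Br(\mc{X}_\eta)$ is the colimit of the maps $\Br(\mc{X})\to\Br(\mc{X}_U)$, and each of these is injective. Suppose a class $\alpha$ dies on $\mc{X}_\eta$. By the limit formula it already dies on some $\mc{X}_U$, and then purity forces $\alpha=0$. The map is canonical because it is just $j^*$.

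I expect the main obstacle to be the limit formula on stacks. It needs quasi-separatedness to ensure the nerve terms $W_n$ are qcqs. It also needs care with the big versus small \'etale site, which can be handled via \cref{sm_top_et_top_coincide} and the fact that $\G_m$ cohomology agrees on the lisse-\'etale or small site. If I cannot cite the stacky limit statement directly, I would use the \v{C}ech/descent spectral sequence argument above as the fallback.
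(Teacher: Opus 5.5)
Your argument is correct, but it takes a different route from the paper.

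\textbf{The paper's route.} The paper proves the proposition directly, following Grothendieck. It pulls back the divisor sequence $0\to\G_{m,S}\to\eta_*\G_{m,K}\to\underline{\mathrm{Div}}_S\to 0$ along the gerbe $p$. This factors $\Br(\mc X)\to\Br(\mc X_\eta)$ as $\H^2(\mc X,\G_m)\to\H^2(\mc X,j_*\G_{m,\mc X_\eta})\to\H^2(\mc X_\eta,\G_m)$. The first map is injective because $\H^1(\mc X,\underline{\mathrm{Div}}_{\mc X})$ injects into $\bigoplus_z\H^1(\mc X_z,\underline{\Z})$, which vanishes by \cref{prop:vanishing-h1}. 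The second map is injective because $R^1j_*\G_m\simeq p^{-1}R^1\eta_*\G_m=0$.

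\textbf{Your route.} You deduce generic purity formally from two inputs: purity for dense open substacks, and continuity of $\H^2(-,\G_m)$ along $\mc X_\eta=\varprojlim_U\mc X_U$.

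\textbf{Comparison.} Your approach avoids computing $\H^1$ of gerbes over fields and never needs the divisor sheaf on $\mc X$. It uses the gerbe structure only to know that $\mc X$ is regular and that each $\mc X_U$ is dense. The price is twofold. First, the real content is outsourced to \cite[Theorem 2.1.7]{SHIN-THESIS2019}. Second, you must assemble a limit theorem for cohomology of stacks from the \v{C}ech nerve of an affine atlas; this is where you spend quasi-separatedness. The paper spends quasi-separatedness in \cref{prop:vanishing-h1} instead. Its proof of this proposition is independent of Shin's purity theorem, and it shows exactly where quasi-separatedness enters, in line with the non-quasi-separated counterexample at the end of the appendix.

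\textbf{A technical point in the limit step.} $\G_m$ is not literally the inverse image of a small-\'etale sheaf from $\Spec\Z$. You should apply the version of \cite[\href{https://stacks.math.columbia.edu/tag/09YQ}{Tag 09YQ}]{SP} for a compatible system of sheaves $\G_{m,W_{a,U}}$, together with the identification $\varinjlim_U f_U^{-1}\G_{m,W_{a,U}}\simeq\G_{m,W_{a,\eta}}$. You should also index by nonempty affine opens of $S$, so that the transition maps are affine even when $S$ is not separated.
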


\begin{proof}
The idea of this proof follows \cite{Grothendieck}. We show that there are canonical injections: 
\[
\Br(\mc{X}) = \H^2(\mc{X}, \mathbb{G}_{m, \mc{X}}) \xrightarrow{\quad\alpha\quad} \H^2(\mc{X}, j_* \mathbb{G}_{m, \mc{X}_\eta}) \xrightarrow{\quad\beta\quad} \H^2(\mc{X}_\eta, \mathbb{G}_{m, \mc{X}_\eta})
\]
Consider the divisor sequence of \'etale sheaves on $S$ given by 
\[
0 \to \mathbb{G}_{m, S} \to  \eta_*\mathbb{G}_{m, K} \to \underline{\mathrm{Div}}_S \to 0
\]
where $\underline{\mathrm{Div}}_S \simeq \bigoplus_{z \in S^{(1)}} (i_z)_* \underline{\mathbb{Z}}_z$ with $i_z \colon \Spec \kappa(z) \hookrightarrow S$. 
\[
\begin{tikzcd}[column sep=large, row sep=large]
\mathscr{X}_\eta \arrow[r, "j"] \arrow[d, "p_\eta"'] \arrow[dr, phantom, "\lrcorner"{pos=0.12}] & \mathscr{X} \arrow[d, "p"] \\
\operatorname{Spec}(K) \arrow[r, "\eta"'] & S
\end{tikzcd}
\qquad\qquad\qquad 
\begin{tikzcd}[column sep=large, row sep=large]
\mathscr{X}_z \arrow[r,  "i_z"] \arrow[d, "p_z"'] \arrow[dr, phantom, "\lrcorner"{pos=0.12}] & \mathscr{X} \arrow[d, "p"] \\
\operatorname{Spec}\kappa(z) \arrow[r,  "i_z"'] & S
\end{tikzcd}
\]
Pulling back the divisor sequence along $p$ yields the short exact sequence of sheaves on $\mc{X}$: 
\[
0 \to \mathbb{G}_{m, \mc{X}} \to \mc{R}^*_{\mc{X}} \to \underline{\mathrm{Div}}_{\mc{X}} \to 0
\]
where by \cite[\href{https://stacks.math.columbia.edu/tag/075H}{Tag 075H}]{SP} on the big \'etale site
$\mc{R}^*_{\mc{X}} := j_* \mathbb{G}_{m, \mc{X}_\eta} \simeq p^{-1}(\eta_* \mathbb{G}_{m, K})$, $\underline{\mathrm{Div}}_{\mc{X}} := p^{-1}\underline{\mathrm{Div}}_S \simeq \bigoplus_{z \in S^{(1)}} (i_z)_* \underline{\mathbb{Z}}_{\mc{X}_z}$ with $i_z \colon \mc{X}_z \hookrightarrow \mc{X}$ the base change inclusion over $z$. Taking cohomology on the big \'etale site gives the exact sequence 
\[
\H_{\et}^1(\mc{X}, \underline{\mathrm{Div}}_{\mc{X}}) \to \H_{\et}^2(\mc{X}, \mathbb{G}_{m, \mc{X}}) \xrightarrow{\alpha} \H_{\et}^2(\mc{X}, j_* \mathbb{G}_{m, \mc{X}_\eta}).
\]

Since $p \colon \mc{X} \to S$ is quasi-separated and $S$ is quasi-separated,
$\mc{X}$ is a quasi-separated algebraic stack; by \cite[\href{https://stacks.math.columbia.edu/tag/06R9}{Tag 06R9}]{SP} $\mc{X}$ is a quasi-compact algebraic stack.
Hence $\mc{X}_z$ is quasi-separated over $\kappa (z)$. 
So by \cite[\href{https://stacks.math.columbia.edu/tag/0GQV}{Tag 0GQV}]{SP},
$\H_{\et}^1(\mc{X}, \underline{\mathrm{Div}}_{\mc{X}}) \simeq \bigoplus_{z \in S^{(1)}} \H_{\et}^1(\mc{X}, (i_z)_*\underline{\mathbb{Z}}_{\mc{X}_z})$.
Applying the Leray spectral sequence \cite[\href{https://stacks.math.columbia.edu/tag/0732}{Tag 0732}]{SP} and its five-term sequence for $i_z: \ms X_z\to \ms X$ and $\underline{\mathbb{Z}}_{\ms X_z}$,
we obtain the injection 
$\H_{\et}^1(\mc{X}, (i_z)_*\underline{\mathbb{Z}}_{\mc{X}_z})\hookrightarrow \H_{\et}^1(\mc{X}_z, \underline{\mathbb{Z}}_{\mc{X}_z})$.
$\H_{\et}^1(\mc{X}_z, \underline{\mathbb{Z}}_{\mc{X}_z})=0$ by \cref{prop:vanishing-h1}.  
Thus, the first map $\alpha \colon \Br(\mc{X}) = \H_{\et}^2(\mc{X}, \mathbb{G}_{m, \mc{X}}) \hookrightarrow \H_{\et}^2(\mc{X}, j_* \mathbb{G}_{m, \mc{X}_\eta})$ is injective. 

\medskip

Next, applying the Leray seven-term sequence for $j \colon \mc{X}_\eta \to \mc{X}$ and $\mathbb{G}_{m, \mc{X}_\eta}$,
we have:
\[
\H_{\et}^0(\mc{X}, R^1 j_* \mathbb{G}_{m, \mc{X}_\eta}) \to \H_{\et}^2(\mc{X}, j_* \mathbb{G}_{m, \mc{X}_\eta}) \xrightarrow{\beta} \H_{\et}^2(\mc{X}_\eta, \mathbb{G}_{m, \mc{X}_\eta}).
\]
By base change along the gerbe projection $p \colon \mc{X} \to S$, 
again by \cite[\href{https://stacks.math.columbia.edu/tag/075H}{Tag 075H}]{SP} we have $R^1 j_* \mathbb{G}_{m, \mc{X}_\eta} \simeq p^{-1}(R^1 \eta_* \mathbb{G}_{m, K})$. 
But $R^1 \eta_* \mathbb{G}_{m, K} = 0$ by \cite[Lemma 1.6]{Grothendieck}, which implies that $R^1 j_* \mathbb{G}_{m, \mc{X}_\eta} = 0$ and hence the homomorphism $\beta \colon \H_{\et}^2(\mc{X}, j_* \mathbb{G}_{m, \mc{X}_\eta}) \hookrightarrow \H_{\et}^2(\mc{X}_\eta, \mathbb{G}_{m, \mc{X}_\eta})$ is injective. Combining $\alpha$ and $\beta$, the composite map $\Br(\mc{X}) \hookrightarrow \Br(\mc{X}_\eta)$ is injective.
\end{proof}

\begin{proposition}\label{prop:vanishing-h1}
Let $k$ be a field, and let $\mc{X} \to \Spec k$ be a quasi-separated gerbe over $(\Spec k)_{\fppf}$. Then
\[
\H^1_{\et}(\mc{X}, \underline{\mathbb{Z}}) = 0.
\]
\end{proposition}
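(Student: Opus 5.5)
The plan is to interpret $\H^1_{\et}(\mc{X},\underline{\Z})$ as classes of $\underline{\Z}$-torsors on $\mc{X}$, reduce to a trivial gerbe over a finite separable extension, and then use that torsors pulled back from $\Spec$ of a field are trivial while torsors on a classifying stack $BG$ correspond to homomorphisms $G\to\Z$, which must be trivial.

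\textbf{Reduction to a neutral gerbe.} Since $\mc{X}\to\Spec k$ is an fppf gerbe, it is locally on $(\Spec k)_{\fppf}$ isomorphic to a classifying stack. Equivalently, $\mc{X}$ admits an object over some field, hence over some finitely generated $k$-algebra. Because $\mc{X}$ is quasi-separated and locally of finite type (we are in the Noetherian situation, so its automorphism group is of finite type), the standard argument gives a point over a finite extension. One can even take a finite separable extension $L/k$: a gerbe over a field with smooth-locally-nonempty fibres has a point over the separable closure, since smooth morphisms have sections étale locally (Tag 055U, as in the paper). Then $\mc{X}_L\simeq BG$ for $G=\underline{\Aut}(x)$, a quasi-separated algebraic group space over $L$, hence a group scheme. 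This is exactly the structure the paper's remark attributes to this proposition.

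\textbf{Injectivity of restriction to $\mc{X}_L$.} Let $h:\mc{X}_L\to\mc{X}$ be the base change of the finite étale cover $\Spec L\to\Spec k$. For sheaves of torsion-free groups, a norm or trace argument gives injectivity of $h^*$ on $\H^1$ up to multiplication by $[L:k]$. Since $\H^1(-,\underline\Z)$ is torsion-free, because $\H^0(-,\Q/\Z)\to\H^1(-,\Z)$ comes from $0\to\Z\to\Q\to\Q/\Z\to0$, it suffices to show $\H^1(\mc{X}_L,\underline\Z)=0$. A cleaner alternative: a $\underline\Z$-torsor on $\mc{X}$ that becomes trivial on $\mc{X}_L$ gives a descent datum, i.e. a class in the Čech cohomology of the cover, which lands in $\H^1(\mathrm{Gal},\Z)$-type groups. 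Such groups vanish because $\Hom(\text{profinite},\Z)=0$. I expect to use the trace route.

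\textbf{Vanishing on $BG_L$.} A $\underline{\Z}$-torsor on $BG$ is a $\underline\Z$-torsor $P$ on $\Spec L$ with a $G$-action compatible with the torsor structure. First, $\H^1_{\et}(L,\Z)=\Hom_{\mathrm{cont}}(\mathrm{Gal}_L,\Z)=0$, so $P$ is trivial. The $G$-equivariant structure is then given by a group homomorphism $G\to\underline{\Z}$ of fppf sheaves over $L$. Such a homomorphism factors through $\pi_0(G)$, which is an étale group scheme with finitely many geometric components since $G$ is of finite type. A homomorphism from a finite étale group to $\Z$ is trivial because $\Z$ is torsion-free. Alternatively, use the Leray spectral sequence for $BG\to\Spec L$ with $R^0=\underline\Z$ and $R^1=\underline{\Hom}(G,\Z)=0$.

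The main obstacle is the first step: getting a neutralization over a \emph{finite separable} extension using only quasi-separatedness, and ensuring that $G$ is of finite type so that $\pi_0(G)$ is finite. If only a general fppf neutralization is available, I would instead run the Leray or Čech argument directly with an fppf cover $\Spec L'\to\Spec k$ for a finite, possibly inseparable, $L'$, using that $\Z$-torsors are insensitive to purely inseparable extensions by topological invariance of the étale site.
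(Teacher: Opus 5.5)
Your proposal is correct and follows essentially the paper's proof: neutralize $\mc X$ over a finite separable (Galois) extension $L/k$ using smoothness of the gerbe (the paper cites Tag 0DN8), show that $\H^1_{\et}(\mc X,\underline{\Z})\to\H^1_{\et}(\mc X_L,\underline{\Z})$ is injective, and kill $\H^1_{\et}(BG,\underline{\Z})$ using $\H^1_{\et}(\Spec L,\underline{\Z})=0$ and $\Hom(\pi_0(G),\Z)=0$. The differences are in packaging only: the paper gets injectivity from the Hochschild--Serre term $E_2^{1,0}=\Hom(\Gamma,\Z)=0$ (your ``cleaner alternative'') rather than from a trace argument, which would also need torsion-freeness of $\H^1_{\et}(\mc X,\underline{\Z})$ (true here because $\H^0(\mc X,\underline{\mathbb Q})\to\H^0(\mc X,\underline{\mathbb Q/\Z})$ is surjective, as $|\mc X|$ is a point); and it computes $\H^1_{\et}(BG,\underline{\Z})$ with the \v{C}ech spectral sequence of $\Spec L\to BG$ after enlarging $L$ so that the components of $G$ are geometrically connected, a step your use of the finite \'etale group $\pi_0(G)$ makes unnecessary.
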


First, we prove a special case where $\mc X=BG$. One crucial ingredient of the proof is the calculation of cohomological descent along a torsor, which is well studied in \cite[pp.~38-45]{Sansuc1981}. 

\iffalse
Let $G$ be a linear algebraic group over a field $k$ and $\pi \colon \ms X \to \ms Y$ be a $G$-torsor. Then we have the Čech spectral sequence relative to this fppf covering and to the sheaf $\G_m$:
$$
\check{\H}^p(\ms X/\ms Y, \ms{H}^q(\G_m)) \Rightarrow \H^{p+q}(\ms Y, \G_m),
$$
where $\ms{H}^q(\G_m)$ denotes the presheaf $V \mapsto H^q(V, \G_m)$ on $\ms Y_{\fppf}$, or equivalently $\ms Y_{\et}$. 
\fi

Let $G$ be a linear algebraic group over a field $k$ and $\pi \colon \ms X \to \ms Y$ be a $G$-torsor. 
\iffalse
Since $\ms X$ is a $G$-torsor over $\ms Y$, for each $i \ge 0$, we
\fi
We have an isomorphism: 
\[
\ms X \times_k G^i \xrightarrow{\sim} \ms X \times_{\ms Y} \dots \times_{\ms Y} \ms X = \ms X^{i+1}_{/\ms Y}
\]
\[(x, g_1, \dots, g_i) \mapsto (x, xg_1, \dots, xg_1 \dots g_i)
\]
The simplicial system $(\Sigma) = \{\ms X^{i+1}_{/\ms Y}\}$ is thus isomorphic to the simplicial system $\{\ms X \times_k G^i\}$ whose faces are defined by:
\begin{align*}
d_i^0(x, g_1, \dots, g_{i+1}) &= (x g_1, g_2, \dots, g_{i+1}), \\
d_i^j(x, g_1, \dots, g_{i+1}) &= (x, g_1, \dots, g_j g_{j+1}, \dots, g_{i+1}) \quad \text{for } 1 \le j \le i, \\
d_i^{i+1}(x, g_1, \dots, g_{i+1}) &= (x, g_1, \dots, g_i).
\end{align*}
If further $\ms X=\Spec k$ and the group action is trivial, we may omit the first entry: 
\begin{equation}\label{eq:differential}
\begin{split}
d_i^0(g_1, \dots, g_{i+1}) &= (g_2, \dots, g_{i+1}), \\
d_i^j(g_1, \dots, g_{i+1}) &= (g_1, \dots, g_j g_{j+1}, \dots, g_{i+1}) \quad \text{for } 1 \le j \le i, \\
d_i^{i+1}(g_1, \dots, g_{i+1}) &= (g_1, \dots, g_i).
\end{split}
\end{equation}

\begin{lemma}\label{lem:simplicial-iso}
Let $G$ be a discrete group over a field $k$. Assume $\ms X$ and $\ms Y$ are algebraic stacks over $k$ and $\ms X$ is a $G$-torsor over $\ms Y$. Then for any (2,1)-functor $F: (\mathrm{Stack}/k)^{\mathrm{op}} \rightarrow \mathrm{Ab}$, if $F(\coprod \ms X_i)=\prod F(\ms X_i)$, then there is a canonical isomorphism: 
\begin{equation*}\label{eq:cech-group-iso}
\check{\H}^p(\ms X^{\bullet}_{/\ms Y}, F) \simeq \H^p_{\mr{group}}(G, F(\ms X)),
\end{equation*}
where the $G$-action on $F(\ms X)$ is given by $F(\ms X) \xrightarrow{F(g)}F(\ms X)$. 
\end{lemma}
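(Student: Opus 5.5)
The plan is to exploit the fact that for a discrete group $G$ the simplicial system $\ms X^{\bullet}_{/\ms Y}$ becomes a disjoint union indexed by tuples of group elements. Then the \v{C}ech complex of $F$ on it becomes the standard inhomogeneous cochain complex computing group cohomology.

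The first step uses the isomorphism recorded just before the statement, $\ms X\times_k G^i\xrightarrow{\sim}\ms X^{i+1}_{/\ms Y}$. Since $G$ is discrete (a constant group scheme), $\ms X\times_k G^i\simeq\coprod_{(g_1,\dots,g_i)\in G^i}\ms X$. By the hypothesis $F(\coprod\ms X_i)=\prod F(\ms X_i)$, we obtain
\[
F(\ms X^{i+1}_{/\ms Y})\simeq\prod_{G^i}F(\ms X)=\operatorname{Map}(G^i,F(\ms X)),
\]
which is exactly the group of inhomogeneous $i$-cochains $C^i(G,F(\ms X))$. Here I use that $F$ is a (2,1)-functor, so these identifications do not depend on the choice of 2-isomorphisms and are canonical.

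The second step computes the pullbacks $F(d_i^j)$ under these identifications, using the face formulas above. For $1\le j\le i$, the face $d_i^j$ maps the component indexed by $(g_1,\dots,g_{i+1})$ identically onto the component indexed by $(g_1,\dots,g_jg_{j+1},\dots,g_{i+1})$. The last face $d_i^{i+1}$ drops $g_{i+1}$. The face $d_i^0$ maps the component $(g_1,\dots,g_{i+1})$ to the component $(g_2,\dots,g_{i+1})$ via the action map $x\mapsto xg_1$. Hence for $\phi\in C^i$ we get
\[
(F(d^0_i)\phi)(g_1,\dots,g_{i+1})=F(g_1)\phi(g_2,\dots,g_{i+1}),
\]
while the other faces give $\phi(\dots,g_jg_{j+1},\dots)$ and $\phi(g_1,\dots,g_i)$. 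The alternating sum $\sum_j(-1)^jF(d_i^j)$ is therefore precisely the standard inhomogeneous differential, with $G$ acting on $F(\ms X)$ via $g\mapsto F(g)$. Taking cohomology gives the claimed canonical isomorphism.

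The main obstacle is making the action convention consistent. Because $F$ is contravariant, $g\mapsto F(g)$ is a right action, so it must be converted to a left one via $g\mapsto F(g^{-1})$, or else the right-action version of the cochain complex must be used. I would check the $i=0,1$ cases explicitly so that the cocycle and coboundary formulas match, and then the general identification follows. I also have to verify that the isomorphism $\ms X\times G^i\simeq\ms X^{i+1}_{/\ms Y}$ is compatible with the faces only up to 2-isomorphism; this is harmless because $F$ is stable. Finally, canonicity (independence of choices) follows from the naturality of these identifications.
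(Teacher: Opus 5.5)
Your proposal matches the paper's proof: identify $\ms X^{i+1}_{/\ms Y}\simeq\ms X\times_k G^i\simeq\coprod_{G^i}\ms X$, use additivity of $F$ to get $\operatorname{Map}(G^i,F(\ms X))$, and read off the inhomogeneous differential from the face maps; you are in fact more careful than the paper, which cites the trivial-action face formulas, because you keep the term $F(g_1)\phi(g_2,\dots,g_{i+1})$ coming from $d_i^0$. One small correction to your convention worry: $G$ acts on the right on $\ms X$ (writing $\rho_g$ for $x\mapsto xg$, one has $\rho_{gh}=\rho_h\circ\rho_g$), so contravariance makes $g\mapsto F(g)$ a \emph{left} action, no inversion $g\mapsto F(g^{-1})$ is needed, and your planned $i=0,1$ check would confirm this.
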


\begin{proof}
The isomorphism $\ms X \times_{k} G^i \xrightarrow{\sim} \ms X^{i+1}_{/\ms Y}$ is given by $(x, g_1, \dots, g_i) \mapsto (x, x g_1, \dots, x g_1 \dots g_i)$. 
Applying $F$ to $\ms X \times_{k} G^i = \coprod_{G^i} \ms X$ gives $F(\ms X \times_{k} G^i) = \prod_{G^i} F(\ms X) = \Hom_{\text {Set}}(G^i, F(\ms X))$. By \cref{eq:differential}, the \v{C}ech complex coincides with the nonhomogeneous cochain complex for group cohomology with coefficient $F(\ms X)$.
\end{proof}

\begin{lemma}\label{lemma:h1-bg-z}
Let $k$ be a field, and let $G$ be an algebraic group over $k$ whose connected components are geometrically connected. Let $\mathcal{Y} = BG = [\Spec k / G]$ be the classifying stack of $G$ over $k$, and let $f \colon \ms X = \Spec k \to \mathcal{Y}$ be the canonical $G$-torsor. Then $\H^1_{\et}(BG, \underline{\mathbb{Z}}) = 0.$
\end{lemma}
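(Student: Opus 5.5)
We plan to compute $\H^1_{\et}(BG,\underline{\Z})$ through the \v{C}ech-to-derived (descent) spectral sequence of the fppf covering $f\colon \Spec k\to BG$, in the spirit of \cite[pp.~38--45]{Sansuc1981}. The covering is representable, smooth and surjective, with $(\Spec k)^{i+1}_{/BG}\simeq G^i$. We therefore have a spectral sequence
\[
E_2^{p,q}=\check{\H}^p\bigl((\Spec k)^{\bullet}_{/BG},\ms{H}^q(\underline{\Z})\bigr)\Longrightarrow \H^{p+q}_{\et}(BG,\underline{\Z}),
\]
where $\ms H^q(\underline\Z)$ is the presheaf $V\mapsto \H^q_{\et}(V,\underline\Z)$. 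Because \'etale and fppf cohomology agree for $\underline{\Z}$ on the big site, we may take this covering in the fppf topology. The low-degree exact sequence gives
\[
0\to E_2^{1,0}\to \H^1_{\et}(BG,\underline\Z)\to E_2^{0,1}.
\]
It therefore suffices to prove that $E_2^{1,0}=0$ and $E_2^{0,1}=0$.

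For $E_2^{0,1}$: this group is contained in $\H^1_{\et}(\Spec k,\underline\Z)$, which vanishes because $\H^1(\Gamma_k,\Z)=\Hom_{\mathrm{cont}}(\Gamma_k,\Z)=0$ for a profinite group $\Gamma_k$. So $E_2^{0,1}=0$.

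For $E_2^{1,0}$: the \v{C}ech complex is $n\mapsto \H^0_{\et}(G^n,\underline\Z)$, that is, locally constant $\Z$-valued functions on $G^n$. Under the hypothesis that the connected components of $G$ are geometrically connected, $\pi_0(G^n)=\pi_0(G)^n$, and $\pi_0(G)$ is a finite group $\Gamma$ (the component group $G/G^\circ$, all of whose points are rational). Hence $\H^0(G^n,\underline\Z)=\Hom_{\mathrm{Set}}(\Gamma^n,\Z)$. The face maps \eqref{eq:differential} descend to those of $\Gamma$ acting trivially on $\Z$, exactly as in \cref{lem:simplicial-iso} applied to the torsor $\Spec k\to B\Gamma$ with $F=\H^0(-,\underline\Z)$, which commutes with finite coproducts. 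We expect that $G\to\Gamma$ induces an isomorphism of these cochain complexes. This gives $E_2^{1,0}=\H^1_{\mathrm{group}}(\Gamma,\Z)=\Hom(\Gamma,\Z)=0$, since $\Gamma$ is finite.

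The main obstacle is justifying the descent spectral sequence for a non-\'etale (fppf, smooth) covering of a stack, together with its identification with the paper's big \'etale cohomology. We would handle this by working on the big fppf site, where $\Spec k\to BG$ is a covering and the Cartan--Leray/\v{C}ech spectral sequence for a covering of the final object is standard. We would then invoke the comparison of \'etale and fppf cohomology for the smooth group scheme $\underline{\Z}$ (compare \cref{sm_top_et_top_coincide}). The second delicate point is the identification $\pi_0(G^n)=\pi_0(G)^n$ compatibly with multiplication. This uses that the components are geometrically connected: a product of geometrically connected $k$-varieties is geometrically connected. Consequently the locally constant functions on $G^n$ are exactly the functions on $\pi_0(G)^n$.
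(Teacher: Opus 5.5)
Your proposal is correct and follows the paper's proof essentially step for step. You use the same \v{C}ech spectral sequence for the fppf covering $\Spec k\to BG$, transferred to \'etale cohomology for $\underline{\Z}$, and the same vanishing $E_2^{0,1}\subseteq \H^1_{\et}(\Spec k,\underline{\Z})=0$. You reach the same $E_2^{1,0}\simeq\Hom(\pi_0(G),\Z)=0$, phrased as $\H^1_{\mathrm{group}}(\Gamma,\Z)$ where the paper computes $d_0,d_1$ directly. One small slip: $\Spec k\to BG$ is smooth only when $G$ is smooth (its base change along itself is $G\to\Spec k$), so in positive characteristic it is merely fppf; this is harmless, since you, like the paper, run the argument on the fppf site.
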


\begin{proof}
Since $\ms X\to \ms{Y}$ is an fppf covering,
we have the Čech spectral sequence relative to this fppf covering and to the sheaf $\underline{\mathbb{Z}}$ by \cite[\href{https://stacks.math.columbia.edu/tag/03OW}{Tag 03OW}]{SP}:
\[
\check{\H}^p(\ms X^{\bullet}_{/\ms Y}, \ms{H}_{\fppf}^q(\underline{\mathbb{Z}})) \Rightarrow \H_{\fppf}^{p+q}(\ms Y, \underline{\mathbb{Z}}),
\]
where $\ms{H}_{\fppf}^q(\underline{\mathbb{Z}})$ denotes the presheaf $V \mapsto \H_{\fppf}^q(V, \underline{\mathbb{Z}})$ on $\ms Y_{\fppf}$.
By \cite[\href{https://stacks.math.columbia.edu/tag/0DGH}{Tag 0DGH}]{SP},
$\ms{H}_{\fppf}^q(\underline{\mathbb{Z}})=\ms{H}_{\et}^q(\underline{\mathbb{Z}}): V\mapsto \H_{\fppf}^q(V, \underline{\mathbb{Z}})=\H_{\et}^q(V, \underline{\mathbb{Z}})$.
Choose a smooth surjective presentation $U\to \ms Y$.
Applying \cite[\href{https://stacks.math.columbia.edu/tag/06XJ}{Tag 06XJ}, \href{https://stacks.math.columbia.edu/tag/0DGH}{Tag 0DGH}]{SP}, 
we obtain $\H_{\fppf}^{p+q}(\ms Y, \underline{\mathbb{Z}})=\H_{\et}^{p+q}(\ms Y, \underline{\mathbb{Z}})$.
So we have the following \v{C}ech spectral sequence:
\begin{equation}\label{cech_spectral_et}
    E_2^{p,q}=\check{\H}^p(\ms X^{\bullet}_{/\ms Y}, \ms{H}_{\et}^q(\underline{\mathbb{Z}})) \Rightarrow \H_{\et}^{p+q}(\ms Y, \underline{\mathbb{Z}}).
\end{equation}

\iffalse
By the comparison between fppf and \'etale cohomology for the constant sheaf $\underline{\mathbb Z}$,
we may identify its terms and abutment with the corresponding étale cohomology groups.
\fi

Let $\ms X_{\bullet}$ be the \v{C}ech nerve of $\ms X_{/ \mathcal{Y}}$. For each $p \ge 0$, we have an isomorphism of $k$-schemes:
\[
\ms X_p = \underbrace{\ms X \times_{\mathcal{Y}} \ms X \times_{\mathcal{Y}} \dots \times_{\mathcal{Y}} \ms X}_{p+1 \text{ times}} \simeq G^p \times_k \ms X \simeq G^p.
\]
\iffalse
Consider the spectral sequence:
\[
E_1^{p,q} = \H^q_{\et}(\ms X_p, \underline{\mathbb{Z}}) \implies \H^{p+q}_{\et}(\mathcal{Y}, \underline{\mathbb{Z}}),
\]
whose $E_2$-page is given by
\[
E_2^{p,q} = \check{\H}^p(\ms X/\mathcal{Y}, \mathcal{H}^q_{\et}(\underline{\mathbb{Z}})) \implies \H^{p+q}_{\et}(\mathcal{Y}, \underline{\mathbb{Z}}).
\]
\fi
From the above spectral sequence, we obtain the low-degree exact sequence:
\begin{equation*}
0 \to E_2^{1,0} \to \H^1_{\et}(\mathcal{Y}, \underline{\mathbb{Z}}) \to E_2^{0,1} \to E_2^{2,0}.
\end{equation*}
To establish the vanishing of $\H_{\et}^1(\mc{Y}, \underline{\mathbb{Z}})$, it suffices to show that $E_2^{1,0} = 0$ and $E_2^{0,1} = 0$.

\medskip

For $E_2^{0,1}$: 
    \[
    E_2^{0,1} = \check{\H}^0(\ms X_{\bullet}, \mathcal{H}^1_{\et}(\underline{\mathbb{Z}})) = \ker\left( \H^1_{\et}(\ms X, \underline{\mathbb{Z}}) \to \H^1_{\et}(\ms X_1, \underline{\mathbb{Z}}) \right).
    \]
    $\H^1_{\et}(\ms X, \underline{\mathbb{Z}}) = \H^1_{\text{group}}(\text{Gal}(k^{\sep}/k),\mathbb Z)=\Hom_{\text{group}}(\text{Gal}(k^{\sep}/k),\mathbb Z)=0$.
    Thus $E_2^{0,1} = 0.$

\medskip

For $E_2^{1,0}$: 
    \[
    E_2^{1,0} = \check{\H}^1(\ms X_{\bullet}, \mathcal{H}^0_{\et}(\underline{\mathbb{Z}})).
    \]
We compute this term using the explicit Čech complex $C^\bullet$ with $C^p = \H_{\et}^0(\ms X_p, \underline{\mathbb{Z}})$:
\[
0 \to C^0 \xrightarrow{d_0} C^1 \xrightarrow{d_1} C^2 \to \dots
\]
Since $\ms X_p \simeq G^p$, $\H_{\et}^0(\ms X_p, \underline{\mathbb{Z}})=\H_{\et}^0(G^p, \underline{\mathbb{Z}})$ consists of locally constant functions on $G^p$. Let $\pi_0$ be the functor of connected components of the underlying space. Since connected components of $G$ are geometrically connected, we have $\pi_0(G^p) \simeq \pi_0(G)^p$ as abstract groups. 
\begin{align*}
C^0 &= \H_{\et}^0(\Spec k, \underline{\mathbb{Z}}) = \mathbb{Z}, \\
C^1 &= \H_{\et}^0(G, \underline{\mathbb{Z}}) \simeq \operatorname{Map}(\pi_0(G), \mathbb{Z}), \\
C^2 &= \H_{\et}^0(G \times G, \underline{\mathbb{Z}}) \simeq \operatorname{Map}(\pi_0(G) \times \pi_0(G), \mathbb{Z}).
\end{align*}
We calculate the differential maps based on \Cref{eq:differential}: 
\begin{enumerate}
    \item[\textnormal{(i)}] \textit{The differential $d_0 \colon C^0 \to C^1$}: The face maps $d_0^0, d_0^1 \colon \ms X_1 \simeq G \to \ms X_0 \simeq \Spec k$ are both the structure morphism $G \to \Spec k$. For any constant $c \in C^0 = \mathbb{Z}$, its image is
    \[
    d_0(c) = (d_0^0)^*(c) - (d_0^1)^*(c) = c - c = 0.
    \]
    Therefore, $\im(d_0) = 0$.

    \item[\textnormal{(ii)}] \textit{The differential $d_1 \colon C^1 \to C^2$}: The face maps $d_1^0, d_1^1, d_1^2 \colon G \times G \to G$ are given by $d_1^0(g_1, g_2) = g_2$, $d_1^1(g_1, g_2) = g_1 g_2$ (group multiplication), and $d_1^2(g_1, g_2) = g_1$. For any function $f \in C^1 \simeq \operatorname{Map}(\pi_0(G), \mathbb{Z})$, we have
    \[
    d_1(f)(g_1, g_2) = (d_1^0)^*(f) - (d_1^1)^*(f) + (d_1^2)^*(f) = f(g_2) - f(g_1 g_2) + f(g_1).
    \]
    Thus, $f \in \ker(d_1)$ if and only if $f(g_1 g_2) = f(g_1) + f(g_2)$ for all $g_1, g_2 \in \pi_0(G)$, which means $f$ is an abstract group homomorphism from $\pi_0(G)$ to $(\mathbb{Z}, +)$.
\end{enumerate}
Thus, 
$
E_2^{1,0} = \frac{\ker(d_1)}{\im(d_0)} \simeq \operatorname{Hom}_{\text{group}}(\pi_0(G), \mathbb{Z}).
$
Finally, since $\pi_0(G)$ is finite, it is a torsion group. Therefore $E_2^{1,0} = \operatorname{Hom}_{\text{group}}(\pi_0(G), \mathbb{Z})= 0$, which proves the proposition. 
\end{proof}

\medskip

\begin{proof}[Proof of \cref{prop:vanishing-h1}]

Since the gerbe morphism is smooth  by \cite[\href{https://stacks.math.columbia.edu/tag/0DN8}{Tag 0DN8}]{SP}, 
there exists an \'etale section of $\mc X \rightarrow \Spec k$, 
i.e. there exists an \'etale scheme over $k$ of the form $\widetilde{L} =\bigsqcup \Spec L_i$ with $L_i/k$ being finite separable extensions such that $\mc X\times_k \widetilde{L}\simeq BG$ for some fppf sheaf $G$ over $\widetilde{L}$.
Choose one $L_i$, say $L$, which is a finite separable extension of $k$. 
Then $\ms{X}_L$ is neutral over $L$. 
Since \(\mathcal X\) is quasi-separated over \(k\), 
the following cartesian diagram shows that \(G\) is a qcqs group algebraic space of finite type over $L$:
\[
\begin{tikzcd}
    G \arrow[r] \arrow[d] & \Spec L \arrow[d]\\
    \ms{X}_L \arrow[r,"\Delta"] & \ms{X}_L\times_L \ms{X}_L
\end{tikzcd}
\]
Hence \(G\) is an algebraic group by \cite[Theorem 5.5.28]{alper}.
After replacing $L$ by a further finite separable extension, we may assume that the connected components of $G$ are geometrically connected 
by \cite[\href{https://stacks.math.columbia.edu/tag/054R}{Tag 054R}, \href{https://stacks.math.columbia.edu/tag/054S}{Tag 054S}]{SP}.

\iffalse
Choose a smooth surjective atlas \(U \to \mathcal X\) with \(U\) a nonempty \(k\)-scheme.
Since \(U\) is smooth over \(k\) (because the gerbe morphism $\ms X\to k$ is smooth \cite[\href{https://stacks.math.columbia.edu/tag/0DN8}{Tag 0DN8}]{SP}), there exists a closed point \(u \in U\) whose residue field
\(L:=\kappa(u)\) is a finite separable extension of \(k\) by \cite[\href{https://stacks.math.columbia.edu/tag/056U}{Tag 056U}]{SP}.
The composite morphism
\[
\Spec L \longrightarrow U \longrightarrow \mathcal X
\]
gives an object of \(\mathcal X(L)\), hence the gerbe \(\mathcal X_L\) is neutral.
Therefore \(\mathcal X_L \simeq BG\) for some fppf sheaf of groups \(G\) over \(L\).
Since \(\mathcal X\) is quasi-separated over \(k\), 
by the following cartesian diagram:
\[
\begin{tikzcd}
    G \arrow[r] \arrow[d] & \Spec L \arrow[d]\\
    \ms{X}_L \arrow[r,"\Delta"] & \ms{X}_L\times_L \ms{X}_L
\end{tikzcd}
\]
\(G\) is a qcqs group algebraic space of finite type over $L$ by \cite[\href{https://stacks.math.columbia.edu/tag/04YW}{Tag 04YW}]{SP}, hence \(G\) is an algebraic group
by \cite[Theorem 5.5.28]{alper}.
After replacing \(L\) by a further finite separable extension, we may assume that the connected components of \(G\) are geometrically connected by \cite[\href{https://stacks.math.columbia.edu/tag/054R}{Tag 054R}, \href{https://stacks.math.columbia.edu/tag/054S}{Tag 054S}]{SP}.
\fi

Enlarging $L$ further, we may assume that $L/k$ is finite Galois.
Let $\Gamma := \operatorname{Gal}(L/k)$ denote the Galois group of $L/k$. Let $\mc{X}_{L} := \mc{X} \times_{k} \Spec L$. The projection $\mc{X}_{L} \to \mc{X}$ is a $\Gamma$-Galois cover. By the spectral sequence \cref{cech_spectral_et} and \cref{lem:simplicial-iso}, 
\[
E_2^{p,q} = \H^p_{\text{group}}\big(\Gamma, \, \H_{\et}^q(\mc{X}_{L}, \underline{\mathbb{Z}})\big) \implies \H_{\et}^{p+q}(\mc{X}, \underline{\mathbb{Z}}).
\]

\iffalse
where $\H_{\et}^q(\mc{X}_{L}, \underline{\mathbb{Z}})$ is cohomology of trivial $\Gamma-$module. 
\fi

The low-degree exact sequence gives:
\[
0 \longrightarrow E_2^{1,0} \longrightarrow \H_{\et}^1(\mc{X}, \underline{\mathbb{Z}}) \longrightarrow E_2^{0,1}.
\]
It suffices to show that $E_2^{1,0} = 0$ and $E_2^{0,1} = 0$.

\medskip

For $E_2^{1,0}$: 
since $\mc{X}_{L}=BG$, $\H_{\et}^0(\mc{X}_{L}, \underline{\mathbb{Z}}) = \mathbb{Z}$.
Since the Galois group is finite,
\[
E_2^{1,0} = \H_{\text{group}}^1(\Gamma, \mathbb{Z}) = \Hom_{\mathrm{group}}(\Gamma, \mathbb{Z})=0.
\]

\medskip

For $E_2^{0,1}$, 
applying \cref{lemma:h1-bg-z} to $\mc X_L$ over $L$, we have $
\H_{\et}^1(\mc{X}_{L}, \underline{\mathbb{Z}}) = 0.
$ 
Therefore $
E_2^{0,1} = \H^0\big(\Gamma, \, \H_{\et}^1(\mc{X}_{L}, \underline{\mathbb{Z}})\big)=0
$.

\end{proof}

\begin{theorem}\label{thm:br_torsion}
Let $\mc{X}$ be an algebraic stack satisfying \cref{sit:setup}. Then $\Br(\mc{X}) = \H^2_{\et}(\mc{X}, \mathbb{G}_m)$ is a torsion group.
\end{theorem}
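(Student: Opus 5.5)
The plan is to reduce, step by step, to the Brauer group of a gerbe over a field, where torsionness can be seen directly.

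\emph{Reduction to the integral case.} Since $\mc{X}$ is Noetherian, $|\mc{X}|$ has finitely many connected components, and $\mc{X}$ is the disjoint union of the corresponding open and closed substacks. The Brauer group of the union is the product of the Brauer groups of the pieces, so it suffices to treat each piece. By \cref{prop:stack_integral} each connected component is integral, so we may assume $\mc{X}$ is integral.

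\emph{Reduction to a generic gerbe.} By \cref{thm:dense-open-gerbe} there is a dense open substack $\mc{U}\subseteq\mc{X}$ which is a gerbe $p\colon\mc{U}\to V$ over a qcqs integral regular Noetherian scheme $V$. The restriction map $\Br(\mc{X})\to\Br(\mc{U})$ is injective by the purity result recalled in \cref{pg:reduction}. Since $\mc{U}$ is quasi-separated, \cref{prop:brauer-injective-generic} gives a further injection $\Br(\mc{U})\hookrightarrow\Br(\mc{U}_\eta)$, where $\mc{U}_\eta$ is a gerbe over the function field $K$ of $V$. Hence it suffices to prove that $\Br(\mc{U}_\eta)$ is torsion.

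\emph{Passing to a neutral gerbe and splitting off $\Br(L)$.} As in the proof of \cref{prop:vanishing-h1}, there is a finite Galois extension $L/K$ such that $\mc{U}_{\eta,L}\simeq BG$, with $G$ an algebraic group over $L$. By the generically affine stabilizer hypothesis, $G$ is linear. The cover $BG\to\mc{U}_\eta$ is a $\Gamma$-Galois cover with $\Gamma$ finite. In the Hochschild--Serre spectral sequence $\H^p(\Gamma,\H^q(BG,\G_m))\Rightarrow\H^{p+q}(\mc{U}_\eta,\G_m)$, every term with $p>0$ is killed by $|\Gamma|$. Therefore, if $\Br(BG)$ is torsion, so is $\Br(\mc{U}_\eta)$, since $\H^2$ has a finite filtration whose graded pieces are torsion.

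\emph{The main obstacle: $\Br(BG)$ for linear $G$ over a field.} I would use the Čech spectral sequence \eqref{cech_spectral_et} attached to $\Spec L\to BG$, whose simplicial terms are the groups $G^p$, as in \cite{Sansuc1981}. The $E_2^{0,2}$ term injects into $\Br(L)$, which is torsion because it is Galois cohomology.

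The $E_2^{1,1}$ term is built from $\Pic(G^p)$. For a linear group, $\Pic(G)$ is finite when $G$ is connected over an algebraically closed field (Sansuc), and in general it is controlled by finite data after a Galois descent. I expect this term to be torsion, and bounding it is where I expect most of the difficulty.

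The $E_2^{2,0}$ term is computed from the units $\G_m(G^p)$. By Rosenlicht, modulo constants these are characters, so this term is a subquotient of the group cohomology of $\pi_0$ together with the character group. One must check that the cocycle conditions make it torsion; the expected answer is the extension classes coming from characters, which are torsion, or else the piece is controlled by $\H^2$ of a finite group.

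To keep these computations manageable, I would first reduce to the case $G$ connected, using the finite étale cover $BG^0\to BG$ and again the fact that Galois cohomology of a finite group is torsion. Then I would treat $G=G^0$ via the exact sequence $\Br(L)\to\Br(BG^0)\to\H^1(L,\widehat{G})$-type terms, which are torsion since $\widehat{G}$ is finitely generated and the Galois group is profinite.
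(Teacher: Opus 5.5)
\textbf{There is a genuine gap at the one substantive step.} Your reductions agree with the paper's: integral components, a dense open gerbe plus purity, the generic gerbe, and Hochschild--Serre for a Galois extension $L/K$ with $\mc{U}_{\eta,L}\simeq BG$. Everything then hinges on showing that $\Br(BG)$ is torsion for a linear $L$-group $G$, and you do not prove this.

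\emph{The two key terms are only expected, not shown.} You assert that $E_2^{1,1}$ and $E_2^{2,0}$ of the \v{C}ech spectral sequence for $\Spec L\to BG$ ``should'' be torsion. The tools you plan to use do not cover the statement. \cref{sit:setup} has no characteristic assumption, and a regular stack can have a non-reduced generic stabilizer, e.g.\ $B\alpha_p$ over $\mathbb{F}_p$. For such $G$, Rosenlicht's description of units fails: $1+y$ is a unit of $\mc{O}(\alpha_p)=L[y]/(y^p)$ that is not a constant times a character. Sansuc's results on $\Pic(G)$ also do not apply there.

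\emph{The sequence you propose for connected $G$ is wrong.} No sequence $\Br(L)\to\Br(BG^0)\to\H^1(L,\widehat{G})$ can be exact in the middle. Take $L=\mathbb{C}$ and $G=\PGL_n$ with $n\geq 2$: both $\Br(\mathbb{C})$ and $\H^1(\mathbb{C},\widehat{G})$ vanish. Yet the lifting gerbe $B\GL_n\to B\PGL_n$ is a nonzero class in $\Br(B\PGL_n)$, since a trivialization would split $\GL_n\to\PGL_n$. The cokernel of $\Br(L)\to\Br(BG)$ is governed by $\Pic(G)$, including its geometric part $\Pic(G_{\bar L})$, not just by $\H^1(L,\widehat G)$.

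\emph{What would repair your route in characteristic $0$.} For connected smooth $G$, Rosenlicht shows the unit complex gives $E_2^{2,0}=0$. Also $E_2^{1,1}\subseteq\Pic(G)$, and $\Pic(G)$ is an extension of a subgroup of the finite group $\Pic(G_{\bar L})^{\mathrm{Gal}}$ by $\H^1(L,\widehat G)$, hence torsion. This is exactly the input you left as an expectation, and positive characteristic would need further work.

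\textbf{The idea you are missing is the paper's presentation of $BG$ as a quotient by $\GL_n$.} Choose a closed embedding $G\hookrightarrow\GL_n$ and set $Y=\GL_n/G$. Then $Y$ is a smooth, geometrically integral $L$-variety and $BG\simeq[Y/\GL_n]$. As in the proof of \cite[Theorem 3.1]{LW23}, the $\GL_n$-torsor $Y\to BG$ gives an exact sequence $\Pic(\GL_n)\to\Br(BG)\to\Br(Y)$. Since $\Pic(\GL_n)=0$, and $\Br(Y)$ is torsion by \cite[Corollary 1.8]{Grothendieck} because $Y$ is regular and integral, $\Br(BG)$ is torsion. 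This works uniformly in $G$ and in the characteristic, with no computation of units or of $\Pic(G^p)$.
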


\begin{proof}
By \cref{pg:reduction}, we reduce to the case where $\mc{X}$ is a gerbe over a regular integral qcqs scheme $S$. Since $\mc{X}$ is quasi-separated, $f \colon \mc{X} \to S$ is a quasi-separated morphism.

Let $K = K(S)$ be the function field of $S$, and let $\eta = \Spec(K)$ be the generic point of $S$. Let $\mc{X}_\eta = \mc{X} \times_S \eta$ be the pullback gerbe over $\eta$. By \cref{prop:brauer-injective-generic}, we have an injection $\Br(\mc{X}) \hookrightarrow \Br(\mc{X}_\eta)$. Therefore, it suffices to show that $\Br(\mc{X}_\eta) = \H^2_{\et}(\mc{X}_\eta, \mathbb{G}_m)$ is a torsion group.

\medskip

There exists a finite Galois extension $L/K$ such that $\mc{X}_\eta \times_K \Spec L=:\mc X_L=BG$ for some algebraic group $G$ over $L$. Since $f \colon \mc{X} \to S$ is quasi-separated with generically affine stabilizer, $G$ is affine. Let $\Gamma = \mathrm{Gal}(L/K)$. Consider the spectral sequence associated with the $\Gamma$-Galois cover $\mc{X}_{L} \to \mc{X}_\eta$ (\cref{lem:simplicial-iso}):
\[
E_2^{p,q} = \H^p\left(\Gamma, \H^q_{\et}(\mc{X}_{L}, \mathbb{G}_m)\right) \implies \H^{p+q}_{\et}(\mc{X}_\eta, \mathbb{G}_m).
\]
We analyze the terms on the $E_2$-page that contribute to $\H^2_{\et}(\mc{X}_\eta, \mathbb{G}_m)$:
\begin{enumerate}
    \item \emph{For $p \ge 1$}: Since $\Gamma$ is a finite group, the Galois cohomology group $\H^p(\Gamma, M)$ is torsion for any $p \ge 1$ and any $\Gamma$-module $M$. Thus $E_2^{1,1}$ and $E_2^{2,0}$ are both torsion groups.
    \item \emph{For $p = 0$}: $E_2^{0,2} = \H^0\left(\Gamma, \H^2_{\et}(\mc{X}_{L}, \mathbb{G}_m)\right) = \H^2_{\et}(\mc{X}_{L}, 
    \mathbb{G}_m)^\Gamma$. 
    \iffalse
    The gerbe $\mc{X}_{L} \simeq BG$ for a linear group $G$, then by \cite[Lemma 3.9]{LW23}, $\H^2_{\et}(\mc X_L, \mathbb{G}_m)$ is a torsion group, hence $E_2^{0,2}$ is also torsion.
    \fi
    It suffices to show that $\H^2_{\et}(\mc{X}_{L}, 
    \mathbb{G}_m)$ is torsion.
    Choose a closed immersion of linear $L$-groups $G\hookrightarrow \GL_n$ and consider $Y:=\GL_n/G$.
    Then $\ms{X}_L=BG=[Y/\GL_n]$ and $Y$ is a smooth finite type
    $L$-variety (see \cite[Theorem 3.7]{conrad}).
    Moreover, 
    $Y$ is geometrically integral: $Y_{\overline{L}}$ is irreducible because $\GL_{n,\overline{L}}\to Y_{\overline{L}}$ is surjective and $\GL_{n,\overline{L}}$ is irreducible.
    Note that
    the proof of \cite[Theorem 3.1]{LW23} applies to $Y\to \ms{X}_L=[Y/\GL_n]$ over an arbitrary field $L$.
Hence we have the following exact sequence:
\begin{align*}
    \mathrm{Pic}(\GL_n)\to \H^2_{\et}(\mc{X}_{L}, 
    \mathbb{G}_m)\to \H^2_{\et}(Y, 
    \mathbb{G}_m)
\end{align*}
By \cite[Corollary 1.8]{Grothendieck}, $\H^2_{\et}(Y, \mathbb{G}_m)$ is torsion.
By $\mathrm{Pic}(\GL_n)=0$,
$\H^2_{\et}(\mc{X}_{L}, \mathbb{G}_m)$ is torsion.

\end{enumerate}
It follows that all $E_2^{p,q}$ terms contributing to $\H^2_{\et}(\mc{X}_\eta, \mathbb{G}_m)$ are torsion, which implies that the limit $\H^2_{\et}(\mc{X}_\eta, \mathbb{G}_m) = \Br(\mc{X}_\eta)$ is a torsion group. This completes the proof.
\end{proof}

Combining \cref{thm:dense-open-gerbe}, \cref{pg:reduction} and \cref{prop:brauer-injective-generic} we have the following purity result: 

\begin{theorem}{Generic Purity}
Let $\mc{X}$ be an integral, regular Noetherian algebraic stack. Let $\mc G$ be the generic gerbe. Then we have an injection $\Br(\mc X) \hookrightarrow \Br(\mc G)$. 
\end{theorem}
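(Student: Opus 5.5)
The plan is to assemble \cref{thm:dense-open-gerbe}, \cref{pg:reduction} and \cref{prop:brauer-injective-generic} into a chain of injections
\[
\Br(\mc X)\hookrightarrow \Br(\mc U)\hookrightarrow \Br(\mc U_\eta),
\]
and then to identify $\mc U_\eta$ with the generic gerbe $\mc G$ of $\mc X$.

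First, since $\mc X$ is integral, regular and Noetherian, hence quasi-compact, I will apply \cref{thm:dense-open-gerbe}. This gives a dense open substack $\mc U\subseteq\mc X$ together with a gerbe $p\colon \mc U\to V$ over a qcqs integral regular Noetherian scheme $V$. By the purity statement recalled in \cref{pg:reduction}, namely \cite[Theorem 2.1.7]{SHIN-THESIS2019}, restriction $\Br(\mc X)\to\Br(\mc U)$ is injective because $\mc U$ is a dense open substack of a regular Noetherian stack.

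Second, I will apply \cref{prop:brauer-injective-generic} to $p\colon\mc U\to V$ with $S=V$. This gives an injection $\Br(\mc U)\hookrightarrow\Br(\mc U_\eta)$, where $\eta$ is the generic point of $V$. The proposition requires the gerbe to be quasi-separated. I will check this by noting that $\mc U$ is quasi-separated as an open substack of $\mc X$, and that $V$ is separated-enough (qcqs), so $p$ is quasi-separated. If the quasi-separatedness of $\mc X$ is not part of the hypotheses, I will instead use the quasi-separated case built into \cref{thm:dense-open-gerbe} through the Noetherian assumption.

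Third, I will identify $\mc U_\eta=\mc U\times_V\eta$ with the generic (residual) gerbe $\mc G$ of $\mc X$. Since $p$ is a gerbe, $|\mc U|\to|V|$ is a homeomorphism by \cite[Tag 06R9]{SP}. The unique point of $\mc U_\eta$ is therefore the generic point of $\mc U$, which is also the generic point of $\mc X$ because $\mc U$ is dense. The residual gerbe at that point is the reduced substack supported there, so $\mc U_\eta$ is a gerbe over $\kappa(\eta)$ with a single point. Regularity of $\mc U$ gives reducedness, and hence $\mc U_\eta$ agrees with the residual gerbe, after comparing both as the reduced induced structure over $\operatorname{Spec}\kappa(\eta)$. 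Composing the three maps then gives the desired injection $\Br(\mc X)\hookrightarrow\Br(\mc G)$.

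I expect the main obstacle to be this last identification of $\mc U_\eta$ with the residual gerbe $\mc G$. The difficulty is making sure that the residue field of the generic point is exactly $\kappa(\eta)$ of $V$, and that no nonreduced structure appears. I will try to settle it with the characterization of residual gerbes in \cite[Proposition 7.4.39]{alper}: a monomorphism from a reduced gerbe over a field onto the point.
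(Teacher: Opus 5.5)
Your proposal is correct and follows the paper's own argument. The paper obtains the result directly by combining \cref{thm:dense-open-gerbe}, the purity injection of \cref{pg:reduction}, and \cref{prop:brauer-injective-generic}. It leaves implicit two points that you spell out: the quasi-separatedness check, which is automatic because Noetherian stacks are quasi-separated by convention, and the identification of $\mc U_\eta$ with $\mc G$.

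For that identification, use the characterization you end with, not the description of the residual gerbe as a reduced substack supported at the point. The map $\mc U_\eta\to\mc X$ is a monomorphism, being the base change of $\Spec K\to V$ followed by an open immersion. $\mc U_\eta$ is smooth over $K$, hence reduced and locally Noetherian. Finally $|\mc U_\eta|$ is the single generic point. Uniqueness of residual gerbes then gives $\mc U_\eta\simeq\mc G$.
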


\begin{remark}
    The condition “generically affine stabilizer” is crucial. By \cite[Proposition 2.4.4]{SHIN-THESIS2019}, if $E$ is an elliptic curve over $\mathbb C$, then we have an isomorphism
\[
\H_{\et}^2(BE, \mathbb{G}_m) \simeq \Pic^0(E)
\]
where $\Pic^0(E)$ is non-torsion. 
\end{remark}

\medskip

We construct a non-quasi-separated quasi-compact regular  counterexample to \cref{thm:br_torsion}. 

\begin{pg}\label{pg:construction}
Let $X = \A^2_{\mathbb Q} = \Spec \mathbb Q[x, y]$. Consider the discrete group $G = \mathbb Z^2 = \mathbb Z \oplus \mathbb Z$. Define the action of $G$ on $X$ by translations:
$$
(n, m) \cdot (x, y) = (x + n, y + m), \quad \text{for all } (n, m) \in \mathbb Z^2 \text{ and } (x, y) \in \A^2_{\mathbb Q}.
$$
Since $G$ acts freely on $X$, the quotient sheaf in the \'etale topology defines an algebraic space:
$$
Y := \A^2_{\mathbb Q} / \mathbb Z^2.
$$
\end{pg}

\begin{proposition}\label{prop:properties}
The algebraic space $Y$ constructed in \cref{pg:construction} satisfies the following properties:
\begin{enumerate}
    \item $Y$ is regular of finite type over $\mathbb Q$. 
    \item The stabilizer of $Y$ at every point is trivial, hence affine.
    \item $Y$ is not quasi-separated over $\mathbb{Q}$.
\end{enumerate}
\end{proposition}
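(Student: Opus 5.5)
We verify the three properties of $Y=\A^2_{\mathbb Q}/\mathbb Z^2$ in turn, using throughout that the quotient map $q\colon X=\A^2_{\mathbb Q}\to Y$ is a $\mathbb Z^2$-torsor (where $\mathbb Z^2$ is viewed as the constant group scheme $\coprod_{\mathbb Z^2}\Spec\mathbb Q$). Because $\mathbb Z^2$ acts freely, the morphism
\[
\mathbb Z^2\times X\longrightarrow X\times X,\qquad (g,x)\longmapsto(x,gx),
\]
is a monomorphism. It is a disjoint union of graphs of automorphisms of $X$, hence étale, and so it defines an étale equivalence relation. Consequently $Y$ is an algebraic space and $q$ is étale surjective with $X\times_Y X\simeq \mathbb Z^2\times X$.

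For (1), $q$ is an étale atlas and $X$ is regular, so $Y$ is regular, since regularity is étale local. Finite type over $\mathbb Q$ is also étale local on the source: $X$ is of finite type over $\mathbb Q$ and $q$ is étale surjective, so $Y$ is locally of finite type. We also need quasi-compactness of $Y$. For this we observe that the single affine $X$ surjects onto $|Y|$, so $Y$ is quasi-compact. Hence $Y$ is of finite type over $\mathbb Q$.

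For (2), $Y$ is an algebraic space, so its diagonal is a monomorphism. Every stabilizer group is therefore trivial, and in particular affine.

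For (3), which is the main point, we use that quasi-separatedness of $Y$ means the diagonal $\Delta\colon Y\to Y\times_{\mathbb Q}Y$ is quasi-compact. We test this after base change along the étale surjection $X\times X\to Y\times Y$. This base change is exactly the map
\[
\mathbb Z^2\times X=\coprod_{(n,m)\in\mathbb Z^2}X\longrightarrow X\times X.
\]
Its preimage of the affine (hence quasi-compact) scheme $X\times X$ is an infinite disjoint union of nonempty schemes, which is not quasi-compact. Thus the base change is not quasi-compact. Quasi-compactness descends along fpqc, in particular étale surjective, base change, so $\Delta_Y$ is not quasi-compact and $Y$ is not quasi-separated.

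The only step needing care is the descent claim in (3), namely that the failure of quasi-compactness after base change implies the failure for $\Delta_Y$ itself. This is the standard statement that quasi-compactness is fpqc local on the target. Beyond that, the argument consists of routine verifications.
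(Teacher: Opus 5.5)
Your proof is correct and matches the paper's: for (3) the paper also pulls $\Delta_Y$ back along $X\times X\to Y\times Y$ to get $\coprod_{\mathbb Z^2}\A^2_{\mathbb Q}\to X\times X$, which is not quasi-compact, and it treats (1) and (2) as trivial. Two small slips in wording. First, the map $\mathbb Z^2\times X\to X\times X$ is a disjoint union of closed immersions, so it is not étale; what is étale, indeed an isomorphism on each component, are the two projections to $X$, and that is what an étale equivalence relation requires. Second, in (3) you only need the easy fact that quasi-compactness is stable under base change, not descent.
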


\begin{proof}
(1) and (2) are trivial. 

For (3), consider the diagonal morphism $\Delta_Y: Y \to Y \times_{\mathbb Q} Y$. Its pullback along the \'etale cover $X \times_{\mathbb Q} X \to Y \times_{\mathbb Q} Y$ yields the equivalence relation scheme:
    \[
    R := X \times_Y X \simeq \mathbb Z^2 \times \A^2_{\mathbb Q} = \coprod_{(n, m) \in \mathbb Z^2} \A^2_{\mathbb Q}.
    \]
    $R$ is not quasi-compact while $X \times_{\mathbb Q} X$ is quasi-compact, so $\Delta_Y$ is not quasi-compact, proving that $Y$ is not quasi-separated.
\end{proof}

Now we compute the terms $E_2^{p,q}$ in the \v{C}ech spectral sequence for $p + q = 2$:

\begin{proposition}\label{prop:e2-terms}
For the cover $\pi: X=\A^2_{\mathbb Q} \to Y$, consider the spectral sequence $E^{p,q}_2=\check{\H}^p(X^{\bullet}_{/Y}, \mathcal{H}^q_{\et}(\G_m)) \implies \H^{p+q}_{\et}(Y, \G_m)$. Then:
\begin{enumerate}
    \item $E_2^{2,0} \simeq \mathbb Q^\times$,
    \item $E_2^{0,1} = E_2^{1,1} = 0$.
\end{enumerate}
\end{proposition}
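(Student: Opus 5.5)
Since $G=\mathbb Z^2$ is a discrete group acting freely on $X$, the cover $\pi\colon X\to Y$ is a $G$-torsor, so I will apply \cref{lem:simplicial-iso} to each presheaf $F=\mathcal H^q_{\et}(\G_m)\colon V\mapsto \H^q_{\et}(V,\G_m)$. This functor converts coproducts into products, because étale cohomology of a disjoint union is the product of the cohomologies of the pieces. The lemma then gives
\[
E_2^{p,q}\simeq \H^p_{\mathrm{group}}\bigl(\mathbb Z^2,\ \H^q_{\et}(\A^2_{\mathbb Q},\G_m)\bigr),
\]
with $\mathbb Z^2$ acting through pullback along the translations. So the whole proof reduces to computing $\H^q_{\et}(\A^2_{\mathbb Q},\G_m)$ for $q=0,1$ together with the translation action on these groups, and then computing group cohomology of $\mathbb Z^2$.

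For $q=1$ we have $\H^1_{\et}(\A^2_{\mathbb Q},\G_m)=\Pic(\A^2_{\mathbb Q})=0$, because $\mathbb Q[x,y]$ is a UFD. Hence the coefficient module vanishes, and $E_2^{0,1}=E_2^{1,1}=0$ immediately. This proves (2).

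For $q=0$ we have $\H^0(\A^2_{\mathbb Q},\G_m)=\mathbb Q[x,y]^\times=\mathbb Q^\times$. The translations fix constants, so $\mathbb Z^2$ acts trivially on this module. Therefore $E_2^{2,0}=\H^2_{\mathrm{group}}(\mathbb Z^2,\mathbb Q^\times)$ with trivial action. To compute it I will use the Koszul resolution of $\mathbb Z$ over $\mathbb Z[\mathbb Z^2]=\mathbb Z[s^{\pm1},t^{\pm1}]$,
\[
0\to \mathbb Z[\mathbb Z^2]\to \mathbb Z[\mathbb Z^2]^2\to \mathbb Z[\mathbb Z^2]\to\mathbb Z\to 0 .
\]
Applying $\Hom(-,M)$ for a trivial module $M$ gives the complex $M\to M^2\to M$ with zero differentials (they are given by $s-1$ and $t-1$, which act as $0$). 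Hence $\H^2(\mathbb Z^2,M)\simeq M$, and in particular $E_2^{2,0}\simeq\mathbb Q^\times$, proving (1). Alternatively, the Künneth formula for $\mathbb Z\times\mathbb Z$ gives the same result, since $\H^2$ of the torus with coefficients in $M$ is $M$.

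The only point needing any care is the identification in the first step. One must check that the Čech spectral sequence for the étale cover $X\to Y$ (\cite[\href{https://stacks.math.columbia.edu/tag/03OW}{Tag 03OW}]{SP}) really has the stated $E_2$ terms. This amounts to checking that the fibre products $X^{i+1}_{/Y}\simeq X\times\mathbb Z^{i}$ are infinite disjoint unions of copies of $\A^2_{\mathbb Q}$, and that $\H^q_{\et}$ of an infinite disjoint union is the product. The first holds because the action is free; the second holds because the étale topos of a coproduct is the product of the topoi. With that settled, I will verify the trivial-action claim on units; the rest is the formal computation above, and I expect no genuine obstacle beyond this bookkeeping.
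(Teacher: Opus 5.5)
Your proposal is correct and follows the paper's argument. You reduce via \cref{lem:simplicial-iso} to the group cohomology of $\mathbb Z^2$, kill the $q=1$ row with $\Pic(\A^2_{\mathbb Q})=0$, and note that the translations act trivially on $\mathbb Q[x,y]^\times=\mathbb Q^\times$. The one difference is that you get $\H^2(\mathbb Z^2,M)\simeq M$ directly from the Koszul resolution over $\mathbb Z[s^{\pm1},t^{\pm1}]$, whereas the paper uses the universal coefficient sequence together with the K\"unneth computation $\H_2(\mathbb Z^2,\mathbb Z)\simeq\mathbb Z$; your route is a bit shorter and handles every trivial module at once.
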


\begin{proof}
We calculate each term using \cref{lem:simplicial-iso}, noting that 
the functor \(F=\H^q_{\et}(-,\mathbb G_m)\) satisfies the condition $F(\coprod X_i)=\prod F(X_i)$. 

(1) We have $\H^0_{\et}(\A^2_{\mathbb Q}, \G_m) = \mc{O}(\A^2_{\mathbb Q})^\times = \mathbb Q^\times$. The action of $G$ on constant functions is trivial. 
By the universal coefficient formula for group cohomology \cite[Exercise 6.1.5(3)]{weibel1994}, for any group $G$ and any trivial $G$-module $M$, there exists a split short exact sequence:
$$
0 \longrightarrow \text{Ext}^1_{\mathbb{Z}}\bigl(\mathrm{H}_{n-1}(G, \mathbb{Z}), M\bigr) \longrightarrow \mathrm{H}^n(G, M) \longrightarrow \Hom_{\mathbb{Z}}\bigl(\mathrm{H}_n(G, \mathbb{Z}), M\bigr) \longrightarrow 0.
$$

Set $n = 2$, $G = \mathbb{Z}^2$, and $M = \mathbb{Q}^\times$. By \cite[Theorem 6.1.11]{weibel1994}, the first homology group is $
    \mathrm{H}_1(\mathbb{Z}^2, \mathbb{Z}) \simeq (\mathbb{Z}^2)^{\mathrm{ab}} \simeq \mathbb{Z}^2.
$
Since $\mathrm{H}_1(\mathbb{Z}^2, \mathbb{Z}) \simeq \mathbb{Z}^2$ is a free $\mathbb{Z}$-module (hence projective), its $\text {Ext}^1$ functor vanishes:
    \[
    \text{Ext}^1_{\mathbb{Z}}\bigl(\mathrm{H}_1(\mathbb{Z}^2, \mathbb{Z}), \mathbb{Q}^\times\bigr) \simeq \text{Ext}^1_{\mathbb{Z}}(\mathbb{Z}^2, \mathbb{Q}^\times) = 0.
    \]
Thus, 
$$
\mathrm{H}^2(\mathbb{Z}^2, \mathbb{Q}^\times) \xrightarrow{\sim} \Hom_{\mathbb{Z}}\bigl(\mathrm{H}_2(\mathbb{Z}^2, \mathbb{Z}), \mathbb{Q}^\times\bigr).
$$
Write $\mathbb{Z}^2 = \mathbb{Z} \times \mathbb{Z}$ and apply the Künneth formula \cite[Proposition 6.1.13]{weibel1994}:
\begin{align*}
0 \to \bigoplus_{p+q=2} \mathrm{H}_p(\mathbb{Z}, \mathbb{Z}) \otimes_{\mathbb{Z}} \mathrm{H}_q(\mathbb{Z}, \mathbb{Z}) \to \mathrm{H}_2(\mathbb{Z}^2, \mathbb{Z}) \to \bigoplus_{p+q=1} \text{Tor}_1^{\mathbb{Z}}\bigl(\mathrm{H}_p(\mathbb{Z}, \mathbb{Z}), \mathrm{H}_q(\mathbb{Z}, \mathbb{Z})\bigr) \to 0.
\end{align*}

Let $G = \langle t \rangle \cong \mathbb{Z}$ be the infinite cyclic group generated by $t$. The group ring of $G$ over $\mathbb{Z}$ is $\mathbb{Z}[G] \cong \mathbb{Z}[t, t^{-1}]$. 
Consider the short exact sequence of $\mathbb{Z}[G]$-modules:
\begin{equation*} \label{eq:free_resolution}
0 \longrightarrow \mathbb{Z}[G] \xrightarrow{\quad d_1 \quad} \mathbb{Z}[G] \xrightarrow{\quad \epsilon \quad} \mathbb{Z} \longrightarrow 0,
\end{equation*}
where $\epsilon \colon \mathbb{Z}[G] \to \mathbb{Z}$ is the augmentation map defined by $\epsilon(t^k) = 1$ for all $k \in \mathbb{Z}$, and $d_1 \colon \mathbb{Z}[G] \to \mathbb{Z}[G]$ is defined by $d_1(x) = (t - 1)x$. 
By direct calculation:
\[
\mathrm{H}_0(\mathbb{Z}, \mathbb{Z}) \simeq \mathbb{Z}, \quad \mathrm{H}_1(\mathbb{Z}, \mathbb{Z}) \simeq \mathbb{Z}, \quad \mathrm{H}_2(\mathbb{Z}, \mathbb{Z}) \simeq 0
\]
Thus
$
\mathrm{H}_2(\mathbb{Z}^2, \mathbb{Z}) \simeq \mathrm{H}_1(\mathbb{Z}, \mathbb{Z}) \otimes_{\mathbb{Z}} \mathrm{H}_1(\mathbb{Z}, \mathbb{Z}) \simeq \mathbb{Z} \otimes_{\mathbb{Z}} \mathbb{Z} \simeq \mathbb{Z}
$.
Hence $E_2^{2,0}=\H^2(\mathbb Z^2, \H^0_{\et}(\A^2_{\mathbb Q}, \G_m))=\mathrm{H}^2(\mathbb{Z}^2, \mathbb{Q}^\times)=\Hom_{\mathbb{Z}}\bigl(\mathbb Z, \mathbb{Q}^\times\bigr)=\mathbb{Q}^\times$. 

\medskip

(2) By \cite[Theorem 3.6]{Traverso1970} $\H^1_{\et}(\A^2_{\mathbb Q}, \G_m) = \Pic(\A^2_{\mathbb Q}) = 0$; thus 
$E_2^{p,1} = \H^p(\mathbb Z^2, 0) = 0$. 
\end{proof}

\begin{theorem}
There is an injection $\mathbb Q^\times \hookrightarrow \Br(Y)$.
In particular, $\Br(Y)$ contains $\mathbb Q^\times$ as a subgroup, and therefore contains non-torsion elements.
\end{theorem}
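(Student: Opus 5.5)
The plan is to read off $\H^2_{\et}(Y,\G_m)$ from the \v{C}ech spectral sequence of \cref{prop:e2-terms} for the \'etale $\Z^2$-torsor $\pi:X=\A^2_{\mathbb Q}\to Y$,
\[
E_2^{p,q}=\check{\H}^p(X^{\bullet}_{/Y},\mathcal{H}^q_{\et}(\G_m))\Longrightarrow \H^{p+q}_{\et}(Y,\G_m).
\]
I will show that $E_2^{2,0}\simeq\mathbb Q^\times$ survives to $E_\infty$ and injects into the abutment. The existence of this spectral sequence follows exactly as in the proof of \cref{lemma:h1-bg-z}, using \cite[\href{https://stacks.math.columbia.edu/tag/03OW}{Tag 03OW}]{SP}, since $\pi$ is an \'etale covering. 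By \cref{lem:simplicial-iso}, applied with $F=\H^q_{\et}(-,\G_m)$ (which sends disjoint unions to products), its $E_2$-terms are the group cohomology groups $\H^p(\Z^2,\H^q_{\et}(\A^2_{\mathbb Q},\G_m))$.

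First I identify the edge map $E_2^{2,0}\to E_\infty^{2,0}\subseteq \H^2_{\et}(Y,\G_m)$. For a first-quadrant spectral sequence, $E_\infty^{2,0}$ is the quotient of $E_2^{2,0}$ by the images of the differentials
\[
d_2:E_2^{0,1}\to E_2^{2,0}.
\]
The later differentials into $E^{2,0}$ start from negative $q$, so they vanish. \cref{prop:e2-terms}(2) gives $E_2^{0,1}=0$, so $d_2=0$ and $E_3^{2,0}=E_\infty^{2,0}=E_2^{2,0}$.

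Second, $E_\infty^{2,0}$ is the last step $F^2\H^2$ of the filtration on $\H^2_{\et}(Y,\G_m)$, hence a subgroup. Combined with \cref{prop:e2-terms}(1), this gives an injection
\[
\mathbb Q^\times\simeq E_2^{2,0}\hookrightarrow \Br(Y)=\H^2_{\et}(Y,\G_m).
\]
For the remaining graded pieces: $E_2^{1,1}=0$, so $E_\infty^{1,1}=0$, and $E_\infty^{0,2}$ is some subquotient. Neither of these is needed for the injectivity.

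Finally, $\mathbb Q^\times$ contains non-torsion elements (for example $2$), so $\Br(Y)$ is not torsion. This shows that the quasi-separatedness hypothesis in \cref{thm:br_torsion} cannot be dropped, even though $Y$ is regular of finite type with trivial stabilizers by \cref{prop:properties}.

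The main obstacle is justifying the spectral sequence setup itself. Two points need checking. First, that the \v{C}ech-to-derived spectral sequence for the covering $X\to Y$ really has $E_2$-terms computed by group cohomology of $\Z^2$, via the identification $X\times_Y X\simeq \Z^2\times X$ and its higher analogues. Second, that the $\Z^2$-action on $\H^0(\A^2,\G_m)=\mathbb Q^\times$ is trivial, as already noted in \cref{prop:e2-terms}. Once these hold, the argument above is formal.
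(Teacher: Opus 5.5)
Your argument is correct and is essentially the paper's proof: the paper gets injectivity of $E_2^{2,0}\to \H^2_{\et}(Y,\G_m)$ from the five-term exact sequence $E_2^{0,1}\to E_2^{2,0}\to \H^2$ together with $E_2^{0,1}=0$, which is the same as your argument via $d_2$ and the edge map. One small slip: for $r\ge 3$ the differentials into $E_r^{2,0}$ vanish because their source $E_r^{2-r,r-1}$ has negative $p$, not negative $q$.
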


\begin{proof}
    By the five-term exact sequence:
$$
0 \longrightarrow E_2^{1,0} \longrightarrow \H^1 \longrightarrow E_2^{0,1} \longrightarrow E_2^{2,0} \longrightarrow \H^2.
$$
Since $E_2^{0,1}=0$, the map $E_2^{2,0}\to \H^2$ is injective.
\end{proof}
\begin{remark}
This calculation confirms that the quasi-separatedness condition in \cref{sit:setup} cannot be omitted.
\end{remark}

\end{appendix}
\bibliographystyle{alphaurl}
\bibliography{reference.bib}

\end{document}